\newtheorem{thm}{Theorem}[section]
\newtheorem{lemma}[thm]{Lemma}
\newtheorem{prop}[thm]{Proposition}
\newtheorem{cor}[thm]{Corollary}
\newtheorem{defn}[thm]{Definition}
\newtheorem{examp}[thm]{Example}
\newtheorem{rmk}[thm]{Remark}
\title{Weil-\'Etale Cohomology And Special Values Of L-functions At Zero}
\author{Minh-Hoang Tran
}
\begin{document}
\date{}
\maketitle

\begin{abstract}
We construct the Weil-\'etale cohomology and Euler characteristics for a subclass of the class of $\mathbb{Z}$-constructible sheaves on the spectrum of the ring of integers of a totally imaginary number field. Then we show that the special value of an Artin L-function of toric type at zero is given by the Weil-\'etale Euler characteristic of an appropriate  $\mathbb{Z}$-constructible sheaf up to signs. As applications of our result, we will derive some classical formulas of special values of L-functions of tori and their class numbers.
\end{abstract}
\section{Introduction}

Through out this paper, we fix the following notations. $K$ is a totally imaginary number field with ring of integers $O_K$ and Galois group $G_K$. Let $X=Spec(O_K)$ and $j: Spec(K)\to X$. By a discrete $G_K$-module we mean a finitely generated abelian group with a continuous $G_K$ action. There are two main aims of this paper.
\begin{enumerate}
\item To construct the Weil-\'etale cohomology $H^n_W(X,\mathcal{F})$ and Euler characteristic $\chi(\mathcal{F})$ for any strongly-$\mathbb{Z}$-constructible sheaf $\mathcal{F}$ on $X$ (see definition $\ref{strong_const}$).
\item Let $N$ be a discrete $G_K$-module and  let $L(N,s)$ be the Artin L-function associated with the representation $N\otimes_{\mathbb{Z}}\mathbb{C}$ of $G_K$. We will show that $j_{*}N$ is strongly-$\mathbb{Z}$-constructible and  $L^{*}(N,0)=\pm \chi(j_{*}N)$.
\end{enumerate}
  
The fact that $L^{*}(N,0)$ is related to the Euler characteristic of $j_{*}N$ was established by Bienenfeld and Lichtenbaum and our proof is based on the techniques they developed in $\cite{Lic75}$ and $\cite{BL}$. However, their Euler characteristic is different from the Weil-\'etale Euler characteristic constructed in this paper. The Weil-\'etale cohomology in this paper is not the same as the Weil-\'etale cohomology constructed by Lichtenbaum in $\cite{Lic09a}$ but rather is based on his ideas in $\cite{Lic09b}$ and $\cite{Lic14}$.  As applications, let $T$ be an algebraic torus over $K$ with character group $\hat{T}$, we obtain a formula for $L^{*}(\hat{T},0)$ which is similar to Ono's Tamagawa number formula for tori $\cite{Ono63}$ and use it to derive some formulas of Ono $\cite{Ono87}$ and Katayama $\cite{Kat91}$ for the class numbers of tori.

The structure of the paper is as follows. We construct the Weil-\'etale cohomology for $\mathbb{Z}$-constructible sheaves in section 2. In sections 3 and 4, we define the class of strongly-$\mathbb{Z}$-constructible sheaves and construct their Weil-\'etale Euler characteristics. Our main result  $L^{*}(N,0)=\pm \chi(j_{*}N)$ is proved at the end of section 4. Section 5 is for applications and examples. Finally, we have an appendix containing the results about determinants of exact sequences and orders of torsion groups used in this paper.

\textbf{Acknowledgment:} This paper is part of my PhD thesis written at Brown University. I would like to thank my advisor Professor Stephen Lichtenbaum for his guidance and encouragement. Part of this work was written when I was a member of the SFB Higher Invariant Research Group at University of Regensburg. I would like to thank Professor Guido Kings for his support and my friend Yigeng Zhao for many helpful conversations.
\section{The Weil-\'Etale Cohomology Of $\mathbb{Z}$-Constructible Sheaves}
\subsection{The Weil-\'Etale Complexes}
In this section, we define the Weil-\'etale complex for $\mathbb{Z}$-constructible sheaves following the ideas of Lichtenbaum $\cite{Lic14}$. First, we recall the definition of $\mathbb{Z}$-constructible sheaves from $\cite[\mbox{page 146}]{Mil06}$.
\begin{defn}\label{defn_construct}
	A sheaf $\mathcal{F}$ on $X$ is $\mathbb{Z}$-constructible if 
		\begin{enumerate}
			\item there exists an open dense subscheme $U$ of $X$ and a finite \'etale covering 
	$ U' \to U$ such that the restriction of $\mathcal{F}$ to $U'$ is a constant sheaf defined by a finitely generated abelian group, 
			\item for any point $p$ outside $U$, the stalk $\mathcal{F}_{\bar{p}}$ is a finitely generated abelian group.
		\end{enumerate}
We say that $\mathcal{F}$ is constructible if in the definition above the restriction of $\mathcal{F}$ to $U'$ is a  constant sheaf defined by a finite abelian group and for any point $p$ outside $U$, the stalk $\mathcal{F}_{\bar{p}}$ is finite.
\end{defn}

\begin{examp}
\begin{enumerate}
\item Any constant sheaf defined by a finitely-generated abelian groups.
\item Let $M$ be a discrete $G_K$-module, then $j_{*}M$ is a $\mathbb{Z}$-constructible sheaf. Furthermore, if $M$ is finite then $j_{*}M$ is constructible.
\end{enumerate}
 \end{examp}
 \begin{defn}\label{defn_negligible}
We say $\mathcal{F}$ is a negligible sheaf on $X$ if there exists a finite set $S$ of closed points of $X$ such that $ \mathcal{F}=\prod_{p \in S}(i_{p})_{*}M_p$ where $M_p$ is a finite discrete $ \hat{\mathbb{Z}}$-module and $i_p$ is the map $p \to X$. Note that negligible sheaves are constructible.
\end{defn}
\begin{defn}\label{Weil_etale_defn}
Let  $\mathcal{F}$ be $\mathbb{Z}$-constructible, the Weil-\'etale complex is defined as
\[ R\Gamma_W(X,\mathcal{F}) := 
\tau_{\leq 1}R\Gamma_{et}(X,\mathcal{F}) \oplus
 \tau_{\geq 2}R\mathrm{Hom}_{\mathbb{Z}}(R\mathrm{Hom}_{X}(\mathcal{F},\mathbb{G}_m),\mathbb{Z}[-2]) \]
where $\tau_{\leq n}$ and $\tau_{\geq n}$ are the truncation functors defined in $\cite[\mbox{1.2.7}]{Weibel94}$. This is an object in the derived category of abelian groups. The Weil-\'etale cohomology are defined as $H^n_W(X,\mathcal{F}):=h^n(R\Gamma_W(X,\mathcal{F}))$.
\end{defn}

\begin{prop}\label{Weil_F}
The Weil-\'etale cohomology of $\mathcal{F}$ satisfy
\begin{equation}
H^n_W(X,\mathcal{F})= 
       \left\{
				\begin{array}{ll}
					H^n_{et}(X,\mathcal{F})&  \mbox{$n=0,1$}\\
					\mathrm{Hom}_X(\mathcal{F},\mathbb{G}_m)_{tor}^D & \mbox{$n=3$}  \\
					0 & \mbox{$ n > 3$}.
				\end{array}
			   \right.
\end{equation}
\[ 0 \to Ext^{1}_{X}(\mathcal{F},\mathbb{G}_m)_{tor}^D \to H^{2}_W(X,\mathcal{F}) \to Hom_{X}(\mathcal{F},\mathbb{G}_m)^{*} \to 0. \]
Note that  $A^D:=Hom_{\mathbb{Z}}(A,\mathbb{Q}/\mathbb{Z})$ and $A^{*}:=Hom_{\mathbb{Z}}(A,\mathbb{Z})$.
\end{prop}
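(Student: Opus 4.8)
The plan is to read off the cohomology of $R\Gamma_W(X,\mathcal{F})$ one summand at a time. Write $C := R\mathrm{Hom}_X(\mathcal{F},\mathbb{G}_m)$, so that $h^i(C) = \mathrm{Ext}^i_X(\mathcal{F},\mathbb{G}_m)$ and in particular $h^i(C)=0$ for $i<0$, and set $D := R\mathrm{Hom}_{\mathbb{Z}}(C,\mathbb{Z}[-2]) = R\mathrm{Hom}_{\mathbb{Z}}(C,\mathbb{Z})[-2]$, the second summand in Definition \ref{Weil_etale_defn}. Since a truncation changes nothing outside the indicated range of degrees, $\tau_{\leq 1}R\Gamma_{et}(X,\mathcal{F})$ has cohomology $H^n_{et}(X,\mathcal{F})$ for $n\leq 1$ and $0$ for $n\geq 2$, whereas $\tau_{\geq 2}D$ has cohomology $0$ for $n\leq 1$ and $h^n(D)$ for $n\geq 2$. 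Adding the two gives $H^n_W(X,\mathcal{F}) = H^n_{et}(X,\mathcal{F})$ for $n=0,1$ immediately, and reduces the remaining cases to computing $h^n(D)$ for $n\geq 2$.

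For that I would use the hyperext spectral sequence $E_2^{p,q} = \mathrm{Ext}^p_{\mathbb{Z}}(h^{-q}(C),\mathbb{Z}) \Rightarrow h^{p+q}\bigl(R\mathrm{Hom}_{\mathbb{Z}}(C,\mathbb{Z})\bigr)$; as $\mathbb{Z}$ is a principal ideal domain it is concentrated in the columns $p=0,1$, hence degenerates at $E_2$, and after the shift by $[-2]$ it produces, for every $n$, a short exact sequence
\[ 0 \to \mathrm{Ext}^1_{\mathbb{Z}}(h^{3-n}(C),\mathbb{Z}) \to h^n(D) \to \mathrm{Hom}_{\mathbb{Z}}(h^{2-n}(C),\mathbb{Z}) \to 0. \]
For $n\geq 4$ both ends vanish because $C$ is concentrated in non-negative degrees, so $H^n_W(X,\mathcal{F})=0$; for $n=3$ the right end vanishes since $h^{-1}(C)=0$, giving $H^3_W(X,\mathcal{F}) \cong \mathrm{Ext}^1_{\mathbb{Z}}(\mathrm{Hom}_X(\mathcal{F},\mathbb{G}_m),\mathbb{Z})$; and for $n=2$ it reads
\[ 0 \to \mathrm{Ext}^1_{\mathbb{Z}}(\mathrm{Ext}^1_X(\mathcal{F},\mathbb{G}_m),\mathbb{Z}) \to H^2_W(X,\mathcal{F}) \to \mathrm{Hom}_{\mathbb{Z}}(\mathrm{Hom}_X(\mathcal{F},\mathbb{G}_m),\mathbb{Z}) \to 0. \]
To match the statement I then invoke the elementary identities $\mathrm{Ext}^1_{\mathbb{Z}}(A,\mathbb{Z}) \cong A_{tor}^D$ and $\mathrm{Hom}_{\mathbb{Z}}(A,\mathbb{Z}) = A^{*}$, valid for any finitely generated abelian group $A$; these turn the displayed isomorphism into $H^3_W(X,\mathcal{F}) \cong \mathrm{Hom}_X(\mathcal{F},\mathbb{G}_m)_{tor}^D$ and the displayed sequence into the one asserted.

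The whole computation rests on one input, and this is where I expect the real work and the only genuine obstacle to lie: that $\mathrm{Hom}_X(\mathcal{F},\mathbb{G}_m)$ and $\mathrm{Ext}^1_X(\mathcal{F},\mathbb{G}_m)$ are finitely generated abelian groups (this is also the only point at which $\mathbb{Z}$-constructibility of $\mathcal{F}$ enters). I would prove it by d\'evissage: with $U$ as in Definition \ref{defn_construct}, $j_U\colon U\hookrightarrow X$ the open immersion and $i\colon Z\hookrightarrow X$ the closed complement, the gluing sequence $0 \to (j_U)_!\,j_U^{*}\mathcal{F} \to \mathcal{F} \to i_{*}i^{*}\mathcal{F} \to 0$ and its long exact $\mathrm{Ext}$-sequence reduce the claim to $\mathcal{F}$ of the form $(j_U)_!\mathcal{G}$ with $\mathcal{G}$ locally constant $\mathbb{Z}$-constructible on $U$, and $\mathcal{F}=i_{*}M$ with $M$ a finitely generated discrete $G_{k(p)}$-module. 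In the skyscraper case the adjunction $i_{*}\dashv Ri^{!}$ and the purity isomorphism $Ri^{!}\mathbb{G}_m\cong\mathbb{Z}[-1]$ give $R\mathrm{Hom}_X(i_{*}M,\mathbb{G}_m)\cong R\mathrm{Hom}_{G_{k(p)}}(M,\mathbb{Z})[-1]$, whose cohomology is finitely generated because $G_{k(p)}\cong\hat{\mathbb{Z}}$ has cohomological dimension one and $M$, $\mathrm{Hom}_{\mathbb{Z}}(M,\mathbb{Z})$, $M_{tor}^{D}$ are all finitely generated; in the other case $(j_U)_!\dashv j_U^{*}$ gives $R\mathrm{Hom}_X((j_U)_!\mathcal{G},\mathbb{G}_m)\cong R\mathrm{Hom}_U(\mathcal{G},\mathbb{G}_m)$, and after pulling back along a finite \'etale cover trivialising $\mathcal{G}$ and separating off the free and torsion parts one is left with $H^{i}_{et}(-,\mathbb{G}_m)$ and $H^{i}_{et}(-,\mu_n)$ of open subschemes of $X$, which are finitely generated by the classical finiteness theorems for $S$-units, $S$-ideal class groups and Brauer groups of number rings (alternatively, this finiteness can be quoted directly from the treatment of Artin--Verdier duality in Milne's book). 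Granting it, everything above is formal bookkeeping with truncations and the universal coefficient sequence over $\mathbb{Z}$.
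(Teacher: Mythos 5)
Your proposal is correct and follows essentially the same route as the paper: both split $R\Gamma_W(X,\mathcal{F})$ into its two summands and compute the second one via the universal-coefficient short exact sequence over $\mathbb{Z}$ (the paper quotes it from Weibel, exercise 3.6.1, where you derive it from the degenerate two-column hyperext spectral sequence), then read off the degrees $n=2,3$ and the vanishing for $n>3$. Your explicit verification that $\mathrm{Hom}_X(\mathcal{F},\mathbb{G}_m)$ and $Ext^1_X(\mathcal{F},\mathbb{G}_m)$ are finitely generated, needed for the identification $\mathrm{Ext}^1_{\mathbb{Z}}(A,\mathbb{Z})\cong A_{tor}^D$, is a point the paper leaves implicit, and your d\'evissage argument for it is the standard one.
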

\begin{proof}
From the definition of $R\Gamma_W(X,\mathcal{F})$, we have
\[ H^n_W(X,\mathcal{F})= 
       \left\{
				\begin{array}{ll}
					H^n_{et}(X,\mathcal{F})&  \mbox{$n=0,1$}\\
h^n(R\mathrm{Hom}_{\mathbb{Z}}(R\mathrm{Hom}_{X}(\mathcal{F},\mathbb{G}_m),\mathbb{Z}[-2]))& \mbox{$n \geq 2$.}  \\
				\end{array}
			   \right.
\]
For $n\geq 2$, from $\cite[\mbox{exercise 3.6.1}]{Weibel94}$, there is an exact sequence 
\[ 0 \to Ext^{3-n}_{X}(\mathcal{F},\mathbb{G}_m)_{tor}^D \to H^{n}_W(X,\mathcal{F}) \to Ext^{2-n}_{X}(\mathcal{F},\mathbb{G}_m)^{*} \to 0 .\]
Hence, $H^{n}_W(X,\mathcal{F})=0$ for $n\geq 4$
and $H^{3}_W(X,\mathcal{F}) \simeq Hom_{X}(\mathcal{F},\mathbb{G}_m)_{tor}^{D}$. 
\end{proof}
To compute the Weil-\'etale cohomology of $\mathbb{Z}$, we need the following result.
\begin{thm}\label{et_ZGm}
Let $K$ be a totally imaginary number field. Then
\begin{eqnarray}
H^{n}_{et}(X,\mathbb{Z}) = 
              \left\{
				\begin{array}{ll}
					\mathbb{Z} &  \mbox{$n=0$}\\
					0 &  \mbox{$n=1$ }\\
                                 Pic(O_K)^D & \mbox{$n=2$} \\
 					(O_K^{*})^D & \mbox{$n=3$}  \\
 					0 & \mbox{$n>3$}

				\end{array}
			   \right.
			 & \mbox{ and } &
	  H^{n}_{et}(X,\mathbb{G}_m) = 
                                   \left\{
				\begin{array}{ll}
					O_{K}^{*} &  \mbox{$n=0$}\\
					Pic(O_K) &  \mbox{$n=1$ }\\
                                0 & \mbox{$n=2$} \\
 					\mathbb{Q}/\mathbb{Z} & \mbox{$n=3$} \\
					0 & \mbox{$n>3$}.
				\end{array}
			   \right.
	            \end{eqnarray}

\end{thm}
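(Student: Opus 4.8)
The plan is to compute $H^{n}_{et}(X,\mathbb{G}_m)$ first and then deduce $H^{n}_{et}(X,\mathbb{Z})$ from it (so the logical order is the reverse of the order in the statement). For $\mathbb{G}_m$ I would begin with the divisor exact sequence of \'etale sheaves
\[ 0 \longrightarrow \mathbb{G}_{m,X} \longrightarrow j_{*}\mathbb{G}_{m,K} \longrightarrow \bigoplus_{p}(i_{p})_{*}\mathbb{Z} \longrightarrow 0, \]
the sum over the closed points $p$ of $X$. One checks that $R^{q}j_{*}\mathbb{G}_{m,K}=0$ for $q\ge 1$: its stalk at $p$ is $H^{q}$ of $\mathbb{G}_m$ over the fraction field of the strict henselization of $\mathcal{O}_{X,p}$, a field whose residue field is separably closed and hence of cohomological dimension $\le 1$, so Hilbert 90, the vanishing of the Brauer group, and a divisible-hull argument in degrees $\ge 3$ finish it. Thus $H^{n}_{et}(X,j_{*}\mathbb{G}_{m,K})=H^{n}(G_K,\bar{K}^{*})$, which equals $K^{*},\,0,\,\mathrm{Br}(K),\,0,\,0,\dots$; the vanishing for $n\ge 3$ uses that $K$ is totally imaginary, so $\mathrm{cd}(G_K)=2$, after reducing to torsion coefficients via $0\to\mu_{\bar K}\to\bar{K}^{*}\to\bar{K}^{*}/\mu_{\bar K}\to 0$ and comparing with $(\bar{K}^{*}/\mu_{\bar K})\otimes\mathbb{Q}$. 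Also $H^{n}_{et}(X,(i_{p})_{*}\mathbb{Z})=H^{n}(\hat{\mathbb{Z}},\mathbb{Z})$ equals $\mathbb{Z},\,0,\,\mathbb{Q}/\mathbb{Z},\,0,\dots$, and \'etale cohomology commutes with the (filtered) direct sum. The long exact sequence then gives $H^{0}_{et}(X,\mathbb{G}_m)=O_K^{*}$, $H^{1}_{et}(X,\mathbb{G}_m)=\mathrm{coker}\bigl(K^{*}\to\bigoplus_p\mathbb{Z}\bigr)=\mathrm{Pic}(O_K)$, the exact sequence $0\to H^{2}_{et}(X,\mathbb{G}_m)\to\mathrm{Br}(K)\xrightarrow{\ \phi\ }\bigoplus_p\mathbb{Q}/\mathbb{Z}\to H^{3}_{et}(X,\mathbb{G}_m)\to 0$, and $H^{n}_{et}(X,\mathbb{G}_m)=0$ for $n\ge 4$.

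To finish the $\mathbb{G}_m$-table I would identify $\phi$ with the sum of local invariant maps at the finite places (the compatibility of the divisor sequence with local class field theory) and invoke the fundamental exact sequence of global class field theory $0\to\mathrm{Br}(K)\to\bigoplus_{v}\mathrm{Br}(K_v)\xrightarrow{\sum\mathrm{inv}_v}\mathbb{Q}/\mathbb{Z}\to 0$. Since $K$ is totally imaginary every archimedean $v$ is complex with $\mathrm{Br}(K_v)=0$, so $\phi$ is exactly this map: injective with cokernel $\mathbb{Q}/\mathbb{Z}$. Hence $H^{2}_{et}(X,\mathbb{G}_m)=0$ and $H^{3}_{et}(X,\mathbb{G}_m)=\mathbb{Q}/\mathbb{Z}$, which is the crux of the whole theorem.

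For $\mathbb{Z}$, $H^{0}_{et}(X,\mathbb{Z})=\mathbb{Z}$ since $X$ is connected and $H^{1}_{et}(X,\mathbb{Z})=\mathrm{Hom}_{\mathrm{cont}}(\pi_{1}^{et}(X),\mathbb{Z})=0$ since $\pi_{1}^{et}(X)$ is profinite. From $0\to\mathbb{Z}\to\mathbb{Q}\to\mathbb{Q}/\mathbb{Z}\to 0$ and the vanishing $H^{n}_{et}(X,\mathbb{Q})=0$ for $n\ge 1$ (one has $j_{*}\underline{\mathbb{Q}}=\underline{\mathbb{Q}}$ and $R^{q}j_{*}\underline{\mathbb{Q}}=0$ for $q\ge 1$, inertia groups being profinite, so $H^{n}_{et}(X,\mathbb{Q})=H^{n}(G_K,\mathbb{Q})=0$) one gets $H^{n}_{et}(X,\mathbb{Z})\simeq H^{n-1}_{et}(X,\mathbb{Q}/\mathbb{Z})$ for $n\ge 2$. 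Writing $H^{s}_{et}(X,\mathbb{Q}/\mathbb{Z})=\varinjlim_m H^{s}_{et}(X,\mathbb{Z}/m)$ and applying Artin--Verdier duality on $X$ (valid here because $K$ is totally imaginary, with fundamental class $H^{3}_{et}(X,\mathbb{G}_m)=\mathbb{Q}/\mathbb{Z}$), one gets $H^{s}_{et}(X,\mathbb{Z}/m)\simeq\mathrm{Ext}^{3-s}_{X}(\mathbb{Z}/m,\mathbb{G}_m)^{D}$; computing these $\mathrm{Ext}$ groups from $0\to\mathbb{Z}\xrightarrow{\ m\ }\mathbb{Z}\to\mathbb{Z}/m\to 0$ and the $\mathbb{G}_m$-cohomology above, then passing to the colimit over $m$---in which the finite contributions from $\mu_m(O_K)$ and $\mathrm{Pic}(O_K)/m$ vanish because their transition maps become multiplication by arbitrary integers---leaves $H^{1}_{et}(X,\mathbb{Q}/\mathbb{Z})=\mathrm{Pic}(O_K)^{D}$, $H^{2}_{et}(X,\mathbb{Q}/\mathbb{Z})=(O_K^{*})^{D}$ and $H^{s}_{et}(X,\mathbb{Q}/\mathbb{Z})=0$ for $s\ge 3$, i.e. $H^{2}_{et}(X,\mathbb{Z})=\mathrm{Pic}(O_K)^{D}$, $H^{3}_{et}(X,\mathbb{Z})=(O_K^{*})^{D}$ and $H^{n}_{et}(X,\mathbb{Z})=0$ for $n\ge 4$. (Alternatively the whole $\mathbb{Z}$-table drops out in one step from Artin--Verdier duality for the $\mathbb{Z}$-constructible sheaf $\mathbb{Z}$ itself, paired against $\mathrm{Ext}^{3-\bullet}_{X}(\mathbb{Z},\mathbb{G}_m)=H^{3-\bullet}_{et}(X,\mathbb{G}_m)$.)

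I expect the main obstacle to be $H^{3}_{et}(X,\mathbb{G}_m)=\mathbb{Q}/\mathbb{Z}$ together with the vanishing above: this is a repackaging of global class field theory (the Brauer group exact sequence) and uses the non-formal fact $\mathrm{cd}(G_K)=2$ for totally imaginary $K$; both I would quote from the standard references (Serre's \emph{Galois Cohomology}, Neukirch--Schmidt--Wingberg, Milne's \emph{Arithmetic Duality Theorems}). If the $\mathbb{Z}$-part is to remain self-contained, Artin--Verdier duality is a second genuine input to be quoted. With these in hand, every other step is a routine chase with long exact sequences, the Leray spectral sequence for $j$, and the colimit over $m$.
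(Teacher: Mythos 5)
Your proposal is correct and is essentially the paper's own approach: the paper simply quotes $\cite[\mbox{II.2.1}]{Mil06}$ for the $\mathbb{G}_m$-column (whose proof there is exactly your divisor-sequence plus class-field-theory computation) and Artin--Verdier duality $\cite[\mbox{II.3.1}]{Mil06}$ for the $\mathbb{Z}$-column, so you have just unpacked the two citations. Your only deviation --- passing through $\mathbb{Z}/m$-coefficients and a colimit rather than applying the duality directly to the $\mathbb{Z}$-constructible sheaf $\mathbb{Z}$ --- is a harmless variant that avoids the completion caveats of the non-torsion case (note only the small slip in the colimit step: what dies there are the contributions of $\mu_m(K)$ and $Pic(O_K)[m]$, while $Pic(O_K)/m$ is precisely what survives to give $H^2_{et}(X,\mathbb{Z})=Pic(O_K)^D$).
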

\begin{proof}
For $H^n_{et}(X,\mathbb{G}_m)$ see $\cite[\mbox{II.2.1}]{Mil06}$. For $H^n_{et}(X,\mathbb{Z})$, we can apply the Artin-Verdier duality $\cite[\mbox{II.3.1}]{Mil06}$.
\end{proof}

\begin{prop}\label{Weil_ZGm}
The Weil-\'etale cohomology of  $\mathbb{Z}$ is given by
\begin{eqnarray}
H^{n}_{W}(X,\mathbb{Z}) = 
         \left\{
				\begin{array}{ll}
					\mathbb{Z} &  \mbox{$n=0$}\\
					0 &  \mbox{$n=1$ }\\
 					(\mu_K)^D & \mbox{$n=3$} \\
 					0 & \mbox{$n>3$}.
				\end{array}
			   \right.
\end{eqnarray}
\[ 0 \to  Pic(O_K)^D \to H^{2}_{W}(X,\mathbb{Z}) \to Hom_{\mathbb{Z}}(O_K^{*},\mathbb{Z}) \to 0. \]
\end{prop}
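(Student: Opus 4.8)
The plan is to deduce everything from Proposition \ref{Weil_F} applied to the constant sheaf $\mathcal{F}=\mathbb{Z}$, together with the computation of $H^n_{et}(X,\mathbb{G}_m)$ in Theorem \ref{et_ZGm}. The first step is to identify $R\mathrm{Hom}_X(\mathbb{Z},\mathbb{G}_m)$. Since $\mathcal{H}om_X(\mathbb{Z},-)$ is the global sections functor on the category of sheaves on $X$ (the constant sheaf $\mathbb{Z}$ represents $\Gamma(X,-)$), its derived functor is étale cohomology, so $R\mathrm{Hom}_X(\mathbb{Z},\mathbb{G}_m)\cong R\Gamma_{et}(X,\mathbb{G}_m)$ in the derived category. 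In particular $\mathrm{Ext}^i_X(\mathbb{Z},\mathbb{G}_m)=H^i_{et}(X,\mathbb{G}_m)$, which by Theorem \ref{et_ZGm} equals $O_K^{*}$ for $i=0$, $\mathrm{Pic}(O_K)$ for $i=1$, $0$ for $i=2$, $\mathbb{Q}/\mathbb{Z}$ for $i=3$, and $0$ for $i>3$.

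Next I would feed these values into Proposition \ref{Weil_F}. For $n=0,1$ this gives $H^n_W(X,\mathbb{Z})=H^n_{et}(X,\mathbb{Z})$, which by Theorem \ref{et_ZGm} is $\mathbb{Z}$ and $0$ respectively. For $n=3$ the proposition gives $H^3_W(X,\mathbb{Z})=\mathrm{Hom}_X(\mathbb{Z},\mathbb{G}_m)_{tor}^{D}=(O_K^{*})_{tor}^{D}=\mu_K^{D}$, where $\mu_K$ denotes the roots of unity in $K$. For $n>3$ the proposition gives $0$. Finally, the short exact sequence in Proposition \ref{Weil_F} for $n=2$ reads
\[ 0 \to \mathrm{Ext}^1_X(\mathbb{Z},\mathbb{G}_m)_{tor}^{D} \to H^2_W(X,\mathbb{Z}) \to \mathrm{Hom}_X(\mathbb{Z},\mathbb{G}_m)^{*} \to 0, \]
and since $\mathrm{Pic}(O_K)$ is finite (so it coincides with its torsion subgroup) and $\mathrm{Hom}_X(\mathbb{Z},\mathbb{G}_m)=O_K^{*}$, this becomes the asserted sequence $0\to \mathrm{Pic}(O_K)^{D}\to H^2_W(X,\mathbb{Z})\to \mathrm{Hom}_{\mathbb{Z}}(O_K^{*},\mathbb{Z})\to 0$.

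There is essentially no serious obstacle here: the content of the proposition is entirely contained in Proposition \ref{Weil_F} and Theorem \ref{et_ZGm}, and the proof amounts to the bookkeeping above. The only point requiring a word of justification is the identification $\mathrm{Ext}^i_X(\mathbb{Z},\mathbb{G}_m)=H^i_{et}(X,\mathbb{G}_m)$; I would simply cite the standard fact that $\mathrm{Ext}^i$ out of the constant sheaf $\mathbb{Z}$ computes étale cohomology, or equivalently apply the local-to-global $\mathrm{Ext}$ spectral sequence with $\mathcal{E}xt^i_X(\mathbb{Z},\mathbb{G}_m)=0$ for $i>0$.
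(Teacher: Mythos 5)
Your proposal is correct and follows exactly the paper's route: the paper proves this proposition by applying Proposition \ref{Weil_F} and Theorem \ref{et_ZGm} with $\mathcal{F}=\mathbb{Z}$, which is precisely your bookkeeping (including the implicit identification $\mathrm{Ext}^i_X(\mathbb{Z},\mathbb{G}_m)\simeq H^i_{et}(X,\mathbb{G}_m)$, which you spell out a bit more explicitly than the paper does).
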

\begin{proof}
This is an application of proposition $\ref{Weil_F}$ and theorem $\ref{et_ZGm}$ for $\mathcal{F}=\mathbb{Z}$.
\end{proof}
For each place $v$ of $K$, let $K_v$ be the completion of $K$ at $v$ and $j_v$ be the map $Spec(K_v) \to X$.
\begin{defn}\label{defn_FBFDB}
For a sheaf $\mathcal{F}$ on $X$, we define the Betti sheaf $\mathcal{F}_B$ to be
\[ \mathcal{F}_B:=\prod_{v \in S_{\infty}} (j_v)_{*}(j_v)^{*}\mathcal{F}. \]
Note that $\mathcal{F} \mapsto \mathcal{F}_B$ is an exact functor. Moreover, there is a natural map $\mathcal{F}\to \mathcal{F}_B$ obtained by taking the direct sum over all infinite place $v$ of $K$ of the map $\mathcal{F} \to (j_v)_{*}(j_v)^{*}\mathcal{F}$ .
\end{defn} 
\begin{lemma}\label{et_FB}
For each infinite place $v$ of $K$, let $\mathcal{F}_{K_v}$ be the stalk of $\mathcal{F}$ at $\bar{K_v}$. Then 
\begin{eqnarray*}
H^{n}_{et}(X,\mathcal{F}_B) = 
                                   \left\{
				\begin{array}{ll}
					 \prod_{v \in S_{\infty}} \mathcal{F}_{K_v}&  \mbox{$n=0$}\\
					 0 &  \mbox{$n \neq 0$}.\\
				\end{array}
			   \right.
\end{eqnarray*}
\end{lemma}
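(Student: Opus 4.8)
The plan is to reduce, one infinite place at a time, to the (essentially formal) vanishing of the positive-degree étale cohomology of $Spec(\mathbb{C})$. Since $K$ is totally imaginary every $v\in S_\infty$ is complex, so $K_v\cong\mathbb{C}$; thus $Spec(K_v)$ is a geometric point of $X$, the étale site of $Spec(K_v)$ is trivial, and $(j_v)^{*}\mathcal{F}$ is merely the abelian group $\mathcal{F}_{K_v}$ viewed as a constant sheaf there, so that $H^0_{et}(Spec(K_v),(j_v)^{*}\mathcal{F})=\mathcal{F}_{K_v}$ and $H^q_{et}(Spec(K_v),-)=0$ for $q\ne 0$. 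As $S_\infty$ is finite and $H^n_{et}(X,-)$ commutes with finite direct sums, the lemma is equivalent to the statement that for each fixed $v$ the sheaf $(j_v)_{*}(j_v)^{*}\mathcal{F}$ has cohomology $\mathcal{F}_{K_v}$ in degree $0$ and $0$ in every other degree. (Only the stalk of $\mathcal{F}$ at $\overline{K_v}$ enters, so $\mathbb{Z}$-constructibility of $\mathcal{F}$ is irrelevant here.)

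For fixed $v$ I would invoke the Leray spectral sequence for $j_v\colon Spec(K_v)\to X$,
\[ E_2^{p,q}=H^p_{et}\bigl(X,\,R^{q}j_{v*}(j_v)^{*}\mathcal{F}\bigr)\ \Longrightarrow\ H^{p+q}_{et}\bigl(Spec(K_v),(j_v)^{*}\mathcal{F}\bigr). \]
By the first paragraph the abutment is $\mathcal{F}_{K_v}$ concentrated in total degree $0$, so it suffices to show that $R^{q}j_{v*}\mathcal{G}=0$ for every $q>0$ and every abelian sheaf $\mathcal{G}$ on $Spec(K_v)$; the spectral sequence then degenerates and its edge map gives $H^n_{et}(X,(j_v)_{*}(j_v)^{*}\mathcal{F})\cong H^n_{et}(Spec(K_v),(j_v)^{*}\mathcal{F})$, which is what we want.

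To get the vanishing of the higher direct images I would compute stalks. Since $j_v$ is affine, hence quasi-compact and separated, $(R^{q}j_{v*}\mathcal{G})_{\bar x}=H^q_{et}\bigl(Spec(K_v)\times_X Spec(\mathcal{O}^{sh}_{X,\bar x}),\mathcal{G}\bigr)$ for every geometric point $\bar x$ of $X$. As $j_v$ factors through the generic point of $X$, this fibre product is $Spec(K_v\otimes_K L)$ with $L=\mathcal{O}^{sh}_{X,\bar x}$ if $\bar x$ lies over the generic point and $L=\mathrm{Frac}(\mathcal{O}^{sh}_{X,\bar x})$ if $\bar x$ lies over a closed point; in either case $L$ is a separable algebraic extension of $K$, hence a filtered union of finite separable subextensions $L_i/K$. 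Because $K_v\cong\mathbb{C}$ is algebraically closed, each $K_v\otimes_K L_i$ is a finite product of copies of $K_v$, so $Spec(K_v\otimes_K L)=\varprojlim_i Spec(K_v\otimes_K L_i)$ is a cofiltered limit of finite disjoint unions of copies of $Spec(\mathbb{C})$. Since étale cohomology commutes with such limits and each term has cohomology concentrated in degree $0$, the same holds for $Spec(K_v\otimes_K L)$; hence $R^{q}j_{v*}\mathcal{G}=0$ for $q>0$, and the lemma follows.

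The only non-bookkeeping step — and the main obstacle — is this acyclicity of $j_{v*}$, and it is exactly where the totally imaginary hypothesis is used: for a real place $v$ one would have $K_v=\mathbb{R}$, the fibres above would involve $\mathbb{C}\otimes_{\mathbb{R}}(\cdot)$ and $Gal(\mathbb{C}/\mathbb{R})$, whose cohomology is $2$-torsion in every positive even degree, so $R^{q}j_{v*}$ would no longer vanish. One can package the computation differently by factoring $j_v=j\circ a_v$ with $a_v\colon Spec(K_v)\to Spec(K)$: since $K_v$ is separably closed, $a_{v*}$ is exact (coinduction from the trivial subgroup of $G_K$), and $R^{q}j_{*}$ of the resulting coinduced $G_K$-module vanishes for $q>0$ because its stalks are the cohomology of the inertia groups with coefficients in a coinduced, hence acyclic, module.
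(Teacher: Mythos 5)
Your argument is correct and is essentially the paper's own proof written out in full: the paper simply asserts that $(j_v)_{*}$ is exact and that $G_{K_v}=0$ for $v\in S_{\infty}$, which is exactly the content of your vanishing of $R^{q}j_{v*}$ (proved via the Leray spectral sequence and the stalk computation over the strictly henselian local rings) together with the triviality of the \'etale cohomology of $Spec(K_v)\simeq Spec(\mathbb{C})$. Your write-up just supplies the justification the paper leaves implicit, so there is nothing further to add.
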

\begin{proof}
 This follows from the fact that $(j_v)_{*}$ is an exact functor and $G_{K_v}=0$ for $v\in S_{\infty}$. 
\end{proof} 
\subsection{The Regulator Pairing}
We want to define a pairing for every \'etale sheaf on $X$ such that it generalizes the construction of the regulator of a number field when the sheaf is $\mathbb{Z}$. 
There is a natural map 
\[ \Lambda_K : \frac{H^0_{et}(X,(\mathbb{G}_m)_B)}{H^0_{et}(X,\mathbb{G}_m)}=\frac{\prod_{v \in S_{\infty}} K_v^{*}}{O_K^{*}} \to \mathbb{R} \] 
\[ \textbf{u}=(u_1,...,u_v) \mapsto \sum_{v \in S_{\infty}} \log|u_v|_v. \]
Note that by the product formula, $\Lambda_K$ is well-defined.
\begin{defn}\label{defn_pairing} 
Let $\mathcal{F}$ be an \'etale sheaf on $X$. The regulator pairing for $\mathcal{F}$ is defined as  
\begin{equation}\label{eqn_pairing}
 \langle \cdot , \cdot \rangle_{\mathcal{F}} :  \frac{H^0_{et}(X,\mathcal{F}_B)}{H^0_{et}(X,\mathcal{F})} \times Hom_{X}(\mathcal{F},\mathbb{G}_m) \to \mathbb{R} 
\end{equation}
let $\alpha$ and $\phi$ be elements of $\frac{H^0_{et}(X,\mathcal{F}_B)}{H^0_{et}(X,\mathcal{F})}$ and $Hom_{X}(\mathcal{F},\mathbb{G}_m)$. By functoriality, $\phi$ induces a map 
\[  \phi_B : \frac{H^0_{et}(X,\mathcal{F}_B)}{H^0_{et}(X,\mathcal{F})} \to  
\frac{H^0_{et}(X,(\mathbb{G}_m)_B)}{H^0_{et}(X,\mathbb{G}_m)} 
= \frac{\prod_{v \in S_{\infty}} K_v^{*}}{O_K^{*}}. \]
Define 
$ \langle \alpha,\phi\rangle_{\mathcal{F}} := \Lambda_K(\phi_B(\alpha)) = 
\sum_{v \in S_{\infty}} \log|\phi_B(\alpha)_v|_v .$
\end{defn}

\begin{defn}\label{defn_regulator}
Suppose the pairing $\langle,\rangle_{\mathcal{F}}$ is non-degenerate modulo torsion. Choose bases $\{v_i\}$ and $\{u_j\}$ for the torsion free quotient groups of
$H^0_{et}(X,\mathcal{F}_B)/H^0_{et}(X,\mathcal{F})$ and $Hom_{X}(\mathcal{F},\mathbb{G}_m)$ respectively. Define $R(\mathcal{F}):=|\det(\langle v_i,u_j\rangle_{\mathcal{F}})|$ to be {the regulator of $\mathcal{F}$}. This definition does not depend on the choice of bases.
\end{defn}

\begin{examp}\label{ex_pairing}
\begin{enumerate}
\item If $\mathcal{F}$ is constructible then $Hom_{X}(\mathcal{F},\mathbb{G}_m)$ and $H^0_{et}(X,\mathcal{F}_B)$ are finite groups. Thus, the pairing is non-degenerate modulo torsion and $R(\mathcal{F})=1$.
\item Consider the constant sheaf $\mathbb{Z}$ on $X$, the regulator pairing in this case is 
\[ \frac{(\prod_{v \in S_{\infty}}\mathbb{Z})}{\mathbb{Z}} \times O_K^{*} \to \mathbb{R} \]
Let $\{u_1,...u_{r_1+r_2-1}\}$ be a $\mathbb{Z}$-basis for $O_K^{*}/\mu_K$. Let $\{v_1,..v_{r_1+r_2-1}\}$ be any $r_1+r_2-1$ embeddings of $K$ into $\mathbb{C}$ considered as a basis for 
 $\prod_{v \in S_{\infty}}\mathbb{Z}/\mathbb{Z}$. Then $\langle v_i,u_j\rangle=\log|u_j|_{v_i}$. The determinant of the matrix $(\log|u_j|_{v_i})$ is the regulator $R$ of the number field $K$. 
Hence $R(\mathbb{Z})=R$.
\item Let $T$ be an algebraic torus over a totally imaginary number field $K$. Then the regulator 
$R({j_{*}\hat{T}})$ is the same as the regulator $R_T$ of the torus $T$ defined in $\cite{Ono61}$.
\end{enumerate}
\end{examp}

\begin{lemma}\label{pairing_functor}
Let $f : \mathcal{F} \to \mathcal{G}$ be a morphism of sheaves. Then the following diagram commutes
\begin{equation}\label{pairing_functor_1}
 \xymatrixrowsep{0.2in}\xymatrix{ \frac{H^0_{et}(X,\mathcal{F}_{B})}{H^0_{et}(X,\mathcal{F})} \ar[d]^{f_B} & \times & Hom_{X}(\mathcal{F},\mathbb{G}_m)  \ar[r] & \mathbb{R} \ar[d]^{id} \\
\frac{H^0_{et}(X,\mathcal{G}_{B})}{H^0_{et}(X,\mathcal{G})} & \times & Hom_{X}(\mathcal{G},\mathbb{G}_m) \ar[u]^{f^{*}} \ar[r] & \mathbb{R} }
\end{equation}
\end{lemma}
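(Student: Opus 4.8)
The plan is to reduce the claimed commutativity to two instances of functoriality: that of the Betti construction $\mathcal{F}\mapsto\mathcal{F}_B$ and that of $H^0_{et}(X,-)$. Unwinding Definition~\ref{defn_pairing}, the assertion is that for $\alpha\in H^0_{et}(X,\mathcal{F}_B)/H^0_{et}(X,\mathcal{F})$ and $\phi\in Hom_X(\mathcal{G},\mathbb{G}_m)$ one has
\[ \langle f_B(\alpha),\phi\rangle_{\mathcal{G}} \;=\; \langle \alpha, f^{*}\phi\rangle_{\mathcal{F}}, \]
where $f^{*}\phi=\phi\circ f\in Hom_X(\mathcal{F},\mathbb{G}_m)$ is the image of $\phi$ under the right-hand vertical map. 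By definition both sides are values of $\Lambda_K$, namely $\Lambda_K\big(\phi_B(f_B(\alpha))\big)$ and $\Lambda_K\big((f^{*}\phi)_B(\alpha)\big)$ (and $\Lambda_K$ is literally the same map on both rows, so no correction factor can intervene), so it suffices to prove the identity of induced maps
\[ (f^{*}\phi)_B \;=\; \phi_B\circ f_B \colon \frac{H^0_{et}(X,\mathcal{F}_B)}{H^0_{et}(X,\mathcal{F})} \longrightarrow \frac{\prod_{v\in S_{\infty}}K_v^{*}}{O_K^{*}}. \]

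To establish this, I would first make explicit how the horizontal maps labeled $\psi_B$ arise: for any $\psi\colon\mathcal{H}\to\mathbb{G}_m$, naturality of $\mathcal{H}\to\mathcal{H}_B$ gives a commutative square of sheaves with sides $\psi$, $\psi_B$, $\mathcal{H}\to\mathcal{H}_B$ and $\mathbb{G}_m\to(\mathbb{G}_m)_B$; applying $H^0_{et}(X,-)$ and passing to the quotient by the image of $H^0_{et}(X,\mathcal{H})$ yields the map on quotients denoted $\psi_B$, which is well defined precisely because of that square. Now since $\mathcal{F}\mapsto\mathcal{F}_B$ is a functor, $(\phi\circ f)_B=\phi_B\circ f_B$ already as morphisms $\mathcal{F}_B\to(\mathbb{G}_m)_B$ of sheaves; applying the functor $H^0_{et}(X,-)$ preserves this composition, and the corresponding maps carry $H^0_{et}(X,\mathcal{F})$ into $H^0_{et}(X,\mathbb{G}_m)$ in a compatible way, so the identity descends to the quotient groups. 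This gives $(f^{*}\phi)_B=\phi_B\circ f_B$ on quotients, hence the equality of pairings, i.e.\ the commutativity of diagram~\eqref{pairing_functor_1}.

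I do not anticipate a genuine obstacle here: the argument is a short diagram chase invoking functoriality twice. The only point demanding a little care is the bookkeeping — verifying that the maps called $\psi_B$ on the quotient groups really are the ones induced by the sheaf-level Betti morphisms, and that they are well defined after quotienting (which is exactly what the commutative square above provides). Once the notation is set up, the proof is essentially immediate.
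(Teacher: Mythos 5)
Your proposal is correct and follows essentially the same route as the paper: both reduce the commutativity to the identity $(f^{*}\phi)_B=\phi_B\circ f_B$ on the quotient groups and conclude via $\Lambda_K$, with the paper's proof being just a one-line version of your diagram chase. The extra care you take in checking well-definedness on the quotients is fine but not a departure from the paper's argument.
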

\begin{proof}
Let $\alpha$ and $\phi$ be elements of $H^0_{et}(X,\mathcal{F}_{B})$ and $Hom_{X}(\mathcal{G},\mathbb{G}_m)$ respectively. Then
\begin{eqnarray*}
\langle f_B(\alpha),\phi \rangle = \Lambda_K(\phi_B(f_B(\alpha)))
= \Lambda_K((f^{*}\phi)_B(\alpha)) = \langle \alpha,f^{*}\phi \rangle.
\end{eqnarray*}
\end{proof}

\section{Strongly-$\mathbb{Z}$-Constructible Sheaves}
\label{chapter_strongly_Z}
 \subsection{Definitions And Examples}
\begin{defn}\label{strong_const}
An \'etale sheaf $\mathcal{F}$ on $X=Spec(O_K)$ is called strongly-$\mathbb{Z}$-constructible if it satisfies the following conditions :
\begin{enumerate}
\item It is $\mathbb{Z}$-constructible.
\item The map $ H^0_{et}(X,\mathcal{F}) \to H^0_{et}(X,\mathcal{F}_B) $ has finite kernel.
\item $H^1_{et}(X,\mathcal{F})$ and $H^2_{et}(X,\mathcal{F})$ are finite abelian groups.
\item The regulator pairing $(\ref{defn_pairing})$ is non-degenerate modulo torsion.
\end{enumerate}
\end{defn}

\begin{rmk}\label{Artin_Verdier_strongly}
Our definition is modeled after the definition of quasi-constructible sheaves of Bienenfeld and Lichtenbaum (cf. $\cite[\mbox{section 4}]{BL}$). 
\end{rmk}

\begin{examp}
\begin{enumerate}
\item Constant sheaves defined by finitely generated abelian groups.
\item Let $p$ be a closed point of $X$ and  $i : p \to X$ be the natural map. Let M be a finite $\hat{\mathbb{Z}}$-module. Then $i_{*}M$ is strongly-$\mathbb{Z}$-constructible. A non-example would be $i_{*}\mathbb{Z}$. Indeed, 
$H^2_{et}(X,i_{*}\mathbb{Z}) \simeq H^2(\hat{\mathbb{Z}},\mathbb{Z})\simeq \mathbb{Q}/\mathbb{Z}$ which is infinite. 
\item Constructible sheaves.
\item Let $M$ be a discrete $G_K$-module. Then $j_{*}M$ is strongly-$\mathbb{Z}$-constructible (see proposition $\ref{jM_strong_constr}$).
\end{enumerate}
\end{examp}

The following proposition is a direct consequence of $\ref{Weil_F}$ and the Artin-Verdier duality.
\begin{prop}
Let $\mathcal{F}$ be a strongly-$\mathbb{Z}$-constructible sheaf on $X$. Then 
 \[H^{n}_{W}(X,\mathcal{F}) = 
       \left\{
				\begin{array}{ll}
					H^n_{et}(X,\mathcal{F}) & \mbox{$n=0,1$} \\
					Hom_X(\mathcal{F},\mathbb{G}_m)_{tor}^D &  \mbox{$n=3$}\\
					0 &  \mbox{$n \not\in \{0,1,2,3\}$.}
				\end{array}
			   \right.
	            \]           
 \[ 0 \to H^2_{et}(X,\mathcal{F}) \to H^2_{W}(X,\mathcal{F}) \to Hom_X(\mathcal{F},\mathbb{G}_m)^{*} \to 0.\]
 In particular, if $\mathcal{F}$ is a constructible sheaf, then
$H^n_{W}(X,\mathcal{F})=H^n_{et}(X,\mathcal{F})$ for all $n$. 
\end{prop}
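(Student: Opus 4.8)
The plan is to deduce everything from Proposition \ref{Weil_F}, invoking Artin--Verdier duality only to rewrite the single group $Ext^1_X(\mathcal{F},\mathbb{G}_m)_{tor}^D$ occurring there in degree $2$. For $n=0,1$, for $n=3$, and for $n>3$ the asserted values are literally those of Proposition \ref{Weil_F}. For $n<0$ note that $R\Gamma_W(X,\mathcal{F})$ is, by Definition \ref{Weil_etale_defn}, the direct sum of $\tau_{\leq 1}R\Gamma_{et}(X,\mathcal{F})$ (whose cohomology in degree $n$ is $H^n_{et}(X,\mathcal{F})$ for $n\leq 1$, which vanishes for $n<0$) and a complex of the form $\tau_{\geq 2}(-)$ (whose cohomology vanishes below degree $2$); hence $H^n_W(X,\mathcal{F})=0$ for $n<0$ as well, and therefore $H^n_W(X,\mathcal{F})=0$ for all $n\notin\{0,1,2,3\}$.

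The content is in degree $2$. Proposition \ref{Weil_F} gives the exact sequence $0\to Ext^1_X(\mathcal{F},\mathbb{G}_m)_{tor}^D\to H^2_W(X,\mathcal{F})\to Hom_X(\mathcal{F},\mathbb{G}_m)^{*}\to 0$, so it suffices to produce a canonical isomorphism $Ext^1_X(\mathcal{F},\mathbb{G}_m)_{tor}^D\cong H^2_{et}(X,\mathcal{F})$. Here is how I would do it. Since $K$ is totally imaginary, every archimedean place $v$ has $G_{K_v}=1$, so $H^r(K_v,-)=0$ for $r>0$; plugging this into the exact triangle relating compactly supported and ordinary cohomology, $R\Gamma_c(X,\mathcal{F})\to R\Gamma_{et}(X,\mathcal{F})\to\bigoplus_{v\in S_\infty}R\Gamma(K_v,\mathcal{F})$, yields $H^2_c(X,\mathcal{F})\cong H^2_{et}(X,\mathcal{F})$, which is finite by condition (3) of Definition \ref{strong_const}. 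Artin--Verdier duality $\cite[\mbox{II.3.1}]{Mil06}$ tells us that $Ext^1_X(\mathcal{F},\mathbb{G}_m)$ is finitely generated and that cup product induces an isomorphism from its profinite completion $Ext^1_X(\mathcal{F},\mathbb{G}_m)^{\wedge}$ onto the Pontryagin dual $H^2_c(X,\mathcal{F})^D$. As that dual is finite, $Ext^1_X(\mathcal{F},\mathbb{G}_m)$ is a finitely generated group with finite profinite completion, hence finite; thus it equals its torsion subgroup and its own completion, so $Ext^1_X(\mathcal{F},\mathbb{G}_m)_{tor}\cong H^2_{et}(X,\mathcal{F})^D$, and dualizing these finite groups gives $Ext^1_X(\mathcal{F},\mathbb{G}_m)_{tor}^D\cong H^2_{et}(X,\mathcal{F})$. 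Substituting this into the exact sequence above gives the claimed description of $H^2_W(X,\mathcal{F})$.

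For the last assertion, suppose $\mathcal{F}$ is constructible. Then $Hom_X(\mathcal{F},\mathbb{G}_m)$ is finite by Example \ref{ex_pairing}(1), so $Hom_X(\mathcal{F},\mathbb{G}_m)^{*}=0$ and the exact sequence of the previous paragraph collapses to $H^2_W(X,\mathcal{F})=H^2_{et}(X,\mathcal{F})$; likewise $Hom_X(\mathcal{F},\mathbb{G}_m)_{tor}^D=Hom_X(\mathcal{F},\mathbb{G}_m)^D$, and running the same Artin--Verdier argument in degree $0$ (where no completion is needed because $Hom_X(\mathcal{F},\mathbb{G}_m)$ is already finite) identifies $H^3_W(X,\mathcal{F})=Hom_X(\mathcal{F},\mathbb{G}_m)^D$ with $H^3_{et}(X,\mathcal{F})$. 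For $n\geq 4$ both $H^n_W(X,\mathcal{F})$ and $H^n_{et}(X,\mathcal{F})$ vanish ($H^n_{et}(X,-)=0$ for $n\geq 4$ on torsion sheaves since $K$ is totally imaginary), and for $n\leq 1$ equality is already part of Proposition \ref{Weil_F}. Hence $H^n_W(X,\mathcal{F})=H^n_{et}(X,\mathcal{F})$ for all $n$.

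I expect the only real obstacle to be the degree-$2$ identification: Artin--Verdier duality a priori controls only the profinite completion of $Ext^1_X(\mathcal{F},\mathbb{G}_m)$, and upgrading ``finite completion'' to ``finite group'' genuinely needs that this $Ext$ group is finitely generated. The argument accordingly uses both the finiteness hypothesis (3) on $\mathcal{F}$ and, crucially, that $K$ is totally imaginary, which is what lets us discard the archimedean local contributions and replace $H^2_c(X,\mathcal{F})$ by $H^2_{et}(X,\mathcal{F})$.
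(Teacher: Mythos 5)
Your argument is correct and is essentially the paper's own: the paper states this proposition as a direct consequence of Proposition \ref{Weil_F} together with Artin--Verdier duality, which is exactly what you do, merely spelling out the details (using that $K$ is totally imaginary to identify $H^2_c$ with $H^2_{et}$, and finiteness of $H^2_{et}(X,\mathcal{F})$ plus finite generation of $Ext^1_X(\mathcal{F},\mathbb{G}_m)$ to dispose of the profinite completion). The degree-$2$ identification and the constructible case are handled exactly as intended.
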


\begin{prop}\label{long_exact_Weil}
Suppose we have an exact sequence of strongly-$\mathbb{Z}$-constructible sheaves
\begin{equation}\label{short_exact_etale}
 0 \to \mathcal{F}_1 \to \mathcal{F}_2 \to \mathcal{F}_3 \to 0 
\end{equation}
Then we have long exact sequences of Weil-\'etale cohomology
\begin{equation}\label{long_Weil_a}
 0 \to H^0_{W}(X,\mathcal{F}_1) \to H^0_{W}(X,\mathcal{F}_2)  \to ... \to H^3_{W}(X,\mathcal{F}_2) \to H^3_{W}(X,\mathcal{F}_3) \to  0 
\end{equation}
\end{prop}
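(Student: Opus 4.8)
The plan is to assemble $(\ref{long_Weil_a})$ from two long exact sequences that meet only around degree two. Since, by definition,
\[R\Gamma_W(X,\mathcal{F}) = \tau_{\leq 1}R\Gamma_{et}(X,\mathcal{F}) \,\oplus\, \tau_{\geq 2}D(\mathcal{F}), \qquad D(\mathcal{F}):=R\mathrm{Hom}_{\mathbb{Z}}(R\mathrm{Hom}_{X}(\mathcal{F},\mathbb{G}_m),\mathbb{Z}[-2]),\]
we have $H^n_W(X,\mathcal{F})=H^n_{et}(X,\mathcal{F})$ for $n\le 1$ and $H^n_W(X,\mathcal{F})=h^n(D(\mathcal{F}))$ for $n\ge 2$; because the truncation functors are not exact there is no distinguished triangle of Weil-\'etale complexes to invoke, so in particular the connecting map $H^1_W(X,\mathcal{F}_3)\to H^2_W(X,\mathcal{F}_1)$ has to be produced by hand.

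The two inputs are the following. First, the \'etale cohomology long exact sequence of $(\ref{short_exact_etale})$ gives $(\ref{long_Weil_a})$ verbatim in degrees $0$ and $1$ and supplies the \'etale connecting map $\delta_{et}\colon H^1_{et}(X,\mathcal{F}_3)\to H^2_{et}(X,\mathcal{F}_1)$. Second, applying $R\mathrm{Hom}_X(-,\mathbb{G}_m)$ to $(\ref{short_exact_etale})$ and then the contravariant triangulated functor $R\mathrm{Hom}_{\mathbb{Z}}(-,\mathbb{Z}[-2])$ produces a distinguished triangle $D(\mathcal{F}_1)\to D(\mathcal{F}_2)\to D(\mathcal{F}_3)\to$, whose cohomology long exact sequence gives $(\ref{long_Weil_a})$ in degrees $2$ and $3$; here $h^n(D(\mathcal{F}))=0$ for $n\ge 4$ (because $Ext^i_X(\mathcal{F},\mathbb{G}_m)=0$ for $i<0$), which is exactly what makes the sequence terminate with $H^3_W(X,\mathcal{F}_2)\to H^3_W(X,\mathcal{F}_3)\to 0$.

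To glue the two I would take the seam connecting map $H^1_W(X,\mathcal{F}_3)=H^1_{et}(X,\mathcal{F}_3)\to H^2_W(X,\mathcal{F}_1)$ to be $\iota_1\circ\delta_{et}$, where $\iota_1\colon H^2_{et}(X,\mathcal{F}_1)\hookrightarrow H^2_W(X,\mathcal{F}_1)$ is the injection in the short exact sequence $0\to H^2_{et}(X,\mathcal{F}_1)\to H^2_W(X,\mathcal{F}_1)\to Hom_X(\mathcal{F}_1,\mathbb{G}_m)^{*}\to 0$ from the proposition following Remark $\ref{Artin_Verdier_strongly}$. The vanishing of the composites at the seam, and exactness at $H^1_{et}(X,\mathcal{F}_3)$ (immediate, since $\iota_1$ is injective so $\ker(\iota_1\circ\delta_{et})=\ker\delta_{et}$), then follow from the \'etale long exact sequence. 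The one substantive point is exactness at $H^2_W(X,\mathcal{F}_1)$: here I would apply the snake lemma to the morphism of the above short exact sequences for $\mathcal{F}_1$ and $\mathcal{F}_2$ — which is natural in $\mathcal{F}$, the naturality being inherited from the universal-coefficient filtration on $h^2(D(-))$ together with the Artin--Verdier identification of $Ext^1_X(-,\mathbb{G}_m)^{D}_{tor}$ with $H^2_{et}(X,-)$. Once one knows that $Hom_X(\mathcal{F}_1,\mathbb{G}_m)^{*}\to Hom_X(\mathcal{F}_2,\mathbb{G}_m)^{*}$ is injective, the snake lemma identifies $\ker\bigl(H^2_W(X,\mathcal{F}_1)\to H^2_W(X,\mathcal{F}_2)\bigr)$ with $\iota_1\bigl(\ker(H^2_{et}(X,\mathcal{F}_1)\to H^2_{et}(X,\mathcal{F}_2))\bigr)=\iota_1(\operatorname{im}\delta_{et})=\operatorname{im}(\iota_1\circ\delta_{et})$, which is the desired exactness; exactness at all the remaining spots in degrees $\ge 2$ is just that of the triangle in (ii).

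The whole argument thus reduces to the injectivity of $Hom_X(\mathcal{F}_1,\mathbb{G}_m)^{*}\to Hom_X(\mathcal{F}_2,\mathbb{G}_m)^{*}$, equivalently to the cokernel of $Hom_X(\mathcal{F}_2,\mathbb{G}_m)\to Hom_X(\mathcal{F}_1,\mathbb{G}_m)$ being torsion, equivalently — using the left-exact sequence $0\to Hom_X(\mathcal{F}_3,\mathbb{G}_m)\to Hom_X(\mathcal{F}_2,\mathbb{G}_m)\to Hom_X(\mathcal{F}_1,\mathbb{G}_m)$ — to the rank identity $\operatorname{rank}Hom_X(\mathcal{F}_2,\mathbb{G}_m)=\operatorname{rank}Hom_X(\mathcal{F}_1,\mathbb{G}_m)+\operatorname{rank}Hom_X(\mathcal{F}_3,\mathbb{G}_m)$. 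This is where strong $\mathbb{Z}$-constructibility enters: non-degeneracy modulo torsion of the regulator pairing $(\ref{eqn_pairing})$ gives $\operatorname{rank}Hom_X(\mathcal{F}_i,\mathbb{G}_m)=\operatorname{rank}\bigl(H^0_{et}(X,(\mathcal{F}_i)_B)/H^0_{et}(X,\mathcal{F}_i)\bigr)$ for each $i$, and since $\mathcal{F}\mapsto\mathcal{F}_B$ is exact, $H^0_{et}(X,(\mathcal{F}_i)_B)=\prod_{v\in S_\infty}(\mathcal{F}_i)_{K_v}$ by Lemma $\ref{et_FB}$, and the kernel of $H^0_{et}(X,\mathcal{F}_1)\to H^0_{et}(X,(\mathcal{F}_1)_B)$ and the group $H^1_{et}(X,\mathcal{F}_1)$ are finite, these ranks are additive along $(\ref{short_exact_etale})$, which yields the identity. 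I expect the main obstacle to be precisely this final link — the rank bookkeeping, and the verification that the $H^2_W$ short exact sequences and the Artin--Verdier identification underlying them are natural enough to feed the snake lemma — while the rest is formal splicing of the two long exact sequences.
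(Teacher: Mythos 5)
Your argument is correct, but it takes a different route at the seam than the paper does, and it works harder than necessary at one point. The paper's proof rests on a single observation: for a strongly-$\mathbb{Z}$-constructible sheaf, the \emph{untruncated} dual complex $D(\mathcal{F})=R\mathrm{Hom}_{\mathbb{Z}}(R\mathrm{Hom}_X(\mathcal{F},\mathbb{G}_m),\mathbb{Z}[-2])$ already has $h^1(D(\mathcal{F}))\simeq H^1_{et}(X,\mathcal{F})$, because $Ext^1_X(\mathcal{F},\mathbb{G}_m)$ is finite (so its $\mathbb{Z}$-dual vanishes) and $Ext^2_X(\mathcal{F},\mathbb{G}_m)^D_{tor}\simeq H^1_{et}(X,\mathcal{F})$ by Artin--Verdier duality. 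Hence the distinguished triangle ($\ref{long_exact_Weil_seq1}$) of the $D(\mathcal{F}_i)$ --- which does exist, even though no triangle of the truncated complexes $R\Gamma_W$ does, as you correctly note --- yields the exact sequence ($\ref{long_exact_Weil_seq2}$) running from $H^1_{et}$ upward, so the seam map $H^1_{et}(X,\mathcal{F}_3)\to H^2_W(X,\mathcal{F}_1)$ and exactness at both seam spots come for free from one triangle, and only the first six étale terms need to be spliced on. You instead manufacture the seam map as $\iota_1\circ\delta_{et}$ and check exactness at $H^2_W(X,\mathcal{F}_1)$ by a snake-lemma argument on the natural sequences $0\to H^2_{et}\to H^2_W\to Hom_X(-,\mathbb{G}_m)^{*}\to 0$; this is fine, and the naturality you invoke is exactly the compatibility of the universal-coefficient filtration and Artin--Verdier duality that the paper also uses implicitly (in degree $1$) when splicing. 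The one place where your proposal is needlessly roundabout is the injectivity of $Hom_X(\mathcal{F}_1,\mathbb{G}_m)^{*}\to Hom_X(\mathcal{F}_2,\mathbb{G}_m)^{*}$: the cokernel of $Hom_X(\mathcal{F}_2,\mathbb{G}_m)\to Hom_X(\mathcal{F}_1,\mathbb{G}_m)$ embeds into $Ext^1_X(\mathcal{F}_3,\mathbb{G}_m)$, which is finite (it is dual to the finite group $H^2_{et}(X,\mathcal{F}_3)$ by condition 3 of Definition $\ref{strong_const}$), so its $\mathbb{Z}$-dual vanishes and injectivity is immediate; the detour through the regulator pairing and the rank bookkeeping, while valid, is not needed and obscures that only the finiteness conditions enter here. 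In short: your decomposition (étale in degrees $\leq 1$, dual triangle in degrees $\geq 2$, hand-made gluing) is a legitimate alternative, but the paper's identification $h^1(D(\mathcal{F}))\simeq H^1_{et}(X,\mathcal{F})$ makes the gluing automatic and is the cleaner way to organize the same ingredients.
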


\begin{proof}
From $\cite[\mbox{exercise 3.6.1}]{Weibel94}$, for $i\in \{1,2,3\}$ we have 
\[ 0 \to Ext^2_X(\mathcal{F}_i,\mathbb{G}_m)_{tor}^D \to 
h^1(R\mathrm{Hom}_{\mathbb{Z}}(R\mathrm{Hom}_{X}(\mathcal{F}_i,\mathbb{G}_m),\mathbb{Z}[-2]))
\to Ext^1_X(\mathcal{F}_i,\mathbb{G}_m)^{*} \to 0. \]
As $\mathcal{F}_i$ is strongly-$\mathbb{Z}$-constructible, $Ext^1_X(\mathcal{F}_i,\mathbb{G}_m)$ is finite and 
$Ext^2_X(\mathcal{F}_i,\mathbb{G}_m)_{tor}^D \simeq H^1_{et}(X,\mathcal{F}_i)$. Hence $ h^1(R\mathrm{Hom}_{\mathbb{Z}}(R\mathrm{Hom}_{X}(\mathcal{F}_i,\mathbb{G}_m),\mathbb{Z}[-2])) 
\simeq H^1_{et}(X,\mathcal{F}_i)$ .

As $R\mathcal{H}om(-,\mathbb{G}_m[-1])$, $R\Gamma_{et}(X,-)$ and $R\mathrm{Hom}_{\mathbb{Z}}(-,\mathbb{Z}[-3])$ are exact functors, we have the following distinguished triangle 
\begin{multline}\label{long_exact_Weil_seq1}
 R\mathrm{Hom}_{\mathbb{Z}}(R\Gamma_{et}(X,\mathcal{F}_1^D),\mathbb{Z}[-3]) \to 
R\mathrm{Hom}_{\mathbb{Z}}(R\Gamma_{et}(X,\mathcal{F}_2^D),\mathbb{Z}[-3]) \to \\
\to R\mathrm{Hom}_{\mathbb{Z}}(R\Gamma_{et}(X,\mathcal{F}_3^D),\mathbb{Z}[-3]) \to 
R\mathrm{Hom}_{\mathbb{Z}}(R\Gamma_{et}(X,\mathcal{F}_1^D),\mathbb{Z}[-3])[1].
\end{multline}
The long exact sequence of cohomology corresponding to ($\ref{long_exact_Weil_seq1}$) yields 
\begin{multline}\label{long_exact_Weil_seq2}
H^1_{et}(X,\mathcal{F}_1) \to H^1_{et}(X,\mathcal{F}_2) \to H^1_{et}(X,\mathcal{F}_3) \to 
H^2_W(X,\mathcal{F}_1) \to H^2_W(X,\mathcal{F}_2) \to \\ 
\to H^2_W(X,\mathcal{F}_3) \to 
H^3_W(X,\mathcal{F}_1) \to H^3_W(X,\mathcal{F}_2) \to H^3_W(X,\mathcal{F}_3) \to 0.
\end{multline}
Combining ($\ref{long_exact_Weil_seq2}$) with the first 6-term of the long exact sequence of \'etale cohomology groups corresponding to ($\ref{short_exact_etale}$), we have ($\ref{long_Weil_a}$).
\end{proof}

\subsection{Main Properties}
We study the main properties of strongly-$\mathbb{Z}$-constructible sheaves in this section. 
\begin{prop}\label{abel_constr}
Suppose we have an exact sequence of \'etale sheaves on $X$
\begin{equation}\label{abel_constr_seq1}
 0 \to \mathcal{F}_1 \to \mathcal{F}_2 \to \mathcal{F}_3 \to 0 
\end{equation}
where $\mathcal{F}_3$ is constructible. Then $\mathcal{F}_1$ is strongly-$\mathbb{Z}$-constructible if and only if 
$\mathcal{F}_2$ is strongly-$\mathbb{Z}$-constructible.
\end{prop}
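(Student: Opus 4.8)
The plan is to verify the four defining conditions of strong-$\mathbb{Z}$-constructibility simultaneously for $\mathcal{F}_1$ and $\mathcal{F}_2$, using the long exact cohomology sequences attached to (\ref{abel_constr_seq1}) together with the observation that a constructible sheaf contributes only finite groups. First I would note that condition (1), being $\mathbb{Z}$-constructible, is stable under extensions and subsheaves by constructible kernels/quotients, so $\mathcal{F}_1$ is $\mathbb{Z}$-constructible iff $\mathcal{F}_2$ is; this is essentially formal from Definition \ref{defn_construct}, shrinking the open set $U$ if necessary. Next, applying the exact Betti functor $\mathcal{F}\mapsto\mathcal{F}_B$ (Definition \ref{defn_FBFDB}) to (\ref{abel_constr_seq1}) gives a short exact sequence $0\to(\mathcal{F}_1)_B\to(\mathcal{F}_2)_B\to(\mathcal{F}_3)_B\to 0$, and by Lemma \ref{et_FB} all higher \'etale cohomology of the Betti sheaves vanishes; combined with the ordinary \'etale long exact sequence for (\ref{abel_constr_seq1}) and the fact that $H^n_{et}(X,\mathcal{F}_3)$ is finite for all $n$ (since $\mathcal{F}_3$ is constructible), I obtain that $H^1_{et}(X,\mathcal{F}_1)$, $H^2_{et}(X,\mathcal{F}_1)$ are finite iff $H^1_{et}(X,\mathcal{F}_2)$, $H^2_{et}(X,\mathcal{F}_2)$ are — this is condition (3).

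For condition (2) I would compare the two maps $H^0_{et}(X,\mathcal{F}_i)\to H^0_{et}(X,(\mathcal{F}_i)_B)$ via the morphism of long exact sequences induced by $\mathcal{F}\to\mathcal{F}_B$ applied to (\ref{abel_constr_seq1}). A diagram chase, again using that $H^0_{et}(X,\mathcal{F}_3)$ and $H^0_{et}(X,(\mathcal{F}_3)_B)$ are finite, shows that the kernel of $H^0_{et}(X,\mathcal{F}_1)\to H^0_{et}(X,(\mathcal{F}_1)_B)$ is finite iff the kernel of the corresponding map for $\mathcal{F}_2$ is finite, since the two kernels differ only by finite groups sitting in an exact sequence. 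For condition (4), the cleanest route is to pass to the induced map on torsion-free quotients: the morphism $f:\mathcal{F}_1\to\mathcal{F}_2$ gives, by Lemma \ref{pairing_functor}, a compatible pair of maps $f_B$ on the $H^0$-quotients and $f^*$ on $\mathrm{Hom}_X(-,\mathbb{G}_m)$, and because $\mathcal{F}_3$ is constructible, Example \ref{ex_pairing}(1) tells us $H^0_{et}(X,(\mathcal{F}_3)_B)$ and $\mathrm{Hom}_X(\mathcal{F}_3,\mathbb{G}_m)$ are finite, so $f_B$ and $f^*$ are both isomorphisms modulo torsion. Since Lemma \ref{pairing_functor} identifies $\langle f_B(\alpha),\phi\rangle_{\mathcal{F}_2}=\langle\alpha,f^*\phi\rangle_{\mathcal{F}_1}$, non-degeneracy modulo torsion transfers in both directions.

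The main obstacle I anticipate is bookkeeping around condition (2): the sequence $0\to\mathcal{F}_1\to\mathcal{F}_2\to\mathcal{F}_3\to 0$ and its Betti counterpart fit into a commutative ladder, and to conclude one must track not just finiteness of $H^1_{et}(X,\mathcal{F}_3)$ but also the connecting map $H^0_{et}(X,(\mathcal{F}_3)_B)\to H^1_{et}(X,\mathcal{F}_1)$ and the snake-lemma relation between the kernels — getting the exactness right so that "finite on one side $\iff$ finite on the other" genuinely holds requires care, since a priori one only gets that the difference of the kernels is finite, which is exactly what is needed but must be argued and not assumed. The regulator non-degeneracy in condition (4) is conceptually the subtlest point but is in fact dispatched quickly once one invokes Lemma \ref{pairing_functor} and the finiteness from Example \ref{ex_pairing}(1); I would present it last.
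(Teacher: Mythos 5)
Your proposal follows essentially the same route as the paper's proof: condition (1) by the formal stability of $\mathbb{Z}$-constructibility (the paper cites the abelian category of $\mathbb{Z}$-constructible sheaves), conditions (2)--(3) from the long exact cohomology sequence together with the finiteness of all $H^n_{et}(X,\mathcal{F}_3)$ and of $H^0_{et}(X,\mathcal{F}_{3,B})$, and condition (4) from the compatibility diagram of Lemma \ref{pairing_functor} after tensoring with $\mathbb{R}$. The one point to tighten is your justification that $f^{*}$ is an isomorphism modulo torsion on $Hom_X(-,\mathbb{G}_m)$: finiteness of $Hom_X(\mathcal{F}_3,\mathbb{G}_m)$ only gives injectivity of $f^{*}\otimes\mathbb{R}$, and for surjectivity one should note that the cokernel injects into $Ext^1_X(\mathcal{F}_3,\mathbb{G}_m)$, which is torsion (indeed finite, by Artin-Verdier duality, or because $\mathcal{F}_3$ is killed by some integer), so it dies after tensoring with $\mathbb{R}$.
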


\begin{proof}
Since the category of $\mathbb{Z}$-constructible sheaves is an abelian category $\cite[\mbox{page 146}]{Mil06}$, condition 1 of $\ref{strong_const}$ holds for $\mathcal{F}_1$ if and only if it holds for $\mathcal{F}_2$.

Since $H^n_{et}(X,\mathcal{F}_3)$ is finite for all $n$, $H^n_{et}(X,\mathcal{F}_1)$ and $H^n_{et}(X,\mathcal{F}_2)$ differ only by finite groups. As $H^0_{et}(X,\mathcal{F}_{3,B})$ and $H^0_{et}(X,\mathcal{F}_3)$ are finite,
$ \left({H^0_{et}(X,\mathcal{F}_{1,B})}/{H^0_{et}(X,\mathcal{F}_1)}\right)_{\mathbb{R}} \simeq 
\left({H^0_{et}(X,\mathcal{F}_{2,B})}/{H^0_{et}(X,\mathcal{F}_2)}\right)_{\mathbb{R}} $.
Hence, condition 2 and 3 of $\ref{strong_const}$ holds for $\mathcal{F}_1$ if and only if they hold for $\mathcal{F}_2$.

Finally, from lemma $\ref{pairing_functor}$, there is a commutative diagram 
\[ \xymatrixrowsep{0.2in}\xymatrix{ \left(\frac{H^0_{et}(X,\mathcal{F}_{1,B})}{H^0_{et}(X,\mathcal{F}_1)}\right)_{\mathbb{R}} \ar[d]^{\simeq} & \times & Hom_{X}(\mathcal{F}_1,\mathbb{G}_m)_{\mathbb{R}}  \ar[r] & \mathbb{R} \ar[d]^{id} \\
\left(\frac{H^0_{et}(X,\mathcal{F}_{2,B})}{H^0_{et}(X,\mathcal{F}_2)}\right)_{\mathbb{R}} & \times & Hom_{X}(\mathcal{F}_2,\mathbb{G}_m)_{\mathbb{R}} \ar[u]^{\simeq} \ar[r] & \mathbb{R} }
\]
As a result, condition 4 of $\ref{strong_const}$ holds for $\mathcal{F}_1$ if and only if it holds for $\mathcal{F}_2$.
\end{proof}
Next we want to show that strongly-$\mathbb{Z}$-constructible sheaves are stable under push-forward by a finite morphism.
Let $L/K$ be a finite Galois extension of totally imaginary number fields. Let $\pi : Spec(L) \to Spec(K)$ and $ \pi': Y=Spec(O_L) \to X=Spec(O_K)$ be the natural finite morphisms. We write $S_{L,\infty}$ and $S_{K,\infty}$ for the set of infinite places of $L$ and $K$ respectively. 
\begin{lemma}\label{Mackey}
Let $v$ be a place of $K$. Then $j_{v}^{*}\pi'_{*}\mathcal{F}\simeq \prod_{w|v}(\pi_w)_{*}j_{w}^{*}\mathcal{F}$ where $j_{w} : Spec(L_w) \to Y$ and $\pi_{w} : Spec(L_w) \to Spec(K_v)$ are the natural maps. 
\end{lemma}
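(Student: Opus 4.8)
The plan is to obtain the formula as an instance of base change for the finite morphism $\pi'$, together with the elementary splitting of $L\otimes_K K_v$ into the completions $L_w$. First I would form the cartesian square
\[ \xymatrix{ Z \ar[r]^{p} \ar[d]_{q} & Y \ar[d]^{\pi'} \\ Spec(K_v) \ar[r]^{j_v} & X } \]
with $Z:=Y\times_X Spec(K_v)$ and $p,q$ the two projections. Since $\pi'$ is finite, the functor $\pi'_{*}$ on \'etale sheaves is exact and commutes with arbitrary base change, so the base change morphism $j_v^{*}\pi'_{*}\mathcal{F}\to q_{*}p^{*}\mathcal{F}$ is an isomorphism. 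It then remains to identify $Z$ and the restrictions of $p$ and $q$ to its components.

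Second, I would compute $Z$. The map $j_v$ factors through the open immersion $j:Spec(K)\hookrightarrow X$, and $Y\times_X Spec(K)=Spec(O_L\otimes_{O_K}K)=Spec(L)$, so $Z=Spec(L)\times_{Spec(K)}Spec(K_v)=Spec(L\otimes_K K_v)$. The standard isomorphism $L\otimes_K K_v\simeq\prod_{w\mid v}L_w$ gives $Z\simeq\coprod_{w\mid v}Spec(L_w)$. Under this identification the restriction of $p$ to the $w$-component is the natural map $j_w:Spec(L_w)\to Y$ and the restriction of $q$ is $\pi_w:Spec(L_w)\to Spec(K_v)$, since both composites $\pi'\circ j_w$ and $j_v\circ\pi_w$ correspond to the same ring map $O_K\to L_w$ attached to the place $w$. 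For a sheaf on a finite disjoint union, pullback and pushforward decompose as products over the components, so $q_{*}p^{*}\mathcal{F}\simeq\prod_{w\mid v}(\pi_w)_{*}j_w^{*}\mathcal{F}$, which is the asserted formula.

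The only real content is the base change statement of the first step, which is the easy (finite) case of the proper base change theorem; the rest is bookkeeping with fiber products and with the decomposition of a prime in $L/K$. I do not expect a genuine obstacle, but I would note that the argument is uniform in $v$: for an infinite place $v$ one has $K_v=L_w=\mathbb{C}$ and $\pi_w$ is an isomorphism, while the number of $w\mid v$ equals $[L:K]$, consistent with $\dim_{\mathbb{C}}(L\otimes_K\mathbb{C})=[L:K]$. If one prefers to avoid invoking base change, an alternative is to check the isomorphism on stalks, using that the stalk of $\pi'_{*}\mathcal{F}$ at a geometric point above $v$ is $\prod_{w\mid v}$ of the corresponding stalks of $\mathcal{F}$; but the base change route above is cleaner and functorial.
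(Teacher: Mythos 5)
Your argument is correct, but it is organized differently from the paper's. The paper does not invoke a base change theorem: it first manufactures the comparison map by pure adjunction, using the identity $\pi'_{*}(j_w)_{*}=(j_v)_{*}(\pi_w)_{*}$ together with the unit $\mathcal{F}\to\prod_{w|v}(j_w)_{*}j_w^{*}\mathcal{F}$ to produce a canonical arrow $j_v^{*}\pi'_{*}\mathcal{F}\to\prod_{w|v}(\pi_w)_{*}j_w^{*}\mathcal{F}$, and then checks it is an isomorphism on the stalk at the unique (geometric) point of $Spec(K_v)$, where both sides are identified with $\mathcal{F}_{\bar\eta_L}^{[L:K]}$ via the stalk formula for finite pushforward. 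You instead quote the fact that pushforward along the finite morphism $\pi'$ commutes with arbitrary base change, and then identify the fiber product $Y\times_X Spec(K_v)=Spec(L\otimes_K K_v)\simeq\coprod_{w|v}Spec(L_w)$, with the two projections restricting to $j_w$ and $\pi_w$ on the components. The two routes rest on the same arithmetic input (the decomposition of the fiber of $\pi'$ over $v$, equivalently $L\otimes_K K_v\simeq\prod_{w|v}L_w$, which uses separability of $L/K$ -- automatic here); what yours buys is that the canonical map and its bijectivity come for free from the base change formalism, making the proof shorter and manifestly functorial, while the paper's version is more self-contained, needing only adjunction and the elementary stalk description of $\pi'_{*}$ rather than a citation of finite base change. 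Your closing remark that one could alternatively check stalks directly is in fact essentially what the paper does, so there is no gap on either route.
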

\begin{proof}
We have the commutative diagram
\[\xymatrixrowsep{0.2in}
\xymatrix @C=1in{\prod_{w|v}Spec(L_w) \ar[d]^{\prod_{w|v}\pi_w}  \ar[r]^{\prod_{w|v}j_w} &  Spec(O_L)  \ar[d]^{\pi'}  \\
                     Spec(K_v)   \ar[r]^{j_v} &  Spec(O_K)  }
                     \]
Since  $ \pi'_{*}(j_w)_{*}=(j_v)_{*}(\pi_w)_{*}$ for $w|v$,
\begin{eqnarray*}
Hom_{K_v}(j_{v}^{*}\pi'_{*}\mathcal{F},\prod_{w|v}(\pi_w)_{*}j_{w}^{*}\mathcal{F})
 = Hom_{X}(\pi'_{*}\mathcal{F},\prod_{w|v}(j_{v})_{*}(\pi_w)_{*}j_{w}^{*}\mathcal{F}) 
= Hom_{X}(\pi'_{*}\mathcal{F},\pi'_{*}\prod_{w|v}(j_{w})_{*}j_{w}^{*}\mathcal{F}) .
\end{eqnarray*}
Thus, the adjoint map $\mathcal{F} \to \prod_{w|v}(j_{w})_{*}j_{w}^{*}\mathcal{F}$ induces a canonical map $j_{v}^{*}\pi'_{*}\mathcal{F} \to \prod_{w|v}(\pi_w)_{*}j_{w}^{*}\mathcal{F}$. Let $\eta_K=Spec(K)$, $\eta_v=Spec(K_v)$ and similarly for $\eta_L$ and $\eta_w$. Then
\[ (j_{v}^{*}\pi_{*}\mathcal{F})_{\eta_v}=\mathcal{F}_{\eta_L}^{[L:K]}=\left(\prod_{w|v}(\pi_w)_{*}j_{w}^{*}\mathcal{F}\right)_{\eta_v}. \]
Therefore, $j_{v}^{*}\pi'_{*}\mathcal{F}\simeq \prod_{w|v}(\pi_w)_{*}j_{w}^{*}\mathcal{F}$. 
\end{proof}

\begin{lemma}\label{pre_finite_inv}
Let $\mathcal{F}$ be a $\mathbb{Z}$-constructible sheaf on $Y$. 
Then the following hold 
\begin{enumerate}
\item The norm map induces a natural isomorphism $Nm: Ext_{Y}^{n}(\mathcal{F},\mathbb{G}_m) \to Ext_{X}^{n}(\pi'_{*}\mathcal{F},\mathbb{G}_m)$. 
\item There is a natural isomorphism $H^{n}_{W}(X,\pi'_{*}\mathcal{F}) \simeq H^{n}_{W}(Y,\mathcal{F})$.
\item The sheaf $(\pi'_{*}\mathcal{F})_B$ is isomorphic to $\pi'_{*}(\mathcal{F}_B)$. In particular, $ H^0_{et}(X,(\pi'_{*}\mathcal{F})_B) \simeq H^0_{et}(Y,\mathcal{F}_B)$.
\end{enumerate}
\end{lemma}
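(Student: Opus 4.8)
The plan is to prove the three statements essentially independently, using the machinery of projection formulas (norm maps) and the exactness of $\pi'_*$ on étale sheaves (since $\pi'$ is finite, hence $R\pi'_* = \pi'_*$). For part (1), I would invoke the duality-flavored projection formula: the norm map $Nm : \pi'_*\mathbb{G}_{m,Y} \to \mathbb{G}_{m,X}$ together with the adjunction $Hom_Y(\mathcal{F}, \mathbb{G}_{m,Y}) \simeq Hom_X(\pi'_*\mathcal{F}, \pi'_*\mathbb{G}_{m,Y})$ yields a candidate map $Ext^n_Y(\mathcal{F},\mathbb{G}_m) \to Ext^n_X(\pi'_*\mathcal{F},\mathbb{G}_m)$. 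To see it is an isomorphism, one reduces — via the five lemma and a dévissage on $\mathbb{Z}$-constructible sheaves — to the case $\mathcal{F} = i_{w,*}M$ for a closed point $w$ and the case $\mathcal{F} = (j_L)_* M$, where the computation is local and one can cite the standard fact (as in Milne's Arithmetic Duality Theorems) that the norm map induces isomorphisms on the relevant local cohomology groups; alternatively, observe that $R\mathcal{H}om_X(\pi'_*\mathcal{F}, \mathbb{G}_{m,X}) \simeq \pi'_* R\mathcal{H}om_Y(\mathcal{F}, \pi'^!\mathbb{G}_{m,X})$ and identify $\pi'^!\mathbb{G}_{m,X} \simeq \mathbb{G}_{m,Y}$ since $\pi'$ is finite and flat (the different is trivial at the level of $\mathbb{G}_m$ because $\pi'^!$ for a finite morphism is $R\mathcal{H}om(\pi'_*\mathcal{O}_Y, -)$ and $\mathbb{G}_m$ is a smooth group scheme).

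For part (2), I would assemble the isomorphism from part (1) with the definition of $R\Gamma_W$. Since $\pi'_*$ is exact, $R\Gamma_{et}(X, \pi'_*\mathcal{F}) \simeq R\Gamma_{et}(Y,\mathcal{F})$, which handles the $\tau_{\leq 1}$ summand; for the $\tau_{\geq 2}$ summand, part (1) gives $R\mathrm{Hom}_X(\pi'_*\mathcal{F},\mathbb{G}_m) \simeq R\mathrm{Hom}_Y(\mathcal{F},\mathbb{G}_m)$ as objects of the derived category (compatibly with the truncation, since the norm isomorphisms are natural in the cohomological degree), and then applying $R\mathrm{Hom}_{\mathbb{Z}}(-,\mathbb{Z}[-2])$ is formal. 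One has to check the compatibility of the two halves of the splitting in Definition \ref{Weil_etale_defn} with the identifications, but this is immediate from naturality. Part (3) is the most elementary: by Lemma \ref{Mackey} applied with $K_v = K$ trivially — more precisely, for each infinite place $v$ of $K$ we have $j_v^* \pi'_* \mathcal{F} \simeq \prod_{w \mid v}(\pi_w)_* j_w^*\mathcal{F}$ — and since at an infinite place the extension $L_w/K_v$ is an isomorphism ($K$, hence $L$, being totally imaginary, so $K_v = L_w = \mathbb{C}$), we get $(\pi'_*\mathcal{F})_B = \prod_{v}(j_v)_*(j_v)^*\pi'_*\mathcal{F} \simeq \prod_v (j_v)_* \prod_{w \mid v}(\pi_w)_* j_w^*\mathcal{F} \simeq \prod_w (j_w)_* j_w^*\mathcal{F} = \pi'_*(\mathcal{F}_B)$, using $\pi'_*(j_w)_* = (j_v)_*(\pi_w)_*$; taking $H^0_{et}(X,-)$ and using exactness of $\pi'_*$ gives the cohomology statement via Lemma \ref{et_FB}.

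The main obstacle I expect is part (1): specifically, checking that the norm map really does induce an \emph{isomorphism} (and not merely a map) on all the $Ext$ groups, for a general $\mathbb{Z}$-constructible $\mathcal{F}$. The clean route is the identification $\pi'^! \mathbb{G}_m \simeq \mathbb{G}_m$, after which the statement becomes the tautology $Ext^n_Y(\mathcal{F}, \pi'^!\mathbb{G}_m) \simeq Ext^n_X(\pi'_*\mathcal{F}, \mathbb{G}_m)$ coming from the $(\pi'_*, \pi'^!)$-adjunction together with exactness of $\pi'_*$; but justifying $\pi'^!\mathbb{G}_m \simeq \mathbb{G}_m$ carefully — rather than just $\pi'^!\mathbb{Z} \simeq \mathbb{Z}$ which is the usual Artin–Verdier statement — requires either a direct local computation or an appeal to the fact that for a finite flat morphism the trace map $\mathrm{Tr}: \pi'_*\mathbb{G}_{m,Y} \to \mathbb{G}_{m,X}$ (the norm on units) is the counit and exhibits $\mathbb{G}_{m,Y}$ as $\pi'^!\mathbb{G}_{m,X}$. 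I would present this via dévissage to stalks if a clean reference is not available, reducing to: (i) $w$ a closed point, where it is Shapiro's lemma plus the local norm isomorphism on $\hat{\mathbb{Z}}$-cohomology, and (ii) the generic part, where it is the statement for $Spec(L) \to Spec(K)$ which is again Shapiro's lemma for the $G_K$-module structure.
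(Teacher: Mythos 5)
Your proposal is correct and follows essentially the same route as the paper's proof: the norm map $N_{L/K}:\pi'_{*}\mathbb{G}_{m,Y}\to\mathbb{G}_{m,X}$ combined with exactness of $\pi'_{*}$ to define $Nm$ in part (1), a formal transfer through the two truncated summands in the definition of $R\Gamma_W$ for part (2), and the Mackey-type identity $j_v^{*}\pi'_{*}\mathcal{F}\simeq\prod_{w|v}(\pi_w)_{*}j_w^{*}\mathcal{F}$ together with $\pi'_{*}(j_w)_{*}=(j_v)_{*}(\pi_w)_{*}$ for part (3). The only divergence is that the step you single out as the main obstacle --- that $Nm$ is an isomorphism for every $\mathbb{Z}$-constructible $\mathcal{F}$ --- is precisely the Norm Theorem of Milne (\emph{Arithmetic Duality Theorems}, II.3.9), which the paper simply cites, so neither the d\'evissage to closed and generic points nor the identification $\pi'^{!}\mathbb{G}_m\simeq\mathbb{G}_m$ is needed.
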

\begin{proof}
\begin{enumerate}
\item
We begin by describing the map $Nm$. The norm map $N_{L/K}$ induces a morphism of sheaves $N_{L/K}:\pi'_{*}\mathbb{G}_{m,Y} \to \mathbb{G}_{m,X}$. As $\pi'$ is a finite morphism, $\pi'_{*}$ is an exact functor $\cite[\mbox{II.3.6}]{Mil80}$. Therefore, $\pi'_{*}$ induces the map 
$Ext_{Y}^{n}(\mathcal{F},\mathbb{G}_{m,Y}) \to Ext_{X}^{n}(\pi'_{*}\mathcal{F},\pi'_{*}\mathbb{G}_{m,Y})$. We define $Nm$ to be the composition of the following maps
\[  Ext_{Y}^{n}(\mathcal{F},\mathbb{G}_{m,Y}) \xrightarrow{\pi'_{*}} Ext_{X}^{n}(\pi'_{*}\mathcal{F},\pi'_{*}\mathbb{G}_{m,Y})
\xrightarrow{N_{L/K}} Ext_{X}^{n}(\pi'_{*}\mathcal{F},\mathbb{G}_{m,X}). \]
The fact that $Nm$ is an isomorphism is the Norm theorem $\cite[\mbox{II.3.9}]{Mil06}$.
\item As $\pi'_{*}$ is an exact functor, $H^{n}_{et}(X,\pi'_{*}\mathcal{F}) \simeq H^{n}_{et}(Y,\mathcal{F})$. Therefore, $ H^{n}_{W}(X,\pi'_{*}\mathcal{F}) \simeq H^{n}_{W}(Y,\mathcal{F}) $ for $n=0,1$. In addition, by part 1, we have 
\[ H^{3}_{W}(X,\pi'_{*}\mathcal{F}) = Hom_{X}(\pi'_{*}\mathcal{F},\mathbb{G}_m)_{tor}^D 
\simeq Hom_{Y}(\mathcal{F},\mathbb{G}_m)_{tor}^D  = H^{3}_{W}(Y,\mathcal{F}).
\]
To prove $H^{2}_{W}(Y,\mathcal{F}) \simeq H^{2}_{W}(X,\pi'_{*}\mathcal{F})$, we apply the 5-lemma to the following diagram 
\[ \xymatrixrowsep{0.2in}
\xymatrix{ 0 \ar[r] & H^{2}_{et}(X,\pi'_{*}\mathcal{F})   \ar[d]^{\simeq} \ar[r]  & H^2_{W}(X,\pi'_{*}\mathcal{F}) \ar[d] \ar[r]   & Hom_{X}(\pi'_{*}\mathcal{F},\mathbb{G}_m)^{*}  \ar[d]^{\simeq} \ar[r] &  0 \\
  0 \ar[r] &  H^{2}_{et}(Y,\mathcal{F})\ar[r] &   H^2_{W}(Y,\mathcal{F}) \ar[r]  &  Hom_{Y}(\mathcal{F},\mathbb{G}_m)^{*}       \ar[r]    &  0 }\]
\item Recall from lemma $\ref{Mackey}$ that for $w|v$, we have $ (j_v)_{*}(\pi_w)_{*}=\pi'_{*}(j_w)_{*}$.
In addition, $(j_v)^{*}\pi'_{*}\mathcal{F} \simeq \prod_{w|v}(\pi_w)_{*}(j_w)^{*}\mathcal{F}$. Hence,
\begin{eqnarray*}
(\pi'_{*}\mathcal{F})_B &=& \prod_{v \in S_{K,\infty}}(j_v)_{*}(j_v)^{*}\pi'_{*}\mathcal{F} 
\quad \simeq \prod_{v \in S_{K,\infty}}(j_v)_{*}\left(\prod_{w|v}(\pi_w)_{*}(j_w)^{*}\mathcal{F}\right)  \\
&=&  \prod_{w \in S_{L,\infty}}(j_v)_{*}(\pi_w)_{*}(j_w)^{*}\mathcal{F}  \quad
= \prod_{w \in S_{L,\infty}}\pi'_{*}(j_w)_{*}(j_w)^{*}\mathcal{F} = \pi'_{*}(\mathcal{F}_B).
\end{eqnarray*}
Therefore,  
$ H^0_{et}(X,(\pi'_{*}\mathcal{F})_B) \simeq H^0_{et}(X,\pi'_{*}(\mathcal{F}_B))\simeq H^0_{et}(Y,\mathcal{F}_B)$.
\end{enumerate}
\end{proof}

\begin{prop}\label{finite_inv}
If $\mathcal{F}$ is strongly-$\mathbb{Z}$-constructible then so is $\pi'_{*}\mathcal{F}$ and $R(\pi'_{*}\mathcal{F})=R(\mathcal{F})$.
\end{prop}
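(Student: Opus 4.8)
Here is how I would prove Proposition~\ref{finite_inv}.

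The plan is to verify the four conditions of Definition~\ref{strong_const} for $\pi'_{*}\mathcal{F}$ one at a time, using the isomorphisms supplied by Lemma~\ref{pre_finite_inv}, and then to show that under those isomorphisms the regulator pairing of $\pi'_{*}\mathcal{F}$ on $X$ \emph{coincides} with the regulator pairing of $\mathcal{F}$ on $Y$. This last identification does double duty: it yields the non-degeneracy needed for condition~4 and, by choosing corresponding bases, the equality $R(\pi'_{*}\mathcal{F})=R(\mathcal{F})$.

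Conditions 1--3 should be quick. Since $\pi'$ is finite, $\pi'_{*}\mathcal{F}$ is again $\mathbb{Z}$-constructible (its stalks stay finitely generated and its generic behaviour is controlled by a finite étale cover over a common dense open), which is condition~1. Because $\pi'_{*}$ is exact we have $H^{n}_{et}(X,\pi'_{*}\mathcal{F})\simeq H^{n}_{et}(Y,\mathcal{F})$ for all $n$; in particular $H^{1}$ and $H^{2}$ are finite, which is condition~3, and $H^{0}_{et}(X,\pi'_{*}\mathcal{F})\simeq H^{0}_{et}(Y,\mathcal{F})$. For condition~2 I would use Lemma~\ref{pre_finite_inv}(3), $H^{0}_{et}(X,(\pi'_{*}\mathcal{F})_{B})\simeq H^{0}_{et}(Y,\mathcal{F}_{B})$, and check that this isomorphism is compatible with the natural maps $\mathcal{F}\to\mathcal{F}_{B}$ and $\pi'_{*}\mathcal{F}\to(\pi'_{*}\mathcal{F})_{B}$ (both are built from unit/adjunction maps, which commute with the exact functor $\pi'_{*}$), so that $H^{0}_{et}(X,\pi'_{*}\mathcal{F})\to H^{0}_{et}(X,(\pi'_{*}\mathcal{F})_{B})$ is identified with $H^{0}_{et}(Y,\mathcal{F})\to H^{0}_{et}(Y,\mathcal{F}_{B})$, whose kernel is finite by hypothesis. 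This also identifies the torsion-free quotients of $H^{0}_{et}(X,(\pi'_{*}\mathcal{F})_{B})/H^{0}_{et}(X,\pi'_{*}\mathcal{F})$ and of $H^{0}_{et}(Y,\mathcal{F}_{B})/H^{0}_{et}(Y,\mathcal{F})$.

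For condition~4 and the regulator equality I would compare the two pairings directly. On the right-hand factors, the $n=0$ case of Lemma~\ref{pre_finite_inv}(1) is the norm isomorphism $Hom_{X}(\pi'_{*}\mathcal{F},\mathbb{G}_{m})\simeq Hom_{Y}(\mathcal{F},\mathbb{G}_{m})$, sending $\phi\colon\mathcal{F}\to\mathbb{G}_{m,Y}$ to $Nm(\phi)=N_{L/K}\circ\pi'_{*}\phi$. Applying the exact Betti functor to $N_{L/K}\colon\pi'_{*}\mathbb{G}_{m,Y}\to\mathbb{G}_{m,X}$ and taking $H^{0}_{et}$, and using $(\pi'_{*}(-))_{B}\simeq\pi'_{*}((-)_{B})$ (the computation in Lemma~\ref{pre_finite_inv}(3), which is valid for any sheaf), produces the map
\[
\prod_{w\in S_{L,\infty}}L_{w}^{*}\ \longrightarrow\ \prod_{v\in S_{K,\infty}}K_{v}^{*},\qquad (x_{w})_{w}\ \longmapsto\ \Bigl(\textstyle\prod_{w\mid v}N_{L_{w}/K_{v}}(x_{w})\Bigr)_{v},
\]
compatibly with $O_{L}^{*}\to O_{K}^{*}$. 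Since $|N_{L_{w}/K_{v}}(x)|_{v}=|x|_{w}$ for the normalized absolute values entering the product formula, this gives $\Lambda_{K}\circ(N_{L/K})_{B}=\Lambda_{L}$. Chasing $\phi$ through the commutative square of Lemma~\ref{Mackey} used in Lemma~\ref{pre_finite_inv}(3) shows that, under the identifications above, $(Nm(\phi))_{B}$ becomes $(N_{L/K})_{B}\circ\phi_{B}$, so for any $\alpha$ in the left-hand factor
\[
\langle\alpha,Nm(\phi)\rangle_{\pi'_{*}\mathcal{F}}=\Lambda_{K}\bigl((Nm(\phi))_{B}(\alpha)\bigr)=\Lambda_{L}\bigl(\phi_{B}(\alpha)\bigr)=\langle\alpha,\phi\rangle_{\mathcal{F}}.
\]
Hence the two regulator pairings agree, one is non-degenerate modulo torsion iff the other is (condition~4), and transporting bases of the torsion-free quotients gives the same pairing matrix, so $R(\pi'_{*}\mathcal{F})=R(\mathcal{F})$.

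The hard part will be the naturality bookkeeping in the last step: one must check that the isomorphisms coming out of Lemmas~\ref{Mackey} and~\ref{pre_finite_inv} are genuinely compatible with the unit map $\mathcal{F}\to\mathcal{F}_{B}$, with the norm morphism $N_{L/K}$ on $\mathbb{G}_{m}$, and with evaluation of homomorphisms, so that the two pairings are carried to one another and not merely abstractly isomorphic. Everything else reduces to exactness of $\pi'_{*}$ and the local-field identity $|N_{L_{w}/K_{v}}(x)|_{v}=|x|_{w}$.
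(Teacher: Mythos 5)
Your proposal is correct and follows essentially the same route as the paper: conditions 1--3 via exactness of $\pi'_{*}$ and Lemma \ref{pre_finite_inv}, then identification of the two regulator pairings through the norm isomorphism $Nm(\phi)=N_{L/K}\circ\pi'_{*}\phi$ and the local identity $|N_{L_w/K_v}(x)|_v=|x|_w$, which gives $\Lambda_K\circ (N_{L/K})_B=\Lambda_L$. The naturality bookkeeping you flag as the hard part is exactly the commutativity of the paper's diagram (\ref{finite_inv_eq3}), which the paper likewise settles by functoriality plus this computation.
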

\begin{proof}
As $\pi'_{*}$ preserves $\mathbb{Z}$-constructible sheaves
$\cite[\mbox{page 146}]{Mil06}$, $\pi'_{*}\mathcal{F}$ is $\mathbb{Z}$-constructible. From lemma $\ref{pre_finite_inv}$, it remains to show that the regulator pairing of $\pi'_{*}\mathcal{F}$ is non-degenerate and $R(\pi'_{*}\mathcal{F})=R(\mathcal{F})$.
They will all follow once we prove the diagram below commutes
\begin{equation}\label{finite_inv_seq}
\xymatrixrowsep{0.2in}  
  \xymatrix{ \left(\frac{H^0_{et}(X,(\pi'_{*}\mathcal{F})_{B})}{H^0_{et}(X,\pi'_{*}\mathcal{F})}\right) \ar[d]^{\psi}_{\simeq}  & \times & Hom_{X}(\pi'_{*}\mathcal{F},\mathbb{G}_m)   \ar[r]  & \mathbb{R} \ar[d]^{id}  \\
 \left(\frac{H^0_{et}(Y,\mathcal{F}_{B})}{H^0_{et}(Y,\mathcal{F})}\right) & \times & 
 Hom_{Y}(\mathcal{F},\mathbb{G}_m) \ar[u]^{Nm}_{\simeq}  \ar[r]  & \mathbb{R} }
\end{equation}
Let $\alpha$ and $\phi$ be elements of $H^0_{et}(X,(\pi'_{*}\mathcal{F})_{B})$ and $Hom_{Y}(\mathcal{F},\mathbb{G}_m)$. We need to show
\begin{equation}\label{finite_inv_eq2}
\Lambda_L \circ \phi_B(\psi(\alpha))=\Lambda_K \circ Nm(\phi)_B(\alpha).
\end{equation}
From lemma $\ref{pre_finite_inv}$ , $Nm(\phi)=N_{L/K}\circ \pi'_{*}\phi$. Let us consider the following diagram
\begin{equation}\label{finite_inv_eq3}
 \xymatrixrowsep{0.2in}\xymatrixcolsep{5pc}\xymatrix {
 &   &    &    \\
\left(\frac{H^0_{et}(X,(\pi'_{*}\mathcal{F})_{B})}{H^0_{et}(X,\pi'_{*}\mathcal{F})}\right)_{\mathbb{R}}  
 \ar[r]^-{(\pi'_{*}\phi)_B} \ar[d]^{\psi}   \ar@/^3pc/[rr]^-{Nm(\phi)_B}
 & \left(\frac{H^0_{et}(X,(\pi'_{*}\mathbb{G}_m)_{B})}{H^0_{et}(X,\pi'_{*}\mathbb{G}_m)}\right)_{\mathbb{R}}  
 \ar[r]^-{(N_{L/K})_B} \ar[d]^{\psi}
& \left(\frac{H^0_{et}(X,(\mathbb{G}_m)_{B})}{H^0_{et}(X,\mathbb{G}_m)}\right)_{\mathbb{R}}  
\ar[d]^-{\Lambda_K} \\
\left(\frac{H^0_{et}(Y,\mathcal{F}_{B})}{H^0_{et}(Y,\mathcal{F})}\right)_{\mathbb{R}}
\ar[r]^-{\phi_B}
& \left(\frac{H^0_{et}(Y,(\mathbb{G}_m)_{B})}{H^0_{et}(Y,\mathbb{G}_m)}\right)_{\mathbb{R}} 
\ar[r]^-{\Lambda_L}  \ar[ur]^{N_{L/K}}
& \mathbb{R} 
}
\end{equation}
The left square of ($\ref{finite_inv_eq3}$) commutes by functoriality. It is not hard to see the upper triangle on the right is commutative. We shall prove that the lower triangle on the right also commutes. Let $\beta$ be an element of $H^0_{et}(Y,(\mathbb{G}_m)_{B}) \simeq \prod_{w \in S_{L,\infty}} L_w^{*}$. Then 
\begin{eqnarray*}
\Lambda_K(N_{L/K}(\beta)) &=& \sum_{v \in S_{K,\infty}} \log|N_{L/K}(\beta)_v|_v 
= \sum_{v \in S_{K,\infty}} \sum_{w|v} \log|N_{L_w/K_v}(\beta_w)|_v  \\
&=& \sum_{v \in S_{K,\infty}} \sum_{w|v} \log|\beta_w|_w   
= \sum_{w \in S_{L,\infty}}  \log|\beta_w|_w   
=  \Lambda_L(\beta).
\end{eqnarray*}
Therefore, diagram ($\ref{finite_inv_eq3}$) is commutative and from this we deduce equation ($\ref{finite_inv_eq2}$). 
As a result, diagram ($\ref{finite_inv_seq}$) commutes. Hence, the proposition is proved.
\end{proof}

\begin{cor}\label{cor_finite_inv}
$\pi'_{*}\mathbb{Z}$ is a strongly-$\mathbb{Z}$-constructible sheaf.
\end{cor}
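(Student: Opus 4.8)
The plan is to read this off from Proposition \ref{finite_inv} by checking that the constant sheaf $\mathbb{Z}$ on $Y=\mathrm{Spec}(O_L)$ is already strongly-$\mathbb{Z}$-constructible; since push-forward along the finite morphism $\pi'$ preserves this class, $\pi'_{*}\mathbb{Z}$ is then strongly-$\mathbb{Z}$-constructible as well.

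So first I would verify the four conditions of Definition \ref{strong_const} for the constant sheaf $\mathbb{Z}$ on $Y$. Condition (1) is immediate, $\mathbb{Z}$ being a constant sheaf defined by a finitely generated abelian group. For condition (2), Lemma \ref{et_FB} gives $H^0_{et}(Y,\mathbb{Z}_B)=\prod_{w\in S_{L,\infty}}\mathbb{Z}$, and the natural map from $H^0_{et}(Y,\mathbb{Z})=\mathbb{Z}$ is the diagonal, hence injective, so its kernel is trivial. For condition (3), note that $L$ is again totally imaginary (a complex embedding of $K$ cannot extend to a real embedding of $L$), so Theorem \ref{et_ZGm} applied over $L$ yields $H^1_{et}(Y,\mathbb{Z})=0$ and $H^2_{et}(Y,\mathbb{Z})=\mathrm{Pic}(O_L)^D$, which is finite by the finiteness of the class group of $L$. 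For condition (4), Example \ref{ex_pairing}(2) identifies the regulator of the constant sheaf $\mathbb{Z}$ on $Y$ with the Dirichlet regulator $R_L$ of $L$; equivalently, Dirichlet's unit theorem says the pairing $\bigl(\prod_{w\in S_{L,\infty}}\mathbb{Z}\bigr)/\mathbb{Z}\times O_L^{*}\to\mathbb{R}$ is non-degenerate modulo torsion, and $R_L\neq 0$.

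Having shown $\mathbb{Z}$ is strongly-$\mathbb{Z}$-constructible on $Y$, I would simply apply Proposition \ref{finite_inv} with $\mathcal{F}=\mathbb{Z}$ to conclude that $\pi'_{*}\mathbb{Z}$ is strongly-$\mathbb{Z}$-constructible (and, incidentally, $R(\pi'_{*}\mathbb{Z})=R(\mathbb{Z})=R_L$). There is essentially no obstacle: the corollary is a direct consequence of the preceding proposition together with the standard computation of $H^{*}_{et}(Y,\mathbb{Z})$, the finiteness of the ideal class group, and Dirichlet's unit theorem. The only subtlety worth recording in the write-up is that the totally imaginary hypothesis passes to $L$, which is precisely what permits the use of Theorem \ref{et_ZGm} over $L$.
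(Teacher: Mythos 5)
Your proof is correct and is essentially the paper's argument: the corollary is meant as an immediate consequence of Proposition \ref{finite_inv} together with the fact (listed as the first example after Definition \ref{strong_const}, and justified exactly as you do via Theorem \ref{et_ZGm} over the totally imaginary field $L$, Lemma \ref{et_FB}, Example \ref{ex_pairing}(2) and Dirichlet's unit theorem) that the constant sheaf $\mathbb{Z}$ on $Y=\mathrm{Spec}(O_L)$ is strongly-$\mathbb{Z}$-constructible. Your added remark that the totally imaginary hypothesis passes to $L$ is a worthwhile detail the paper leaves implicit.
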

We need the following theorem of Ono.
\begin{thm}\label{thm_ono}
Let $M$ be a discrete $G_K$-module. Then there exist finitely many Galois extensions $\{K_{\mu}\}_{\mu}$, $\{K_{\lambda}\}_{\lambda}$ of $K$ and positive integers $n$,
$\{m_{\mu}\}_{\mu}$, $\{m_{\lambda}\}_{\lambda}$ and a finite $G_K$-module N such that 
\[ 0 \to M^{n}\oplus \prod_{\mu}(\pi_{\mu})_{*}\mathbb{Z}^{m_{\mu}} \to \prod_{\lambda}(\pi_{\lambda})_{*}\mathbb{Z}^{m_{\lambda}} \to N \to 0 \]
is an exact sequence of $G_K$-modules. Here $\pi_{\mu}$ is the natural map from $Spec(K_{\mu})$ to $Spec(K)$.
\end{thm}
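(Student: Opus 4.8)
The plan is to reduce the statement, as Ono does, to a fact about rational representations of finite groups, applied to a finite quotient of $G_K$ through which the action on $M$ factors. First I would choose a finite Galois extension $E/K$ such that $G_E$ acts trivially on $M$, and set $\Gamma = \mathrm{Gal}(E/K)$. The module $M$ is then a $\mathbb{Z}[\Gamma]$-module that is finitely generated over $\mathbb{Z}$; writing $M_0 = M/M_{tor}$, it suffices to prove the analogous exact sequence for $M_0$, since the torsion part can be absorbed into the finite module $N$ at the end (take a sequence $0 \to M_0^n \oplus (\text{induced from }\mathbb{Z}) \to (\text{induced from }\mathbb{Z}) \to N_0 \to 0$ and combine it with $0 \to M_{tor}^n \to M^n \to M_0^n \to 0$; a short diagram chase with the snake lemma produces the desired sequence for $M^n$ with a possibly larger finite cokernel). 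So I may assume $M$ is $\mathbb{Z}$-free.

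The key input is the classical theorem (due to Artin, in the form used by Ono) that for a finite group $\Gamma$, the rational representation $M \otimes \mathbb{Q}$ becomes, after multiplying by a suitable positive integer $n$, isomorphic to a difference of permutation representations: there are subgroups $\{H_\mu\}$, $\{H_\lambda\}$ of $\Gamma$ and multiplicities such that
\[
(M\otimes\mathbb{Q})^{n} \oplus \bigoplus_{\mu} \mathbb{Q}[\Gamma/H_\mu]^{m_\mu} \;\simeq\; \bigoplus_{\lambda} \mathbb{Q}[\Gamma/H_\lambda]^{m_\lambda}
\]
as $\mathbb{Q}[\Gamma]$-modules. Now I would clear denominators: an isomorphism of $\mathbb{Q}[\Gamma]$-modules restricts on the natural $\mathbb{Z}[\Gamma]$-lattices to an injective $\mathbb{Z}[\Gamma]$-map with finite cokernel (after scaling by an integer to land inside the target lattice, and noting injectivity is preserved since the map is a rational isomorphism). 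Translating permutation modules $\mathbb{Z}[\Gamma/H]$ into the sheaf-theoretic language: if $K_\mu = E^{H_\mu}$ is the fixed field, then $\mathbb{Z}[\Gamma/H_\mu]$ is exactly the $G_K$-module underlying $(\pi_\mu)_*\mathbb{Z}$ on $\mathrm{Spec}(K)$ — i.e. the induced module $\mathrm{Ind}_{G_{K_\mu}}^{G_K}\mathbb{Z}$ — by the usual identification of $\pi_*$ with induction for the finite morphism $\pi_\mu: \mathrm{Spec}(K_\mu)\to\mathrm{Spec}(K)$. This gives an exact sequence of $G_K$-modules
\[
0 \to M^{n}\oplus \prod_{\mu}(\pi_{\mu})_{*}\mathbb{Z}^{m_{\mu}} \to \prod_{\lambda}(\pi_{\lambda})_{*}\mathbb{Z}^{m_{\lambda}} \to N \to 0
\]
with $N$ the (finite) cokernel, which is what is claimed. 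One must also arrange that all the $K_\mu, K_\lambda$ are Galois over $K$; if a subgroup $H$ is not normal, one can either replace it by passing to the Galois closure at the cost of rewriting the permutation module as a further sum/difference of genuinely induced-from-normal pieces, or simply observe that Ono's argument already yields normal subgroups — in any case this is a bookkeeping point, not a substantive one.

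The main obstacle is not any single deep step but making the reduction to $\mathbb{Z}$-free $M$ and the "clearing denominators" passage genuinely clean: one has to check that the finite cokernel that appears is a $G_K$-module (clear, since all maps are $\Gamma$-equivariant), that injectivity survives scaling (clear, since the scaled map is still a rational isomorphism hence injective on lattices), and that combining the free-case sequence with the torsion extension via the snake lemma does not disturb exactness on the left — here one uses that $M_{tor}^n$ injects into $M^n$ and that the relevant $\mathrm{Tor}$ terms vanish because the permutation modules are $\mathbb{Z}$-free. I would present the representation-theoretic fact as a citation to Artin's induction theorem (or directly to $\cite{Ono63}$, from which the whole statement is essentially quoted), and spend the written proof only on the dictionary between induced modules and $\pi_*\mathbb{Z}$ and on the torsion bookkeeping.
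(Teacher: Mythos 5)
The core of your argument --- pass to a finite Galois quotient $\Gamma=Gal(E/K)$, apply Artin's induction theorem to $M\otimes\mathbb{Q}$, clear denominators to get an injective $\mathbb{Z}[\Gamma]$-map of lattices with finite cokernel, and translate $\mathbb{Z}[\Gamma/H]=\mathrm{Ind}_{G_{E^H}}^{G_K}\mathbb{Z}=(\pi)_{*}\mathbb{Z}$ --- is precisely Ono's proof, and the paper does not reprove it: its entire proof is the citation to $\cite{Ono61}$, 1.5.1 (note: $\cite{Ono61}$, not $\cite{Ono63}$). For $\mathbb{Z}$-free $M$ and arbitrary subgroups $H$ that part of your sketch is fine.

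The two places where you go beyond the citation, however, contain genuine errors rather than bookkeeping. First, the torsion reduction cannot work: if $M$ has nontrivial torsion, then $M^{n}\oplus\prod_{\mu}(\pi_{\mu})_{*}\mathbb{Z}^{m_{\mu}}$ has torsion, while $\prod_{\lambda}(\pi_{\lambda})_{*}\mathbb{Z}^{m_{\lambda}}$ is torsion free, so no injection of the stated form exists and no snake-lemma manipulation can ``absorb $M_{tor}$ into $N$''; the statement is really one about $\mathbb{Z}$-free modules (Ono's lattices), and your reduction step is vacuous. Second, the Galois condition on the $K_{\mu},K_{\lambda}$ is substantive, and neither of your proposed fixes is available: Artin induction uses cyclic, generally non-normal, subgroups, and characters induced from normal subgroups span far too little. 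Concretely, if $F/K$ is Galois then the character of $\mathrm{Ind}_{G_{F}}^{G_{K}}\mathbb{Q}$ equals $[F:K]$ on $G_{F}$ and $0$ elsewhere; taking $M$ a lattice whose action factors through an $A_{5}$-quotient $Gal(F_{0}/K)$ with $M\otimes\mathbb{Q}$ a nontrivial irreducible, placing all fields inside one finite Galois $E/K$ and averaging the putative character identity over cosets of $Gal(E/F_{0})$ turns every such induced character into a rational combination of the trivial and the regular character of $A_{5}$ while fixing $\chi_{M\otimes\mathbb{Q}}$; pairing with a third irreducible character of $A_{5}$ forces the regular part to vanish and then $n\chi_{M\otimes\mathbb{Q}}$ to be a multiple of the trivial character, a contradiction. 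So with ``Galois'' taken literally the sequence need not exist at all, and passing to Galois closures cannot repair this; the statement must be read, as in Ono, with arbitrary finite subextensions of a splitting field (typically fixed fields of cyclic subgroups), after which your main argument does go through for $\mathbb{Z}$-free $M$.
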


\begin{proof}
For a proof, see $\cite[\mbox{1.5.1}]{Ono61}$. 
\end{proof}
The following proposition is a sheaf-theoretic version of theorem $\ref{thm_ono}$.
\begin{prop}\label{ono_etale}
Let $M$ be a discrete $G_K$-module which is also a finitely generated abelian group. Then there exist finitely many Galois extensions $\{K_{\mu}\}_{\mu}$, $\{K_{\lambda}\}_{\lambda}$ of $K$ and positive integers $n$,
$\{m_{\mu}\}_{\mu}$, $\{m_{\lambda}\}_{\lambda}$, a constructible sheaf $\mathcal{R}$ and a finite $G_K$-module $N$ such that we have the following exact sequences 
\begin{equation}\label{ono_etale_seq1}
0 \to M^{n}\oplus \prod_{\mu}(\pi_{\mu})_{*}\mathbb{Z}^{m_{\mu}} \to \prod_{\lambda}(\pi_{\lambda})_{*}\mathbb{Z}^{m_{\lambda}} \to N \to 0,
\end{equation}
\begin{equation}\label{ono_etale_seq2}
0 \to (j_{*}M)^{n}\oplus \prod_{\mu}(\pi_{\mu}')_{*}\mathbb{Z}^{m_{\mu}} \to \prod_{\lambda}(\pi_{\lambda}')_{*}\mathbb{Z}^{m_{\lambda}} \to \mathcal{R} \to 0
\end{equation}
where $\pi_{\mu}':Spec(O_{K_{\mu}}) \to Spec(O_K)$ and $\pi_{\mu}: Spec(K_{\mu})\to 
Spec(K)$ are the natural maps.
\end{prop}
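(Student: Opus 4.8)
The plan is to deduce $(\ref{ono_etale_seq2})$ from $(\ref{ono_etale_seq1})$ by applying the pushforward functor $j_{*}$ to the exact sequence of $G_K$-modules produced by Theorem $\ref{thm_ono}$ and then reading off the long exact sequence for the right derived functors $R^{q}j_{*}$. First I would invoke Theorem $\ref{thm_ono}$ for the given $M$ to obtain the Galois extensions $\{K_{\mu}\}$, $\{K_{\lambda}\}$, the integers $n$, $\{m_{\mu}\}$, $\{m_{\lambda}\}$, the finite $G_K$-module $N$, and the exact sequence $(\ref{ono_etale_seq1})$. Write $A := M^{n}\oplus\prod_{\mu}(\pi_{\mu})_{*}\mathbb{Z}^{m_{\mu}}$ and $B := \prod_{\lambda}(\pi_{\lambda})_{*}\mathbb{Z}^{m_{\lambda}}$, regarded as $\mathbb{Z}$-constructible sheaves on $Spec(K)$, so that $(\ref{ono_etale_seq1})$ reads $0 \to A \to B \to N \to 0$. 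Applying $j_{*}$ gives a long exact sequence $0 \to j_{*}A \to j_{*}B \to j_{*}N \to R^{1}j_{*}A \to R^{1}j_{*}B \to \cdots$, and the whole proposition comes down to identifying the first few terms.

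The two computations I would carry out are the following. (i) For a finite extension $F/K$ with natural maps $\pi_{F}:Spec(F)\to Spec(K)$, $\pi'_{F}:Spec(O_{F})\to X$ and $j_{F}:Spec(F)\to Spec(O_{F})$, one has $j\circ\pi_{F}=\pi'_{F}\circ j_{F}$; since $\pi_{F}$ and $\pi'_{F}$ are finite, $\pi_{F*}$ and $\pi'_{F*}$ are exact, so the Grothendieck composite-functor spectral sequences for $j_{*}\circ\pi_{F*}$ and $\pi'_{F*}\circ j_{F*}$ degenerate and give natural isomorphisms $R^{q}j_{*}\big((\pi_{F})_{*}\mathbb{Z}\big)\cong\pi'_{F*}\big(R^{q}j_{F*}\mathbb{Z}\big)$ for all $q$. (ii) $j_{F*}\mathbb{Z}\cong\mathbb{Z}$ and $R^{1}j_{F*}\mathbb{Z}=0$. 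For (ii) I would compute stalks: for $q\geq 1$ the generic stalk of $R^{q}j_{F*}\mathbb{Z}$ is $H^{q}$ of a separably closed field, hence $0$, and the stalk at a geometric point over a closed point $p$ is $H^{q}(I_{p},\mathbb{Z})$, where $I_{p}$ is the inertia group at $p$, because the fraction field of the strict henselization of $O_{F}$ at $p$ has absolute Galois group $I_{p}$. Now $H^{0}(I_{p},\mathbb{Z})=\mathbb{Z}$, and $H^{1}(I_{p},\mathbb{Z})=\mathrm{Hom}_{\mathrm{cont}}(I_{p},\mathbb{Z})=0$ since $I_{p}$ is profinite and $\mathbb{Z}$ is torsion-free; the unit $\mathbb{Z}\to j_{F*}\mathbb{Z}$ is then an isomorphism on every stalk. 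Combining (i) and (ii) with the fact that $j_{*}$ commutes with finite direct sums, I get $j_{*}A\cong (j_{*}M)^{n}\oplus\prod_{\mu}(\pi'_{\mu})_{*}\mathbb{Z}^{m_{\mu}}$, $\;j_{*}B\cong\prod_{\lambda}(\pi'_{\lambda})_{*}\mathbb{Z}^{m_{\lambda}}$, and $R^{1}j_{*}B=0$.

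With $R^{1}j_{*}B=0$, the long exact sequence collapses to an exact sequence $0\to (j_{*}M)^{n}\oplus\prod_{\mu}(\pi'_{\mu})_{*}\mathbb{Z}^{m_{\mu}}\to\prod_{\lambda}(\pi'_{\lambda})_{*}\mathbb{Z}^{m_{\lambda}}\to j_{*}N$, the first map being injective by left-exactness of $j_{*}$. I define $\mathcal{R}$ to be the image of the last map; then $(\ref{ono_etale_seq2})$ holds by construction, and $\mathcal{R}$ is naturally a subsheaf of $j_{*}N$ (it equals the kernel of $j_{*}N\to R^{1}j_{*}A$). Finally, since $N$ is finite, $j_{*}N$ is a constructible torsion sheaf on the Noetherian scheme $X$, and subsheaves of constructible torsion sheaves on a Noetherian scheme are again constructible; hence $\mathcal{R}$ is constructible, as required.

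The main obstacle is step (ii): correctly identifying the stalk of $R^{q}j_{F*}\mathbb{Z}$ at a closed point with the inertia cohomology $H^{q}(I_{p},\mathbb{Z})$ and deducing the vanishing $R^{1}j_{F*}\mathbb{Z}=0$ together with $j_{F*}\mathbb{Z}=\mathbb{Z}$. Once this is established, the identification of the terms via the finiteness of $\pi_{F}$ and $\pi'_{F}$, the collapse of the long exact sequence, and the constructibility of $\mathcal{R}$ are all formal.
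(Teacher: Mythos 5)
Your argument is correct and follows essentially the same route as the paper: both proofs take (\ref{ono_etale_seq1}) from Theorem \ref{thm_ono}, apply $j_{*}$, identify $j_{*}\bigl((\pi_{\mu})_{*}\mathbb{Z}\bigr)$ with $(\pi'_{\mu})_{*}\mathbb{Z}$, and define $\mathcal{R}$ as the image of $j_{*}\bigl(\prod_{\lambda}(\pi_{\lambda})_{*}\mathbb{Z}^{m_{\lambda}}\bigr)$ in $j_{*}N$, i.e.\ the kernel of $j_{*}N\to R^{1}j_{*}(M^{n}\oplus\prod_{\mu}(\pi_{\mu})_{*}\mathbb{Z}^{m_{\mu}})$. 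The only real divergence is the last step. The paper notes that the quotient $\mathcal{Q}:=j_{*}N/\mathcal{R}$ embeds in $R^{1}j_{*}(M^{n}\oplus\prod_{\mu}(\pi_{\mu})_{*}\mathbb{Z}^{m_{\mu}})$, which is negligible, so $\mathcal{Q}$ is constructible, and then concludes that $\mathcal{R}=\ker(j_{*}N\to\mathcal{Q})$ is constructible because the constructible sheaves form an abelian category; you instead quote the fact that a subsheaf of a constructible torsion sheaf on a Noetherian scheme is constructible (constructible torsion sheaves are the Noetherian objects of the category of torsion sheaves, SGA~4, Exp.~IX). Both justifications are valid: the paper's stays within tools it has already introduced (negligible sheaves and the abelianness of the constructible category), while yours invokes a heavier external characterization but, in compensation, you actually verify the identification $j_{F*}\mathbb{Z}\cong\mathbb{Z}$ and $R^{1}j_{F*}\mathbb{Z}=0$ behind $j_{*}\bigl((\pi_{\mu})_{*}\mathbb{Z}\bigr)\cong(\pi'_{\mu})_{*}\mathbb{Z}$, which the paper uses without proof. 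One small superfluity: you do not need $R^{1}j_{*}\bigl(\prod_{\lambda}(\pi_{\lambda})_{*}\mathbb{Z}^{m_{\lambda}}\bigr)=0$ anywhere, since the exactness of $0\to j_{*}A\to j_{*}B\to j_{*}N$ is already given by left exactness of $j_{*}$.
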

\begin{proof}
The existence of ($\ref{ono_etale_seq1}$) is precisely theorem $\ref{thm_ono}$. 
Let $P_1=\prod_{\mu}(\pi_{\mu})_{*}\mathbb{Z}^{m_{\mu}}$ and $P_2=\prod_{\lambda}(\pi_{\lambda})_{*}\mathbb{Z}^{m_{\lambda}}$. 
By applying $j_{*}$ to ($\ref{ono_etale_seq1}$), we obtain the exact sequence 
\[ 0 \to j_{*}M^n\oplus j_{*}P_1 \to j_{*}P_2 \to j_{*}N \to R^{1}j_{*}(M\oplus P_1)\]
which we split into two exact sequences
\[ 0 \to j_{*}M^n\oplus j_{*}P_1 \to j_{*}P_2 \to \mathcal{R} \to 0 \qquad \mbox{and} \qquad
 0 \to \mathcal{R} \to j_{*}N \to \mathcal{Q} \to 0 \]
where $\mathcal{Q}$ is a subsheaf of $R^{1}j_{*}(M\oplus P_1)$. As $R^{1}j_{*}(M\oplus P_1)$ is negligible, $\mathcal{Q}$ is constructible. Since $j_{*}P_1=\prod_{\mu}(\pi_{\mu}')_{*}\mathbb{Z}^{m_{\mu}}$ and $j_{*}P_2=\prod_{\lambda}(\pi_{\lambda}')_{*}\mathbb{Z}^{m_{\lambda}}$, we have $j_{*}P_1$ and $j_{*}P_2$ are strongly-$\mathbb{Z}$-constructible by corollary $\ref{cor_finite_inv}$. As $N$ is finite, $j_{*}N$ is constructible. Since $\mathcal{Q}$ is constructible, $\mathcal{R}$ is constructible as the category of constructible sheaves is abelian. 
\end{proof}
\begin{prop}\label{jM_strong_constr}
Let $M$ be a discrete $G_K$-module. Then $j_{*}M$ is a strongly-$\mathbb{Z}$-constructible sheaf.
\end{prop}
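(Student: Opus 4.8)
The plan is to bootstrap from the strong-$\mathbb{Z}$-constructibility of the sheaves $\pi'_*\mathbb{Z}$ established in Corollary \ref{cor_finite_inv}, using the exact sequence produced in Proposition \ref{ono_etale}. Since $j_*M$ is $\mathbb{Z}$-constructible, condition (1) of Definition \ref{strong_const} is automatic, so only conditions (2)--(4) need to be verified. The key observation is that these three conditions are additive: a finite direct sum $\mathcal{F}\oplus\mathcal{G}$ of \'etale sheaves on $X$ is strongly-$\mathbb{Z}$-constructible if and only if both $\mathcal{F}$ and $\mathcal{G}$ are. First I would record this. The functors $H^n_{et}(X,-)$, $\mathcal{F}\mapsto H^0_{et}(X,\mathcal{F}_B)$ (the Betti construction being additive) and $Hom_X(-,\mathbb{G}_m)$ all carry finite direct sums to direct sums, and the comparison map in condition (2) for $\mathcal{F}\oplus\mathcal{G}$ is the direct sum of the ones for $\mathcal{F}$ and $\mathcal{G}$; hence conditions (2) and (3) hold for $\mathcal{F}\oplus\mathcal{G}$ if and only if they hold for $\mathcal{F}$ and for $\mathcal{G}$. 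For condition (4), Lemma \ref{pairing_functor} applied to the inclusions and projections of the direct sum shows that the regulator pairing of $\mathcal{F}\oplus\mathcal{G}$ is block-diagonal with diagonal blocks $\langle\cdot,\cdot\rangle_{\mathcal{F}}$ and $\langle\cdot,\cdot\rangle_{\mathcal{G}}$ (the cross terms vanish because, e.g., $(0,\phi_{\mathcal{G}}):\mathcal{F}\oplus\mathcal{G}\to\mathbb{G}_m$ factors through the projection to $\mathcal{G}$, so the induced map on $H^0_{et}(X,(-)_B)/H^0_{et}(X,-)$ annihilates the $\mathcal{F}$-summand); a block-diagonal pairing is non-degenerate modulo torsion precisely when each block is.

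With the additivity statement in hand, apply Proposition \ref{ono_etale} to $M$ to obtain an exact sequence
\[ 0 \to (j_*M)^n \oplus P \to Q \to \mathcal{R} \to 0 \]
with $P=\prod_{\mu}(\pi'_{\mu})_*\mathbb{Z}^{m_{\mu}}$, $Q=\prod_{\lambda}(\pi'_{\lambda})_*\mathbb{Z}^{m_{\lambda}}$, and $\mathcal{R}$ constructible. Each $(\pi'_{\mu})_*\mathbb{Z}$ and $(\pi'_{\lambda})_*\mathbb{Z}$ is strongly-$\mathbb{Z}$-constructible by Corollary \ref{cor_finite_inv}, so by additivity $P$ and $Q$ are strongly-$\mathbb{Z}$-constructible. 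Since $\mathcal{R}$ is constructible and $Q$ is strongly-$\mathbb{Z}$-constructible, Proposition \ref{abel_constr} applied to the displayed sequence shows that $(j_*M)^n\oplus P$ is strongly-$\mathbb{Z}$-constructible; as $j_*M$ is a direct summand of $(j_*M)^n\oplus P$, the additivity statement then yields that $j_*M$ is strongly-$\mathbb{Z}$-constructible, which is the assertion.

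I expect the only genuinely delicate point to be the verification of condition (4) in the additivity step, namely that the regulator pairing carries direct sums to orthogonal direct sums of pairings; but this is immediate from the definition of the pairing together with Lemma \ref{pairing_functor}. Everything else is formal bookkeeping, and no input beyond Proposition \ref{ono_etale}, Corollary \ref{cor_finite_inv}, Proposition \ref{abel_constr}, and Lemma \ref{pairing_functor} is required.
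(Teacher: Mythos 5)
Your proposal is correct and follows essentially the same route as the paper: it feeds the exact sequence of Proposition \ref{ono_etale} (with $P$ and $Q$ strongly-$\mathbb{Z}$-constructible by Corollary \ref{cor_finite_inv}) into Proposition \ref{abel_constr} and then peels off $j_{*}M$ as a direct summand. The only difference is that you spell out the additivity of conditions (2)--(4), including the block-diagonal behaviour of the regulator pairing, a step the paper's proof asserts without comment; this is a worthwhile clarification but not a different argument.
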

\begin{proof}
Consider sequence ($\ref{ono_etale_seq2}$) of proposition $\ref{ono_etale}$.
As $\mathcal{R}$ is constructible and $\prod_{\lambda}(\pi_{\lambda}')_{*}\mathbb{Z}^{m_{\lambda}}$ is strongly-$\mathbb{Z}$-constructible, $(j_{*}M)^{n}\oplus \prod_{\mu}(\pi_{\mu}')_{*}\mathbb{Z}^{m_{\mu}}$ is strongly-$\mathbb{Z}$-constructible by proposition $\ref{abel_constr}$. Since $\prod_{\mu}(\pi_{\mu}')_{*}\mathbb{Z}^{m_{\mu}}$ is strongly-$\mathbb{Z}$-constructible, so is  $j_{*}M$.
\end{proof}

\section{Euler Characteristics Of Strongly-$\mathbb{Z}$-Constructible Sheaves}
\label{chapter_euler_strongly_Z}

\subsection{Construction} 
Let $\mathcal{F}$ be a strongly-$\mathbb{Z}$-constructible sheaf on $X$. 
There are natural maps
$ R\Gamma_{W}(X,\mathcal{F}) \to \tau_{\leq 1} R\Gamma_{et}(X,\mathcal{F}) $ and 
$ \tau_{\leq 1} R\Gamma_{et}(X,\mathcal{F}) \to R\Gamma_{et}(X,\mathcal{F})$.
The morphism $\mathcal{F}\to \mathcal{F}_{B}$ induces a map of complexes
$ R\Gamma_{et}(X,\mathcal{F}) \to R\Gamma_{et}(X,\mathcal{F}_{B}) $.
Composing all three maps yields the map
\begin{equation}\label{defn_cF}
R\Gamma_W(X,\mathcal{F}) \xrightarrow{} R\Gamma_{et}(X,\mathcal{F}_B).
\end{equation}
\begin{defn}\label{defn_dF}
We define the complex $D_{\mathcal{F}}$ by the cone
\[ D_{\mathcal{F}} := [ R\Gamma_W(X,\mathcal{F}) \xrightarrow{} R\Gamma_{et}(X,\mathcal{F}_B) ][-1]. \]
\end{defn}
 
 \begin{prop}\label{Hn_dF}
Let $\mathcal{F}$ be a strongly-$\mathbb{Z}$-constructible sheaf on $X$. Then $H^n(D_\mathcal{F})$ satisfy 
\begin{equation}\label{Hn_dF_seq1}
0 \to H^1(D_\mathcal{F}) \to H^0_{et}(X,\mathcal{F}) \to H^0_{et}(X,\mathcal{F}_B) \to H^2(D_\mathcal{F}) \xrightarrow{\beta} H^1_{et}(X,\mathcal{F}) \to 0
\end{equation}
\begin{equation}\label{Hn_dF_seq2}
0 \to H^2_{et}(X,\mathcal{F}) \to H^3(D_\mathcal{F}) \to Hom_X(\mathcal{F},\mathbb{G}_m)^{*} \to 0
\end{equation}
and $H^4(D_\mathcal{F})=Hom_{X}(\mathcal{F},\mathbb{G}_m)_{tor}^D$ and $H^n(D_{\mathcal{F}})=0$ otherwise. 
 \end{prop}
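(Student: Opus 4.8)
The plan is to read off the cohomology of $D_\mathcal{F}$ directly from the long exact sequence attached to the distinguished triangle underlying the mapping cone of Definition \ref{defn_dF}, substituting two ingredients that are already available: the computation of $H^n_W(X,\mathcal{F})$ for a strongly-$\mathbb{Z}$-constructible sheaf (the proposition stated just before Proposition \ref{long_exact_Weil}), and Lemma \ref{et_FB}, which says that $R\Gamma_{et}(X,\mathcal{F}_B)$ is concentrated in degree $0$ with $H^0_{et}(X,\mathcal{F}_B)=\prod_{v\in S_\infty}\mathcal{F}_{K_v}$.

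First I would pin down the morphism that (\ref{defn_cF}) induces on cohomology. Since its target is concentrated in degree $0$, it is zero on $H^n$ for $n\neq 0$, and on $H^0$ it is the natural map $g\colon H^0_{et}(X,\mathcal{F})\to H^0_{et}(X,\mathcal{F}_B)$ coming from $\mathcal{F}\to\mathcal{F}_B$: the projection of $R\Gamma_W$ onto its first summand and the canonical map out of $\tau_{\leq 1}R\Gamma_{et}(X,\mathcal{F})$ are both the identity on $H^0$, so only the last map in (\ref{defn_cF}) matters there. Consequently, in the long exact sequence the only nonzero connecting contribution is $g$.

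Next I would run the long exact sequence. In every degree $n\notin\{1,2\}$ both $H^{n-1}_{et}(X,\mathcal{F}_B)$ and its left neighbour vanish, so the triangle gives $H^n(D_\mathcal{F})\cong H^{n-1}_W(X,\mathcal{F})$; this produces $H^n(D_\mathcal{F})=0$ for $n\leq 0$ and $n\geq 5$, the identity $H^4(D_\mathcal{F})\cong H^3_W(X,\mathcal{F})=\mathrm{Hom}_X(\mathcal{F},\mathbb{G}_m)_{tor}^D$, and $H^3(D_\mathcal{F})\cong H^2_W(X,\mathcal{F})$; feeding the last isomorphism into the known short exact sequence $0\to H^2_{et}(X,\mathcal{F})\to H^2_W(X,\mathcal{F})\to\mathrm{Hom}_X(\mathcal{F},\mathbb{G}_m)^*\to 0$ gives (\ref{Hn_dF_seq2}). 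In degrees $1$ and $2$ the surviving term $H^0_{et}(X,\mathcal{F}_B)$ together with the map $g$ yields the five-term exact sequence
\[ 0\to H^1(D_\mathcal{F})\to H^0_{et}(X,\mathcal{F})\xrightarrow{g} H^0_{et}(X,\mathcal{F}_B)\to H^2(D_\mathcal{F})\xrightarrow{\beta} H^1_{et}(X,\mathcal{F})\to 0, \]
in which $H^1(D_\mathcal{F})=\ker g$, $\beta$ is the map coming from the triangle, $H^1_W(X,\mathcal{F})=H^1_{et}(X,\mathcal{F})$ has been used, and exactness on the right uses $H^1_{et}(X,\mathcal{F}_B)=0$; this is (\ref{Hn_dF_seq1}).

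Once the two inputs are in place the rest is a diagram chase, so I do not expect a serious obstacle. The only points needing care are keeping track of the shift $[-1]$ in Definition \ref{defn_dF} — which moves $R\Gamma_W(X,\mathcal{F})$, concentrated in degrees $0$ through $3$, to $D_\mathcal{F}$, concentrated in degrees $1$ through $4$ — and the verification that $H^0$ of (\ref{defn_cF}) genuinely is the natural map $g$ rather than something distorted by the truncation functor; both are immediate once the definitions are unwound.
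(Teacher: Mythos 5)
Your argument is correct and is essentially the paper's own proof: both take the distinguished triangle underlying the cone in Definition \ref{defn_dF}, run its long exact cohomology sequence using Lemma \ref{et_FB} (so that $R\Gamma_{et}(X,\mathcal{F}_B)$ contributes only in degree $0$), and then substitute the known values of $H^n_W(X,\mathcal{F})$ to get the five-term sequence in degrees $1,2$ and the identifications $H^{n+1}(D_\mathcal{F})\simeq H^n_W(X,\mathcal{F})$ for $n\geq 2$. Your extra check that the induced map on $H^0$ is the natural map $H^0_{et}(X,\mathcal{F})\to H^0_{et}(X,\mathcal{F}_B)$ is a point the paper leaves implicit, but it is not a different method.
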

 \begin{proof}
There is a distinguished triangle
 \[ D_\mathcal{F}[1] \to R\Gamma_W(X,\mathcal{F}) \to R\Gamma_{et}(X,\mathcal{F}_B) \to D_\mathcal{F}[2]. \]
 The long exact sequence of cohomology of this triangle yields the following exact sequence  
 \[ 0 \to H^1(D_\mathcal{F}) \to  H^0_{et}(X,\mathcal{F}) \to H^0_{et}(X,\mathcal{F}_B) \to H^2(D_\mathcal{F})
 \to H^1_{et}(X,\mathcal{F}) \to 0 \]
 and $H^{n+1}(D_{\mathcal{F}}) \simeq H^n_{W}(X,\mathcal{F})$ for $n \geq 2$. The lemma then follows from proposition $\ref{Weil_F}$.
 \end{proof}
 
\begin{lemma}\label{M_det}
Let $\beta$ be the map $H^2(D_\mathcal{F}) \to  H^1_{et}(X,\mathcal{F})$ from proposition $\ref{Hn_dF}$. Then 
there is a canonical isomorphism $\theta : H^2(D_\mathcal{F})_{\mathbb{R}} \to H^3(D_\mathcal{F})_{\mathbb{R}}$ with 
$|\det\theta|=R(\mathcal{F})/[\mathrm{cok}(\beta_{tor})]$ with respect to integral bases.
\end{lemma}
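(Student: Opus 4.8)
The plan is to exhibit $\theta$ as a composite of three canonical isomorphisms — one coming from sequence (\ref{Hn_dF_seq1}), one from the regulator pairing, and one from sequence (\ref{Hn_dF_seq2}) — and then to reduce the determinant computation to Definition \ref{defn_regulator} together with a single lattice-index identity.

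First I would tensor the sequences (\ref{Hn_dF_seq1}) and (\ref{Hn_dF_seq2}) of Proposition \ref{Hn_dF} with $\mathbb{R}$. By conditions 2 and 3 of Definition \ref{strong_const}, the groups $H^1(D_\mathcal{F})$, $H^1_{et}(X,\mathcal{F})$ and $H^2_{et}(X,\mathcal{F})$ are finite, so after $-\otimes\mathbb{R}$ the sequence (\ref{Hn_dF_seq1}) collapses to $0 \to H^0_{et}(X,\mathcal{F})_{\mathbb{R}} \to H^0_{et}(X,\mathcal{F}_B)_{\mathbb{R}} \to H^2(D_\mathcal{F})_{\mathbb{R}} \to 0$, giving a canonical isomorphism $\iota_2 : H^2(D_\mathcal{F})_{\mathbb{R}} \xrightarrow{\sim} \left(H^0_{et}(X,\mathcal{F}_B)/H^0_{et}(X,\mathcal{F})\right)_{\mathbb{R}}$, and (\ref{Hn_dF_seq2}) collapses to a canonical isomorphism $\iota_3 : H^3(D_\mathcal{F})_{\mathbb{R}} \xrightarrow{\sim} Hom_X(\mathcal{F},\mathbb{G}_m)^{*}_{\mathbb{R}}$. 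Since $Hom_{\mathbb{Z}}(A,\mathbb{Z})\otimes\mathbb{R}=Hom_{\mathbb{R}}(A_{\mathbb{R}},\mathbb{R})$ for finitely generated $A$, condition 4 says that the regulator pairing $(\ref{eqn_pairing})$ induces an isomorphism $\rho$ from $\left(H^0_{et}(X,\mathcal{F}_B)/H^0_{et}(X,\mathcal{F})\right)_{\mathbb{R}}$ onto $Hom_X(\mathcal{F},\mathbb{G}_m)^{*}_{\mathbb{R}}$. I then set $\theta := \iota_3^{-1}\circ\rho\circ\iota_2$, which is canonical, and which is an isomorphism because all three factors are.

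For the determinant I would track the integral lattices carefully. On the $H^3$ side this is clean: $Hom_X(\mathcal{F},\mathbb{G}_m)^{*}$ is torsion-free, so by (\ref{Hn_dF_seq2}) the torsion subgroup of $H^3(D_\mathcal{F})$ is exactly $H^2_{et}(X,\mathcal{F})$ and $\iota_3$ carries $H^3(D_\mathcal{F})/\mathrm{tor}$ isomorphically onto the standard lattice $Hom_X(\mathcal{F},\mathbb{G}_m)^{*}$ used in Definition \ref{defn_regulator}. On the $H^2$ side, the map $H^0_{et}(X,\mathcal{F}_B)\to H^2(D_\mathcal{F})$ in (\ref{Hn_dF_seq1}) induces an isomorphism $H^0_{et}(X,\mathcal{F}_B)/H^0_{et}(X,\mathcal{F})\xrightarrow{\sim}\ker\beta$, so $\iota_2$ carries the image $\overline{\ker\beta}$ of $\ker\beta$ in $H^2(D_\mathcal{F})/\mathrm{tor}$ onto the standard lattice $\left(H^0_{et}(X,\mathcal{F}_B)/H^0_{et}(X,\mathcal{F})\right)/\mathrm{tor}$. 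The discrepancy between $H^2(D_\mathcal{F})/\mathrm{tor}$ and this sublattice is $H^2(D_\mathcal{F})/(\ker\beta+\mathrm{tor})$, which via $\beta$ is isomorphic to $H^1_{et}(X,\mathcal{F})/\beta(\mathrm{tor}\,H^2(D_\mathcal{F}))=\mathrm{cok}(\beta_{tor})$; hence $\iota_2\bigl(H^2(D_\mathcal{F})/\mathrm{tor}\bigr)$ contains the standard lattice with index $[\mathrm{cok}(\beta_{tor})]$.

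Finally I would assemble the pieces. Transporting the lattices through $\iota_2$ and $\iota_3$ shows that $|\det\theta|$, computed in the integral bases of $H^2(D_\mathcal{F})/\mathrm{tor}$ and $H^3(D_\mathcal{F})/\mathrm{tor}$, equals $|\det\rho|$ computed in bases of $\iota_2\bigl(H^2(D_\mathcal{F})/\mathrm{tor}\bigr)$ and $Hom_X(\mathcal{F},\mathbb{G}_m)^{*}$. By Definition \ref{defn_regulator}, $|\det\rho|$ in the standard source lattice is $R(\mathcal{F})$; enlarging the source lattice by index $[\mathrm{cok}(\beta_{tor})]$ divides the determinant by that index, so $|\det\theta|=R(\mathcal{F})/[\mathrm{cok}(\beta_{tor})]$. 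The main obstacle is the penultimate paragraph: correctly identifying the sublattice generated by $\ker\beta$ and proving its index in $H^2(D_\mathcal{F})/\mathrm{tor}$ is exactly $[\mathrm{cok}(\beta_{tor})]$, and keeping straight which lattice is larger so that the correction enters as a division rather than a multiplication. The determinant-of-exact-sequence and torsion-order lemmas collected in the appendix should make this bookkeeping routine.
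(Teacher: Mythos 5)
Your construction $\theta=\iota_3^{-1}\circ\rho\circ\iota_2$ is exactly the paper's $\theta=\psi^{-1}\circ\gamma\circ\phi^{-1}$, and your determinant bookkeeping (factor $1$ on the $H^3$ side, $R(\mathcal{F})$ from the regulator, division by the index $[\mathrm{cok}(\beta_{tor})]$ on the $H^2$ side) matches the paper's computation, which simply packages your lattice-index argument as an appeal to the appendix lemma \ref{det_tor_3term}. So the proposal is correct and essentially the same proof as in the paper.
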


\begin{proof}
We construct the isomorphism $\theta : H^2(D_\mathcal{F})_{\mathbb{R}} \to H^3(D_\mathcal{F})_{\mathbb{R}}$ as follows. From the exact sequence ($\ref{Hn_dF_seq1}$)
 we have \[ 0 \to H^0_{et}(X,\mathcal{F}_B)/H^0_{et}(X,\mathcal{F}) \to H^2(D_\mathcal{F}) \xrightarrow{\beta} H^1_{et}(X,\mathcal{F}) \to 0. \]
As $H^1_{et}(X,\mathcal{F})$ is finite, there is an isomorphism
$ \phi : (H^0_{et}(X,\mathcal{F}_B)/H^0_{et}(X,\mathcal{F}))_{\mathbb{R}} \to H^2(D_\mathcal{F})_{\mathbb{R}} $.
By lemma $\ref{det_tor_3term}$, 
$ |\det(\phi)|=[\mathrm{cok}(\beta_{tor})]$ with respect to integral bases.

From the exact sequence ($\ref{Hn_dF_seq2}$)
and the fact that $H^2_{et}(X,\mathcal{F})$ is finite, there is an isomorphism 
$ \psi : H^3(D_\mathcal{F})_{\mathbb{R}} \to Hom_X(\mathcal{F},\mathbb{G}_m)^{*}_{\mathbb{R}} $.
By lemma $\ref{det_tor_3term}$ and the fact that $Hom_X(\mathcal{F},\mathbb{G}_m)^{*}$ is torsion free, we have 
$|\det(\psi)|=1$ with respect to integral bases.

As $\mathcal{F}$ is strongly-$\mathbb{Z}$-constructible, the regulator pairing induces the isomorphism  
\[ \gamma : \left(\frac{H^0_{et}(X,\mathcal{F}_B)}{H^0_{et}(X,\mathcal{F})}\right)_{\mathbb{R}} \to Hom_X(\mathcal{F},\mathbb{G}_m)^{*}_{\mathbb{R}}.\]
By definition $\ref{defn_regulator}$, $|\det(\gamma)|=R(\mathcal{F})$ with respect to integral bases.

We define $\theta$ to be $\psi^{-1} \circ \beta \circ \phi^{-1}$. 
Therefore, $\theta : H^2(D_\mathcal{F})_{\mathbb{R}} \to H^3(D_\mathcal{F})_{\mathbb{R}}$ is an isomorphism 
and $|\det\theta|=R(\mathcal{F})/[\mathrm{cok}(\beta_{tor})]$ with respect to integral bases.
\end{proof}
The existence of $\theta$ in lemma $\ref{M_det}$ enables us to make the following definition.
\begin{defn}\label{euler_defn1}
For a strongly-$\mathbb{Z}$-constructible sheaf $\mathcal{F}$ on $X$, we define the Euler characteristic $\chi(\mathcal{F})$ by 
\[ {\chi}(\mathcal{F}):= \frac{[H^1(D_\mathcal{F})][H^3(D_\mathcal{F})_{tor}]}{[H^2(D_\mathcal{F})_{tor}][H^4(D_\mathcal{F})]}|\det(\theta)| \]
where $\theta$ is the isomorphism constructed in the lemma $\ref{M_det}$ and its determinant is computed with respect to integral bases.
\end{defn}

\begin{prop}\label{euler_comp1}
For a strongly-$\mathbb{Z}$-constructible sheaf $\mathcal{F}$ on $X$, we have 
\[ {\chi}(\mathcal{F})= \frac{[H^0_{et}(X,\mathcal{F})_{tor}][H^2_{et}(X,\mathcal{F})]R(\mathcal{F})}{[H^1_{et}(X,\mathcal{F})][Hom_X(\mathcal{F},\mathbb{G}_m)^D_{tor}][H^0_{et}(X,\mathcal{F}_B)_{tor}][\mathrm{cok}(\delta_{tor})]} \]
where $\delta$ is the quotient map $H^0_{et}(X,\mathcal{F}_B)\to H^0_{et}(X,\mathcal{F}_B)/H^0_{et}(X,\mathcal{F})$. In particular, if $\mathcal{F}$ is a constructible sheaf then
 \[ {\chi}(\mathcal{F})= \frac{[H^0_{et}(X,\mathcal{F})][H^2_{et}(X,\mathcal{F})]}
 {[H^1_{et}(X,\mathcal{F})][H^3_{et}(X,\mathcal{F})][H^0_{et}(X,\mathcal{F}_B)]}. \]
\end{prop}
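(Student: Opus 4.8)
The plan is to unwind Definition~\ref{euler_defn1} using the description of the groups $H^n(D_\mathcal{F})$ from Proposition~\ref{Hn_dF} together with the determinant computation of Lemma~\ref{M_det}. First I would record what Lemma~\ref{M_det} gives us: $|\det(\theta)| = R(\mathcal{F})/[\mathrm{cok}(\beta_{tor})]$, so that
\[ \chi(\mathcal{F}) = \frac{[H^1(D_\mathcal{F})]\,[H^3(D_\mathcal{F})_{tor}]}{[H^2(D_\mathcal{F})_{tor}]\,[H^4(D_\mathcal{F})]}\cdot\frac{R(\mathcal{F})}{[\mathrm{cok}(\beta_{tor})]}. \]
Now I would go term by term. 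From Proposition~\ref{Hn_dF}, $H^4(D_\mathcal{F}) = Hom_X(\mathcal{F},\mathbb{G}_m)_{tor}^D$, which handles one denominator factor. The sequence (\ref{Hn_dF_seq2}) identifies $H^3(D_\mathcal{F})_{tor}$: since $Hom_X(\mathcal{F},\mathbb{G}_m)^{*}$ is torsion-free, the torsion of $H^3(D_\mathcal{F})$ equals $H^2_{et}(X,\mathcal{F})$ (which is finite), so $[H^3(D_\mathcal{F})_{tor}] = [H^2_{et}(X,\mathcal{F})]$. For $H^1(D_\mathcal{F})$ and the remaining pieces I would split the five-term sequence (\ref{Hn_dF_seq1}) into
\[ 0 \to H^1(D_\mathcal{F}) \to H^0_{et}(X,\mathcal{F}) \to H^0_{et}(X,\mathcal{F}_B) \xrightarrow{\delta} H^0_{et}(X,\mathcal{F}_B)/H^0_{et}(X,\mathcal{F}) \to 0 \]
and
\[ 0 \to H^0_{et}(X,\mathcal{F}_B)/H^0_{et}(X,\mathcal{F}) \to H^2(D_\mathcal{F}) \xrightarrow{\beta} H^1_{et}(X,\mathcal{F}) \to 0, \]
where I identify the image of the middle map in (\ref{Hn_dF_seq1}) with the quotient $H^0_{et}(X,\mathcal{F}_B)/H^0_{et}(X,\mathcal{F})$ (this is exactly what appears in $\delta$).

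The torsion bookkeeping is the technical heart. From the first split sequence, $H^1(D_\mathcal{F})$ is the kernel of $H^0_{et}(X,\mathcal{F}) \to H^0_{et}(X,\mathcal{F}_B)$, which by condition~2 of Definition~\ref{strong_const} is finite; I would argue it equals a subgroup of $H^0_{et}(X,\mathcal{F})_{tor}$, and more precisely use the multiplicativity of orders along the four-term sequence (invoking the appendix lemmas on orders of torsion groups, e.g.\ the analogue of \ref{det_tor_3term} for finite groups) to get a relation among $[H^1(D_\mathcal{F})]$, $[H^0_{et}(X,\mathcal{F})_{tor}]$, $[H^0_{et}(X,\mathcal{F}_B)_{tor}]$ and $[\mathrm{cok}(\delta_{tor})]$. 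Likewise, from the second split sequence, $H^2(D_\mathcal{F})_{tor}$ fits into $0 \to (\text{torsion of the quotient}) \to H^2(D_\mathcal{F})_{tor} \to \mathrm{im}(\beta_{tor}) \to 0$; but the quotient $H^0_{et}(X,\mathcal{F}_B)/H^0_{et}(X,\mathcal{F})$ has torsion of order $[\mathrm{cok}(\delta_{tor})]$ (again the four-term sequence), so $[H^2(D_\mathcal{F})_{tor}] = [\mathrm{cok}(\delta_{tor})]\cdot[\mathrm{im}(\beta_{tor})]$, and $[\mathrm{im}(\beta_{tor})] = [H^1_{et}(X,\mathcal{F})]/[\mathrm{cok}(\beta_{tor})]$ since $H^1_{et}(X,\mathcal{F})$ is finite. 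Substituting all of these into the displayed formula for $\chi(\mathcal{F})$, the factors $[\mathrm{cok}(\beta_{tor})]$ cancel, $[\mathrm{cok}(\delta_{tor})]$ migrates to the denominator, and after collecting terms one lands on
\[ \chi(\mathcal{F}) = \frac{[H^0_{et}(X,\mathcal{F})_{tor}]\,[H^2_{et}(X,\mathcal{F})]\,R(\mathcal{F})}{[H^1_{et}(X,\mathcal{F})]\,[Hom_X(\mathcal{F},\mathbb{G}_m)^D_{tor}]\,[H^0_{et}(X,\mathcal{F}_B)_{tor}]\,[\mathrm{cok}(\delta_{tor})]}, \]
as claimed. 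I expect the main obstacle to be precisely this reconciliation of torsion orders across the two split exact sequences: one must be careful that the ``middle'' group $H^0_{et}(X,\mathcal{F}_B)/H^0_{et}(X,\mathcal{F})$ is not torsion-free (its torsion has order $[\mathrm{cok}(\delta_{tor})]$, not $1$), and that $H^1(D_\mathcal{F})$ — while finite — interacts with $[H^0_{et}(X,\mathcal{F})_{tor}]$ rather than equalling it, so the factor $[H^0_{et}(X,\mathcal{F}_B)_{tor}]$ enters with the correct sign.

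For the constructible case I would specialize: if $\mathcal{F}$ is constructible then $H^0_{et}(X,\mathcal{F})$, $H^0_{et}(X,\mathcal{F}_B)$, $H^1_{et}(X,\mathcal{F})$, $H^2_{et}(X,\mathcal{F})$, and $Hom_X(\mathcal{F},\mathbb{G}_m)$ are all finite (as noted in Example~\ref{ex_pairing}), so $R(\mathcal{F}) = 1$, all subscripts ``$tor$'' may be dropped, $\mathrm{cok}(\delta_{tor}) = \mathrm{cok}(\delta)$, and from the four-term sequence for $\delta$ we get $[\mathrm{cok}(\delta)] = [H^0_{et}(X,\mathcal{F}_B)]\,[H^0_{et}(X,\mathcal{F})]^{-1}\cdot[\ker]$; combined with $[Hom_X(\mathcal{F},\mathbb{G}_m)^D] = [Hom_X(\mathcal{F},\mathbb{G}_m)] = [H^3_{et}(X,\mathcal{F})]$ (by Proposition~\ref{Weil_F}, since for constructible $\mathcal{F}$ one has $H^3_{et}(X,\mathcal{F}) = H^3_W(X,\mathcal{F}) = Hom_X(\mathcal{F},\mathbb{G}_m)_{tor}^D$ and the group is finite) the general formula collapses to
\[ \chi(\mathcal{F}) = \frac{[H^0_{et}(X,\mathcal{F})]\,[H^2_{et}(X,\mathcal{F})]}{[H^1_{et}(X,\mathcal{F})]\,[H^3_{et}(X,\mathcal{F})]\,[H^0_{et}(X,\mathcal{F}_B)]}. \]
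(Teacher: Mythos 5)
Your overall strategy is the paper's: split (\ref{Hn_dF_seq1}) at the quotient $Q:=H^0_{et}(X,\mathcal{F}_B)/H^0_{et}(X,\mathcal{F})$, read off $H^3(D_\mathcal{F})_{tor}$ and $H^4(D_\mathcal{F})$ from Proposition \ref{Hn_dF}, and combine with $|\det\theta|=R(\mathcal{F})/[\mathrm{cok}(\beta_{tor})]$ from Lemma \ref{M_det}. But the step you yourself single out as the heart, the torsion bookkeeping, contains a false claim: you assert that $Q_{tor}$ has order $[\mathrm{cok}(\delta_{tor})]$. In fact $\mathrm{cok}(\delta_{tor})=Q_{tor}/\delta\bigl(H^0_{et}(X,\mathcal{F}_B)_{tor}\bigr)$, so $[Q_{tor}]=[\mathrm{cok}(\delta_{tor})]\cdot[\mathrm{im}(\delta_{tor})]$, and the image term need not vanish. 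A counterexample within the scope of the proposition: $\mathcal{F}=j_{*}M$ with $M$ finite and $M^{G_K}=0$ (say $M=\mathbb{Z}/3$ twisted by a quadratic character); then $H^0_{et}(X,\mathcal{F})=0$, $Q\simeq\prod_{v\in S_\infty}M$ is nonzero torsion, while $\delta_{tor}=\delta$ is surjective, so $[\mathrm{cok}(\delta_{tor})]=1\neq[Q_{tor}]$. Consequently your identity $[H^2(D_\mathcal{F})_{tor}]=[\mathrm{cok}(\delta_{tor})]\cdot[\mathrm{im}(\beta_{tor})]$ is wrong (the correct one has $[Q_{tor}]$ in place of $[\mathrm{cok}(\delta_{tor})]$), and the ``relation among $[H^1(D_\mathcal{F})]$, $[H^0_{et}(X,\mathcal{F})_{tor}]$, $[H^0_{et}(X,\mathcal{F}_B)_{tor}]$, $[\mathrm{cok}(\delta_{tor})]$'' that you invoke for the first split sequence cannot involve only those four quantities: Lemma \ref{torsion_group} gives $[H^1(D_\mathcal{F})]=[H^0_{et}(X,\mathcal{F})_{tor}][Q_{tor}]/([H^0_{et}(X,\mathcal{F}_B)_{tor}][\mathrm{cok}(\delta_{tor})])$, which genuinely contains $[Q_{tor}]$. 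The paper carries $[Q_{tor}]$ as an explicit factor in both relations and lets it cancel in the ratio $[H^1(D_\mathcal{F})]/[H^2(D_\mathcal{F})_{tor}]$; it is that cancellation, not the identification $[Q_{tor}]=[\mathrm{cok}(\delta_{tor})]$, that makes the stated formula emerge.

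The same confusion resurfaces in your constructible specialization. Since $\delta$ is by definition the quotient map onto $Q$, it is surjective; in the constructible case all groups are finite, so $\delta_{tor}=\delta$ and $[\mathrm{cok}(\delta_{tor})]=1$, and this is the value that collapses the general formula to the displayed one. Your expression $[\mathrm{cok}(\delta)]=[H^0_{et}(X,\mathcal{F}_B)]\,[H^0_{et}(X,\mathcal{F})]^{-1}\cdot[\ker]$ is the order of $Q$ itself (the cokernel of $H^0_{et}(X,\mathcal{F})\to H^0_{et}(X,\mathcal{F}_B)$), not of $\mathrm{cok}(\delta)$, and substituting it into the general formula does not produce the constructible formula you state. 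So the route is the right one, but the bookkeeping must be redone with $[Q_{tor}]$ carried through both split sequences (exactly as in Lemma \ref{torsion_group}) rather than replaced by $[\mathrm{cok}(\delta_{tor})]$.
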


\begin{proof}
By lemma $\ref{M_det}$, $|\det\theta|=R(\mathcal{F})/[\mathrm{cok}(\beta_{tor})]$ where 
$\beta : H^2(D_{\mathcal{F}}) \to H^1_{et}(X,\mathcal{F}) $.
Now we compute the torsion subgroups of $H^n(D_\mathcal{F})$. 
We have $[H^3(D_\mathcal{F})_{tor}]=[H^2_{et}(X,\mathcal{F})]$ and $[H^4(D_\mathcal{F})]=[Hom_X(\mathcal{F},\mathbb{G}_m)^D_{tor}]$ from proposition $\ref{Hn_dF}$.
We split the exact sequence $(\ref{Hn_dF_seq1})$ into
 \[ 0 \to H^1(D_\mathcal{F}) \to H^0_{et}(X,\mathcal{F}) \to H^0_{et}(X,\mathcal{F}_B) \xrightarrow{\delta} 
 \frac{H^0_{et}(X,\mathcal{F}_B)}{H^0_{et}(X,\mathcal{F})} \to 0, \]
 \[ 0 \to \frac{H^0_{et}(X,\mathcal{F}_B)}{H^0_{et}(X,\mathcal{F})} \to H^2(D_\mathcal{F}) \xrightarrow{\beta} H^1_{et}(X,\mathcal{F}) \to 0 .\]
Applying lemma $\ref{torsion_group}$ to the two exact sequences above, we have  
 \[ [H^1(D_\mathcal{F})] = \frac{[{H^0_{et}(X,\mathcal{F})}_{tor}]
 \left[\left(\frac{H^0_{et}(X,\mathcal{F}_B)}{H^0_{et}(X,\mathcal{F})}\right)_{tor}\right]}
 {[{H^0_{et}(X,\mathcal{F}_B)}_{tor}][\mathrm{cok}(\delta_{tor})]},  \qquad
  [H^2(D_\mathcal{F})_{tor}] = \frac{\left[\left(\frac{H^0_{et}(X,\mathcal{F}_B)}{H^0_{et}(X,\mathcal{F})}\right)_{tor}\right][H^1_{et}(X,\mathcal{F})]}{[\mathrm{cok}(\beta_{tor})]} .\]
Putting everything together establishes the formula for $\chi(\mathcal{F})$.
 \end{proof}
 
\subsection{Simple Computations}

\begin{prop}\label{Z_Euler}
Let $h$, $R$ and $w$ be the class number, the regulator and the number of roots of unity of $K$ respectively. Then
$\chi(\mathbb{Z})={hR}/{w} $.
In particular, $\zeta_K^{*}(0)=-\chi(\mathbb{Z})$.
\end{prop}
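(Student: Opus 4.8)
The plan is to apply the formula for $\chi(\mathcal{F})$ in Proposition \ref{euler_comp1} to the constant sheaf $\mathcal{F}=\mathbb{Z}$, which is strongly-$\mathbb{Z}$-constructible by the examples in Section \ref{chapter_strongly_Z}. First I would assemble the ingredients. From Theorem \ref{et_ZGm}, $H^0_{et}(X,\mathbb{Z})=\mathbb{Z}$, which is torsion-free, so $[H^0_{et}(X,\mathbb{Z})_{tor}]=1$; $H^1_{et}(X,\mathbb{Z})=0$, so $[H^1_{et}(X,\mathbb{Z})]=1$; and $H^2_{et}(X,\mathbb{Z})=Pic(O_K)^D$, so $[H^2_{et}(X,\mathbb{Z})]=h$. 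Next, $Hom_X(\mathbb{Z},\mathbb{G}_m)=H^0_{et}(X,\mathbb{G}_m)=O_K^{*}$, so $Hom_X(\mathbb{Z},\mathbb{G}_m)^D_{tor}=(O_K^{*})^D_{tor}=\mu_K^D$, giving $[Hom_X(\mathbb{Z},\mathbb{G}_m)^D_{tor}]=w$. By Example \ref{ex_pairing}(2), $R(\mathbb{Z})=R$.

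Then I would handle the two remaining factors, $[H^0_{et}(X,\mathcal{F}_B)_{tor}]$ and $[\mathrm{cok}(\delta_{tor})]$. By Lemma \ref{et_FB}, $H^0_{et}(X,\mathbb{Z}_B)=\prod_{v\in S_\infty}\mathbb{Z}_{K_v}=\prod_{v\in S_\infty}\mathbb{Z}$, a free abelian group, so $[H^0_{et}(X,\mathbb{Z}_B)_{tor}]=1$. The map $\delta: H^0_{et}(X,\mathbb{Z}_B)\to H^0_{et}(X,\mathbb{Z}_B)/H^0_{et}(X,\mathbb{Z})$ is the quotient $\prod_{v\in S_\infty}\mathbb{Z}\to (\prod_{v\in S_\infty}\mathbb{Z})/\mathbb{Z}$, where $\mathbb{Z}$ is embedded diagonally; the quotient is free of rank $r_1+r_2-1$ (here $r_1=0$ since $K$ is totally imaginary, but the argument is the same), hence torsion-free, and $\delta$ is surjective, so $\mathrm{cok}(\delta_{tor})=\mathrm{cok}(\delta)_{tor}=0$ and $[\mathrm{cok}(\delta_{tor})]=1$. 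Substituting all of this into the formula of Proposition \ref{euler_comp1} gives
\[
\chi(\mathbb{Z})=\frac{1\cdot h\cdot R}{1\cdot w\cdot 1\cdot 1}=\frac{hR}{w}.
\]

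Finally, for the statement $\zeta_K^{*}(0)=-\chi(\mathbb{Z})$, I would invoke the classical analytic class number formula: the leading coefficient of $\zeta_K(s)$ at $s=0$ is $\zeta_K^{*}(0)=-\frac{hR}{w}$, which for a totally imaginary field of degree $n=2r_2$ over $\mathbb{Q}$ follows from the functional equation applied to the residue at $s=1$ (equivalently, $\zeta_K(s)$ has a zero of order $r_1+r_2-1=r_2-1$ at $s=0$ with leading term $-\frac{hR}{w}$; the sign is $(-1)^{?}$ bookkeeping that comes out to a single minus sign). Combining with $\chi(\mathbb{Z})=hR/w$ yields $\zeta_K^{*}(0)=-\chi(\mathbb{Z})$.

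I do not anticipate a serious obstacle here: the only mildly delicate point is correctly tracking that $\mathrm{cok}(\delta_{tor})$ is trivial (i.e.\ that the torsion-free quotient $\prod_{v\in S_\infty}\mathbb{Z}/\mathbb{Z}$ contributes nothing), and recalling the precise sign in the analytic class number formula at $s=0$ for a totally imaginary field. Both are standard, so this proposition is essentially a sanity check that Definition \ref{euler_defn1} recovers the expected normalization.
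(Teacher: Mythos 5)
Your proposal is correct and follows the same route as the paper: plug the cohomology groups from Theorem \ref{et_ZGm}, the regulator $R(\mathbb{Z})=R$ from Example \ref{ex_pairing}, and the trivial torsion contributions of $H^0_{et}(X,\mathbb{Z}_B)$ and $\mathrm{cok}(\delta_{tor})$ into the formula of Proposition \ref{euler_comp1}, then conclude with the analytic class number formula at $s=0$. The only cosmetic remark is that $\mathrm{cok}(\delta_{tor})=0$ follows directly from the target $(\prod_{v\in S_\infty}\mathbb{Z})/\mathbb{Z}$ being torsion-free, which is exactly what you observe, so there is no gap.
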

\begin{proof}
From theorem $\ref{et_ZGm}$, $H^1_{et}(X,\mathbb{Z})=0$ and $[H^2_{et}(X,\mathbb{Z})]=h$. It is clear that $H^0_{et}(X,\mathbb{Z}_B)_{tor}=0$ and $\mathrm{cok}(\delta_{tor})=0$.
In addition, $R(\mathbb{Z})=R$ and $[(O_K^{*})_{tor}]=w$. As a result,
${\chi}(\mathbb{Z})={hR}/{w}$.
Therefore, $\zeta_K^{*}(0) = -\chi(\mathbb{Z}) $ by the analytic class number formula.
\end{proof}
\begin{prop}\label{Zn_Euler}
Euler characteristics of finite constant sheaves are 1.
\end{prop}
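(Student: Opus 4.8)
The plan is to compute $\chi$ directly from the closed formula of Proposition~\ref{euler_comp1}. Let $A$ be a finite abelian group and, following the paper's convention, write $A$ also for the constant sheaf it defines on $X$; it is constructible, hence strongly-$\mathbb{Z}$-constructible, so $\chi(A)$ is defined and
\[ \chi(A)=\frac{[H^0_{et}(X,A)]\,[H^2_{et}(X,A)]}{[H^1_{et}(X,A)]\,[H^3_{et}(X,A)]\,[H^0_{et}(X,A_B)]}. \]
By Lemma~\ref{et_FB} the stalk of $A$ at $\bar K_v$ is $A$ for each $v\in S_\infty$, so $H^0_{et}(X,A_B)=\prod_{v\in S_\infty}A=A^{r_2}$, where $r_2=[K:\mathbb{Q}]/2$ is the number of (complex) archimedean places of the totally imaginary field $K$; thus $[H^0_{et}(X,A_B)]=[A]^{r_2}$, and it remains to prove
\[ [H^0_{et}(X,A)]\,[H^2_{et}(X,A)]=[H^1_{et}(X,A)]\,[H^3_{et}(X,A)]\cdot[A]^{r_2}. \]

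Since \'etale cohomology commutes with finite direct sums and every finite abelian group is a finite direct sum of cyclic groups, it suffices to treat $A=\mathbb{Z}/m$. First I would apply $H^{\bullet}_{et}(X,-)$ to the exact sequence of constant sheaves $0\to\mathbb{Z}\xrightarrow{m}\mathbb{Z}\to\mathbb{Z}/m\to0$, obtaining short exact sequences $0\to H^i_{et}(X,\mathbb{Z})/m\to H^i_{et}(X,\mathbb{Z}/m)\to H^{i+1}_{et}(X,\mathbb{Z})[m]\to0$ for all $i$, and then substitute the values supplied by Theorem~\ref{et_ZGm}: $H^0_{et}(X,\mathbb{Z})=\mathbb{Z}$, $H^1_{et}(X,\mathbb{Z})=0$, $H^2_{et}(X,\mathbb{Z})=\mathrm{Pic}(O_K)^D$ (finite), $H^3_{et}(X,\mathbb{Z})=(O_K^{*})^D$, and $H^n_{et}(X,\mathbb{Z})=0$ for $n>3$. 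Using that $\mathrm{Pic}(O_K)^D$ is finite and that $O_K^{*}\cong\mu_K\times\mathbb{Z}^{r_2-1}$ (no real places), so $(O_K^{*})^D\cong\mu_K^D\times(\mathbb{Q}/\mathbb{Z})^{r_2-1}$ with $\mu_K^D$ cyclic of order $w=[\mu_K]$, one reads off $[H^0_{et}(X,\mathbb{Z}/m)]=m$, $[H^1_{et}(X,\mathbb{Z}/m)]=[\mathrm{Pic}(O_K)[m]]$, $[H^2_{et}(X,\mathbb{Z}/m)]=[\mathrm{Pic}(O_K)[m]]\cdot\gcd(w,m)\cdot m^{r_2-1}$, $[H^3_{et}(X,\mathbb{Z}/m)]=\gcd(w,m)$, and $H^n_{et}(X,\mathbb{Z}/m)=0$ for $n>3$. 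The $\mathrm{Pic}(O_K)$ and $\gcd(w,m)$ factors cancel between degrees $1,2$ and $3$, leaving $[H^0][H^2]/([H^1][H^3])=m\cdot m^{r_2-1}=m^{r_2}=[\mathbb{Z}/m]^{r_2}$. Reassembling the cyclic decomposition of a general finite $A$ yields the required identity, and hence $\chi(A)=1$.

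I do not anticipate a real obstacle; the only point requiring care is the asymmetry between $[B/m]$ and $[B[m]]$ for the groups $B$ at play. These coincide for the finite group $\mathrm{Pic}(O_K)^D$, which is why the class-number contributions cancel, but for the divisible part $(\mathbb{Q}/\mathbb{Z})^{r_2-1}$ of $(O_K^{*})^D$ one has $[(\mathbb{Q}/\mathbb{Z})^{r_2-1}/m]=1$ while $[(\mathbb{Q}/\mathbb{Z})^{r_2-1}[m]]=m^{r_2-1}$, and it is precisely this $m^{r_2-1}$, together with the $H^0$-contribution $m$, that matches the factor $[A]^{r_2}=[H^0_{et}(X,A_B)]$ in the denominator. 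Finally, the argument uses the totally imaginary hypothesis both in the shape $O_K^{*}\cong\mu_K\times\mathbb{Z}^{r_2-1}$ of the unit group and, via Lemma~\ref{et_FB}, in the fact that $A_B$ has cohomology only in degree $0$.
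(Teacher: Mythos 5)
Your proposal is correct and follows essentially the same route as the paper: reduce to $\mathbb{Z}/m$, compute $H^\bullet_{et}(X,\mathbb{Z}/m)$ from Theorem~\ref{et_ZGm} via the multiplication-by-$m$ sequence, and feed the orders into the constructible-sheaf formula of Proposition~\ref{euler_comp1}, with Dirichlet's unit theorem and the totally imaginary hypothesis producing the cancellation against $[H^0_{et}(X,A_B)]=[A]^{r_2}$. You merely make explicit the order bookkeeping that the paper leaves implicit.
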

\begin{proof}
 It suffices to prove this proposition for the constant sheaf $\mathbb{Z}/n$.
From theorem $\ref{et_ZGm}$, the \'etale cohomology of $\mathbb{Z}/n$ is given by
\[  H^r_{et}(X,\mathbb{Z}/n) = 
         \left\{
				\begin{array}{ll}
 					\mathbb{Z}/n   &  \mbox{$r=0$}\\
 					(Pic(O_K)/n)^D &  \mbox{$r=1$ }\\
 						\mu_n(K)^D & \mbox{$r=3$}. 
				\end{array}
			   \right.
	            \]
\[ 0 \to Pic(O_K)[n]^D \to H^2_{et}(X,\mathbb{Z}/n) \to (O_K^{*}/(O_K^{*})^{n})^D \to 0 .\]
Observe that $[H^0_{et}(X,(\mathbb{Z}/n)_B)]=n^{|S_{\infty}|}$ and $R(\mathbb{Z}/n)=1$. From proposition $\ref{euler_comp1}$ and Dirichlet's Unit Theorem,
${\chi}(\mathbb{Z}/n)=1$.
\end{proof}
\begin{prop}\label{Euler_negligible}
Euler characteristics of negligible sheaves are 1.
\end{prop}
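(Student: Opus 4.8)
The plan is to invoke the formula of Proposition \ref{euler_comp1} for constructible sheaves — which applies because a negligible sheaf $\mathcal{F}=\prod_{p\in S}(i_p)_*M_p$ is constructible (Definition \ref{defn_negligible}), hence strongly-$\mathbb{Z}$-constructible, so that $\chi(\mathcal{F})$ is defined — and then compute every cohomology order appearing in it. First I would note that $\mathcal{F}_B=0$: for an infinite place $v$ the stalk of $\mathcal{F}$ at $\bar{K_v}$ vanishes since $\mathcal{F}$ is supported on the finite set $S$ of closed points, so $(j_v)^*\mathcal{F}=0$ and therefore $\mathcal{F}_B=\prod_{v\in S_\infty}(j_v)_*(j_v)^*\mathcal{F}=0$. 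In particular $[H^0_{et}(X,\mathcal{F}_B)]=1$, so that factor disappears from the denominator.

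Next, since a finite product of sheaves is a direct sum and étale cohomology commutes with finite direct sums, $H^n_{et}(X,\mathcal{F})=\bigoplus_{p\in S}H^n_{et}(X,(i_p)_*M_p)$. As $i_p\colon p\to X$ is a closed immersion, $(i_p)_*$ is exact, so $H^n_{et}(X,(i_p)_*M_p)\cong H^n_{et}(p,M_p)$; and the residue field $\kappa(p)$ is finite with absolute Galois group $\hat{\mathbb{Z}}$, whence $H^n_{et}(p,M_p)\cong H^n(\hat{\mathbb{Z}},M_p)$. Because $M_p$ is finite and $\hat{\mathbb{Z}}$ has cohomological dimension $1$, we get $H^n(\hat{\mathbb{Z}},M_p)=0$ for $n\geq 2$, so $[H^2_{et}(X,\mathcal{F})]=[H^3_{et}(X,\mathcal{F})]=1$. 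Finally, writing $\sigma$ for a topological generator (Frobenius) of $\hat{\mathbb{Z}}$, one has $H^0(\hat{\mathbb{Z}},M_p)=\ker(\sigma-1\colon M_p\to M_p)$ and $H^1(\hat{\mathbb{Z}},M_p)=\mathrm{cok}(\sigma-1\colon M_p\to M_p)$; since $\sigma-1$ is an endomorphism of the finite abelian group $M_p$, its kernel and cokernel have equal order, so $[H^0(\hat{\mathbb{Z}},M_p)]=[H^1(\hat{\mathbb{Z}},M_p)]$ and hence $[H^0_{et}(X,\mathcal{F})]=[H^1_{et}(X,\mathcal{F})]$. Substituting all of this into the constructible-sheaf formula of Proposition \ref{euler_comp1} yields $\chi(\mathcal{F})=1$.

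I do not expect a real obstacle: this is a direct unwinding of definitions together with two standard facts about the procyclic group $\hat{\mathbb{Z}}$ (namely $\mathrm{cd}(\hat{\mathbb{Z}})=1$ and the equality of orders of kernel and cokernel of an endomorphism of a finite abelian group). The only places warranting a line of justification are the vanishing $\mathcal{F}_B=0$, which is immediate from Definition \ref{defn_FBFDB}, and the identification of the étale cohomology of $(i_p)_*M_p$ on $X$ with the Galois cohomology of the residue field. As an alternative one could filter $\mathcal{F}$ by subsheaves with negligible quotients and argue by multiplicativity of $\chi$ along the exact sequences of Proposition \ref{long_exact_Weil}, but the direct computation above is the shortest route.
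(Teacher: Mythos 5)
Your argument is correct and is essentially the paper's own proof: the paper likewise reduces to a single sheaf $i_{*}M$ with $M$ finite, identifies $H^{n}_{et}(X,i_{*}M)\simeq H^{n}(\hat{\mathbb{Z}},M)$ (vanishing for $n\geq 2$, and $[H^0(\hat{\mathbb{Z}},M)]=[H^1(\hat{\mathbb{Z}},M)]$ for finite $M$), notes $(i_{*}M)_B=0$, and concludes $\chi=1$ from the constructible-sheaf formula of Proposition \ref{euler_comp1}. Your write-up only adds explicit justifications (the Frobenius kernel/cokernel count, $\mathrm{cd}(\hat{\mathbb{Z}})=1$, and the vanishing of the Betti sheaf) that the paper leaves implicit.
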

\begin{proof} 
It is enough to prove this lemma for the sheaf $i_{*}M$ where $M$ is a finite $\hat{\mathbb{Z}}$-module. 
We have $H^{n}_{et}(X,i_{*}M) \simeq H^{n}(\hat{\mathbb{Z}},M)$ which is 0 for $n \geq 2$ 
$\cite[\mbox{page 189}]{Ser95}$. Moreover, $[H^0(\hat{\mathbb{Z}},M)]=[H^1(\hat{\mathbb{Z}},M)]$ for finite $M$. Also $(i_{*}M)_B=0$, therefore ${\chi}(i_{*}M)=1$.
\end{proof}

\begin{prop}\label{euler_finite_inv}
Let $L$ be a finite Galois extension of $K$, $Y=Spec(O_L)$ and $\pi' : Y \to X $ be the natural map. Let $\mathcal{F}$ be a strongly-$\mathbb{Z}$-constructible sheaf on $Y$. Then $\pi'_{*}F$ is a strongly-$\mathbb{Z}$-constructible sheaf on $X$. Moreover, $H^n(D_{\pi'_{*}\mathcal{F}}) \simeq H^n(D_{\mathcal{F}})$ and ${\chi}(\pi'_{*}F)={\chi}(F)$.
\end{prop}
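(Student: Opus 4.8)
The plan is to reduce everything to a single isomorphism $D_{\pi'_{*}\mathcal{F}} \simeq D_{\mathcal{F}}$ in the derived category, compatible with the structural data, and then read off the equality of Euler characteristics from Definition \ref{euler_defn1}. That $\pi'_{*}\mathcal{F}$ is strongly-$\mathbb{Z}$-constructible is already Proposition \ref{finite_inv}, which also records $R(\pi'_{*}\mathcal{F})=R(\mathcal{F})$; so only the statements about $H^{n}(D_{-})$ and about $\chi$ remain.

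First I would assemble the square of morphisms in the derived category of abelian groups
\[
\xymatrix{
R\Gamma_W(X,\pi'_{*}\mathcal{F}) \ar[r] \ar[d] & R\Gamma_{et}(X,(\pi'_{*}\mathcal{F})_B) \ar[d] \\
R\Gamma_W(Y,\mathcal{F}) \ar[r] & R\Gamma_{et}(Y,\mathcal{F}_B)
}
\]
whose horizontal arrows are the maps of $(\ref{defn_cF})$ that define $D_{\pi'_{*}\mathcal{F}}$ and $D_{\mathcal{F}}$. The left vertical map is the isomorphism of Lemma \ref{pre_finite_inv}(2); since the Weil-\'etale complex agrees with $\tau_{\leq 1}R\Gamma_{et}$ in degrees $\leq 1$ and $\pi'_{*}$ is exact $\cite[\mbox{II.3.6}]{Mil80}$, it is compatible with the composite $R\Gamma_W \to \tau_{\leq 1}R\Gamma_{et} \to R\Gamma_{et}$. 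The right vertical map comes from the identification $(\pi'_{*}\mathcal{F})_B \simeq \pi'_{*}(\mathcal{F}_B)$ of Lemma \ref{pre_finite_inv}(3) together with the exactness of $\pi'_{*}$, which gives $R\Gamma_{et}(X,\pi'_{*}(\mathcal{F}_B)) \simeq R\Gamma_{et}(Y,\mathcal{F}_B)$. The point that needs care is that the natural map $\mathcal{F}\to \mathcal{F}_B$ on $Y$, pushed forward by $\pi'_{*}$ and composed with these identifications, coincides with the natural map $\pi'_{*}\mathcal{F}\to(\pi'_{*}\mathcal{F})_B$ on $X$; this is a diagram chase with the adjunction units $\mathcal{F}\to (j_w)_{*}(j_w)^{*}\mathcal{F}$ and the Mackey identification of Lemma \ref{Mackey}, using $\pi'_{*}(j_w)_{*}=(j_v)_{*}(\pi_w)_{*}$ for $w\mid v$. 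Granting commutativity, the mapping fibers of the two rows are isomorphic, i.e. $D_{\pi'_{*}\mathcal{F}}\simeq D_{\mathcal{F}}$, hence $H^{n}(D_{\pi'_{*}\mathcal{F}})\simeq H^{n}(D_{\mathcal{F}})$ for all $n$.

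To conclude $\chi(\pi'_{*}\mathcal{F})=\chi(\mathcal{F})$, I would check that this isomorphism of complexes also matches the real isomorphisms $\theta$ of Lemma \ref{M_det}. By that lemma, $|\det\theta_{\mathcal{F}}|=R(\mathcal{F})/[\mathrm{cok}(\beta_{\mathcal{F},tor})]$, where $\beta_{\mathcal{F}}$ is the connecting map $H^{2}(D_{\mathcal{F}})\to H^{1}_{et}(Y,\mathcal{F})$ extracted from $(\ref{Hn_dF_seq1})$. Since the square above is compatible with the long exact cohomology sequences of the defining triangles (Proposition \ref{Hn_dF}), $\beta_{\pi'_{*}\mathcal{F}}$ is identified with $\beta_{\mathcal{F}}$ under $H^{1}_{et}(X,\pi'_{*}\mathcal{F})\simeq H^{1}_{et}(Y,\mathcal{F})$, so the orders of $\mathrm{cok}(\beta_{tor})$ agree; combined with $R(\pi'_{*}\mathcal{F})=R(\mathcal{F})$ from Proposition \ref{finite_inv} this gives $|\det\theta_{\pi'_{*}\mathcal{F}}|=|\det\theta_{\mathcal{F}}|$. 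Together with the equality of all the orders $[H^{n}(D_{-})]$ and $[H^{n}(D_{-})_{tor}]$ from the previous paragraph, Definition \ref{euler_defn1} yields $\chi(\pi'_{*}\mathcal{F})=\chi(\mathcal{F})$. One can alternatively avoid $\theta$ and verify term by term in the formula of Proposition \ref{euler_comp1} that each factor is preserved, using Lemma \ref{pre_finite_inv}(1)--(3) and Proposition \ref{finite_inv}; there the only slightly delicate factor is $[\mathrm{cok}(\delta_{tor})]$, which is preserved because the identifications respect the inclusion $H^{0}_{et}(-,\mathcal{F})\hookrightarrow H^{0}_{et}(-,\mathcal{F}_B)$ induced by $\mathcal{F}\to\mathcal{F}_B$.

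The main obstacle I anticipate is purely bookkeeping: verifying that the natural transformation $\mathrm{id}\Rightarrow(-)_B$ commutes with $\pi'_{*}$ under the Mackey-type identification of Lemma \ref{Mackey}, and more generally that the derived-category square above is genuinely commutative (not merely commutative on cohomology), so that passing to mapping fibers is legitimate. Everything else is a direct application of Lemma \ref{pre_finite_inv}, Proposition \ref{finite_inv}, and the exactness of $\pi'_{*}$.
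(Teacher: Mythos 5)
Your argument is correct and follows essentially the paper's own route: both rest on Proposition \ref{finite_inv} (strong-$\mathbb{Z}$-constructibility and $R(\pi'_{*}\mathcal{F})=R(\mathcal{F})$) and Lemma \ref{pre_finite_inv}, together with the compatibility of $\mathcal{F}\to\mathcal{F}_B$ with $\pi'_{*}$. The only difference is packaging: the paper never forms a derived-category isomorphism $D_{\pi'_{*}\mathcal{F}}\simeq D_{\mathcal{F}}$, but instead compares the five-term exact sequences of Proposition \ref{Hn_dF} for the two sheaves and applies the 5-lemma (using $H^n(D_{-})\simeq H^n_W$ for $n=3,4$), which sidesteps the mapping-fiber/commutativity subtlety you flag while still identifying $\beta$, hence $[\mathrm{cok}(\beta_{tor})]$ and $|\det\theta|$, and thus $\chi$.
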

\begin{proof}
From proposition $\ref{finite_inv}$, $\pi'_{*}F$ is a strongly-$\mathbb{Z}$-constructible sheaf and $R(\pi'_{*}\mathcal{F})=R(\mathcal{F})$. To show $H^n(D_{\pi'_{*}\mathcal{F}}) \simeq H^n(D_{\mathcal{F}})$, let us consider the following commutative diagram
\[ \xymatrix @C=1.1pc{0 \ar[r]& H^1(D_{\pi'_{*}\mathcal{F}}) \ar[r] \ar[d] & H^0_{et}(X,\pi'_{*}\mathcal{F}) \ar[r] \ar[d]^{\simeq}& H^0_{et}(X,(\pi'_{*}\mathcal{F})_B) \ar[r] \ar[d]^{\simeq} & H^2(D_{\pi'_{*}\mathcal{F}}) \ar[r]\ar[d] & H^1_{et}(X,\pi'_{*}\mathcal{F}) \ar[r]\ar[d]^{\simeq} & 0 \\ 
0 \ar[r]& H^1(D_\mathcal{F}) \ar[r]  & H^0_{et}(Y,\mathcal{F}) \ar[r] & H^0_{et}(Y,\mathcal{F}_B) \ar[r] & H^2(D_\mathcal{F}) \ar[r] & H^1_{et}(Y,\mathcal{F}) \ar[r]  & 0 \\ }
\]
where the rows are exact from proposition $\ref{Hn_dF}$. Note that the map in the center is an isomorphism by proposition $\ref{finite_inv}$. Thus, the 5-lemma implies that  
$H^n(D_{\pi'_{*}\mathcal{F}}) \simeq H^n(D_{\mathcal{F}})$ for $n=1,2$. For $n=3,4$, again from proposition $\ref{Hn_dF}$, we have $H^n(D_{\pi'_{*}\mathcal{F}}) \simeq H^n_{W}(X,\pi'_{*}\mathcal{F})$ and $H^n(D_{\mathcal{F}})\simeq H^n_{W}(Y,\mathcal{F})$. Thus, $H^n(D_{\pi'_{*}\mathcal{F}}) \simeq H^n(D_{\mathcal{F}})$ for $n=3,4$ by lemma $\ref{pre_finite_inv}$. 
Therefore, ${\chi}(\pi'_{*}F)={\chi}(F)$.

\end{proof}

\begin{cor}\label{euler_piZ}
The sheaf $\pi_{*}\mathbb{Z}$ on $Spec(K)$  corresponds to the induced $G_K$-module ${Ind}_{G_L}^{G_K}(\mathbb{Z})$. If we write $\pi_{*}\mathbb{Z}$ for ${Ind}_{G_L}^{G_K}(\mathbb{Z})$ then
 $ L^{*}(K,\pi_{*}\mathbb{Z},0)=\pm \chi(\pi'_{*}\mathbb{Z}) $.
\end{cor}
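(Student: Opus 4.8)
The plan is to reduce the statement to \texttt{Proposition \ref{Z_Euler}} via the invariance results already established. First I would recall the elementary fact that for the finite morphism $\pi : \mathrm{Spec}(L) \to \mathrm{Spec}(K)$, the pushforward $\pi_{*}\mathbb{Z}$ of the constant $G_L$-module $\mathbb{Z}$ corresponds, under the equivalence between étale sheaves on $\mathrm{Spec}(K)$ and discrete $G_K$-modules, to the induced module $\mathrm{Ind}_{G_L}^{G_K}(\mathbb{Z})$; this is standard (adjunction between restriction and induction / Shapiro's lemma at the level of sheaves). Correspondingly, $\pi'_{*}\mathbb{Z}$ on $X = \mathrm{Spec}(O_K)$ is the ``spread out'' version, which by \texttt{Corollary \ref{cor_finite_inv}} is strongly-$\mathbb{Z}$-constructible.

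Next I would invoke the Artin-formalism of $L$-functions: for an induced representation, the Artin $L$-function satisfies $L(K, \mathrm{Ind}_{G_L}^{G_K}(\mathbb{Z}), s) = L(L, \mathbb{Z}, s) = \zeta_L(s)$, the Dedekind zeta function of $L$. Taking leading Taylor coefficients at $s = 0$, this gives $L^{*}(K, \pi_{*}\mathbb{Z}, 0) = \zeta_L^{*}(0)$. On the other side, \texttt{Proposition \ref{euler_finite_inv}} gives $\chi(\pi'_{*}\mathbb{Z}) = \chi_L(\mathbb{Z})$, the Weil-étale Euler characteristic of the constant sheaf $\mathbb{Z}$ on $\mathrm{Spec}(O_L)$ (here I am tacitly using that $L$, being a finite extension of the totally imaginary field $K$, is itself totally imaginary, so all the constructions of Sections 2--4 apply over $L$). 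Then \texttt{Proposition \ref{Z_Euler}}, applied over $L$ rather than $K$, yields $\zeta_L^{*}(0) = -\chi_L(\mathbb{Z})$. Chaining these three equalities gives $L^{*}(K, \pi_{*}\mathbb{Z}, 0) = \zeta_L^{*}(0) = -\chi_L(\mathbb{Z}) = -\chi(\pi'_{*}\mathbb{Z})$, which is the claimed identity up to sign.

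The only genuine subtlety — and the step I would be most careful about — is the bookkeeping of the sign, which is why the statement only claims equality ``$\pm$''. The chain above in fact produces a definite sign $-1$, but one should double-check that $\chi_L(\mathbb{Z})$ as computed by \texttt{Proposition \ref{Z_Euler}} over $L$ really is the quantity $\chi(\pi'_{*}\mathbb{Z})$ and not a variant differing by the class number or unit index in a way that the sign analysis does not see; since \texttt{Proposition \ref{euler_finite_inv}} asserts $\chi(\pi'_{*}\mathcal{F}) = \chi(\mathcal{F})$ on the nose, this is fine, but it is worth stating explicitly that $\chi(\mathcal{F})$ on the right means the Euler characteristic computed on $Y = \mathrm{Spec}(O_L)$. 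A secondary point to verify is that the identification $\pi'_{*}\mathbb{Z} \leftrightarrow \mathrm{Ind}_{G_L}^{G_K}(\mathbb{Z})$ is compatible, under $j_{*}$, with the generic-fibre identification $\pi_{*}\mathbb{Z} \leftrightarrow \mathrm{Ind}_{G_L}^{G_K}(\mathbb{Z})$ used to define the $L$-function; this is immediate from $j_{*} \circ \pi_{*} = \pi'_{*} \circ j_{*}$ together with the fact that $j_{*}$ of the constant sheaf $\mathbb{Z}$ is the constant sheaf $\mathbb{Z}$. No step requires a new computation beyond what \texttt{Proposition \ref{Z_Euler}} and \texttt{Proposition \ref{euler_finite_inv}} already supply.
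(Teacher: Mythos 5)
Your proposal is correct and follows essentially the same route as the paper: identify $L(K,\pi_{*}\mathbb{Z},s)=\zeta_L(s)$ by Artin formalism, apply Proposition \ref{euler_finite_inv} to get $\chi(\pi'_{*}\mathbb{Z})=\chi(\mathbb{Z})$ computed over $Spec(O_L)$, and conclude via Proposition \ref{Z_Euler} applied over $L$ (which is automatically totally imaginary). Your extra remarks on the sign and on the compatibility $j_{*}\circ\pi_{*}=\pi'_{*}\circ j_{*}$ are sound refinements of the same argument, not a different approach.
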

\begin{proof}
By proposition $\ref{euler_finite_inv}$, $\chi(\pi'_{*}\mathbb{Z})=\chi(\mathbb{Z})$. Also by proposition 
$\ref{Z_Euler}$, 
$\zeta^{*}_L(0)=\pm \chi(\mathbb{Z})$. Since $ L(K,\pi_{*}\mathbb{Z},s)=\zeta_L(s)$, $L^{*}(K,\pi_{*}\mathbb{Z},0)=\pm \chi(\pi'_{*}\mathbb{Z})$.
\end{proof}
\subsection{Multiplicative Property}
\begin{lemma}\label{cok_delta_tor}
Let $\delta$ be the map $H_{et}^0(X,\mathcal{F}_B) \to H_{et}^0(X,\mathcal{F}_B)/H^0_{et}(X,\mathcal{F})$. Then the exact sequence
\begin{equation}\label{R(F)_T}
0 \to   H^0_{et}(X,\mathcal{F})_{\mathbb{R}} \xrightarrow{\Delta}  
H_{et}^0(X,\mathcal{F}_B)_{\mathbb{R}} 
\xrightarrow{\delta} \left(\frac{H^0_{et}(X,\mathcal{F}_B)}{H^0_{et}(X,\mathcal{F})}\right)_{\mathbb{R}}\to 0
\end{equation}
has determinant $[\mathrm{cok}(\delta_{tor})]$ with respect to integral bases.
\end{lemma}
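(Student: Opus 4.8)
The plan is to deduce the statement from the general fact about determinants of short exact sequences of finitely generated abelian groups proved in the appendix (lemma $\ref{det_tor_3term}$), after first accounting for the fact that the natural map $H^0_{et}(X,\mathcal{F})\to H^0_{et}(X,\mathcal{F}_B)$ inducing $\Delta$ is not injective in general.

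Set $A:=H^0_{et}(X,\mathcal{F})$ and $B:=H^0_{et}(X,\mathcal{F}_B)$, and let $A'\subseteq B$ be the image of the natural map $\Delta_0\colon A\to B$, so that $A'\cong A/\ker\Delta_0$ and $C:=B/A'=H^0_{et}(X,\mathcal{F}_B)/H^0_{et}(X,\mathcal{F})$. By condition 2 of $\ref{strong_const}$, $\ker\Delta_0$ is finite; hence it lies in $A_{tor}$, and the surjection $A\twoheadrightarrow A'$ induces an isomorphism $A/A_{tor}\xrightarrow{\sim}A'/A'_{tor}$ on torsion-free quotients. In particular the integral lattice in $A_{\mathbb{R}}=A'_{\mathbb{R}}$ used to compute the determinant of $(\ref{R(F)_T})$ is exactly the one coming from $A'$, so $(\ref{R(F)_T})$ is, as an exact sequence of real vector spaces equipped with integral structures, nothing but $\mathbb{R}\otimes_{\mathbb{Z}}$ of the short exact sequence of abelian groups $0\to A'\to B\xrightarrow{\delta}C\to 0$ (with $\delta$ the quotient map, consistently with the notation in the statement once $H^0_{et}(X,\mathcal{F})$ is viewed through its image $A'$).

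Now I would apply lemma $\ref{det_tor_3term}$ to $0\to A'\to B\xrightarrow{\delta}C\to 0$: its determinant with respect to integral bases equals $[\mathrm{cok}(\delta_{tor})]$, where $\delta_{tor}\colon B_{tor}\to C_{tor}$ is the induced map on torsion. Since $\ker\delta=A'$ we have $\ker\delta_{tor}=A'\cap B_{tor}=A'_{tor}$, so the four-term exact sequence $0\to A'_{tor}\to B_{tor}\xrightarrow{\delta_{tor}}C_{tor}\to\mathrm{cok}(\delta_{tor})\to 0$ also gives the explicit value $[\mathrm{cok}(\delta_{tor})]=[A'_{tor}][C_{tor}]/[B_{tor}]$; one can alternatively verify the claim directly from the definition of the determinant of an exact sequence, by choosing integral bases of $A'_{\mathbb{R}}$ and $C_{\mathbb{R}}$, lifting the latter arbitrarily to $B$, and writing the resulting basis of $B_{\mathbb{R}}$ in an integral basis of $B/B_{tor}$: the determinant is the index $[B/B_{tor}:L]$ with $L$ the subgroup generated by those integral vectors, and a short diagram chase identifies this with $[\delta^{-1}(C_{tor}):A'+B_{tor}]=[C_{tor}][A'_{tor}]/[B_{tor}]$. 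I do not expect a genuine obstacle here; the only point needing care is the bookkeeping of the second paragraph, namely that the left map of $(\ref{R(F)_T})$ arises from a non-injective map of abelian groups, so that one must first pass to the image $A'$ and check that this does not disturb the integral structure on $A_{\mathbb{R}}$ — which is exactly where the finiteness of the kernel (condition 2 of $\ref{strong_const}$) enters.
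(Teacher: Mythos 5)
Your proof is correct and follows essentially the same route as the paper: both rest on condition 2 of Definition \ref{strong_const} (finiteness of the kernel of $H^0_{et}(X,\mathcal{F})\to H^0_{et}(X,\mathcal{F}_B)$, i.e.\ of $H^1(D_{\mathcal{F}})$) combined with the appendix results comparing determinants with orders of torsion subgroups. The only cosmetic difference is that the paper applies Corollary \ref{det_tor_5term} directly to the four-term sequence $0\to H^1(D_{\mathcal{F}})\to H^0_{et}(X,\mathcal{F})\xrightarrow{\Delta} H^0_{et}(X,\mathcal{F}_B)\xrightarrow{\delta} H^0_{et}(X,\mathcal{F}_B)/H^0_{et}(X,\mathcal{F})\to 0$, whereas you first pass to the image of $\Delta$ (checking, correctly, that the finite kernel does not disturb the integral structure) and then invoke Lemma \ref{det_tor_3term}; the two computations are the same.
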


\begin{proof}
Since $\mathcal{F}$ is strongly-$\mathbb{Z}$-constructible, the kernel $H^1(D_{\mathcal{F}})$ of the map $H^0_{et}(X,\mathcal{F}) \to H^0_{et}(X,\mathcal{F}_B)$ is finite. Applying lemma $\ref{det_tor_5term}$ to the exact sequence
\[ 0 \to H^1(D_{\mathcal{F}}) \to H^0_{et}(X,\mathcal{F}) \xrightarrow{\Delta} H^0_{et}(X,\mathcal{F}_B) \xrightarrow{\delta} 
\frac{H^0_{et}(X,\mathcal{F}_B)}{H^0_{et}(X,\mathcal{F})} \to 0 \] 
we deduce that ($\ref{R(F)_T}$) has determinant $[\mathrm{cok} (\delta_{tor})]$ 
with respect to integral bases.
\end{proof}
The main result of this section is the following theorem. 
\begin{thm}\label{mul}
Suppose we have a short exact sequence of strongly-$\mathbb{Z}$-constructible sheaves
\begin{equation}\label{chi_mul_seq1}
0 \to \mathcal{F}_1 \to \mathcal{F}_2 \to \mathcal{F}_3 \to 0.
\end{equation}
   Then $ {\chi}(\mathcal{F}_2) = {\chi}(\mathcal{F}_1){\chi}(\mathcal{F}_3).$
\end{thm}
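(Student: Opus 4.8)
The plan is to follow how the auxiliary complexes $D_{\mathcal{F}_i}$ of Definition \ref{defn_dF} behave along the short exact sequence (\ref{chi_mul_seq1}), and then to let the torsion factors and the regulator determinants that enter $\chi(\mathcal{F}_i)$ cancel against one another by the determinant calculus of the appendix. First I would produce a long exact sequence relating the groups $H^n(D_{\mathcal{F}_i})$. The Betti functor $\mathcal{F}\mapsto\mathcal{F}_B$ is exact and each $R\Gamma_{et}(X,(\mathcal{F}_i)_B)$ is concentrated in degree $0$ (Lemma \ref{et_FB}), so (\ref{chi_mul_seq1}) induces a short exact sequence $0\to\prod_{v\in S_\infty}(\mathcal{F}_1)_{K_v}\to\prod_{v\in S_\infty}(\mathcal{F}_2)_{K_v}\to\prod_{v\in S_\infty}(\mathcal{F}_3)_{K_v}\to 0$. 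Combining this with the Weil-\'etale long exact sequence of Proposition \ref{long_exact_Weil} — which for strongly-$\mathbb{Z}$-constructible sheaves underlies a distinguished triangle $R\Gamma_W(X,\mathcal{F}_1)\to R\Gamma_W(X,\mathcal{F}_2)\to R\Gamma_W(X,\mathcal{F}_3)\to [1]$ — and the naturality of the morphism $R\Gamma_W(X,\mathcal{F})\to R\Gamma_{et}(X,\mathcal{F}_B)$ defining $D_\mathcal{F}$, one obtains a distinguished triangle $D_{\mathcal{F}_1}\to D_{\mathcal{F}_2}\to D_{\mathcal{F}_3}\to D_{\mathcal{F}_1}[1]$ (alternatively its cohomology long exact sequence may be assembled by hand from the \'etale, Weil-\'etale and Betti long exact sequences and the sequences (\ref{Hn_dF_seq1}), (\ref{Hn_dF_seq2}), in the diagram-chasing style of Proposition \ref{euler_finite_inv}). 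Reading off cohomology via Proposition \ref{Hn_dF} gives a twelve-term exact sequence of finitely generated abelian groups whose degree-$1$ and degree-$4$ terms are finite; applying $-\otimes_{\mathbb{Z}}\mathbb{R}$, which is exact and kills those finite groups, leaves a six-term exact sequence of finite-dimensional real vector spaces
\[ 0\to H^2(D_{\mathcal{F}_1})_{\mathbb{R}}\to H^2(D_{\mathcal{F}_2})_{\mathbb{R}}\to H^2(D_{\mathcal{F}_3})_{\mathbb{R}}\xrightarrow{\partial} H^3(D_{\mathcal{F}_1})_{\mathbb{R}}\to H^3(D_{\mathcal{F}_2})_{\mathbb{R}}\to H^3(D_{\mathcal{F}_3})_{\mathbb{R}}\to 0. \]

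Second, I would check that the isomorphisms $\theta_i\colon H^2(D_{\mathcal{F}_i})_{\mathbb{R}}\to H^3(D_{\mathcal{F}_i})_{\mathbb{R}}$ of Lemma \ref{M_det} are compatible with this six-term sequence. Since $\theta_i=\psi_i^{-1}\circ\gamma_i\circ\phi_i^{-1}$, with $\phi_i,\psi_i$ induced by the functorial sequences (\ref{Hn_dF_seq1}), (\ref{Hn_dF_seq2}) and $\gamma_i$ induced by the regulator pairing, functoriality of the pairing (Lemma \ref{pairing_functor}) makes the squares built from $(\theta_1,\theta_2)$ and from $(\theta_2,\theta_3)$, together with the horizontal maps $H^j(D_{\mathcal{F}_i})_{\mathbb{R}}\to H^j(D_{\mathcal{F}_{i+1}})_{\mathbb{R}}$ for $j=2,3$, commute. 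It then remains to see that $\theta_1$ and $\theta_3$ are compatible with the connecting map $\partial$; for this I would describe $\partial$ explicitly — by tracing it through the octahedron producing the triangle of $D$'s, or equivalently by a snake-lemma computation across the three component long exact sequences — and verify, again via Lemma \ref{pairing_functor}, that it is still governed by the regulator pairings.

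Then comes the bookkeeping. Write $\chi(\mathcal{F}_i)=t_i\,|\det\theta_i|$ with $t_i=\frac{[H^1(D_{\mathcal{F}_i})]\,[H^3(D_{\mathcal{F}_i})_{tor}]}{[H^2(D_{\mathcal{F}_i})_{tor}]\,[H^4(D_{\mathcal{F}_i})]}$ as in Definition \ref{euler_defn1}. Applying the appendix lemma on orders of torsion subgroups in a long exact sequence (the general form of Lemmas \ref{det_tor_5term} and \ref{torsion_group}) to the twelve-term sequence identifies $t_1 t_3/t_2$ with the determinant, in integral bases, of the realified six-term sequence; applying the appendix lemma on determinants of exact sequences to the same six-term sequence after its first three terms $H^2(D_{\mathcal{F}_i})_{\mathbb{R}}$ are replaced by $H^3(D_{\mathcal{F}_i})_{\mathbb{R}}$ via the $\theta_i$ (legitimate precisely by the compatibility established above) identifies $|\det\theta_1|\,|\det\theta_3|/|\det\theta_2|$ with the reciprocal of that same determinant. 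Multiplying the two identities gives $\chi(\mathcal{F}_1)\chi(\mathcal{F}_3)/\chi(\mathcal{F}_2)=1$, as desired.

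The step I expect to be the main obstacle is the compatibility of the $\theta_i$ with the connecting homomorphism $\partial$: naturality of the regulator pairing and of (\ref{Hn_dF_seq1}), (\ref{Hn_dF_seq2}) delivers the horizontal squares for free, but $\partial$ crosses from the $H^2$-row to the $H^3$-row of the six-term sequence, so one genuinely has to unwind the connecting map of the triangle (respectively the snake map) and see that it too is expressed through the pairings $\langle\cdot,\cdot\rangle_{\mathcal{F}_i}$. Everything after that is the formal determinant-of-exact-sequences machinery of the appendix, in which the torsion contributions $t_i$ and the regulator contributions $|\det\theta_i|$ cancel.
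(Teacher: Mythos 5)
Your overall shape is close to the paper's (the same cancellation of torsion orders against regulator determinants, carried out with the appendix lemmas), but the argument hinges on one unproved claim that is in fact the heart of the matter: the existence of a distinguished triangle $D_{\mathcal{F}_1}\to D_{\mathcal{F}_2}\to D_{\mathcal{F}_3}\to D_{\mathcal{F}_1}[1]$, equivalently of the twelve-term exact sequence of the groups $H^n(D_{\mathcal{F}_i})$ with the natural maps. This does not follow from what you cite. Proposition \ref{long_exact_Weil} produces only a long exact sequence of Weil-\'etale cohomology \emph{groups}, obtained by splicing the low-degree \'etale sequence with the sequence coming from $R\mathrm{Hom}_{\mathbb{Z}}(R\mathrm{Hom}_X(-,\mathbb{G}_m),\mathbb{Z}[-3])$ via Artin--Verdier duality; it does not exhibit a distinguished triangle of the complexes $R\Gamma_W(X,\mathcal{F}_i)$, and none is available for free, since $R\Gamma_W$ is a direct sum of truncations and $\tau_{\leq 1}$ does not preserve triangles (the cone of $\tau_{\leq1}R\Gamma_{et}(X,\mathcal{F}_1)\to\tau_{\leq1}R\Gamma_{et}(X,\mathcal{F}_2)$ has $h^2=\ker(H^2_{et}(X,\mathcal{F}_1)\to H^2_{et}(X,\mathcal{F}_2))$, which need not vanish). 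Moreover, even with triangles on both sides of the map $R\Gamma_W\to R\Gamma_{et}(X,\mathcal{F}_B)$, cones are not functorial in the derived category, so a triangle of the $D_{\mathcal{F}_i}$ still requires a genuine construction. Your fallback, assembling the twelve-term sequence ``by hand in the style of Proposition \ref{euler_finite_inv},'' is not analogous: that proposition is a five-lemma comparison of two already exact rows for a single sheaf, whereas here you must define connecting maps $H^n(D_{\mathcal{F}_3})\to H^{n+1}(D_{\mathcal{F}_1})$ and verify exactness at twelve places, interleaving the \'etale, Betti and $\mathrm{Ext}(-,\mathbb{G}_m)$ sequences through Artin--Verdier duality. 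That is precisely the work the paper's proof is organized to avoid: it never touches the $D$-complexes along the sequence, but computes $\chi(\mathcal{F}_1)\chi(\mathcal{F}_3)/\chi(\mathcal{F}_2)$ directly from the formula of Proposition \ref{euler_comp1} and cancels the factors using the four sequences $(\mathcal{H}^0)$, $(\mathcal{H}om)$, $(\mathcal{H}_B)$, $(\mathcal{H}_B/\mathcal{H}^0)$ together with Lemmas \ref{det_tor}, \ref{det_2x3}, \ref{det_3x3} and \ref{cok_delta_tor}.

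Two smaller remarks. First, you misplace the main difficulty: granted the twelve-term sequence with the natural maps, the connecting map $\partial$ is automatically zero after $\otimes\,\mathbb{R}$ (the map $H^2(D_{\mathcal{F}_2})_{\mathbb{R}}\to H^2(D_{\mathcal{F}_3})_{\mathbb{R}}$ is already surjective because $H^0_{et}(X,\mathcal{F}_{2,B})\to H^0_{et}(X,\mathcal{F}_{3,B})$ is), so the compatibility of the $\theta_i$ with $\partial$ is vacuous; the only compatibilities needed are the two commuting ladders, which do follow from Lemma \ref{pairing_functor} and the naturality of (\ref{Hn_dF_seq1}), (\ref{Hn_dF_seq2}) \emph{once the maps of the sequence are known to be the natural ones}. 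Second, the final bookkeeping does close up, but only after careful attention to the position-dependent conventions in the appendix: the realified six-term sequence splits into the two short exact sequences $H^2(D_{\mathcal{F}_\bullet})_{\mathbb{R}}$ and $H^3(D_{\mathcal{F}_\bullet})_{\mathbb{R}}$, its determinant is the \emph{ratio} $\nu_{H^2}/\nu_{H^3}$ (not the product), and deleting the finite groups $H^1$, $H^4$ shifts the parity in Proposition \ref{det_tor}; with these conventions one indeed gets that $t_1t_3/t_2$ and $|\det\theta_1||\det\theta_3|/|\det\theta_2|$ are mutually reciprocal, hence $\chi(\mathcal{F}_2)=\chi(\mathcal{F}_1)\chi(\mathcal{F}_3)$. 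So the plan is salvageable, but as written it assumes, rather than proves, the exact sequence on which everything rests.
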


\begin{proof}
From proposition $\ref{euler_comp1}$, ${{\chi}(\mathcal{F}_1){\chi}(\mathcal{F}_3)}/{{\chi}(\mathcal{F}_2)}$ is given by

\begin{equation}\label{chi_mul_eq0}
\frac{\left(\prod_{i=1}^3[H^0_{et}(X,\mathcal{F}_i)_{tor}]^{(-1)^{i+1}}\right)
\left(\prod_{i=1}^3R(\mathcal{F}_i)^{(-1)^{i+1}}\right)
\left(\prod_{i=1}^3\left(\frac{[H^2_{et}(X,\mathcal{F}_{i+1})]}{[H^1_{et}(X,\mathcal{F}_i)]}\right)^{(-1)^{i+1}}\right)}
{ \left(\prod_{i=1}^3[H^0_{et}(X,\mathcal{F}_{i,B})_{tor}]^{(-1)^{i+1}}\right)
\left(\prod_{i=1}^3[Hom_{X}(\mathcal{F}_i,\mathbb{G}_m)_{tor}]^{(-1)^{i+1}}\right)
\left(\prod_{i=1}^3[\mathrm{cok}(\delta_{i,tor})]^{(-1)^{i+1}}\right)} 
\end{equation}

We split the long exact sequence of cohomology associated with ($\ref{chi_mul_seq1}$) into the following exact sequences 
\begin{equation}\label{chi_mul_seq2}
(\mathcal{H}^0): \qquad 0 \to H^0_{et}(X,\mathcal{F}_1) \to H^0_{et}(X,\mathcal{F}_2) \to
 H^0_{et}(X,\mathcal{F}_3) \to S \to 0 ,
\end{equation}
\begin{equation}\label{chi_mul_seq3}
0 \to S \to H^1_{et}(X,\mathcal{F}_1) \to H^1_{et}(X,\mathcal{F}_2) \to ... \to H^2_{et}(X,\mathcal{F}_3) \to Q \to 0 ,
\end{equation}
where both $S$ and $Q$ are finite abelian groups. From ($\ref{chi_mul_seq3}$), we have 
\begin{equation}\label{chi_mul_eq1}
\prod_{i=1}^3\left(\frac{[H^2_{et}(X,\mathcal{F}_i)]}{[H^1_{et}(X,\mathcal{F}_i)]}\right)^{(-1)^{i+1}}
=\frac{[Q]}{[S]} .
\end{equation}
Applying lemma $\ref{det_tor}$ to ($\ref{chi_mul_seq2}$), we obtain
\begin{equation}\label{chi_mul_eq2}
\left(\prod_{i=1}^3[H^0_{et}(X,\mathcal{F}_i)_{tor}]^{(-1)^{i+1}}\right)=
\nu(\mathcal{H}^0)_{\mathbb{R}}[S].
\end{equation}
As all the $\mathcal{F}_i$ are strongly-$\mathbb{Z}$-constructible, $ Ext^1_X(\mathcal{F}_i,\mathbb{G}_m)^D \simeq H^2_{et}(X,\mathcal{F}_i)$. Therefore, the following sequence is exact
\begin{equation}\label{chi_mul_seq4}
(\mathcal{H}om): \quad 0 \to Hom_X(\mathcal{F}_3,\mathbb{G}_m) \to Hom_X(\mathcal{F}_2,\mathbb{G}_m) \to Hom_X(\mathcal{F}_1,\mathbb{G}_m) \to Q^D \to 0.
\end{equation}
Applying lemma $\ref{det_tor}$ to ($\ref{chi_mul_seq4}$) yields
\begin{equation}\label{chi_mul_eq3}
\left(\prod_{i=1}^3[Hom_{X}(\mathcal{F}_i,\mathbb{G}_m)_{tor}]^{(-1)^{i+1}}\right)
=\nu(\mathcal{H}om)_{\mathbb{R}}[Q^D].
\end{equation}
Let $(\mathcal{H}_B)$ be the exact sequence
\begin{equation}\label{chi_mul_seq5}
(\mathcal{H}_B): \quad 0 \to H^0_{et}(X,\mathcal{F}_{1,B}) \to H^0_{et}(X,\mathcal{F}_{2,B}) \to H^0_{et}(X,\mathcal{F}_{3,B}) \to 0.
\end{equation} 
Applying lemma $\ref{det_tor}$ to ($\ref{chi_mul_seq5}$), we obtain
\begin{equation}\label{chi_mul_eq4}
 \left(\prod_{i=1}^3[H^0_{et}(X,\mathcal{F}_{i,B})_{tor}]^{(-1)^{i+1}}\right)=\nu(\mathcal{H}_B)_{\mathbb{R}}.
\end{equation}
Applying lemma $\ref{det_2x3}$ to the following diagram
\[ \xymatrix{ 
0 \ar[r] & \left(\frac{H^0_{et}(X,\mathcal{F}_{1,B})}{H^0_{et}(X,\mathcal{F}_1)}\right)_{\mathbb{R}} 
 \ar[r] \ar[d] & 
\left(\frac{H^0_{et}(X,\mathcal{F}_{2,B})}{H^0_{et}(X,\mathcal{F}_2)}\right)_{\mathbb{R}}  \ar[r] \ar[d] & 
\left(\frac{H^0_{et}(X,\mathcal{F}_{3,B})}{H^0_{et}(X,\mathcal{F}_3)}\right)_{\mathbb{R}} \ar[d] \ar[r] & 0
& (\mathcal{H}_B/\mathcal{H}^0)  \\
0 \ar[r] & Hom_{X}(\mathcal{F}_1,\mathbb{G}_m)^{*}_{\mathbb{R}} \ar[r] & Hom_{X}(\mathcal{F}_2,\mathbb{G}_m)^{*}_{\mathbb{R}} \ar[r] & Hom_{X}(\mathcal{F}_3,\mathbb{G}_m)^{*}_{\mathbb{R}} \ar[r] & 0 
& (\mathcal{H}om)^{*}_{\mathbb{R}} \\}
\]
yields 
\begin{equation}
\left(\prod_{i=1}^3R(\mathcal{F}_i)^{(-1)^{i+1}}\right)
={\nu(\mathcal{H}_B/\mathcal{H}^0)\nu(\mathcal{H}om)_{\mathbb{R}}}  .
\end{equation}
 Applying lemma $\ref{det_3x3}$ to the diagram below where all the columns are short exact sequences
\[\xymatrixrowsep{0.2in}\xymatrix{      
0 \ar[r] & H^0_{et}(X,\mathcal{F}_1)_{\mathbb{R}}  \ar[d] \ar[r]   &  H^0_{et}(X,\mathcal{F}_2)_{\mathbb{R}} \ar[d] 
\ar[r] &H^0_{et}(X,\mathcal{F}_3)_{\mathbb{R}}       \ar[d] \ar[r] & 0 & (\mathcal{H}^0)_{\mathbb{R}} 
\\
0 \ar[r] & H^0_{et}(X,\mathcal{F}_{1,B})_{\mathbb{R}}  \ar[d]^{(\delta_1)_{\mathbb{R}}} \ar[r] & H^0_{et}(X,\mathcal{F}_{2,B})_{\mathbb{R}} \ar[d]^{(\delta_2)_{\mathbb{R}}} \ar[r] & H^0_{et}(X,\mathcal{F}_{3,B})_{\mathbb{R}} \ar[d]^{(\delta_3)_{\mathbb{R}}}\ar[r] & 0 & (\mathcal{H}_B)_{\mathbb{R}}
\\
0 \ar[r] & \left(\frac{H^0_{et}(X,\mathcal{F}_{1,B})}{H^0_{et}(X,\mathcal{F}_1)}\right)_{\mathbb{R}}  \ar[r] & 
\left(\frac{H^0_{et}(X,\mathcal{F}_{2,B})}{H^0_{et}(X,\mathcal{F}_2)}\right)_{\mathbb{R}}  \ar[r]  & 
\left(\frac{H^0_{et}(X,\mathcal{F}_{3,B})}{H^0_{et}(X,\mathcal{F}_3)}\right)_{\mathbb{R}} \ar[r] & 0 &
 (\mathcal{H}_B/\mathcal{H}^0)  \\                     
  & (\mathcal{E}_1) & (\mathcal{E}_2) & (\mathcal{E}_3)   &                }
 \]
 and use lemma $\ref{cok_delta_tor}$, we have  
 \begin{equation}
 \left(\prod_{i=1}^3[\mathrm{cok}(\delta_{i,tor})]^{(-1)^{i+1}}\right) =
 \frac{\nu(\mathcal{H}^0)_{\mathbb{R}}\nu(\mathcal{H}_B/\mathcal{H}^0)}{\nu(\mathcal{H}_B)_{\mathbb{R}}}.
 \end{equation}
 Putting everything together, we obtain $\chi(\mathcal{F}_2)=\chi(\mathcal{F}_1)\chi(\mathcal{F}_3)$.
\end{proof}
 
\begin{prop} \label{euler_constructible}
Euler characteristics of constructible sheaves are 1.
\end{prop}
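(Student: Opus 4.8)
The plan is to run a dévissage that reduces the statement, via multiplicativity of $\chi$ (Theorem \ref{mul}), its vanishing on negligible sheaves (Proposition \ref{Euler_negligible}) and on finite constant sheaves (Proposition \ref{Zn_Euler}), and its invariance under finite pushforward (Proposition \ref{euler_finite_inv}), down to the constant sheaves $\mathbb{F}_p$ on spectra of rings of integers. The one external ingredient I would use is Artin's induction theorem, and the device that makes the argument close is that $\chi$ takes values in the torsion-free group $(\mathbb{R}_{>0},\cdot)$, so the denominator $|G|$ that Artin's theorem produces is harmless.

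First I would reduce to proving $\chi(j_*M)=1$ for $M$ a finite $G_K$-module. Choosing a dense open $j_U:U\hookrightarrow X$, with closed complement $i:Z\hookrightarrow X$, on which $\mathcal{F}$ is locally constant with finite stalks, the exact sequences $0\to j_{U!}(\mathcal{F}|_U)\to\mathcal{F}\to i_*(\mathcal{F}|_Z)\to 0$ and $0\to j_{U!}(\mathcal{F}|_U)\to j_{U*}(\mathcal{F}|_U)\to\mathcal{N}\to 0$ have negligible last term (skyscrapers on $Z$ with finite stalks), while $j_{U*}(\mathcal{F}|_U)=j_*M$ with $M=\mathcal{F}_{\bar\eta}$ because a locally constant sheaf on the normal scheme $U$ is the pushforward of its generic stalk; hence $\chi(\mathcal{F})=\chi(j_{U!}(\mathcal{F}|_U))=\chi(j_*M)$ by Theorem \ref{mul} and Proposition \ref{Euler_negligible}.

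Next I would check that $M\mapsto\chi(j_*M)$ descends to a homomorphism from the Grothendieck group $K_0$ of finite $G_K$-modules to $(\mathbb{R}_{>0},\cdot)$: the sheaf $j_*M$ is strongly-$\mathbb{Z}$-constructible (Proposition \ref{jM_strong_constr}) with $\chi(j_*M)>0$, and for $0\to M_1\to M_2\to M_3\to 0$ the left-exactness of $j_*$ together with the negligibility of $R^1j_*M_1$ (as used in the proof of Proposition \ref{ono_etale}) produces short exact sequences $0\to j_*M_1\to j_*M_2\to\mathcal{C}\to 0$ and $0\to\mathcal{C}\to j_*M_3\to(\text{negligible})\to 0$ of strongly-$\mathbb{Z}$-constructible sheaves, so $\chi(j_*M_2)=\chi(j_*M_1)\chi(j_*M_3)$ by Theorem \ref{mul} and Proposition \ref{Euler_negligible}.

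Finally, given a finite $M$, I would use its $p$-primary and $p$-torsion filtrations (and the previous step) to reduce to $M$ killed by a prime $p$, and set $G=Gal(L/K)$ equal to the image of $G_K$ in $Aut(M)$. Artin's induction theorem, combined with the surjectivity of the mod-$p$ decomposition map, yields an identity $|G|\,[M]=\sum_C n_C\,[\mathbb{F}_p[G/C]]$ in $K_0$ of finite $G_K$-modules, the sum over cyclic $C\le G$ with $n_C\in\mathbb{Z}$; and since $\mathbb{F}_p[G/C]=Ind_{G_{L^C}}^{G_K}\mathbb{F}_p$ and pushforward along $Spec(L^C)\to Spec(K)$ is induction, $j_*\mathbb{F}_p[G/C]=\pi'_{C*}\mathbb{F}_p$ for the finite map $\pi'_C:Spec(O_{L^C})\to X$, whence $\chi(j_*\mathbb{F}_p[G/C])=\chi_{Spec(O_{L^C})}(\mathbb{F}_p)=1$ by Propositions \ref{euler_finite_inv} and \ref{Zn_Euler} over the (again totally imaginary) field $L^C$. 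Applying the homomorphism of the previous step to the displayed identity gives $\chi(j_*M)^{|G|}=1$, hence $\chi(j_*M)=1$ and $\chi(\mathcal{F})=1$. The part I expect to need the most care is the third step — taming the non-exactness of $j_*$ via negligibility of $R^1j_*$ so that $\chi\circ j_*$ genuinely is additive on $K_0$ — whereas the remaining inputs are standard, with torsion-freeness of $\mathbb{R}_{>0}$ absorbing the factor $|G|$ forced on us by Artin's theorem.
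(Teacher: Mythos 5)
Your dévissage cannot be completed as described: the decisive third step is not just unproven but false. The identity you want, $|G|\,[M]=\sum_C n_C\,[\mathbb{F}_p[G/C]]$ in the Grothendieck group of finite modules (with $C$ running over cyclic subgroups), fails in general, and no form of Artin induction can rescue it. Artin's theorem with induced \emph{trivial} characters applies to \emph{rational} virtual representations; after reduction mod $p$ the classes $[\mathbb{F}_p[G/C]]$ (indeed $[\mathrm{Ind}_H^{G_K}\mathbb{F}_p]$ for any open $H$) all have rational-valued Brauer characters, so their integral -- or even rational -- span cannot contain any nonzero multiple of $[M]$ when the Brauer character of $M$ is irrational. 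Concretely, take $M=\mathbb{F}_7$ with $G_K$ acting through a cubic character (so $G\simeq\mathbb{Z}/3$ acting via an order-$3$ unit of $\mathbb{F}_7$): in $R_{\mathbb{F}_7}(\mathbb{Z}/3)=\mathbb{Z}^3$ the permutation classes span only $\{(a+b,b,b)\}$, while $3[M]=(0,3,0)$. Passing to a larger quotient of $G_K$ does not help, by the same Brauer-character obstruction. This is not a technicality: the finite-module case of the proposition is essentially Tate--Milne's global Euler--Poincar\'e characteristic formula, which has genuine content beyond the constant-sheaf case; accordingly the paper does not argue by d\'evissage to constant sheaves but, after the same first reduction you make (splitting off a negligible skyscraper and passing to $\rho_{!}\mathcal{F}_U$), invokes Milne's compactly supported Euler characteristic formula $\cite[\mbox{II.2.13}]{Mil06}$ directly.

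Two smaller points. In your second step, the justification ``$R^1j_*M_1$ is negligible'' is false for finite $M_1$: the stalk $(R^1j_*M_1)_{\bar p}=H^1(I_p,M_1)$ is nonzero at almost every $p$ (for $M_1=\mathbb{Z}/n$ it contains $\mathbb{Z}/n$ at every $p\nmid n$); the paper's use of this negligibility in Proposition \ref{ono_etale} concerns modules embedded in torsion-free ones. What is true, and suffices for your additivity claim, is that $\mathrm{coker}(j_*M_2\to j_*M_3)$ has stalks $\mathrm{coker}(M_2^{I_p}\to M_3^{I_p})$, which vanish wherever the action is unramified, so this cokernel is negligible by a direct stalk computation. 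Finally, Proposition \ref{euler_finite_inv} is stated only for Galois extensions, whereas your fixed fields $L^C$ need not be Galois over $K$; this is a lesser issue (the proofs do not seriously use the Galois hypothesis), but as written it is another appeal outside the stated results. None of this, however, repairs the Artin-induction step, which is where the argument genuinely breaks.
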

\begin{proof}
Let $\mathcal{F}$ be a constructible sheaf on $X$. Then there exists an open dense subset $U$ of $X$ such that $\mathcal{F}_U:=\rho^{*}\mathcal{F}$ is locally constant where $\rho :U\to X$ is the inclusion map. Let $\pi : V \to U$ be the finite \'etale morphism such that $\pi^{*}\mathcal{F}_{U}$ is a constant sheaf. Let $S$ be the set of primes of $K$ (including the infinite primes) not corresponding to a point of $U$. Let $K_S$ be the maximal subfield of $\bar{K}$ that is ramified over $K$ at only primes in $S$. Let $G_S$ be $Gal(K_S/K)$. The category of locally constant sheaves with finite stalks on $U$ is equivalent to the category of discrete finite $G_S$-modules $\cite[\mbox{page 156}]{Mil80}$. Let $M$ be the $G_S$-module corresponding to $\mathcal{F}_{U}$. In particular, $M$ is a finite abelian group. By making $U$ smaller if necessary, we may assume $[M]$ is not divisible by the residue characteristics of any closed points of $U$. From 
$\cite[\mbox{II.2.9 }]{Mil06}$, $H^n(U,\mathcal{F}_U)\simeq H^n(G_S,M)$. 
Let $i_p :p \to X$ and $\mathcal{F}_p=i_p^{*}\mathcal{F}$. We have the canonical exact sequence
\[ 0 \to \rho_{!}\mathcal{F}_U \to \mathcal{F} \to \prod_{p \in X-U}(i_p)_{*}\mathcal{F}_p \to 0. \] 
Since $\prod_{p \in X-U}(i_p)_{*}\mathcal{F}_p$ is negligible, its Euler characteristic is 1. 
By theorem $\ref{mul}$ , it suffices to show 
${\chi}(\rho_{!}\mathcal{F}_U)=1$. Since $(\rho_{!}\mathcal{F}_U)_B = \mathcal{F}_B$, 
$ [H^0_{et}(X,(\rho_{!}\mathcal{F}_U)_B)] = \prod_{v \in S_{\infty}} [M]$. Let $H^n_c(U,\mathcal{F}_U)$ be the cohomology with compact support defined in $\cite[\mbox{page 165}]{Mil06}$. As $K$ is totally imaginary, $H^n_c(U,\mathcal{F})=H^n(X,\rho_{!}\mathcal{F}_U)$. Then the proof is complete because
by $\cite[\mbox{II.2.13}]{Mil06}$, 
 \[ {\chi}(\rho_{!}\mathcal{F}_U)
 = \frac{[H^0_{c}(U,\mathcal{F}_U)][H^2_{c}(U,\mathcal{F}_U)]}
{[H^1_{c}(U,\mathcal{F}_U)][H^3_{c}(U,\mathcal{F}_U)]\prod_{v \in S_{\infty}}[M]} =1.
\]
\end{proof}
\subsection{Special Values Of L-Functions At Zero}
\begin{thm}\label{euler_value_L0}
Let $K$ be a totally imaginary number field. Let $M$ be a discrete $G_K$-module. Then 
\begin{enumerate}
	\item $\mathrm{ord}_{s=0}L(M,s)= 
	\mathrm{rank}_{\mathbb{Z}} Hom_{X}(j_{*}M,\mathbb{G}_m) $.
	\item $ L^{*}(M,0)=\pm\chi(j_{*}M) $.
\end{enumerate}
\end{thm}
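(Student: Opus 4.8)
The plan is to reduce the general statement to the already-established cases via Ono's theorem and the multiplicativity of the Euler characteristic. First I would recall that we have reduced to finitely generated $M$: the contribution of the torsion part of $M$ to both sides is trivial, since by Proposition \ref{Euler_negligible} (applied to $R^q j_* M_{\mathrm{tor}}$-type pieces) and Proposition \ref{euler_constructible} the Euler characteristic of a constructible sheaf is $1$, and the Artin $L$-function of a finite module contributes only a constant to $L^*(M,0)$ whose sign we do not track. More precisely, if $M_{\mathrm{tor}}$ denotes the torsion submodule with quotient $M_{\mathrm{free}}=M/M_{\mathrm{tor}}$, then $j_* M_{\mathrm{tor}}$ is constructible, so $\chi(j_*M_{\mathrm{tor}})=1$, and applying $j_*$ to $0\to M_{\mathrm{tor}}\to M\to M_{\mathrm{free}}\to 0$ gives an exact sequence $0\to j_*M_{\mathrm{tor}}\to j_*M\to j_*M_{\mathrm{free}}\to \mathcal Q\to 0$ with $\mathcal Q\subseteq R^1j_*M_{\mathrm{tor}}$ negligible hence constructible; splitting and using Theorem \ref{mul} together with $\chi(\mathrm{constructible})=1$ reduces us to showing $L^*(M_{\mathrm{free}},0)=\pm\chi(j_*M_{\mathrm{free}})$. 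So assume $M$ is a finitely generated abelian group with $G_K$-action.

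Next I would invoke Proposition \ref{ono_etale}: there is an exact sequence of sheaves
\[ 0 \to (j_{*}M)^{n}\oplus \textstyle\prod_{\mu}(\pi_{\mu}')_{*}\mathbb{Z}^{m_{\mu}} \to \textstyle\prod_{\lambda}(\pi_{\lambda}')_{*}\mathbb{Z}^{m_{\lambda}} \to \mathcal{R} \to 0 \]
with $\mathcal R$ constructible, and correspondingly an exact sequence of $G_K$-modules (\ref{ono_etale_seq1}) giving the $L$-function relation $L(M,s)^n \cdot \prod_\mu \zeta_{K_\mu}(s)^{m_\mu} = \prod_\lambda \zeta_{K_\lambda}(s)^{m_\lambda} \cdot L(N,s)$ with $N$ finite, where I use $L(\mathrm{Ind}_{G_{K_\mu}}^{G_K}\mathbb{Z},s)=\zeta_{K_\mu}(s)$. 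Since $N$ is finite, $L(N,s)$ is entire and nonvanishing at $0$ up to sign (its value being an algebraic number which I only need up to sign — or one may absorb it by noting $j_*N$ is constructible), so taking leading terms at $s=0$ yields
\[ L^*(M,0)^n \cdot \textstyle\prod_\mu \zeta_{K_\mu}^*(0)^{m_\mu} = \pm \textstyle\prod_\lambda \zeta_{K_\lambda}^*(0)^{m_\lambda}. \]
On the cohomological side, Theorem \ref{mul} applied to the sheaf sequence (together with $\chi(\mathcal R)=1$ from Proposition \ref{euler_constructible}) gives
\[ \chi(j_*M)^n \cdot \textstyle\prod_\mu \chi((\pi_\mu')_*\mathbb{Z})^{m_\mu} = \textstyle\prod_\lambda \chi((\pi_\lambda')_*\mathbb{Z})^{m_\lambda}. \]
By Proposition \ref{euler_finite_inv} and Proposition \ref{Z_Euler}, $\chi((\pi_\mu')_*\mathbb{Z}) = \chi_{K_\mu}(\mathbb{Z}) = h_{K_\mu}R_{K_\mu}/w_{K_\mu} = \pm\zeta_{K_\mu}^*(0)$, and likewise for the $\lambda$'s. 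Substituting, the two displayed identities become the same relation, so $\chi(j_*M)^n = \pm L^*(M,0)^n$, whence $\chi(j_*M) = \pm L^*(M,0)$ (both sides being positive real numbers, the $n$-th root is legitimate and only the sign is lost, which is all we claim). Part (1), the order statement, follows in parallel by comparing $\mathrm{ord}_{s=0}$ in the $L$-function identity with $\mathrm{rank}$ of $Hom_X(-,\mathbb{G}_m)$ across the sheaf sequence — the latter rank being additive on the exact sequence and, for $(\pi'_*\mathbb{Z})$, equal to $\mathrm{rank}\, O_{K_\mu}^* = r_1(K_\mu)+r_2(K_\mu)-1$, matching $\mathrm{ord}_{s=0}\zeta_{K_\mu}(s)$.

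The main obstacle, and the point requiring the most care, is bookkeeping the rational powers and the sign: Ono's sequence only gives a relation between $n$-th powers, so I must argue that extracting the $n$-th root is valid — this is fine because $\chi(j_*M)$ and $L^*(M,0)$ (up to its known sign) are positive reals, but it should be stated cleanly. A secondary subtlety is confirming that the finite module $N$ genuinely contributes only a sign: this is where I would either cite that $j_*N$ constructible forces $\chi(j_*N)=1$ and that $L^*(N,0)$ is a rational (or algebraic) number of absolute value compatible with the other terms, or — more robustly — fold $L(N,s)$ into the argument by working with the full sheaf sequence including $\mathcal R$ and matching leading terms term by term, so that the finite-module ambiguity never needs to be resolved numerically. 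I would also double-check the compatibility of the sign conventions in $\zeta_K^*(0)=-\chi(\mathbb{Z})$ (Proposition \ref{Z_Euler}) when raised to powers $m_\mu$, but since the final claim is only up to sign this does not obstruct the argument.
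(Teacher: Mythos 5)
Your part (2) is essentially the paper's own argument: invoke Proposition \ref{ono_etale}, apply Theorem \ref{mul} together with $\chi(\mathcal{R})=1$ (Proposition \ref{euler_constructible}) and $\chi((\pi'_{\mu})_{*}\mathbb{Z})=\pm\zeta^{*}_{K_\mu}(0)$ (Corollary \ref{euler_piZ}, i.e.\ Propositions \ref{euler_finite_inv} and \ref{Z_Euler}), compare with the leading-term identity coming from ($\ref{ono_etale_seq1}$), and extract the $n$-th root using positivity. Two remarks on your write-up. First, the preliminary reduction to torsion-free $M$ is superfluous: in this paper a discrete $G_K$-module is by definition finitely generated, and Theorem \ref{thm_ono} and Proposition \ref{ono_etale} are stated for arbitrary such $M$, so you may feed $M$ in directly (your reduction is valid, just not needed). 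Second, the subtlety you flag about the finite module $N$ resolves more cleanly than ``an algebraic value needed only up to sign'': by the paper's convention $L(N,s)$ is the Artin $L$-function of $N\otimes_{\mathbb{Z}}\mathbb{C}$, which is the zero representation when $N$ is finite, so $L(N,s)\equiv 1$ and it drops out of the leading-term identity exactly; as written, your justification (``nonvanishing at $0$ up to sign'', ``algebraic number which I only need up to sign'') is not a proof, since a nonzero algebraic number need not be $\pm 1$ --- replace it by this observation, which is exactly what the paper means by ``the fact that $N$ is a finite $G_K$-module''. For part (1) you genuinely diverge from the paper: the paper obtains the order formula directly from Tate's formula $\mathrm{ord}_{s=0}L(M,s)=\sum_{v\in S_\infty}\mathrm{rank}_{\mathbb{Z}}H^0(K_v,M)-\mathrm{rank}_{\mathbb{Z}}H^0(K,M)$ combined with the non-degeneracy (mod torsion) of the regulator pairing for $j_{*}M$, which identifies that rank with $\mathrm{rank}_{\mathbb{Z}}Hom_X(j_{*}M,\mathbb{G}_m)$; you instead propagate $\mathrm{ord}_{s=0}$ and $\mathrm{rank}\,Hom_X(-,\mathbb{G}_m)$ through Ono's sequence, which works provided you note that $Hom_X(\mathcal{R},\mathbb{G}_m)$ and $Ext^1_X(\mathcal{R},\mathbb{G}_m)$ are finite for constructible $\mathcal{R}$ (so that ranks are additive along the long exact $Ext$-sequence), and that $\mathrm{rank}\,O_{K_\mu}^{*}=\mathrm{ord}_{s=0}\zeta_{K_\mu}(s)$ by Dirichlet's unit theorem. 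Both routes are correct; the paper's is shorter and independent of the resolution, while yours handles (1) and (2) by a single mechanism.
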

\begin{proof}
\begin{enumerate}
\item 
The ranks of
$Hom_{X}(j_{*}M,\mathbb{G}_m)$ and $\prod_{v \in S_{\infty}} H^0(K_v,M_v)/H^0(K,M)$ are the same because the regulator pairing for $j_{*}M$ is non-degenerate.
Thus, from $\cite[\mbox{I.3.4}]{Tat84}$, 
\[ \mathrm{ord}_{s=0}L(M,s)=\sum_{v \in S_{\infty}} \mathrm{rank}_{\mathbb{Z}} H^0(K_v,M_v) - \mathrm{rank}_{\mathbb{Z}}H^0(K,M)=\mathrm{rank}_{\mathbb{Z}} Hom_{X}(j_{*}M,\mathbb{G}_m) . \]
\item
Consider the two exact sequences ($\ref{ono_etale_seq1}$) and ($\ref{ono_etale_seq2}$) from proposition $\ref{ono_etale}$. Since $\mathcal{R}$ is a constructible sheaf,
by propositions $\ref{euler_piZ}$ and $\ref{euler_constructible}$, $\chi(\mathcal{R})=1$ 
and $\chi((\pi_{\lambda}')_{*}\mathbb{Z}^{m_{\lambda}})=\pm L^{*}((\pi_{\lambda}')_{*}\mathbb{Z}^{m_{\lambda}},0)$.
Hence, by theorem $\ref{mul}$ and the fact that $N$ is a finite $G_K$-module
\begin{eqnarray*}
\chi(j_{*}M)^n &=& \frac{\prod_{\lambda}\chi((\pi_{\lambda}')_{*}\mathbb{Z}^{m_{\lambda}})}
{\prod_{\mu}\chi((\pi_{\mu}')_{*}\mathbb{Z}^{m_{\mu}})}  
=  \left|\frac{\prod_{\lambda}L^{*}((\pi_{\lambda}')_{*}\mathbb{Z}^{m_{\lambda}},0)}
{\prod_{\mu}L^{*}((\pi_{\mu}')_{*}\mathbb{Z}^{m_{\mu}},0)}\right| 
=  |L^{*}(M,0)^n|.
\end{eqnarray*}
Since $L^{*}(M,0)$ is a real number, we deduce $L^{*}(M,0)=\pm\chi(j_{*}M)$.
\end{enumerate}
\end{proof}

\begin{cor}\label{value_etale_fg}
Let $T$ be an algebraic torus defined over a totally imaginary number field $K$ with character group $\hat{T}$. Then 
\[ L^{*}(\hat{T},0)=\pm\frac{[H^2_{et}(X,j_{*}\hat{T})]R(j_{*}\hat{T})}{[H^1_{et}(X,j_{*}\hat{T})][Hom_X(j_{*}\hat{T},\mathbb{G}_m)_{tor}]} 
=\pm\frac{[Ext^1_X(j_{*}\hat{T},\mathbb{G}_m)]R(j_{*}\hat{T})}{[Ext^2_X(j_{*}\hat{T},\mathbb{G}_m)][Hom_X(j_{*}\hat{T},\mathbb{G}_m)_{tor}]}.
\]
\end{cor}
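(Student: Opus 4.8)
The plan is to derive the corollary directly from Theorem~\ref{euler_value_L0}(2) together with the explicit Euler-characteristic formula of Proposition~\ref{euler_comp1}, specialized to the sheaf $j_{*}\hat{T}$, which is strongly-$\mathbb{Z}$-constructible by Proposition~\ref{jM_strong_constr}. First I would apply Theorem~\ref{euler_value_L0}(2) with $M=\hat{T}$, a discrete $G_K$-module, to obtain $L^{*}(\hat{T},0)=\pm\chi(j_{*}\hat{T})$. It then remains to rewrite $\chi(j_{*}\hat{T})$ by means of Proposition~\ref{euler_comp1}, and the key simplification is that $\hat{T}$ is a torsion-free abelian group.

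Concretely, I claim three of the factors in the formula of Proposition~\ref{euler_comp1} are trivial for $\mathcal{F}=j_{*}\hat{T}$. Since $H^{0}_{et}(X,j_{*}\hat{T})=\hat{T}^{G_K}$ is a subgroup of the torsion-free group $\hat{T}$, we get $[H^{0}_{et}(X,j_{*}\hat{T})_{tor}]=1$. By Lemma~\ref{et_FB}, together with the fact that all decomposition groups at the infinite places of the totally imaginary field $K$ are trivial, $H^{0}_{et}(X,(j_{*}\hat{T})_B)=\prod_{v\in S_{\infty}}\hat{T}$ is torsion-free, so $[H^{0}_{et}(X,(j_{*}\hat{T})_B)_{tor}]=1$. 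Finally, the quotient $(\prod_{v\in S_{\infty}}\hat{T})/\hat{T}^{G_K}$, where $\hat{T}^{G_K}$ is embedded diagonally, is torsion-free: if $n\cdot(x_v)_v$ lies in the diagonal copy of $\hat{T}^{G_K}$ for some $n>0$, then the $x_v$ are all equal to a single $x\in\hat{T}$ with $nx\in\hat{T}^{G_K}$, and torsion-freeness of $\hat{T}$ forces $gx=x$ for all $g\in G_K$, i.e. $x\in\hat{T}^{G_K}$; hence $\mathrm{cok}(\delta_{tor})=0$. Plugging $[H^{0}_{et}(X,j_{*}\hat{T})_{tor}]=[H^{0}_{et}(X,(j_{*}\hat{T})_B)_{tor}]=[\mathrm{cok}(\delta_{tor})]=1$ into Proposition~\ref{euler_comp1}, and using that $[Hom_X(j_{*}\hat{T},\mathbb{G}_m)^{D}_{tor}]=[Hom_X(j_{*}\hat{T},\mathbb{G}_m)_{tor}]$ since the Pontryagin dual of a finite group has the same order, yields the first displayed equality.

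For the second equality I would pass from étale cohomology to $Ext$-groups via the Artin-Verdier duality isomorphisms already used in the proofs of Proposition~\ref{long_exact_Weil} and Theorem~\ref{mul}: namely $Ext^{1}_{X}(j_{*}\hat{T},\mathbb{G}_m)^{D}\simeq H^{2}_{et}(X,j_{*}\hat{T})$ with $Ext^{1}_{X}(j_{*}\hat{T},\mathbb{G}_m)$ finite, and $Ext^{2}_{X}(j_{*}\hat{T},\mathbb{G}_m)^{D}_{tor}\simeq H^{1}_{et}(X,j_{*}\hat{T})$. Taking orders of these finite groups gives $[H^{2}_{et}(X,j_{*}\hat{T})]=[Ext^{1}_{X}(j_{*}\hat{T},\mathbb{G}_m)]$ and $[H^{1}_{et}(X,j_{*}\hat{T})]=[Ext^{2}_{X}(j_{*}\hat{T},\mathbb{G}_m)_{tor}]$, and substituting these into the first equality finishes the proof once one knows that $Ext^{2}_{X}(j_{*}\hat{T},\mathbb{G}_m)$ is itself finite, so that the subscript $tor$ may be dropped to match the stated formula. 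This last point is the step I expect to require the most care: it should follow from the same Artin-Verdier duality package, under which $Ext^{2}_{X}(j_{*}\hat{T},\mathbb{G}_m)$ corresponds to the relevant piece of $H^{0}_{c}(X,j_{*}\hat{T})$, whose free part $\hat{T}^{G_K}$ is annihilated by the pairing into $\mathbb{Q}/\mathbb{Z}$. Everything else is routine bookkeeping with the torsion-freeness of $\hat{T}$ and results already established in the earlier sections; note in particular that the identification $R(j_{*}\hat{T})=R_T$ of Example~\ref{ex_pairing} is only needed to interpret the answer, not to prove the corollary.
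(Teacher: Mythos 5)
Your argument is correct and is essentially the paper's own proof: the first equality is Theorem \ref{euler_value_L0} combined with Proposition \ref{euler_comp1}, using the torsion-freeness of $\hat{T}$ to kill the factors $[H^0_{et}(X,j_{*}\hat{T})_{tor}]$, $[H^0_{et}(X,(j_{*}\hat{T})_B)_{tor}]$ and $[\mathrm{cok}(\delta_{tor})]$, and the second equality is Artin-Verdier duality. The one slip is in your justification for dropping the torsion subscript: $Ext^2_X(j_{*}\hat{T},\mathbb{G}_m)$ pairs under Artin-Verdier duality with $H^1_{et}(X,j_{*}\hat{T})$, not with a piece of $H^0_c$, and since $H^1_{et}(X,j_{*}\hat{T})$ is finite for the strongly-$\mathbb{Z}$-constructible sheaf $j_{*}\hat{T}$, the duality theorem ($\cite[\mbox{II.3.1}]{Mil06}$) gives $Ext^2_X(j_{*}\hat{T},\mathbb{G}_m)\simeq H^1_{et}(X,j_{*}\hat{T})^D$, which is finite, so $Ext^2_X(j_{*}\hat{T},\mathbb{G}_m)_{tor}=Ext^2_X(j_{*}\hat{T},\mathbb{G}_m)$ as needed.
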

\begin{proof}
The second equality follows from the first by the Artin-Verdier duality. The first equality follows from theorem $\ref{euler_value_L0}$, proposition $\ref{euler_comp1}$ and the fact that $\hat{T}$ is a torsion free abelian group.
\end{proof}

\section{Applications : Algebraic Tori}\label{chap_galois_coh}
Let $T$ be an algebraic torus defined over a totally imaginary number field $K$ with character group $\hat{T}$. Corollary $\ref{value_etale_fg}$ gives a formula for $L^{*}(\hat{T},0)$ in terms of \'etale cohomology. We shall derive another formula for $L^{*}(\hat{T},0)$ in terms of Galois cohomology and other arithmetic invariant of $T$. 

For each finite place $p$ of $K$, we write $K_p^{ur}$ for the maximal unramified extension of the completion $K_p$ of $K$. We denote by $I_p$ the inertia group of $p$. Let $O_{p}$, $O_{p}^{ur}$ and $\bar{O}_{p}$ be the valuation rings of $K_p$, $K_p^{ur}$ and $\bar{K}_p$ respectively.
\subsection{Local Galois Cohomology}
\begin{lemma}\label{lemma_localext}
Let $N$ be a discrete $G_K$-module. Let $\hat{N}=Hom_{\mathbb{Z}}(N,\bar{K}^{*})$. Then 
\begin{enumerate}
\item $Ext_X^{n}(j_{*}N,j_{*}\mathbb{G}_m) \simeq H^n(K,\hat{N})$. 
In particular,
$ Ext_X^{n}(j_{*}\hat{T},j_{*}\mathbb{G}_m) \simeq H^{n}(K,T)$.
\item Let $i:p\to X$ be a closed immersion. Then $Ext_X^{n}(j_{*}N,i_{*}\mathbb{Z}) \simeq Ext^{n}_{\hat{\mathbb{Z}}}(N^{I_p},\mathbb{Z})$.
\item For each finite prime $p$ of $K$, $H^1(I_p,N)$ is finite.
\end{enumerate}
\end{lemma}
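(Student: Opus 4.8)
\emph{Part 1.} The plan is to reduce the computation to Galois cohomology over $K$ by passing through the generic point. First I would record that $R^qj_*\mathbb{G}_m=0$ for $q\ge 1$: the stalk of $R^qj_*\mathbb{G}_m$ at a geometric point lying over a closed point $p$ is $H^q(G_{K_p^{ur}},\bar K_p^{*})$, which vanishes for $q=1$ by Hilbert's Theorem 90 and for $q\ge 2$ because $K_p^{ur}$ has algebraically closed residue field, hence cohomological dimension $\le 1$ and trivial Brauer group. Thus $Rj_*\mathbb{G}_m=j_*\mathbb{G}_m$. Since $j^*$ is exact with $j^*j_*=\mathrm{id}$ and $j_*$ therefore preserves injectives, the adjunction gives
\[ R\mathrm{Hom}_X(j_*N,j_*\mathbb{G}_m)=R\mathrm{Hom}_X(j_*N,Rj_*\mathbb{G}_m)\simeq R\mathrm{Hom}_K(j^*j_*N,\mathbb{G}_m)=R\mathrm{Hom}_K(N,\mathbb{G}_m). \]
Then I would feed $R\mathrm{Hom}_K(N,\mathbb{G}_m)$ into the local-to-global $\mathrm{Ext}$ spectral sequence on the small étale site of $\mathrm{Spec}(K)$, namely $H^p(K,\mathcal{E}xt^q_K(N,\mathbb{G}_m))\Rightarrow\mathrm{Ext}^{p+q}_K(N,\mathbb{G}_m)$. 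The stalk of $\mathcal{E}xt^q_K(N,\mathbb{G}_m)$ at $\bar K$ is $\mathrm{Ext}^q_{\mathbb{Z}}(N,\bar K^{*})$; since $\bar K^{*}$ is divisible, hence injective as a $\mathbb{Z}$-module, this is $0$ for $q\ge 1$ and equals $\hat N$ for $q=0$. So the spectral sequence collapses to $\mathrm{Ext}^n_X(j_*N,j_*\mathbb{G}_m)\simeq H^n(K,\hat N)$. The ``in particular'' follows by taking $N=\hat T$, since $\widehat{\hat T}=\mathrm{Hom}_{\mathbb{Z}}(\hat T,\bar K^{*})=T(\bar K)$.

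\emph{Part 2.} Here I would use the adjunction $(i^*,i_*)$. As $i$ is a closed immersion, $i^*$ is exact, so $i_*$ is exact and preserves injectives; hence $R\mathrm{Hom}_X(j_*N,i_*\mathbb{Z})\simeq R\mathrm{Hom}_{k_p}(i^*j_*N,\mathbb{Z})$, computed in the category of discrete $G_{k_p}=\hat{\mathbb{Z}}$-modules. It then remains to identify $i^*j_*N$: the stalk of $j_*N$ at a geometric point over $p$ is $N(\mathrm{Spec}\,\mathrm{Frac}(\mathcal{O}^{sh}_{X,\bar p}))=N(\mathrm{Spec}\,K_p^{ur})=N^{I_p}$, equipped with its natural action of $G_{k_p}=G_{K_p}/I_p$. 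This yields $\mathrm{Ext}^n_X(j_*N,i_*\mathbb{Z})\simeq\mathrm{Ext}^n_{\hat{\mathbb{Z}}}(N^{I_p},\mathbb{Z})$.

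\emph{Part 3.} Since $G_K$ acts continuously on the finitely generated abelian group $N$, the action restricted to $I_p$ has open kernel; let $L/K_p^{ur}$ be the finite Galois extension it cuts out, so $G_L:=\mathrm{Gal}(\bar K_p/L)$ acts trivially on $N$. Inflation--restriction for $1\to G_L\to I_p\to\mathrm{Gal}(L/K_p^{ur})\to 1$ yields $0\to H^1(\mathrm{Gal}(L/K_p^{ur}),N)\to H^1(I_p,N)\to H^1(G_L,N)$. The left-hand term is finite (finite group, finitely generated coefficients), and $H^1(G_L,N)=\mathrm{Hom}_{\mathrm{cont}}(G_L,N)$, so everything reduces to the finiteness of this Hom group. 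A continuous homomorphism from the profinite group $G_L$ to the discrete group $N$ has open kernel, hence finite image, so its image lies in $N_{\mathrm{tors}}$; in particular, when $N$ is torsion-free --- the case relevant to the character group of a torus --- one gets $\mathrm{Hom}_{\mathrm{cont}}(G_L,N)=0$ and $H^1(I_p,N)$ is finite. I expect the genuinely delicate point to be precisely the finiteness of $\mathrm{Hom}_{\mathrm{cont}}(G_L,N)$: for $N$ carrying torsion one would have to bound $\mathrm{Hom}(G_L^{\mathrm{ab}},N_{\mathrm{tors}})$ through the tame/wild filtration of $I_p$, which is the one place where the local structure of $K_p$ really enters, though it is not needed for the applications to tori in this paper.
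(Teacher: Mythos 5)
Parts 1 and 2 of your proposal are correct and are in substance the paper's own argument: the paper likewise kills $R^qj_{*}\mathbb{G}_m$, identifies $Ext^n_X(j_{*}N,j_{*}\mathbb{G}_m)$ with $Ext^n_{G_K}(N,\bar{K}^{*})$ by a collapsing spectral sequence (your derived adjunction), and then uses divisibility of $\bar{K}^{*}$; for part 2 it uses the analogous collapsing spectral sequence for $i_{*}$, which is exactly your adjunction together with the stalk computation $i^{*}j_{*}N=N^{I_p}$. Nothing to flag there.

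Part 3 is where there is a genuine gap, and you have located it yourself: after passing to an open subgroup acting trivially you only conclude when the relevant $\mathrm{Hom}$ group vanishes, i.e.\ for $N$ torsion-free, whereas the lemma is asserted for every discrete $G_K$-module (any finitely generated $N$), and it is invoked later for such general $N$ (e.g.\ in the proof of Proposition \ref{local_duality_fg}). The paper's route is to reduce, as you do, to trivial action and then to $N=\mathbb{Z}$ and $N=\mathbb{Z}/n$, disposing of $\mathbb{Z}/n$ via the identification $H^1(I_{L_p},\mathbb{Z}/n)\simeq (O_{L_p}^{*}/(O_{L_p}^{*})^{n})^{D}$. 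Your instinct that the wild part is the delicate point is justified: that identification really computes the characters of the image of inertia in $Gal(L_p^{ab}/L_p)$, equivalently the Frobenius-invariants $H^0(\hat{\mathbb{Z}},H^1(I_{L_p},\mathbb{Z}/n))$, not $H^1(I_{L_p},\mathbb{Z}/n)$ itself, since homomorphisms out of $I_{L_p}$ factor through $I_{L_p}^{ab}$, which is much larger than the image of inertia in $G_{L_p}^{ab}$. In fact $H^1(I_{L_p},\mathbb{Z}/p)$ is infinite when $p$ is the residue characteristic: writing $M=L_p^{ur}(\zeta_p)$ as the union of finite levels $F_m$ with $F_m/F_1$ unramified of degree $m$, Kummer theory gives $H^1(M,\mathbb{Z}/p)\simeq M^{*}/(M^{*})^{p}$, the direct limit of the groups $F_m^{*}/(F_m^{*})^{p}$; a class of $F_m$ whose Kummer extension is ramified can never become a $p$-th power further up the unramified tower, so subgroups of unbounded $\mathbb{F}_p$-dimension inject into the limit, and the same holds eigenspace by eigenspace for the prime-to-$p$ group $Gal(M/L_p^{ur})$, whose invariants compute $H^1(I_{L_p},\mathbb{Z}/p)$.

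Consequently the missing torsion case cannot simply be filled in: part 3 as stated holds for $N$ torsion-free (your case, and the one needed for $\hat{T}$) or for torsion prime to the residue characteristic, while in general only the finiteness of the invariants $H^0(\hat{\mathbb{Z}},H^1(I_p,N))$ survives --- and that weaker statement is all the later arguments (Proposition \ref{local_duality_fg}, Theorem \ref{value_galois_fg}) actually use. The constructive way to complete your write-up is therefore not to chase $\mathrm{Hom}(G_L^{ab},N_{tors})$ through the wild filtration, but to prove the invariant form directly: combine your reduction with inflation--restriction for $I_{L_p}\subset G_{L_p}$ and the finiteness of $H^1(L_p,N)$, which yields $H^0(\hat{\mathbb{Z}},H^1(I_{L_p},\mathbb{Z}/n))\simeq (O_{L_p}^{*}/(O_{L_p}^{*})^{n})^{D}$; this closes your gap and at the same time gives the statement in the form the rest of the paper needs.
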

\begin{proof}
\begin{enumerate}
\item From $\cite[\mbox{II.1.4}]{Mil06}$, $R^qj_{*}\mathbb{G}_m=0$ for $q>0$. Thus $ Ext^p_X(j_{*}N,R^qj_{*}\mathbb{G}_m) \Rightarrow Ext^{p+q}_{G_K}(N,\bar{K}^{*})$ collapses 
and yields $ Ext_X^{p}(j_{*}N,j_{*}\mathbb{G}_m) \simeq Ext^{p}_{G_K}(N,\bar{K}^{*})$. 
From $\cite[\mbox{I.0.8}]{Mil06}$, there is a spectral sequence 
$ H^p(G_K,Ext^q_{\mathbb{Z}}(N,\bar{K}^{*})) \Rightarrow Ext^{p+q}_{G_K}(N,\bar{K}^{*})$.
Since $\bar{K}^{*}$ is divisible, $Ext^q_{\mathbb{Z}}(N,\bar{K}^{*})=0$ for $q>0$.
Thus,
$H^p(G_K,Hom_{\mathbb{Z}}(N,\bar{K}^{*}))\simeq Ext^{p}_{G_K}(N,\bar{K}^{*})$. 
Hence, $Ext_X^{n}(j_{*}N,j_{*}\mathbb{G}_m)\simeq H^n(K,\hat{N})$.
Finally, if $N=\hat{T}$ then $\hat{N}=Hom_{\mathbb{Z}}(\hat{T},\bar{K}^{*}) \simeq T(\bar{K})$. Therefore, $ Ext_X^{n}(j_{*}\hat{T},j_{*}\mathbb{G}_m) \simeq H^{n}(K,T)$.

\item  From the spectral sequence 
$Ext^p_X(j_{*}N,R^qi_{*}\mathbb{Z}) \Rightarrow Ext^{p+q}_{\hat{\mathbb{Z}}}(N^{I_p},\mathbb{Z})$ and the fact that $i_{*}$ is exact, we deduce
$Ext_X^{n}(j_{*}N,i_{*}\mathbb{Z}) \simeq Ext^{n}_{\hat{\mathbb{Z}}}(N^{I_p},\mathbb{Z})$.
\item Let $L/K$ be a finite Galois extension such that $G_L$ acts trivially on $N$. 
From the Hochschild-Serre spectral sequence 
$H^r(G_{L_p^{ur}/K_p^{ur}},H^s(I_{L_p},N)) \Rightarrow H^{r+s}(I_p,N)$,
\[ 0 \to H^1(G_{L_p^{ur}/K_p^{ur}},N) \to H^1(I_p,N) \to H^0(G_{L_p^{ur}/K_p^{ur}},H^1(I_{L_p},N)) \to H^2(G_{L_p^{ur}/K_p^{ur}},N). \]
Note that $G_{L_p^{ur}/K_p^{ur}}$ is isomorphic to the inertia subgroup of $G_{L_p/K_p}$, in particular it is finite. Hence, $H^i(G_{L_p^{ur}/K_p^{ur}},N)$ is finite for $i=1,2$. Thus, it is enough to show $H^1(I_{L_p},N)$ is finite.
As $I_{L_p}$ acts trivially on $N$, it suffices to consider only two cases namely $N=\mathbb{Z}$ and $N=\mathbb{Z}/n$. Indeed, $H^1(I_{L_p},\mathbb{Z})=0$ and $H^1(I_{L_p},\mathbb{Z}/n)\simeq (O_{L_p}^{*}/(O_{L_p}^{*})^n)^D$. Hence, $H^1(I_p,N)$ is finite. 
\end{enumerate}
\end{proof}

\begin{prop}\label{local_duality_fg}
Let $N$ be a discrete $G_{K_p}$-module. Let $\hat{N}=Hom_{\mathbb{Z}}(N,\bar{K}_{p}^{*})$ and $\hat{N}^c=Hom_{\mathbb{Z}}(N,\bar{O}_{p}^{*})$. Then
\begin{enumerate}
\item $ H^0(K_p,\hat{N}^c) = 
\{ f \in Hom_{G_{K_p}}(N,\bar{K}_p^{*}) : \text{ for all } x \in H^0(K_p,N) \text{, we have } f(x) \in O_p^{*} \} $.
\item $ Ext^{2}_{\hat{\mathbb{Z}}}(N^{I_p},\mathbb{Z}) \simeq H^2(K_p,\hat{N})$.
\item The following sequence is exact
\begin{multline}\label{local_duality_sq}
0 \to H^0(K_p,\hat{N}^c) \to H^0(K_p,\hat{N}) \to Hom_{\hat{\mathbb{Z}}}(N^{I_p},\mathbb{Z}) \to
 H^0(\hat{\mathbb{Z}},H^1(I_p,N))^{D} \to \\
 \to H^1(K_p,\hat{N}) \to Ext^{1}_{\hat{\mathbb{Z}}}(N^{I_p},\mathbb{Z}) \to 0.
\end{multline}
\end{enumerate} 
\end{prop}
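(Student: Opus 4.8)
The plan is to treat the three parts in a logical order that lets later parts build on earlier ones, and to reduce everything to a single exact triangle that I will obtain from a local-to-global–style spectral sequence over $\mathrm{Spec}(O_p)$. First I would dispose of part 1, which is essentially a matter of unwinding definitions: an element of $H^0(K_p,\hat N^c) = \mathrm{Hom}_{G_{K_p}}(N,\bar O_p^{*})$ is a $G_{K_p}$-equivariant map $N\to\bar O_p^{*}\subset\bar K_p^{*}$, so it is in particular an element of $H^0(K_p,\hat N)=\mathrm{Hom}_{G_{K_p}}(N,\bar K_p^{*})$; the content is that $G_{K_p}$-equivariance together with landing in $\bar O_p^{*}$ is equivalent to landing in $\bar O_p^{*}$ merely on the submodule $H^0(K_p,N)=N^{G_{K_p}}$. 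That follows because $\bar O_p^{*}$ is the set of elements of $\bar K_p^{*}$ whose valuation is $0$, the valuation on $\bar K_p$ is $G_{K_p}$-invariant, and $N$ is generated as a $G_{K_p}$-module by... no — rather, one argues that $f(x)\in O_p^{*}$ for $x\in N^{G_{K_p}}$ forces $v(f(gn))=v(f(n))$ for all $g$, hence it suffices to control valuations on a set of $G_{K_p}$-orbit representatives, and the valuation of $f$ on the whole of $N$ is pinned down by its values on the invariants after a finite base change trivializing the action; I would make this precise using that $N$ is finitely generated. This is the least interesting step and I expect it to be routine.

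Next I would set up the main computational engine. Working over the local scheme $\mathrm{Spec}(O_p)$ with closed point $i\colon p\to\mathrm{Spec}(O_p)$ and generic point $j_p\colon\mathrm{Spec}(K_p)\to\mathrm{Spec}(O_p)$, I want to relate $\mathrm{Ext}$-groups of $(j_p)_{*}N$ against $\mathbb{G}_m$ to those against $i_{*}\mathbb{Z}$ and $(j_p)_{*}\mathbb{G}_m$. The key input is the localization (or purity) triangle relating $\mathbb{G}_m$ on $\mathrm{Spec}(O_p)$ to $(j_p)_{*}\mathbb{G}_{m,K_p}$ and $i_{*}\mathbb{Z}$: there is a short exact sequence $0\to\mathbb{G}_m\to (j_p)_{*}\mathbb{G}_{m,K_p}\to i_{*}\mathbb{Z}\to 0$ coming from the valuation. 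Applying $R\mathrm{Hom}_{O_p}((j_p)_{*}N,-)$ gives a long exact sequence. By the local analogues of Lemma \ref{lemma_localext}(1) and (2), the $\mathrm{Ext}$-groups against $(j_p)_{*}\mathbb{G}_{m,K_p}$ are $H^{n}(K_p,\hat N)$ and those against $i_{*}\mathbb{Z}$ are $\mathrm{Ext}^{n}_{\hat{\mathbb{Z}}}(N^{I_p},\mathbb{Z})$ — here I would cite the same spectral-sequence arguments used in the proof of Lemma \ref{lemma_localext}, just over $\mathrm{Spec}(O_p)$ instead of $X$. The remaining term, $\mathrm{Ext}^{n}_{O_p}((j_p)_{*}N,\mathbb{G}_m)$, I would want to identify, using local duality (Milne, Arithmetic Duality Theorems, or $\cite{Mil06}$ II), with something small; in low degrees the relevant group should be $H^{0}(K_p,\hat N^c)$ in degree $0$ and vanish appropriately, with the degree-$1$ term capturing $H^0(\hat{\mathbb{Z}},H^1(I_p,N))^{D}$ via the unramified cohomology. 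Finiteness of $H^1(I_p,N)$ from Lemma \ref{lemma_localext}(3) is what makes this last group finite, hence dualizable.

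With this long exact sequence in hand, parts 2 and 3 fall out: truncating the long exact sequence at the appropriate spots gives the six-term exact sequence \eqref{local_duality_sq}, and reading off the degree-$2$ piece — once one knows $\mathrm{Ext}^{2}_{O_p}((j_p)_{*}N,\mathbb{G}_m)$ and $\mathrm{Ext}^{2}$ against $i_{*}\mathbb{Z}$ in the range where the connecting maps vanish — yields the isomorphism $\mathrm{Ext}^{2}_{\hat{\mathbb{Z}}}(N^{I_p},\mathbb{Z})\simeq H^2(K_p,\hat N)$ of part 2. I would prove part 2 first as a consequence of the tail of the sequence (it only needs vanishing of $\mathrm{Ext}^{\geq 2}_{O_p}((j_p)_{*}N,\mathbb{G}_m)$, which is cohomological dimension $\le 2$ for a local field together with the dévissage), and then deduce part 3 by assembling the low-degree terms. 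The main obstacle, I expect, is pinning down $\mathrm{Ext}^{*}_{O_p}((j_p)_{*}N,\mathbb{G}_m)$ precisely — in particular showing its degree-$0$ part is exactly $H^0(K_p,\hat N^c)$ and that its degree-$1$ part is $H^0(\hat{\mathbb{Z}},H^1(I_p,N))^{D}$ — since this is where local Tate–Nakayama duality and the interplay between the inertia-invariants functor and the derived pushforward $R(j_p)_{*}$ genuinely enter; everything else is bookkeeping with spectral sequences and long exact sequences.
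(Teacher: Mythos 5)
Your route is genuinely different from the paper's: you propose to work sheaf-theoretically over $Spec(O_p)$, applying $R\mathrm{Hom}_{O_p}((j_p)_{*}N,-)$ to the valuation sequence $0\to\mathbb{G}_m\to (j_p)_{*}\mathbb{G}_m\to i_{*}\mathbb{Z}\to 0$ and invoking local duality over the henselian trait, whereas the paper never leaves Galois cohomology: it dualizes the five-term Hochschild--Serre sequence for $I_p\subset G_{K_p}$ via Tate local duality and then removes the resulting profinite completions by comparing with the explicit valuation map $\Psi(f)(x)=v(f(x))$, using part 1, exactness of completion for strict morphisms, and finiteness of $H^0(\hat{\mathbb{Z}},H^1(I_p,N))$. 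The problem is that, as written, your proposal has gaps exactly at the steps carrying the content. Part 1 is not routine: the nontrivial inclusion is that $f(H^0(K_p,N))\subset O_p^{*}$ forces $f(N)\subset\bar{O}_p^{*}$, and your sketch (valuations on all of $N$ are ``pinned down'' by values on the invariants, using finite generation) is not an argument --- for the sign action on $N=\mathbb{Z}$ the invariants are $0$, so nothing is pinned down directly. The device that makes it work is the identity $N_{L_p/K_p}(f(x))=f(Tr_{L_p/K_p}(x))$ over a trivializing extension $L_p$, which converts the hypothesis into $v(f(x))=0$; you would also need (even on your route) the sharper statement that $f$ with $f(N^{I_p})\subset (O_p^{ur})^{*}$, i.e.\ a genuine section of $\mathcal{H}om((j_p)_{*}N,\mathbb{G}_m)$ over $Spec(O_p)$, is the same thing as an element of $H^0(K_p,\hat{N}^c)$; this is a separate claim in the paper's proof and again rests on part 1.

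Second, the identifications $\mathrm{Hom}_{O_p}((j_p)_{*}N,\mathbb{G}_m)\simeq H^0(K_p,\hat{N}^c)$ and $\mathrm{Ext}^1_{O_p}((j_p)_{*}N,\mathbb{G}_m)\simeq H^0(\hat{\mathbb{Z}},H^1(I_p,N))^{D}$, plus the vanishing of $\mathrm{Ext}^{2}_{O_p}$ and $\mathrm{Ext}^{3}_{O_p}$ of $(j_p)_{*}N$ into $\mathbb{G}_m$, are essentially the whole proposition, and you defer all of them. Moreover the one justification you do offer --- that $\mathrm{Ext}^{\geq 2}_{O_p}((j_p)_{*}N,\mathbb{G}_m)=0$ because a local field has cohomological dimension $2$ plus d\'evissage --- is not correct: for a general ($\mathbb{Z}$-)constructible sheaf these groups do not vanish, e.g.\ $\mathrm{Ext}^3_{O_p}(i_{*}\mathbb{Z}/n,\mathbb{G}_m)\simeq \mathrm{Ext}^2_{\hat{\mathbb{Z}}}(\mathbb{Z}/n,\mathbb{Z})\simeq\mathbb{Z}/n\neq 0$, so no d\'evissage argument that passes through skyscraper pieces can give the vanishing. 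What makes it true for the specific sheaf $(j_p)_{*}N$ is that its local cohomology at the closed point vanishes in degrees $0$ and $1$ (because $H^0(O_p,(j_p)_{*}N)\to H^0(K_p,N)$ is an isomorphism and $H^1(\hat{\mathbb{Z}},N^{I_p})\to H^1(K_p,N)$ is injective), combined with the duality theorem over the henselian local ring; and in degree $1$, where groups such as $H^0(K_p,\hat{N})$ are neither finite nor finitely generated, that duality involves precisely the topological/completion subtleties that the paper handles by hand (the comparison $W'=W$). So the plan is plausibly salvageable, but to make it a proof you must supply the norm--trace argument for part 1, the degree-$0$ and degree-$1$ identifications, and the degree-$\geq 2$ vanishing via local duality for $\mathbb{Z}$-constructible sheaves over $O_p$ with attention to which groups get completed. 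Note also that part 2 does not need any of this machinery: it follows at once from Tate local duality, $H^2(K_p,\hat{N})\simeq H^0(K_p,N)^{D}$, together with $\mathrm{Ext}^2_{\hat{\mathbb{Z}}}(N^{I_p},\mathbb{Z})\simeq H^0(\hat{\mathbb{Z}},N^{I_p})^{D}=H^0(K_p,N)^{D}$, which is how the paper proves it.
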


\begin{proof}
\begin{enumerate}
\item Let $f \in H^0(K_p,\hat{N}^c)$. For any $x \in H^0(K_p,N)$, $f(x) \in \bar{O_{p}}^{*}$ by definition. As $f$ is $G_{K_p}$-invariant, $f(x) \in O_p^{*}$. Thus, $H^0(K_p,\hat{N}^c)$ is a subset of the right hand side.

Conversely, let $f$ be an element of the right hand side. Let $L_p$ be a finite Galois extension of $K_p$ such that the Galois group $G_{L_p}$ acts trivially on $N$. For $x\in N$, $f(x) \in L_p^{*}$ as $N=H^0(L_p,N)$. We have $ N_{L_p/K_p}(f(x))=f(Tr_{L_p/K_p}(x))$.
As $Tr_{L_p/K_p}(x) \in H^0(K_p,N)$, $f(Tr_{L_p/K_p}(x)) \in O_p^{*}$. 
Hence, $N_{L_p/K_p}(f(x)) \in O_p^{*}$. We deduce that $f(x)\in O_{L_p}^{*} \subset \bar{O}_p^{*}$. As a result, the right hand side is a subset of $ H^0(K_p,\hat{N}^c)$.

\item From Tate's local duality, $H^2(K_p,\hat{N}) \simeq H^0(K_p,N)^D $ and 
$Ext^{2}_{\hat{\mathbb{Z}}}(N^{I_p},\mathbb{Z}) \simeq H^0(\hat{\mathbb{Z}},N^{I_p})^D$ which is $H^0(K_p,N)^D$. 
Thus, $H^2(K_p,\hat{N}) \simeq Ext^{2}_{\hat{\mathbb{Z}}}(N^{I_p},\mathbb{Z}) $.

\item From the spectral sequence $ H^r(\hat{\mathbb{Z}},H^s(I_p,N))\Rightarrow H^{r+s}(K_p,N)$, 
we obtain
\[0 \to H^1(\hat{\mathbb{Z}},N^{I_p}) \to H^{1}(K_p,N) \to H^0(\hat{\mathbb{Z}},H^1(I_p,N)) \to H^2(\hat{\mathbb{Z}},N^{I_p}) 
\to H^{2}(K_p,N)\]
Taking Pontryagin dual and use Tate's local duality theorem, we have
\begin{equation}\label{local_duality_sequence1}
 \widehat{H^0(K_p,\hat{N})} \xrightarrow{\hat{\Psi}} \widehat{Hom_{\hat{\mathbb{Z}}}(N^{I_p},\mathbb{Z})} \to H^0(\hat{\mathbb{Z}},H^1(I_p,N))^D 
\to H^1(K_p,\hat{N}) \to Ext^{1}_{\hat{\mathbb{Z}}}(N^{I_p},\mathbb{Z}) \to 0. 
\end{equation} 

Let $W=\mathrm{cok}\hat{\Psi}$. As $H^0(\hat{\mathbb{Z}},H^1(I_p,N))$ is finite by lemma $\ref{lemma_localext}$, $W$ is finite. To complete the proof, we shall show the following sequence is exact.
\[ 0 \to H^0(K_p,\hat{N}^c)  \to H^0(K_p,\hat{N}) \xrightarrow{\Psi} Hom_{\hat{\mathbb{Z}}}(N^{I_p},\mathbb{Z}) 
\to W \to 0.\]
The map $\Psi$ is defined as follows : for $f \in H^0(K_p,\hat{N})$ and $x \in N^{I_p}$, $\Psi(f)(x)=v(f(x))$ where $v$ is the normalized valuation of $K_p$.
Then $\Psi$ is a continuous map and 
\[ \ker\Psi=\{f \in Hom_{G_{K_p}}(N,\bar{K_p}^{*}) : \text{ for all } x \in N^{I_p} \text{, we have } f(x) \in (O_p^{ur})^{*}\}. \]

\textbf{Claim :}  $\ker \Psi = H^0(K_p,\hat{N}^c)$.

\textbf{Proof of claim :}
\begin{itemize}
\item Let $f\in H^0(K_p,\hat{N}^c)$ and $x\in N^{I_p}$. Then 
$f(x) \in H^0(I_p,\bar{K}^{*})\cap \bar{O_p}^{*}=(O_p^{ur})^{*}$.
Therefore, $H^0(K_p,\hat{N}^c) \subset \ker \Psi$.
\item To prove the other inclusion, we use the description of $H^0(K_p,\hat{N}^c)$ from part 1. 
Let $f \in \ker \Psi$ and $x \in H^0(K,N)$. Then $f(x)\in (O_p^{ur})^*$ by definition. Since $f(x)\in K_p^{*}$, $f(x) \in (O_p^{ur})^* \cap K_p^{*}=O_p^{*}$. Hence, $\ker \Psi \subset H^0(K_p,\hat{N}^c)$.
\end{itemize}

\end{enumerate}
Let $W'=\mathrm{cok}(\Psi)$. We have the following exact sequence where all the maps are strict morphisms 
$\cite[\mathrm{page 13}]{Mil06}$
\[ 0 \to H^0(K_p,\hat{N}^c)  \to H^0(K_p,\hat{N}) \xrightarrow{\Psi} Hom_{\hat{\mathbb{Z}}}(N^{I_p},\mathbb{Z}) \to W' \to 0. \]
As profinite completion is exact for sequences with strict morphisms $\cite[\mathrm{page 14}]{Mil06}$,
\[ 0 \to \widehat{H^0(K_p,\hat{N}^c)}  \to \widehat{H^0(K_p,\hat{N})} \xrightarrow{\hat{\Psi}} \widehat{Hom_{\hat{\mathbb{Z}}}(N^{I_p},\mathbb{Z})} \to \hat{W'} \to 0 .\]
As $H^0(K_p,\hat{N}^c)$ is compact and totally disconnected (topologically it is a product of finitely many copies of $O_p^{*}$), it is a profinite group. Therefore, $\widehat{H^0(K_p,\hat{N}^c)}={H^0(K_p,\hat{N}^c)}$. Moreover, $\hat{W'}=W$ which is a finite group. Hence $W'=W$. That completes the proof of the proposition.
\end{proof}

\begin{cor}\label{local_duality_examp}
Let $N=\hat{T}$ for some torus $T$ over $K_p$. Then $H^0(K_p,\hat{N})=T(K_p)$, 
	$H^0(K_p,\hat{N}^c)=T^c_p$, the maximal compact subgroup of $T(K_p)$ and 
\begin{equation}\label{local_duality_seq_2}
 0 \to \frac{T(K_p)}{T^c_p} \to Hom_{\hat{\mathbb{Z}}}(\hat{T}^{I_p},\mathbb{Z}) 
\to H^0(\hat{\mathbb{Z}},H^1(I_p,\hat{T}))^D \to H^1(K_p,T) \to Ext^{1}_{\hat{\mathbb{Z}}}(\hat{T}^{I_p},\mathbb{Z}) \to 0.
\end{equation}
\end{cor}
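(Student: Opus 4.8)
The plan is to specialize Proposition \ref{local_duality_fg} to the module $N=\hat{T}$ and then to translate the two terms $H^0(K_p,\hat{N})$ and $H^0(K_p,\hat{N}^c)$ into the language of the torus $T$. First I would invoke the standard dictionary between a torus over $K_p$ and its character lattice: evaluation $t\mapsto(\chi\mapsto\chi(t))$ is a $G_{K_p}$-equivariant isomorphism $T(\bar{K}_p)\xrightarrow{\ \sim\ }\mathrm{Hom}_{\mathbb{Z}}(\hat{T},\bar{K}_p^{*})=\hat{N}$. Since $\hat{T}$ is a finitely generated free abelian group with continuous $G_{K_p}$-action it is in particular a discrete $G_{K_p}$-module, so Proposition \ref{local_duality_fg} applies with $N=\hat{T}$, and $N^{I_p}$, $H^1(I_p,N)$ become $\hat{T}^{I_p}$, $H^1(I_p,\hat{T})$. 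Taking $G_{K_p}$-invariants in the isomorphism above gives $H^0(K_p,\hat{N})=T(K_p)$, and similarly $H^1(K_p,\hat{N})=H^1(K_p,T)$.

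Next I would identify $H^0(K_p,\hat{N}^c)$ with the maximal compact subgroup of $T(K_p)$. Under $\hat{N}^c=\mathrm{Hom}_{\mathbb{Z}}(\hat{T},\bar{O}_p^{*})\subset\mathrm{Hom}_{\mathbb{Z}}(\hat{T},\bar{K}_p^{*})=\hat{N}$, taking invariants yields
\[ H^0(K_p,\hat{N}^c)=\{\,t\in T(K_p):\chi(t)\in\bar{O}_p^{*}\text{ for all }\chi\in\hat{T}\,\}=\{\,t\in T(K_p):v(\chi(t))=0\text{ for all }\chi\in\hat{T}\,\}, \]
where $v$ is the normalized valuation on $\bar{K}_p$, which is $G_{K_p}$-invariant. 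This is exactly the kernel of the valuation homomorphism $T(K_p)\to\mathrm{Hom}_{G_{K_p}}(\hat{T},\mathbb{Z})$, $t\mapsto(\chi\mapsto v(\chi(t)))$, whose target is finitely generated free; hence the kernel is the unique maximal bounded, equivalently maximal compact, subgroup $T^c_p$ of $T(K_p)$. (One may also read this off part 1 of Proposition \ref{local_duality_fg}: the condition $f(x)\in O_p^{*}$ for all $x\in H^0(K_p,\hat{T})$ says $|\chi(t)|_p=1$ for all $K_p$-rational characters $\chi$, and since $\hat{T}^{G_{K_p}}\hookrightarrow\hat{T}$ has image of finite index after passing to coinvariants, these already cut out $T^c_p$.) This also proves $H^0(K_p,\hat{N}^c)=T^c_p$ is genuinely the maximal compact subgroup, as asserted.

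Finally I would substitute $H^0(K_p,\hat{N})=T(K_p)$, $H^0(K_p,\hat{N}^c)=T^c_p$ and $H^1(K_p,\hat{N})=H^1(K_p,T)$ into the six-term exact sequence (\ref{local_duality_sq}) of Proposition \ref{local_duality_fg}. The resulting sequence begins $0\to T^c_p\to T(K_p)\to\mathrm{Hom}_{\hat{\mathbb{Z}}}(\hat{T}^{I_p},\mathbb{Z})\to\cdots$; replacing its first two terms by the quotient $T(K_p)/T^c_p$ is legitimate because, by exactness, $T^c_p$ is precisely the kernel of the next map, so $T(K_p)/T^c_p$ injects into $\mathrm{Hom}_{\hat{\mathbb{Z}}}(\hat{T}^{I_p},\mathbb{Z})$. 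This yields exactly sequence (\ref{local_duality_seq_2}).

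The only step that requires genuine care is the identification $H^0(K_p,\hat{N}^c)=T^c_p$, since it relies on the structure theory of rational points of tori over local fields (the valuation map and its finitely generated free cokernel); the rest of the argument is a formal substitution into Proposition \ref{local_duality_fg} together with the elementary dictionary $T\leftrightarrow\hat{T}$.
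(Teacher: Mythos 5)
Your proposal is correct and is exactly the intended derivation: the paper states this corollary without separate proof as the immediate specialization of Proposition \ref{local_duality_fg} to $N=\hat{T}$, using the dictionary $\hat{N}=\mathrm{Hom}_{\mathbb{Z}}(\hat{T},\bar{K}_p^{*})\simeq T(\bar{K}_p)$ and the identification of $\mathrm{Hom}_{G_{K_p}}(\hat{T},\bar{O}_p^{*})$ with the maximal compact subgroup of $T(K_p)$. Your justification of that last identification (kernel of the valuation map, with the norm/finite-index argument reducing to $K_p$-rational characters) is sound and fills in the only point the paper leaves implicit.
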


\subsection{A Formula For $L^{*}(\hat{T},0)$}

\begin{thm}\label{value_galois_fg}
Let $K$ be a totally imaginary number field and $T$ be an algebraic torus over $K$ with character group $\hat{T}$.
Let $h_T$, $R_T$ and $w_T$ be the class number, the regulator and the number of roots of unity of $T$.
\footnote{see $\cite{Ono61}$ for the definitions of these invariants.}
Let $\mathbb{III}^n(T)$ be the Tate-Shafarevich group. Then
\begin{equation}\label{galois_form}
L^{*}(\hat{T},0)=\pm \frac{h_TR_T}{w_T}\frac{[\mathbb{III}^1(T)]}{[H^{1}(K,\hat{T})]} \prod_{p \notin S_{\infty}}
[H^0(\hat{\mathbb{Z}},H^1(I_p,\hat{T}))].
\end{equation}
\end{thm}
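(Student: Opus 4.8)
The plan is to start from Corollary \ref{value_etale_fg}, which already expresses
\[ L^{*}(\hat{T},0)=\pm\frac{[Ext^1_X(j_{*}\hat{T},\mathbb{G}_m)]\,R(j_{*}\hat{T})}{[Ext^2_X(j_{*}\hat{T},\mathbb{G}_m)]\,[Hom_X(j_{*}\hat{T},\mathbb{G}_m)_{tor}]}, \]
and to translate each factor into Galois-cohomological data. By Example \ref{ex_pairing}(3), $R(j_{*}\hat{T})=R_T$. Unwinding the definitions of \cite{Ono61}, the group $Hom_X(j_{*}\hat{T},\mathbb{G}_m)$ is the group of units of $T$: the construction of the map $\Psi$ in Proposition \ref{local_duality_fg}(3) shows that the inclusion $Hom_X(j_{*}\hat{T},\mathbb{G}_m)\hookrightarrow Hom_X(j_{*}\hat{T},j_{*}\mathbb{G}_m)=T(K)$ identifies it with $\{x\in T(K): x\in T^c_p\text{ for all finite }p\}$, whose torsion subgroup is $T(K)_{tor}$, of order $w_T$. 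It therefore remains to show that $[Ext^1_X(j_{*}\hat{T},\mathbb{G}_m)]/[Ext^2_X(j_{*}\hat{T},\mathbb{G}_m)]$ equals, up to sign, $h_T\,[\mathbb{III}^1(T)]\,[H^{1}(K,\hat{T})]^{-1}\prod_{p\notin S_{\infty}}[H^0(\hat{\mathbb{Z}},H^1(I_p,\hat{T}))]$.

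To compute the two $Ext$-orders I would apply $R\mathrm{Hom}_X(j_{*}\hat{T},-)$ to the divisor sequence $0\to\mathbb{G}_m\to j_{*}\mathbb{G}_m\to\bigoplus_p (i_p)_{*}\mathbb{Z}\to 0$ on $X$. Lemma \ref{lemma_localext}(1),(2) identifies the terms and yields the long exact sequence
\begin{multline*}
0\to Hom_X(j_{*}\hat{T},\mathbb{G}_m)\to T(K)\xrightarrow{\lambda}\bigoplus_p Hom_{\hat{\mathbb{Z}}}(\hat{T}^{I_p},\mathbb{Z})\to Ext^1_X(j_{*}\hat{T},\mathbb{G}_m)\to H^1(K,T)\xrightarrow{\rho^1}\bigoplus_p Ext^1_{\hat{\mathbb{Z}}}(\hat{T}^{I_p},\mathbb{Z})\to\\ Ext^2_X(j_{*}\hat{T},\mathbb{G}_m)\to H^2(K,T)\xrightarrow{\rho^2}\bigoplus_p Ext^2_{\hat{\mathbb{Z}}}(\hat{T}^{I_p},\mathbb{Z})\to Ext^3_X(j_{*}\hat{T},\mathbb{G}_m)\to 0,
\end{multline*}
using $H^n(K,T)=0$ for $n\ge 3$ ($K$ totally imaginary) and $\mathrm{scd}(\hat{\mathbb{Z}})=2$. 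By functoriality and Corollary \ref{local_duality_examp}, $\rho^2$ is the localization map $H^2(K,T)\to\bigoplus_p H^2(K_p,T)$ followed by the isomorphisms $H^2(K_p,T)\simeq Ext^2_{\hat{\mathbb{Z}}}(\hat{T}^{I_p},\mathbb{Z})$ of Proposition \ref{local_duality_fg}(2) — so $\ker\rho^2=\mathbb{III}^2(T)$ — while $\rho^1$ is the localization $H^1(K,T)\to\bigoplus_p H^1(K_p,T)$ followed by the surjections $\partial_p\colon H^1(K_p,T)\twoheadrightarrow Ext^1_{\hat{\mathbb{Z}}}(\hat{T}^{I_p},\mathbb{Z})$ of Corollary \ref{local_duality_examp}, whose kernels $J_p:=\ker\partial_p$ satisfy $[J_p]\,[Ext^1_{\hat{\mathbb{Z}}}(\hat{T}^{I_p},\mathbb{Z})]=[H^1(K_p,T)]$ and vanish for all but finitely many $p$.

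Next I would compare the left end of this sequence with Ono's class-number exact sequence $0\to Hom_X(j_{*}\hat{T},\mathbb{G}_m)\to T(K)\xrightarrow{\mu}\bigoplus_p T(K_p)/T^c_p\to Cl(T)\to 0$, with $[Cl(T)]=h_T$ (cf. \cite{Ono61}). Since $\lambda$ factors as $\mu$ followed by the injection $\bigoplus_p\iota_p$, with $\iota_p\colon T(K_p)/T^c_p\hookrightarrow Hom_{\hat{\mathbb{Z}}}(\hat{T}^{I_p},\mathbb{Z})$ from Corollary \ref{local_duality_examp}, a diagram chase gives $0\to Cl(T)\to\mathrm{cok}(\lambda)\to\bigoplus_p\mathrm{cok}(\iota_p)\to 0$, and the five-term sequence of Corollary \ref{local_duality_examp} gives $[\mathrm{cok}(\iota_p)]=[H^0(\hat{\mathbb{Z}},H^1(I_p,\hat{T}))]\,[Ext^1_{\hat{\mathbb{Z}}}(\hat{T}^{I_p},\mathbb{Z})]/[H^1(K_p,T)]$. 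Splitting the long exact sequence into $0\to\mathrm{cok}(\lambda)\to Ext^1_X(j_{*}\hat{T},\mathbb{G}_m)\to\ker\rho^1\to 0$ and $0\to\mathrm{cok}\rho^1\to Ext^2_X(j_{*}\hat{T},\mathbb{G}_m)\to\mathbb{III}^2(T)\to 0$ gives $[Ext^1_X]/[Ext^2_X]=h_T\left(\prod_p[\mathrm{cok}(\iota_p)]\right)[\ker\rho^1]\,[\mathrm{cok}\rho^1]^{-1}\,[\mathbb{III}^2(T)]^{-1}$. To finish I would feed in the Poitou--Tate exact sequence for the pair $(T,\hat{T})$, $H^1(K,T)\xrightarrow{\mathrm{loc}^1}\bigoplus_p H^1(K_p,T)\xrightarrow{g}H^1(K,\hat{T})^{D}\to H^2(K,T)\to\bigoplus_p H^2(K_p,T)$, which yields $\mathrm{im}(\mathrm{loc}^1)=\ker g$, $[\ker\rho^1]=[\mathbb{III}^1(T)]\,[\ker g\cap\bigoplus_p J_p]$, $[\mathrm{cok}\rho^1]=[\mathrm{im}(g)]\,[\ker g\cap\bigoplus_p J_p]\,[\bigoplus_p J_p]^{-1}$ and $[H^1(K,\hat{T})]=[\mathrm{im}(g)]\,[\mathbb{III}^2(T)]$; substituting these together with $[\mathrm{cok}(\iota_p)]$, all of $[\mathrm{im}(g)]$, $[\mathbb{III}^2(T)]$, the $[J_p]$ and $[\ker g\cap\bigoplus_p J_p]$ cancel and leave exactly $h_T\,[\mathbb{III}^1(T)]\,[H^{1}(K,\hat{T})]^{-1}\prod_p[H^0(\hat{\mathbb{Z}},H^1(I_p,\hat{T}))]$.

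The main obstacle is that nearly every group in this argument — $T(K)$, $H^1(K,T)$, $H^2(K,T)$ and the local sums $\bigoplus_p H^n(K_p,T)$ — is infinite, so one cannot simply take an alternating product of orders along the exact sequences; at each step one must isolate the genuinely finite subquotients ($Ext^1_X$, $Ext^2_X$, $\mathbb{III}^i(T)$, $H^1(K,\hat{T})$, $H^1(I_p,\hat{T})$, and the $J_p,\ \mathrm{cok}(\iota_p)$, the last two vanishing outside a finite set) and check that the infinite parts cancel in pairs. The delicate points are the compatibility of the $Ext$-localization sequence coming from the divisor triangle with the Galois-cohomology localization maps — so that $\rho^1,\rho^2$ really are the maps described and $\ker\rho^2=\mathbb{III}^2(T)$ — and the bookkeeping with $\bigoplus_p J_p$ in the Poitou--Tate step, which is precisely what makes the full order $[H^{1}(K,\hat{T})]$, rather than only $[\mathbb{III}^1(\hat{T})]$, appear in the denominator. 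The sign is immaterial, the statement being asserted only up to $\pm$.
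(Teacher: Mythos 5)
Your proposal is correct and follows essentially the same route as the paper: apply $R\mathrm{Hom}_X(j_{*}\hat{T},-)$ to the divisor sequence $0\to\mathbb{G}_m\to j_{*}\mathbb{G}_m\to\coprod_p i_{*}\mathbb{Z}\to 0$, identify the terms via Lemma \ref{lemma_localext} and Proposition \ref{local_duality_fg}, compare with Ono's class-number sequence to extract $h_T$, $w_T$ and the local factors, and finish with Poitou--Tate to bring in $[\mathbb{III}^1(T)]/[H^1(K,\hat{T})]$ before plugging into Corollary \ref{value_etale_fg}. The only difference is organizational: you track $\ker\rho^1$, $\mathrm{cok}\rho^1$, the $J_p$ and $\ker g\cap\bigoplus_p J_p$ directly, while the paper packages the same cancellations through snake-lemma diagrams with the auxiliary finite groups $P,Q,R,S_p$ and $\mathrm{cok}\Delta$; the outcome and the key identities (e.g.\ $[P]=h_T\prod_p[S_p]$ and $[\mathrm{cok}\Delta][\mathbb{III}^2(T)]=[H^1(K,\hat{T})]$) coincide with yours.
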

\begin{proof}
From the short exact sequence of \'etale sheaves on $X=Spec(O_K)$
\[ 0\to \mathbb{G}_m \to j_{*}\mathbb{G}_m \to \coprod_{p\in X}i_{*}\mathbb{Z} \to 0 \]
we obtain the long exact sequence
\begin{equation}\label{value_galois_seq0}
 ...\to Ext_X^n(j_{*}\hat{T},\mathbb{G}_m) \to  Ext_X^n(j_{*}\hat{T},j_{*}\mathbb{G}_m) \to 
 \prod_{p \in X}Ext_X^n(j_{*}\hat{T},i_{*}\mathbb{Z}) \to ...
\end{equation}
By lemma $\ref{lemma_localext}$ and proposition $\ref{local_duality_fg}$, ($\ref{value_galois_seq0}$) can be rewritten as
\begin{equation}\label{value_galois_seq1}
...\to Ext_X^n(j_{*}\hat{T},\mathbb{G}_m) \to H^n(K,T) \to 
 \prod_{p \in X}Ext^{n}_{\hat{\mathbb{Z}}}(\hat{T}^{I_p},\mathbb{Z}) \to ... 
\end{equation}
Since $K$ is totally imaginary, for $n \geq 1$, the Tate-Shafarevich group $\mathbb{III}^n(T)$ is the kernel of the map $H^n(K,T) \to \prod_{p \in X}H^n(K_p,T)$. We split ($\ref{value_galois_seq1}$) into the following exact sequences
\begin{equation}\label{value_galois_seq2}
 0\to Hom_X(j_{*}\hat{T},\mathbb{G}_m) \to T(K) \to \prod_{p\in X} Hom_{\hat{\mathbb{Z}}}(\hat{T}^{I_p},\mathbb{Z})\to P \to 0 ,
\end{equation}
\begin{equation}\label{value_galois_seq4}
0 \to P \to Ext_X^1(j_{*}\hat{T},\mathbb{G}_m) \to Q \to 0 ,
\end{equation}
\begin{equation}\label{value_galois_seq5}
0 \to Q \to H^1(K,T) \xrightarrow{\Psi} \prod_{p \in X}Ext^{1}_{\hat{\mathbb{Z}}}(\hat{T}^{I_p},\mathbb{Z}) 
\to R \to 0 ,
\end{equation}
\begin{equation}\label{value_galois_seq6}
 0 \to R \to Ext_X^2(j_{*}\hat{T},\mathbb{G}_m) \to \mathbb{III}^{2}(T) \to 0 .
\end{equation}
Note that $P$, $Q$ and $R$ are finite groups as $Ext_X^1(j_{*}\hat{T},\mathbb{G}_m)$ and $Ext_X^2(j_{*}\hat{T},\mathbb{G}_m)$ are finite. From ($\ref{value_galois_seq4}$) and ($\ref{value_galois_seq6}$), we have
\begin{equation}\label{value_galois_seq17}
 \frac{[Ext_X^1(j_{*}\hat{T},\mathbb{G}_m)]}{[Ext_X^2(j_{*}\hat{T},\mathbb{G}_m)]}=\frac{[P][Q]}{[\mathbb{III}^2(T)][R]}.
\end{equation}
For each finite prime $p$ of $K$, we split the sequence $(\ref{local_duality_seq_2})$ into
\begin{equation}\label{value_galois_seq9}
 0 \to T(K_p)/T_p^{c} \to Hom_{\hat{\mathbb{Z}}}(\hat{T}^{I_p},\mathbb{Z}) \to S_p \to 0 ,
\end{equation}
\begin{equation}\label{value_galois_seq10}
 0 \to S_p \to H^0(\hat{\mathbb{Z}},H^1(I_p,\hat{T}))^{D} \to H^1(K_p,T) 
\to Ext^{1}_{\hat{\mathbb{Z}}}(\hat{T}^{I_p},\mathbb{Z}) \to 0 .
\end{equation}
Let $U_T$ be the group of units in $T(K)$ and $Cl(T)$ be the class group of $T$. We have the exact sequence
\begin{equation}\label{value_galois_seq3}
 0\to U_T \to T(K) \to \coprod_{p\in X}{T(K_p)/T_p^c} \to Cl(T) \to 0.
\end{equation}
From ($\ref{value_galois_seq9}$), we obtain the following commutative diagram
\[ \xymatrix{  0 \ar[r] & T(K) \ar[r] \ar[d] & T(K) \ar[r] \ar[d] & 0 \ar[r] \ar[d] & 0\\
            0 \ar[r] & \prod_{p \in X} \frac{T(K_p)}{T_p^{c}} \ar[r] & \prod_{p \in X}Hom_{\hat{\mathbb{Z}}}(\hat{T}^{I_p},\mathbb{Z}) \ar[r] & \prod_{p \in X}{S_p} \ar[r] & 0 \\}
\]
The Snake lemma combining with ($\ref{value_galois_seq2}$) and ($\ref{value_galois_seq3}$) yield $U_T \simeq Hom_X(j_{*}\hat{T},\mathbb{G}_m) $ and 
\begin{equation}\label{value_galois_seq7}
 0 \to Cl(T) \to P \to \prod_{p \in X}S_p \to 0 .
\end{equation}

In particular, $[Hom(j_{*}\hat{T},\mathbb{G}_m)_{tor}]=[U_{T,tor}]=w_T$. Also as $P$ is finite, ($\ref{value_galois_seq7}$) implies $\prod_{p \in X}S_p$ is finite and $[P]=h_T\prod_{p \in X}[S_p]$.
From ($\ref{value_galois_seq10}$), we have
\[  \xymatrix{   0 \ar[r] & 0 \ar[r] \ar[d] & H^1(K,T) \ar[d]^{\Delta} \ar[r] & H^1(K,T) \ar[r] \ar[d]^{\Psi} & 0 \\
   0 \ar[r] &\prod_p\frac{H^0(\hat{\mathbb{Z}},H^1(I_p,\hat{T}))^{D}}{S_p} \ar[r]  & \prod_pH^1(K_p,T) \ar[r]& \prod_pExt^1_{\hat{\mathbb{Z}}}(N^{I_p},\mathbb{Z}) \ar[r] &0  \\}
\]
By the Snake lemma and ($\ref{value_galois_seq5}$), we obtain
\begin{equation}\label{value_galois_seq11}
 0 \to \mathbb{III}^1(T) \to Q \to \prod_p\frac{H^0(\hat{\mathbb{Z}},H^1(I_p,\hat{T}))^{D}}{S_p} 
\to \mathrm{cok}\Delta \to R \to 0.
\end{equation}

From the generalized Poitou-Tate exact sequence $\cite[\mbox{I.4.20}]{Mil06}$, we have 
\begin{equation*}\label{value_galois_seq12}
0 \to \mathbb{III}^1(T) \to H^1(K,T) \xrightarrow{\Delta} \prod_{p}H^1(K_p,T) \to H^1(K,\hat{T})^{D} \to \mathbb{III}^2(T) \to 0.
\end{equation*}

Thus, $[\mathrm{cok}\Delta][\mathbb{III}^2(T)]=[H^1(K,\hat{T})]$. As $\mathrm{cok}\Delta$ and $\prod_pS_p$ are finite, so is $\prod_pH^0(\hat{\mathbb{Z}},H^1(I_p,\hat{T}))$. Therefore from ($\ref{value_galois_seq11}$),
\begin{equation}\label{value_galois_seq13}
[Q]= \frac{[\mathbb{III}^1(T)]\prod_p\frac{[H^0(\hat{\mathbb{Z}},H^1(I_p,\hat{T}))^{D}]}{[S_p]}[R]}{[\mathrm{cok}\Delta]}
	= \frac{[\mathbb{III}^1(T)][\mathbb{III}^2(T)][R]\prod_p[H^0(\hat{\mathbb{Z}},H^1(I_p,\hat{T}))^{D}]}{[H^1(K,\hat{T})]\prod_p[S_p]}.
\end{equation}
Putting together ($\ref{value_galois_seq17}$),($\ref{value_galois_seq7}$) and ($\ref{value_galois_seq13}$), we have
\begin{eqnarray}\label{value_galois_seq14}
 \frac{[Ext_X^1(j_{*}\hat{T},\mathbb{G}_m)]}{[Ext_X^2(j_{*}\hat{T},\mathbb{G}_m)]}
&=& \frac{h_T[\mathbb{III}^1(T)]\prod_p[H^0(\hat{\mathbb{Z}},H^1(I_p,\hat{T}))^{D}]}{[H^1(K,\hat{T})]}.
\end{eqnarray}
From corollary $\ref{value_etale_fg}$ and the fact that $R(j_{*}\hat{T})=R_T$, we finally obtain formula $(\ref{galois_form})$.
\end{proof}
\subsection{A Class Number Formula}
We can interpret equation ($\ref{value_galois_seq14}$) in the proof of theorem $\ref{value_galois_fg}$ as a formula for the class number of a torus.
\begin{prop}\label{prop_class_number_T}
Let $T$ be an algebraic torus over a totally imaginary number field $K$. Then 
\begin{eqnarray}
 h_T
&=& \frac{[Ext_X^1(j_{*}\hat{T},\mathbb{G}_m)]}{[Ext_X^2(j_{*}\hat{T},\mathbb{G}_m)]}
\frac{[H^1(K,\hat{T})]}{[\mathbb{III}^1(T)]\prod_p[H^0(\hat{\mathbb{Z}},H^1(I_p,\hat{T}))^{D}]}.
\end{eqnarray}
\end{prop}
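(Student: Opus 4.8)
The plan is to read the claimed identity directly off the proof of Theorem~\ref{value_galois_fg}, where the quantity $h_T$ already appears in an equation relating orders of $Ext$-groups. Concretely, I would invoke equation~(\ref{value_galois_seq14}), which states
\[
\frac{[Ext_X^1(j_{*}\hat{T},\mathbb{G}_m)]}{[Ext_X^2(j_{*}\hat{T},\mathbb{G}_m)]}
= \frac{h_T[\mathbb{III}^1(T)]\prod_p[H^0(\hat{\mathbb{Z}},H^1(I_p,\hat{T}))^{D}]}{[H^1(K,\hat{T})]},
\]
and then solve for $h_T$. All groups entering this equation are finite: the $Ext_X^n(j_{*}\hat{T},\mathbb{G}_m)$ for $n=1,2$ are finite because $j_{*}\hat{T}$ is strongly-$\mathbb{Z}$-constructible (Proposition~\ref{jM_strong_constr}); $\mathbb{III}^1(T)$ and $H^1(K,\hat{T})$ are finite; and the finiteness of $\prod_p H^0(\hat{\mathbb{Z}},H^1(I_p,\hat{T}))$ was established in the course of that proof. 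Hence the division is legitimate and yields exactly the asserted formula.

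For completeness I would recall where the real content of (\ref{value_galois_seq14}) lies, since the present statement inherits it. One starts from the short exact sequence $0\to\mathbb{G}_m\to j_{*}\mathbb{G}_m\to\coprod_p i_{*}\mathbb{Z}\to 0$ on $X$, applies $Ext_X^\bullet(j_{*}\hat{T},-)$, and identifies the three terms of the resulting long exact sequence via Lemma~\ref{lemma_localext} and Proposition~\ref{local_duality_fg} with $Ext_X^n(j_{*}\hat{T},\mathbb{G}_m)$, $H^n(K,T)$, and $\prod_p Ext^n_{\hat{\mathbb{Z}}}(\hat{T}^{I_p},\mathbb{Z})$. Breaking this into short exact sequences, combining with the local sequence~(\ref{local_duality_seq_2}), the global sequence~(\ref{value_galois_seq3}) for $Cl(T)$, and the Poitou--Tate sequence, and running the snake lemma through the displayed diagrams gives the identifications $[P]=h_T\prod_p[S_p]$ and the expression for $[Q]$; counting orders along each short exact sequence of finite groups then produces (\ref{value_galois_seq14}).

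The main obstacle is therefore not in the present proposition at all — it is purely the algebraic rearrangement of an equation already proved — but rather resides in having assembled, in the proof of Theorem~\ref{value_galois_fg}, that precise chain of exact sequences and order comparisons. Given that theorem, I would present Proposition~\ref{prop_class_number_T} in one line as the solved-for form of~(\ref{value_galois_seq14}).
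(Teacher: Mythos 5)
Your proposal is correct and coincides with the paper's own treatment: the paper introduces Proposition \ref{prop_class_number_T} precisely as a reinterpretation of equation (\ref{value_galois_seq14}) from the proof of Theorem \ref{value_galois_fg}, so solving that equation for $h_T$ (all groups involved being finite) is exactly the intended argument. Your added remarks on finiteness and on where the real work lies are consistent with the paper and require no changes.
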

Using ($\ref{prop_class_number_T}$) we shall deduce the following theorems of Ono  and Katayama.
\begin{thm}[$\cite{Ono87}$]\label{thm_rel_class_number}
Let $L/K$ be a Galois extension of totally imaginary number fields with Galois group $G$. 
Let $L_0$ be the maximal abelian subextension of $L$ over $K$ and $I_L$ be the group of ideles of $L$. For each finite prime $v$ of $K$, choose a prime $w$ of $L$ lying over $v$ and let $D_w$ and $I_w$ be the decomposition group and the inertia group of $w$. Let $O_w$ be the ring of integers of $L_w$.  
Let $T=R_{L/K}^{(1)}(\mathbb{G}_m)$ be the norm torus corresponding to the extension $L/K$. Then 
\begin{equation}
h_T= \frac{h_L[L_0:K][H^0_T(G,O_L^{*})]}{h_K[\ker(H^0_T(G,L^{*})\to H^0_T(G,I_L))]\prod_{v}[H^0_T(D_w,O_w^{*})]}.
\end{equation}
\end{thm}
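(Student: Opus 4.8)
The plan is to substitute the norm torus $T=R^{(1)}_{L/K}(\mathbb{G}_m)$ into Proposition \ref{prop_class_number_T} and evaluate its four ingredients, $[Ext_X^1(j_{*}\hat{T},\mathbb{G}_m)]/[Ext_X^2(j_{*}\hat{T},\mathbb{G}_m)]$, $[H^1(K,\hat{T})]$, $[\mathbb{III}^1(T)]$ and $\prod_p[H^0(\hat{\mathbb{Z}},H^1(I_p,\hat{T}))]$. The computation is driven by two short exact sequences. On character groups, the defining sequence of the norm torus gives a short exact sequence of $G_K$-modules $0\to\mathbb{Z}\to\mathbb{Z}[G]\to\hat{T}\to 0$ with first map $1\mapsto\sum_{g\in G}g$; since $R^1j_{*}\mathbb{Z}=0$, applying $j_{*}$ yields a short exact sequence of sheaves $0\to\mathbb{Z}\to\pi'_{*}\mathbb{Z}\to j_{*}\hat{T}\to 0$ on $X$. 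On the level of tori we have $1\to T\to R_{L/K}\mathbb{G}_m\xrightarrow{Nm}\mathbb{G}_m\to 1$.

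First I would apply $R\mathrm{Hom}_X(-,\mathbb{G}_m)$ to $0\to\mathbb{Z}\to\pi'_{*}\mathbb{Z}\to j_{*}\hat{T}\to 0$. By Lemma \ref{pre_finite_inv}(1) the norm map identifies $Ext_X^n(\pi'_{*}\mathbb{Z},\mathbb{G}_m)$ with $H^n_{et}(Y,\mathbb{G}_m)$, which Theorem \ref{et_ZGm} computes; one checks that the connecting maps are the transfer maps, namely $N_{L/K}\colon O_L^{*}\to O_K^{*}$ in degree $0$, the ideal norm $Pic(O_L)\to Pic(O_K)$ in degree $1$, and an isomorphism in degree $3$. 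The long exact sequence then gives
\[ 0\to O_K^{*}/N_{L/K}(O_L^{*})\to Ext_X^1(j_{*}\hat{T},\mathbb{G}_m)\to\ker\big(Pic(O_L)\to Pic(O_K)\big)\to 0 \]
and $Ext_X^2(j_{*}\hat{T},\mathbb{G}_m)=\mathrm{coker}\big(Pic(O_L)\to Pic(O_K)\big)$. Using that $[\ker\phi]/[\mathrm{coker}\,\phi]=[A]/[B]$ for a homomorphism $\phi\colon A\to B$ of finite groups, together with $O_K^{*}/N_{L/K}(O_L^{*})=H^0_T(G,O_L^{*})$, one obtains
\[ \frac{[Ext_X^1(j_{*}\hat{T},\mathbb{G}_m)]}{[Ext_X^2(j_{*}\hat{T},\mathbb{G}_m)]}=\frac{h_L}{h_K}\,[H^0_T(G,O_L^{*})]. \]
Feeding $0\to\mathbb{Z}\to\mathbb{Z}[G]\to\hat{T}\to 0$ through group cohomology and using $H^i(G,\mathbb{Z}[G])=0$ for $i\ge 1$ gives $H^1(K,\hat{T})=H^1(G,\hat{T})\cong H^2(G,\mathbb{Z})\cong\mathrm{Hom}(G^{ab},\mathbb{Q}/\mathbb{Z})$, which has order $[L_0:K]$.

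For the $\mathbb{III}$-term I would use the torus sequence. Hilbert~90 and Shapiro's lemma give $H^1(K,R_{L/K}\mathbb{G}_m)=H^1(L,\mathbb{G}_m)=0$, so $H^1(K,T)\cong K^{*}/N_{L/K}(L^{*})=H^0_T(G,L^{*})$, and likewise $H^1(K_v,T)\cong H^0_T(D_w,L_w^{*})$ at each finite $v$ (while $H^1(K_v,T)=0$ at the infinite places, as $K$ is totally imaginary), compatibly with the localization maps; since $H^0_T(G,I_L)=\bigoplus_v H^0_T(D_w,L_w^{*})$, this identifies $\mathbb{III}^1(T)=\ker\big(H^0_T(G,L^{*})\to H^0_T(G,I_L)\big)$. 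It remains to match the local factor, i.e. to show $[H^0(\hat{\mathbb{Z}},H^1(I_p,\hat{T}))]=[H^0_T(D_w,O_w^{*})]$ for every finite $p$, both sides being trivial at the unramified primes. Restricted to $D_w$, the module $\hat{T}$ is the sum of $\hat{S}:=\widehat{R^{(1)}_{L_w/K_v}\mathbb{G}_m}$ and a free $D_w$-module (equivalently $T\times_K K_v\cong R^{(1)}_{L_w/K_v}\mathbb{G}_m\times(R_{L_w/K_v}\mathbb{G}_m)^{[G:D_w]-1}$), the free summand being $I_p$-acyclic; applying group cohomology over $I_w$ to $0\to\mathbb{Z}\to\mathbb{Z}[D_w]\to\hat{S}\to 0$, in which $\mathbb{Z}[D_w]$ is $I_w$-free, gives $H^1(I_p,\hat{T})\cong H^1(I_w,\hat{S})\cong H^2(I_w,\mathbb{Z})\cong\mathrm{Hom}(I_w^{ab},\mathbb{Q}/\mathbb{Z})$ as $\hat{\mathbb{Z}}$-modules, so $[H^0(\hat{\mathbb{Z}},H^1(I_p,\hat{T}))]=[(I_w^{ab})_{D_w/I_w}]$, while local class field theory identifies $H^0_T(D_w,O_w^{*})=O_v^{*}/N_{L_w/K_v}(O_w^{*})$ with the image of inertia in $D_w^{ab}$, again of order $[(I_w^{ab})_{D_w/I_w}]$.

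Substituting the four values into Proposition \ref{prop_class_number_T} and using $[A^D]=[A]$ for finite $A$ produces the stated formula. I expect the local identity of the previous step to be the main obstacle: one must split $\hat{T}$ correctly over each decomposition group, track the Frobenius action on the inertia cohomology through the connecting isomorphism, and invoke local class field theory in a form that pins down the norm residue image of $O_v^{*}$; one should also verify that the isomorphisms $H^1(K,T)\cong H^0_T(G,L^{*})$ and $H^1(K_v,T)\cong H^0_T(D_w,L_w^{*})$ are compatible with restriction, so that $\mathbb{III}^1(T)$ really becomes the idele-class object appearing in the theorem. Determining the transfer maps in the first step, in particular that the degree-$3$ map is an isomorphism, also needs a little care.
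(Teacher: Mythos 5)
Your proposal is correct and follows essentially the same route as the paper: substitute $T=R^{(1)}_{L/K}(\mathbb{G}_m)$ into Proposition \ref{prop_class_number_T}, compute the ratio $[Ext^1_X(j_*\hat{T},\mathbb{G}_m)]/[Ext^2_X(j_*\hat{T},\mathbb{G}_m)]$ from the sheaf sequence $0\to\mathbb{Z}\to\pi'_{*}\mathbb{Z}\to j_{*}\hat{T}\to 0$, get $[H^1(K,\hat{T})]=[L_0:K]$ from $0\to\mathbb{Z}\to\mathbb{Z}[G]\to\hat{T}\to 0$, identify $\mathbb{III}^1(T)$ with $\ker(H^0_T(G,L^{*})\to H^0_T(G,I_L))$, and match the local factors with $[H^0_T(D_w,O_w^{*})]$ via local class field theory. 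The only deviations are minor: the paper cites Platonov--Rapinchuk for the $\mathbb{III}^1$ identification where you argue directly with Hilbert 90/Shapiro, and for the local term it uses the Hochschild--Serre spectral sequence for $H^{*}(D_w,\mathbb{Z})$ (exploiting that $D_w/I_w$ is cyclic) where you count Frobenius coinvariants of $I_w^{ab}$ --- both computations give the same order, namely that of the image of $I_w$ in $D_w^{ab}$.
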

\begin{proof}
Let $\pi:Spec(L)\to Spec(K)$. We have the exact sequence of $G_K$-modules
\begin{equation}\label{rel_class_number_eqn1}
0 \to \mathbb{Z} \to \pi_{*}\mathbb{Z} \to \hat{T} \to 0.
\end{equation}
Since $R^1j_{*}\mathbb{Z}=0$, we have the exact sequence of \'etale sheaves on $X=Spec(O_K)$
\[ 0 \to \mathbb{Z} \to \pi'_{*}\mathbb{Z} \to j_{*}\hat{T} \to 0 \]
where $\pi':Spec(O_L)\to Spec(O_K)$. The long exact sequence of Ext-groups yields  
\[
0 \to  H^0_T(G,O_L^{*}) \to 
Ext^1_{X}(j_{*}\hat{T},\mathbb{G}_m) \to Pic(O_L)\to Pic(O_K) \to Ext^2_{X}(j_{*}\hat{T},\mathbb{G}_m) \to 0.  \]
From proposition $\ref{prop_class_number_T}$, we have 
\begin{equation}
h_T= \frac{h_L[H^1(K,\hat{T})][H^0_T(G,O_L^{*})]}
{h_K[\mathbb{III}^1(T)]\prod_{p}[H^0(\hat{\mathbb{Z}},H^1(I_p,\hat{T}))]}.
\end{equation}
The exact sequence ($\ref{rel_class_number_eqn1}$) induces 
\begin{equation}\label{rel_class_number_eqn2}
0 \to \mathbb{Z} \to \mathbb{Z}[G] \to \hat{T} \to 0. 
\end{equation}
From ($\ref{rel_class_number_eqn2}$) and the fact that $H^n(G,\mathbb{Z}[G])=0$, we deduce that
\[ H^1(K,\hat{T})\simeq H^1(G,\hat{T})\simeq H^2(G,\mathbb{Z}) \simeq H^1(G,\mathbb{Q}/\mathbb{Z}).\]
Therefore, $[H^1(K,\hat{T})]=[G^{ab}]=[L_0:K]$.
The fact that $\mathbb{III}^1(T)\simeq \ker(H^0_T(G,L^{*})\to H^0_T(G,I_L))$ is proved in $\cite[6.10]{PR93}$. To complete the proof we will show that if $w$ is a prime of $L$ lying above a prime $p$ of $K$ then
\begin{equation}\label{rel_class_number_eqn3}
[H^0_T(D_w,O_w^{*})]=[H^0(\hat{\mathbb{Z}},H^1(I_p,\hat{T}))].
\end{equation}
Indeed, $H^0_T(D_w,O_w^{*})\simeq I_w^{ab}$ by local class field theory. On the other hand, $H^0(\hat{\mathbb{Z}},H^1(I_p,\hat{T}))\simeq H^0(D_w/I_w,H^1(I_w,\hat{T}))$. From sequence ($\ref{rel_class_number_eqn3}$) and the fact that $\mathbb{Z}[G]$ is an induced $I_w$-module, we have $H^1(I_w,\hat{T})\simeq H^2(I_w,\mathbb{Z})$. Consider the spectral sequence 
\[ E_2^{m,n}=H^m(D_w/I_w,H^n(I_w,\mathbb{Z}))\Rightarrow E^{m+n}=H^{m+n}(D_w,\mathbb{Z}).\]
Note that $E_2^{m,1}=0$ for all $m$ and $E_2^{m,0}=0$ for $m$ odd. Therefore, we obtain the exact sequence
\[ 0 \to (D_w/I_w)^D \to D_w^D \to H^0(D_w/I_w,H^2(I_w,\mathbb{Z})) \to 0 .\]
Hence, $H^0(D_w/I_w,H^2(I_w,\mathbb{Z}))\simeq I_w^D$. Finally,
\[ [H^0(\hat{\mathbb{Z}},H^1(I_p,\hat{T}))]=[I_w^D]=[I_w^{ab}]= [H^0_T(D_w,O_w^{*})].\]
\end{proof}
\begin{cor}\label{cyclic_rel_class_number}
In theorem $\ref{thm_rel_class_number}$, suppose further that $L/K$ is a cyclic extension. For each prime $v$ of $K$, let $e_v(L/K)$ be the ramification index of $v$ in $L$. Then 
\begin{equation}
h_T= \frac{h_L[L:K][H^0_T(G,O_L^{*})]}{h_K\prod_{v}e_v(L/K)}.
\end{equation}
\end{cor}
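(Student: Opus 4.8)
The plan is to obtain this corollary directly from Theorem~\ref{thm_rel_class_number} by feeding in the simplifications forced by the hypothesis that $G = \mathrm{Gal}(L/K)$ is cyclic; no new machinery is needed beyond what already appears in the proof of that theorem. Recall the formula there reads
\[ h_T = \frac{h_L[L_0:K][H^0_T(G,O_L^{*})]}{h_K\,[\ker(H^0_T(G,L^{*})\to H^0_T(G,I_L))]\,\prod_{v}[H^0_T(D_w,O_w^{*})]}, \]
so it suffices to evaluate the three quantities $[L_0:K]$, $[\ker(H^0_T(G,L^{*})\to H^0_T(G,I_L))]$ and each $[H^0_T(D_w,O_w^{*})]$ in the cyclic case.

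First, a cyclic group is abelian, so its maximal abelian subextension is $L$ itself and $[L_0:K]=[L:K]$. Next, the proof of Theorem~\ref{thm_rel_class_number} identifies $\ker(H^0_T(G,L^{*})\to H^0_T(G,I_L))$ with $\mathbb{III}^1(T)$; concretely, this kernel is $(K^{*}\cap N_{L/K}I_L)/N_{L/K}L^{*}$, the obstruction to the Hasse norm principle for $L/K$. For a cyclic extension the Hasse norm theorem asserts that an element of $K^{*}$ which is a local norm at every place is a global norm, so this kernel vanishes and the corresponding factor is $1$. Finally, the proof of Theorem~\ref{thm_rel_class_number} shows $[H^0_T(D_w,O_w^{*})]=[I_w^{ab}]$ for $w\mid v$; since $G$ is cyclic the subgroups $D_w$ and $I_w$ are cyclic, hence $I_w^{ab}=I_w$ and $[I_w^{ab}]=|I_w|=e_v(L/K)$, the ramification index of $v$ in $L$ (well defined because $L/K$ is Galois, and equal to $1$ for all but finitely many $v$, so the product is finite). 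Substituting $[L_0:K]=[L:K]$, $[\ker(\cdots)]=1$ and $[H^0_T(D_w,O_w^{*})]=e_v(L/K)$ into the displayed formula gives the claimed identity.

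The only step that is more than bookkeeping is the vanishing of the Hasse norm obstruction for cyclic extensions, which I expect to be the main (and essentially the only) point requiring care; I would invoke the classical Hasse norm theorem for this, or alternatively recompute $\mathbb{III}^1(T)$ directly for the norm torus of a cyclic extension via Tate cohomology. Everything else is immediate from the structure of cyclic groups together with the local class field theory computations already carried out in the proof of Theorem~\ref{thm_rel_class_number}.
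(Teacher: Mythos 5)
Your proposal is correct and follows the same route as the paper: it specializes Theorem~\ref{thm_rel_class_number} by noting $[L_0:K]=[L:K]$, kills the norm-obstruction term via Hasse's norm theorem for cyclic extensions (exactly the paper's step), and identifies $[H^0_T(D_w,O_w^{*})]=[I_w]=e_v(L/K)$ using the local class field theory computation already carried out in the proof of that theorem. No gaps; your extra remark that $I_w^{ab}=I_w$ because $I_w$ is cyclic just makes explicit what the paper leaves implicit.
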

\begin{proof}
Since $L/K$ is cyclic, by Hasse's theorem $\ker(H^0_T(G,L^{*})\to H^0_T(G,I_L))=0$. Furthermore, $[H^0_T(D_w,O_w^{*})]=[I_w]=e_v(L/K)$. Thus, the corollary follows.
\end{proof}

\begin{thm}[$\cite{Kat91}$]\label{thm_proj_class_number}
With notations as in theorem $\ref{thm_rel_class_number}$, let ${T}'$ be the dual torus of $R_{L/K}^{(1)}(\mathbb{G}_m)$. Then 
\begin{equation}
h_{T'}= \frac{h_L[H^1_T(G,O_L^{*})]}{h_K \prod_{v}e_v(L/K)}.
\end{equation}
\end{thm}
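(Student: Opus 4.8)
The plan is to apply Proposition \ref{prop_class_number_T} to $T'$ and to compute the five finite invariants of $\hat{T'}$ occurring in it, following the pattern of the proof of Theorem \ref{thm_rel_class_number}. Put $G=\mathrm{Gal}(L/K)$, $N_G=\sum_{g\in G}g\in\mathbb{Z}[G]$ and $I_G=\ker(\mathbb{Z}[G]\xrightarrow{\varepsilon}\mathbb{Z})$. Since $\hat T=\mathbb{Z}[G]/(N_G)$ is the character group of $T=R^{(1)}_{L/K}(\mathbb{G}_m)$, dualising $0\to\mathbb{Z}\xrightarrow{N_G}\mathbb{Z}[G]\to\hat T\to 0$ identifies $\hat{T'}=\mathrm{Hom}_{\mathbb{Z}}(\hat T,\mathbb{Z})$ with $I_G$; thus $T'\cong R_{L/K}(\mathbb{G}_m)/\mathbb{G}_m$ and there is an exact sequence of $G_K$-modules
\[ 0\longrightarrow\hat{T'}\longrightarrow\mathbb{Z}[G]\xrightarrow{\ \varepsilon\ }\mathbb{Z}\longrightarrow 0 ,\]
which here plays the role that $0\to\mathbb{Z}\to\mathbb{Z}[G]\to\hat T\to 0$ played for $T$.

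Three invariants come out directly from this sequence. Taking $G_K$-cohomology and using Shapiro's lemma $H^n(K,\mathbb{Z}[G])=H^n(L,\mathbb{Z})$, $H^1(L,\mathbb{Z})=0$, and that $\varepsilon$ acts as multiplication by $[L:K]$ on $G_K$-invariants gives $H^1(K,\hat{T'})\cong\mathbb{Z}/[L:K]$. Taking $I_p$-cohomology, $(\mathbb{Z}[G])^{I_p}$ is free on the orbit sums for the inertia image $\bar I_p\subseteq G$, each of size $e_v(L/K)$, which $\varepsilon$ sends to $e_v(L/K)$; hence $H^1(I_p,\hat{T'})\cong\mathbb{Z}/e_v(L/K)$ with trivial $\hat{\mathbb{Z}}$-action, so $\prod_p[H^0(\hat{\mathbb{Z}},H^1(I_p,\hat{T'}))^D]=\prod_v e_v(L/K)$. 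Lastly, $T'$ is a quotient of the quasi-trivial torus $R_{L/K}(\mathbb{G}_m)$ by $\mathbb{G}_m$, so $\mathbb{III}^1(T')$ injects into $\mathbb{III}^2(\mathbb{G}_m)=\ker\big(Br(K)\to\bigoplus_v Br(K_v)\big)$, which vanishes by the Hasse principle for Brauer groups ($K$ being totally imaginary); thus $[\mathbb{III}^1(T')]=1$.

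For the two $\mathrm{Ext}$-orders, applying $j_*$ to the displayed sequence and using $R^1j_*\mathbb{Z}[G]=0=R^1j_*\mathbb{Z}$ gives a four-term exact sequence of étale sheaves on $X$
\[ 0\to j_*\hat{T'}\to\pi'_*\mathbb{Z}\xrightarrow{\ \varepsilon\ }\mathbb{Z}\to\mathcal N\to 0,\qquad \mathcal N=R^1j_*\hat{T'}\cong\coprod_p (i_p)_*\big(\mathbb{Z}/e_v(L/K)\big), \]
with $\mathcal N$ negligible (the analogous sequence for $\hat T$ was the short exact $0\to\mathbb{Z}\to\pi'_*\mathbb{Z}\to j_*\hat T\to 0$; the new feature is the negligible cokernel $\mathcal N$). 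Splitting this through $\mathrm{im}(\varepsilon)$, applying $R\mathrm{Hom}_X(-,\mathbb{G}_m)$, and using Theorem \ref{et_ZGm}, Lemma \ref{pre_finite_inv}(1), and $Ext^n_X\big((i_p)_*(\mathbb{Z}/e),\mathbb{G}_m\big)=\mathbb{Z}/e$ concentrated in degree $2$, one reduces $Ext^2_X(j_*\hat{T'},\mathbb{G}_m)$ to $\ker\big(H^3_{et}(X,\mathbb{G}_m)\to H^3_{et}(Y,\mathbb{G}_m)\big)$, which is multiplication by $[L:K]$ on $\mathbb{Q}/\mathbb{Z}$; hence $[Ext^2_X(j_*\hat{T'},\mathbb{G}_m)]=[L:K]$. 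The same long exact sequence, combined with the identification $Hom_X(j_*\hat{T'},\mathbb{G}_m)=U_{T'}$ from the proof of Theorem \ref{value_galois_fg} and the fact that the natural map $O_L^{*}\to U_{T'}$ (the "restriction" dual to the norm map in the proof of Theorem \ref{thm_rel_class_number}) has kernel $O_K^{*}$, gives
\[ [Ext^1_X(j_*\hat{T'},\mathbb{G}_m)]=\frac{h_L\cdot[\,U_{T'}:O_L^{*}/O_K^{*}\,]}{h_K\cdot\prod_v e_v(L/K)}. \]

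The main obstacle is the index identity $[\,U_{T'}:O_L^{*}/O_K^{*}\,]=[H^1_T(G,O_L^{*})]\cdot\prod_v e_v(L/K)$. I would establish it by combining the description $Hom_X(j_*\hat{T'},\mathbb{G}_m)=\ker\big(T'(K)\to\prod_p Hom_{\hat{\mathbb{Z}}}(\hat{T'}^{\,I_p},\mathbb{Z})\big)$ from Lemma \ref{lemma_localext} with the $G$-cohomology of $0\to O_L^{*}\to L^{*}\to Div(O_L)\to Cl(L)\to 0$: by Hilbert $90$ ($H^1(G,L^{*})=0$), $H^1_T(G,O_L^{*})$ is the cokernel of the extension-of-ideals map from the group of principal divisors of $O_K$ to the $G$-fixed principal divisors of $O_L$, and $Div(O_L)^G/Div(O_K)\cong\coprod_p\mathbb{Z}/e_v(L/K)$ contributes the factor $\prod_v e_v(L/K)$ (when $G$ is cyclic this is exactly the Herbrand-quotient identity $q(O_L^{*})=[L:K]^{-1}$, valid because $K$ is totally imaginary). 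Granting this, $[Ext^1_X(j_*\hat{T'},\mathbb{G}_m)]=h_L[H^1_T(G,O_L^{*})]/h_K$; substituting the five orders into Proposition \ref{prop_class_number_T} for $T'$, the factors $[L:K]$ and $\prod_v e_v(L/K)$ cancel in pairs and we obtain $h_{T'}=h_L[H^1_T(G,O_L^{*})]/\big(h_K\prod_v e_v(L/K)\big)$.
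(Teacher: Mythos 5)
Your overall strategy is the paper's (apply Proposition \ref{prop_class_number_T} to $T'$ via the sequence $0\to\hat{T'}\to\mathbb{Z}[G]\xrightarrow{\epsilon}\mathbb{Z}\to0$ and its sheafification), and several ingredients are fine: $H^1(K,\hat{T'})\cong\mathbb{Z}/[L:K]$, $H^1(I_p,\hat{T'})\cong\mathbb{Z}/e_v$ with trivial Frobenius action, and your direct proof that $\mathbb{III}^1(T')=0$ (embedding $H^1(-,T')$ into the Brauer group via Hilbert 90 and Shapiro, then the Hasse principle) is a nice self-contained replacement for the paper's citation of Katayama. However, two of your key orders are wrong, and you only land on the stated formula because the two errors cancel. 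First, $Ext^n_X\bigl((i_p)_*(\mathbb{Z}/e),\mathbb{G}_m\bigr)$ is \emph{not} concentrated in degree $2$: since $Ext^n_X(i_*M,\mathbb{G}_m)\cong Ext^{n-1}_{\hat{\mathbb{Z}}}(M,\mathbb{Z})$ and $Ext^2_{\hat{\mathbb{Z}}}(\mathbb{Z}/e,\mathbb{Z})\cong(\mathbb{Q}/\mathbb{Z})[e]\cong\mathbb{Z}/e$, it is $\mathbb{Z}/e$ in degrees $2$ \emph{and} $3$. The degree-$3$ part ruins your identification of $Ext^2_X(j_*\hat{T'},\mathbb{G}_m)$ with $\ker\bigl(H^3_{et}(X,\mathbb{G}_m)\to H^3_{et}(Y,\mathbb{G}_m)\bigr)$: the correct order is $[\mathrm{cok}\,\alpha]=[L:K]/[\mathrm{cok}\,\beta]$ in the paper's notation, where $[\mathrm{cok}\,\beta]=\mathrm{lcm}_p\,e_p$, so it is strictly smaller than $[L:K]$ as soon as $L/K$ ramifies. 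The paper's own example in Section 5.4 decides this: there $[L:K]=2$, $\hat{T}\cong\hat{T'}$ (a quadratic norm torus is self-dual), and Proposition \ref{compute_Ext_RT} gives $Ext^2_X(j_*\hat{T},\mathbb{G}_m)=0$, whereas your formula predicts order $2$.

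Second, the index identity you yourself flag as the main obstacle, $[\,U_{T'}:O_L^{*}/O_K^{*}\,]=[H^1_T(G,O_L^{*})]\prod_v e_v$, is false: the correct statement carries no ramification factor. This is the paper's $S\simeq H^1_T(G,O_L^{*})$, proved from the $G$-cohomology of $0\to O_L^{*}\to R_{L/K}(\mathbb{G}_m)(O_L)\to T'(O_L)\to0$ using that the middle term has vanishing Tate $H^1$ — not from the divisor sequence, where the piece $Div(O_L)^G/Div(O_K)$ you invoke does not enter this particular index. Again the example is decisive: $U_{T'}\cong\ker(N_{L/K}|_{O_L^{*}})=\langle\zeta^2\rangle\oplus(2+\sqrt{3})^{\mathbb{Z}}$, the image of $O_L^{*}$ is $\langle\zeta^4\rangle\oplus(2+\sqrt{3})^{\mathbb{Z}}$, so the index is $2=[H^1_T(G,O_L^{*})]$, not $4$. (Your expression for $[Ext^1_X(j_*\hat{T'},\mathbb{G}_m)]$ is likewise off by $\prod_p e_p/\mathrm{lcm}_p e_p$ once several primes ramify.) The spurious factor $\prod_v e_v$ in your $Ext^2$ computation exactly cancels the spurious factor in your index identity, which is why the final formula comes out; but the intermediate claims are incorrect, and the index identity is in any case only sketched. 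To repair the argument, track the negligible sheaf $R^1j_*\hat{T'}$ through the $Ext(-,\mathbb{G}_m)$ sequences in both degrees $2$ and $3$ (equivalently, use Artin--Verdier duality $Ext^2\cong H^1_{et}(X,j_*\hat{T'})^D$ as the paper does), and prove the index statement via the integral-points sequence above.
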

\begin{proof}
Let $N$ be the character group of $T'$. Then $N$ satisfies the following exact sequence
\begin{equation}\label{seq_proj_class_number_1}
0 \to N \to \pi_{*}\mathbb{Z} \xrightarrow{\epsilon} \mathbb{Z}\to 0.
\end{equation}
As $R^1j_{*}(\pi_{*}\mathbb{Z})=0$, we have the exact sequence of sheaves on $X$
\begin{equation}\label{seq_proj_class_number_2}
0 \to j_{*}N \to \pi'_{*}\mathbb{Z} \to \mathbb{Z} \to R^1j_{*}N \to 0.
\end{equation}
We split ($\ref{seq_proj_class_number_2}$) into 
\begin{equation}\label{seq_proj_class_number_3a}
0 \to j_{*}N \to \pi'_{*}\mathbb{Z} \to Q \to 0.
\end{equation}
\begin{equation}\label{seq_proj_class_number_3b}
0 \to Q \to \mathbb{Z} \to R^1j_{*}N \to 0.
\end{equation}
To ease notation, let $R=R^1j_{*}N$. Then $R$ is a negligible sheaf. In particular, $Ext^n_X(R,\mathbb{G}_m)=0$ for $n=0,1$. Let $\beta$ be the map $H^0_{et}(X,Q)\to H^0_{et}(X,\mathbb{Z})$. From the long exact sequences of $Ext$-groups and cohomology groups of ($\ref{seq_proj_class_number_3b}$), we obtain the following:
\begin{eqnarray*}
Hom_X(Q,\mathbb{G}_m)& \simeq & O_K^{*},   \\ {}
 [Ext^1_X(Q,\mathbb{G}_m)]
&=& h_K [Ext^2_X(R,\mathbb{G}_m)]=
h_K \prod_{p\in X}[H^1(\hat{\mathbb{Z}},H^1(I_p,N))^D], \\ {}
[Ext^2_X(Q,\mathbb{G}_m)]&=&[H^1_{et}(X,Q)^D]=\frac{[H^0_{et}(X,R)]}{[\mathrm{cok}(\beta)]}
=\frac{\prod_{p\in X}[H^0(\hat{\mathbb{Z}},H^1(I_p,N))]}{[\mathrm{cok}(\beta)]}.
\end{eqnarray*}
Similar argument applied to the sequence ($\ref{seq_proj_class_number_3a}$) yields the following :
\begin{eqnarray*}
[Ext^2_X(j_{*}N,\mathbb{G}_m)]&=& [H^1_{et}(X,j_{*}N)^D]=[\mathrm{cok}(\alpha)],   \\ {}
[Ext^1_X(j_{*}N,\mathbb{G}_m)]&=&
\frac{h_L[S][Ext^2_X(Q,\mathbb{G}_m)]}{[Ext^1_X(Q,\mathbb{G}_m)]}
=\frac{h_L[S]}{h_K[\mathrm{cok}(\beta)]}
\end{eqnarray*}
where $\alpha$ is the map $H^0_{et}(X,\pi'_{*}\mathbb{Z})\to H^0_{et}(X,Q)$ and $S$ satisfies the exact sequence
\begin{equation}\label{seq_proj_class_number_4}
0 \to O_K^{*} \to O_L^{*} \to Hom_X(j_{*}N,\mathbb{G}_m) \to S \to 0.
\end{equation}
Note that $\beta\alpha$ is the map $H^0_{et}(X,\pi'_{*}\mathbb{Z})\to H^0_{et}(X,\mathbb{Z})$ which can be identified with the map $\epsilon :H^0(G,\mathbb{Z}[G]) \to H^0(G,\mathbb{Z})$. From the exact sequence
\begin{equation}\label{seq_proj_class_number_5}
0 \to N \to \mathbb{Z}[G] \xrightarrow{\epsilon} \mathbb{Z} \to 0
\end{equation}
we deduce that $H^1(K,N)\simeq H^1(G,N) \simeq \mathrm{cok}(\epsilon)$. As $\beta$ and $\alpha
$ are injective, $[\mathrm{cok}(\alpha)][\mathrm{cok}(\beta)]=[\mathrm{cok}(\beta\alpha)]=[H^1(K,N)]$. From proposition $\ref{prop_class_number_T}$, we have 
\[ h_{T'}=\frac{h_L[S]}{h_K[\mathbb{III}^1(T')]\prod_{p\in X}[H^0(\hat{\mathbb{Z}},H^1(I_p,N))^D]}.\]
To complete the proof of the theorem, we shall prove :
 $\mathbb{III}^1(T')=0$, $[H^0(\hat{\mathbb{Z}},H^1(I_p,N))^D]=e_p(L/K)$ and $S\simeq H^1_T(G,O_L^{*})$. 
\begin{itemize}
\item The fact that $\mathbb{III}^1(T')=0$ is proved in $\cite[\mbox{page 685}]{Kat91}$. 
\item We have $H^0(\hat{\mathbb{Z}},H^1(I_p,N))\simeq H^0(D_w/I_w,H^1(I_w,N))$ where $w$ is a prime of $L$ dividing $p$. By ($\ref{seq_proj_class_number_5}$), $H^1(I_w,N)\simeq H^0_T(I_w,\mathbb{Z})$. As $D_w/I_w$ acts trivially on $H^0_T(I_w,\mathbb{Z})$,
\[ [H^0(D_w/I_w,H^1(I_w,N))]=[H^0_T(I_w,\mathbb{Z})]=[I_w]=e_p(L/K).\]
\item From the proof of theorem $\ref{value_galois_fg}$, $Hom(j_{*}N,\mathbb{G}_m)\simeq U_{T'}=T'(O_K)$. Therefore, sequence ($\ref{seq_proj_class_number_4}$) can be identified with the sequence
\begin{equation}\label{seq_proj_class_number_6}
0 \to \mathbb{G}_m(O_K) \to R_{L/K}(\mathbb{G}_m)(O_K) \to T'(O_K) \to S \to 0 
\end{equation}
which is part of the long exact sequence of cohomology associated with 
\begin{equation}\label{seq_proj_class_number_7}
0 \to \mathbb{G}_m(O_L) \to R_{L/K}(\mathbb{G}_m)(O_L) \to T'(O_L) \to 0. 
\end{equation}
 Note that $R_{L/K}(\mathbb{G}_m)(O_L)$ is an induced $G$-module thus $H^1_T(G,R_{L/K}(\mathbb{G}_m)(O_L))=0$. Consider the long exact sequence of cohomology associated with ($\ref{seq_proj_class_number_7}$) and compare with ($\ref{seq_proj_class_number_6}$), we obtain $S\simeq H^1_T(G,O_L^{*})$.
\end{itemize} 
 \end{proof}
 \begin{rmk}
 Thereoms $\ref{thm_rel_class_number}$ and $\ref{thm_proj_class_number}$ are weaker than the original theorems of Ono and Katayama because we have to assume $K$ is totally imaginary. Moreover, Katayama also obtained formulas for $h_T$ and $h_{T'}$ when $L/K$ is not Galois, see $\cite{Kat91}$.
 \end{rmk}
\subsection{An Example \textbf{$T=Spec(\mathbb{Q}(i)[x,y]/(x^2-3y^2-1))$}}
Let $K=\mathbb{Q}(i)$ and $L=\mathbb{Q}(\zeta)$ where $\zeta=e^{2\pi i/12}$. Let $T=Spec(\mathbb{Q}(i)[x,y]/(x^2-3y^2-1))$. Then $T=R_{L/K}^{(1)}(\mathbb{G}_m)$. We want to illustrate the results of this section via the torus $T$.
\begin{lemma}\label{compute_OK_OL}
\begin{enumerate}
\item
The unit group, class number and regulator of $K=\mathbb{Q}(i)$ are given by
$O_K^{*}=\mu_K =\{ \pm 1, \pm i\}$ , $h_K=1$, $R_K=1$.
\item
The unit group, class number and regulator of $L=\mathbb{Q}(\zeta)$ are given by 
$O_L^{*}=\mu_L \oplus (1-\zeta^5)^\mathbb{Z}$
where $\mu_L=\langle \zeta \rangle$,  $h_L=1$, $R_L=\log(2+\sqrt{3})$.
Furthermore, the norm map $N_{L/K} : L^{*} \to K^{*} $ is given by $N_{L/K}(a+b\sqrt{3})=a^2-3b^2$ for $a,b \in K$.
\end{enumerate}
\end{lemma}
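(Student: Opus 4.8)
The lemma is a collection of explicit facts about the two fields $K=\mathbb{Q}(i)$ and $L=\mathbb{Q}(\zeta)$, $\zeta=e^{2\pi i/12}$, so the plan is simply to assemble them from standard algebraic number theory. For part (1): $\mathbb{Z}[i]=O_K$ is norm-Euclidean, hence a principal ideal domain, so $h_K=1$; the units are the integer solutions of $a^2+b^2=1$, i.e. $\{\pm1,\pm i\}=\mu_4=\mu_K$; and since $K$ is imaginary quadratic it has unit rank $r_1+r_2-1=0$, so Dirichlet's unit theorem gives $O_K^{*}=\mu_K$, while the regulator, being the determinant of an empty matrix, is $R_K=1$.

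For part (2), I would first record the basic invariants of $L$. One has $L=\mathbb{Q}(\zeta)=\mathbb{Q}(i,\sqrt3)$ (since $\zeta^3=i$ and $\zeta+\zeta^{-1}=\sqrt3$), a totally imaginary field of degree $4$ with $|d_L|=144$, signature $(r_1,r_2)=(0,2)$, and maximal real subfield $L^{+}=\mathbb{Q}(\zeta+\zeta^{-1})=\mathbb{Q}(\sqrt3)$. The Minkowski bound is $\frac{4!}{4^4}(4/\pi)^{2}\sqrt{144}=18/\pi^{2}<2$, so every ideal class is represented by an integral ideal of norm $1$ and $h_L=1$. For the torsion subgroup: if $\zeta_m\in L$ then $\mathbb{Q}(\zeta_m)\subseteq L$ forces $\varphi(\operatorname{lcm}(m,12))=\varphi(12)=4$, hence $m\mid 12$, so $\mu_L=\langle\zeta\rangle\cong\mathbb{Z}/12$. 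Dirichlet's unit theorem then gives unit rank $r_1+r_2-1=1$, so $O_L^{*}\cong\mu_L\oplus\mathbb{Z}$.

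It then remains to identify the free part and compute $R_L$. Because $12$ is not a prime power, $1-\zeta^5$ is a cyclotomic \emph{unit} of $O_L$, and a short direct computation gives $|1-\zeta^5|^{2}=2+\sqrt3$ for the ordinary complex absolute value at the place $v$ induced by $\zeta\mapsto\zeta$; in particular $1-\zeta^5$ is not a root of unity, and the same kind of computation yields the key identity $(1-\zeta^5)^{2}=\zeta^{11}(2+\sqrt3)$. Now $O_{L^{+}}^{*}=\{\pm1\}\langle 2+\sqrt3\rangle$, with $2+\sqrt3$ the fundamental unit of $\mathbb{Q}(\sqrt3)$ (smallest solution of Pell's equation $x^2-3y^2=1$). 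Comparing complex absolute values in a hypothetical relation $1-\zeta^5=\zeta^{a}(2+\sqrt3)^{k}$ forces $k=\tfrac12$, a contradiction, so $1-\zeta^5\notin\mu_L O_{L^{+}}^{*}$; on the other hand $2+\sqrt3=\zeta\,(1-\zeta^5)^{2}\in\mu_L\langle 1-\zeta^5\rangle$. Using the elementary fact that for any CM field $L\supset L^{+}$ the index $[O_L^{*}:\mu_L O_{L^{+}}^{*}]$ is $1$ or $2$ (the map $\eta\mapsto\eta/\bar\eta$ embeds $O_L^{*}/O_{L^{+}}^{*}$ into $\mu_L$ with image containing $\mu_L^{2}$), the strict chain $\mu_L O_{L^{+}}^{*}\subsetneq\mu_L\langle 1-\zeta^5\rangle\subseteq O_L^{*}$ forces equality on the right, so $O_L^{*}=\mu_L\oplus(1-\zeta^5)^{\mathbb{Z}}$. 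With the normalization used by $\Lambda_K$ (the complex place carrying the square of the usual absolute value) this gives $R_L=|\log|1-\zeta^5|_v|=\log(2+\sqrt3)$. Finally, $L=K(\sqrt3)$ is quadratic over $K$ with nontrivial automorphism $\sqrt3\mapsto-\sqrt3$, so for $a,b\in K$ one has $N_{L/K}(a+b\sqrt3)=(a+b\sqrt3)(a-b\sqrt3)=a^2-3b^2$.

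The main obstacle is the fundamental-unit assertion $O_L^{*}=\mu_L\oplus(1-\zeta^5)^{\mathbb{Z}}$: checking that $1-\zeta^5$ is a unit of infinite order is immediate, but showing it \emph{generates} the free part (equivalently that $[O_L^{*}:\mu_L\langle 1-\zeta^5\rangle]=1$) is the one point requiring a genuine input — here either the elementary bound $[O_L^{*}:\mu_L O_{L^{+}}^{*}]\le 2$ combined with the explicit relation $(1-\zeta^5)^{2}=\zeta^{11}(2+\sqrt3)$, or, alternatively, Sinnott's index formula for cyclotomic units (which for $L=\mathbb{Q}(\zeta_{12})$ gives index $h_{L^{+}}=1$). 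Everything else is routine verification.
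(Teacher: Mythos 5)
Your proof is correct and follows essentially the same route as the paper: Dirichlet's unit theorem plus the CM index bound $[O_L^{*}:\mu_L O_{L^{+}}^{*}]\le 2$ from Washington, the explicit relation expressing $2+\sqrt3$ as a root of unity times $(1-\zeta^5)^2$, the Minkowski bound $18/\pi^2<2$ for $h_L=1$, and the direct norm computation for $L=K(\sqrt3)$. (Your identity $(1-\zeta^5)^2=\zeta^{11}(2+\sqrt3)$ is the correct form of the relation the paper states with a slightly different root-of-unity factor, and your extra verifications of part (1) and of $\mu_L=\langle\zeta\rangle$ only make the argument more complete.)
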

\begin{proof}
We only give the proof for part 2. By Dirichlet Unit Theorem, $O_L^{*}$ has rank 1. The torsion subgroup of $O_L^{*}$ is 
$\{ \zeta^n : n=0,...,11 \}$. 
As $L$ is a quadratic totally imaginary extension of $\mathbb{Q}(\sqrt{3})$, $L$ is a CM field. Therefore,
$ [O_L^{*} : \mu_L \mathbb{Z}[\sqrt{3}]^{*}] = 1,2 $ by $\cite[\mbox{4.12}]{Was97}$.
Since $\mathbb{Z}[\sqrt{3}]^{*}=\{\pm 1\}\times (2+\sqrt{3})^{\mathbb{Z}}$ and $(2+\sqrt{3})\zeta=-(1-\zeta^5)^2$, we conclude that
\[ O_L^{*} = \mu_L \oplus (1-\zeta^5)^{\mathbb{Z}}. \]
The regulator of $L$ is given by 
$ R_L=\log|(1-\zeta^5)|_{\mathbb{C}}^2=\log(2+\sqrt{3})$.
The discriminant $\Delta_L=12^2$. The Minkowski's bound is
$M_L={18}/{\pi^2}<2$. Therefore,  $h_L=1$.
Finally as $L=K(\sqrt{3})$, $N_{L/K}(a+b\sqrt{3})=a^2-3b^2$ for $a,b \in K$. 
\end{proof}
\begin{lemma}\label{global_galois_coh}
Let $G=G_{L/K}$. Then $H^n_T(G,\mu_L)\simeq \mathbb{Z}/2\mathbb{Z}$ for all $n$ and
\begin{equation}
H^n_T(G,O_L^{*}) \simeq \left\{
\begin{array}{cl}
\mathbb{Z}/2\mathbb{Z} & \mbox{for $n$ odd} \\
0 & \mbox{for $n$ even.} 
\end{array}
\right.
\end{equation}
\end{lemma}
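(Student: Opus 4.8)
The plan is to use that $G=G_{L/K}$ is cyclic of order $2$ (indeed $[L:K]=\varphi(12)/\varphi(4)=2$), so all Tate cohomology is $2$-periodic and it suffices to compute $H^0_T$ and $H^{-1}_T$ in each case. First I would pin down the $G$-action on $\mu_L$: since $K=\mathbb{Q}(i)=\mathbb{Q}(\zeta^3)$, the nontrivial $\sigma\in G$ must fix $\zeta^3$, which among the elements of $(\mathbb{Z}/12)^\times$ forces $\sigma(\zeta)=\zeta^5$; equivalently $\sigma$ acts on $\mu_L\simeq\mathbb{Z}/12\mathbb{Z}$ by multiplication by $5$. Then $\mu_L^G=\langle\zeta^3\rangle=\mu_K$ (order $4$), while the norm $N_{L/K}=1+\sigma$ acts on $\mu_L$ as multiplication by $6$ with image $\langle\zeta^6\rangle=\{\pm1\}$. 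Hence $H^0_T(G,\mu_L)=\mu_K/\{\pm1\}\simeq\mathbb{Z}/2\mathbb{Z}$, and since the Herbrand quotient of a finite module over a finite cyclic group is $1$, also $H^{-1}_T(G,\mu_L)\simeq\mathbb{Z}/2\mathbb{Z}$; by $2$-periodicity $H^n_T(G,\mu_L)\simeq\mathbb{Z}/2\mathbb{Z}$ for all $n$.

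For $O_L^*$ the key input is the short exact sequence of $G$-modules $0\to\mu_L\to O_L^*\to O_L^*/\mu_L\to0$ together with the structure of the rank-one quotient $O_L^*/\mu_L$, which by Lemma $\ref{compute_OK_OL}$ is infinite cyclic generated by the class $[u]$ of $u=1-\zeta^5$. Using the relation $(2+\sqrt3)\zeta=-(1-\zeta^5)^2$ I would note $2[u]=[2+\sqrt3]$ in $O_L^*/\mu_L$, and since $\sigma$ restricts to the nontrivial automorphism of $\mathbb{Q}(\sqrt3)$ (visible from the norm formula $N_{L/K}(a+b\sqrt3)=a^2-3b^2$ in Lemma $\ref{compute_OK_OL}$), one has $\sigma(2+\sqrt3)=(2+\sqrt3)^{-1}$, whence $2\sigma[u]=-2[u]$ and therefore $\sigma[u]=-[u]$. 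Thus $O_L^*/\mu_L\simeq\mathbb{Z}$ with $\sigma$ acting by $-1$, i.e. it is the augmentation ideal $I_G$; from $0\to I_G\to\mathbb{Z}[G]\to\mathbb{Z}\to0$ and the vanishing of the Tate cohomology of $\mathbb{Z}[G]$ one gets $H^n_T(G,O_L^*/\mu_L)\simeq H^{n-1}_T(G,\mathbb{Z})$, which is $\mathbb{Z}/2\mathbb{Z}$ for $n$ odd and $0$ for $n$ even (this is also immediate directly, since for $M=\mathbb{Z}^-$ one has $M^G=0$ and $N_GM=0$, $\ker N_G=\mathbb{Z}$, $I_GM=2\mathbb{Z}$).

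Next I would compute $H^0_T(G,O_L^*)=O_K^*/N_{L/K}(O_L^*)$ directly from the explicit generators of $O_L^*$ in Lemma $\ref{compute_OK_OL}$: $N_{L/K}(\zeta)=\zeta\cdot\zeta^5=\zeta^6=-1$ and $N_{L/K}(1-\zeta^5)=(1-\zeta^5)(1-\zeta)=-(\zeta+\zeta^5)=-i$, so $N_{L/K}(O_L^*)\supseteq\langle-i\rangle=\mu_K=O_K^*$ and hence $H^0_T(G,O_L^*)=0$. Finally I would feed $H^0_T(G,O_L^*)=0$, together with the cohomology of $\mu_L$ and of $O_L^*/\mu_L$ computed above, into the long exact Tate cohomology sequence of the displayed short exact sequence: the vanishing of $H^0_T(G,O_L^*)$ forces the connecting map $H^{-1}_T(G,O_L^*/\mu_L)\to H^0_T(G,\mu_L)$ (both isomorphic to $\mathbb{Z}/2\mathbb{Z}$) to be an isomorphism, and by $2$-periodicity the corresponding connecting maps in all odd degrees are isomorphisms as well. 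A short diagram chase then yields $H^n_T(G,O_L^*)\simeq H^n_T(G,\mu_L)\simeq\mathbb{Z}/2\mathbb{Z}$ for $n$ odd and $H^n_T(G,O_L^*)=0$ for $n$ even, which is the assertion.

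The main obstacle is the second paragraph: correctly identifying the $G$-module structure of $O_L^*/\mu_L$ (namely that $\sigma$ acts by $-1$) and, relatedly, ruling out the ``wrong'' outcome in the long exact sequence — note that neither the Herbrand quotient nor periodicity alone suffices, so the explicit norm computation $H^0_T(G,O_L^*)=0$ is essential. Both points rely on sufficiently explicit control of the unit group and the norm map, which is exactly what Lemma $\ref{compute_OK_OL}$ supplies; the rest is routine $2$-periodic Tate cohomology bookkeeping.
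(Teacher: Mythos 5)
Your proposal is correct, and it leans on the same explicit input as the paper (the description of $O_L^{*}$, the relation $(2+\sqrt{3})\zeta=-(1-\zeta^5)^2$ and the norm formula from Lemma \ref{compute_OK_OL}), but the bookkeeping in the second half is genuinely different. The paper computes both Tate groups of $O_L^{*}$ directly inside the unit group: it checks the norm is onto $O_K^{*}$ (so $H^0_T=0$) and then explicitly determines $\ker N_{L/K}=\langle\zeta^2\rangle\oplus(2+\sqrt{3})^{\mathbb{Z}}$ and $I_GO_L^{*}=\langle\zeta^4\rangle\oplus(2+\sqrt{3})^{\mathbb{Z}}$, giving $H^{-1}_T\simeq\mathbb{Z}/2\mathbb{Z}$; the statement for $\mu_L$ is handled by the same kind of direct calculation. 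You instead compute only $H^0_T(G,O_L^{*})=0$ directly, identify the $G$-module structure of the quotient $O_L^{*}/\mu_L\simeq\mathbb{Z}$ with $\sigma$ acting by $-1$ (your derivation of $\sigma[u]=-[u]$ is sound), and then run the Tate long exact sequence of $0\to\mu_L\to O_L^{*}\to O_L^{*}/\mu_L\to 0$, using the Herbrand quotient for $\mu_L$ and $2$-periodicity of the connecting maps. What your route buys is that it avoids computing $I_GO_L^{*}$ explicitly and makes the structural reason for the answer visible (an extension of $\mathbb{Z}^{-}$ by $\mu_L$ with surjective norm); the cost is the extra diagram chase and the (standard, cup-product) compatibility of periodicity with the connecting maps, which you correctly flag as needing the explicit vanishing $H^0_T(G,O_L^{*})=0$ rather than periodicity alone. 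One small remark: your value $N_{L/K}(1-\zeta^5)=-i$ is the correct one (the paper records $i$), but since either sign generates $\mu_K=O_K^{*}$, the surjectivity of the norm and everything downstream is unaffected.
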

\begin{proof}
Since $G\simeq \mathbb{Z}/2\mathbb{Z}$, $ H^n_T(G,O_L^{*})$ only depend on the parity of $n$. 
We have 
$N_{L/K}(\zeta)=-1$ and $N_{L/K}(1-\zeta^5)=i$. If $u \in O_L^{*}$ then $u=\zeta^m(1-\zeta^5)^n$ for integers $m,n$. Thus,
$N_{L/K}(u)=(-1)^mi^n=i^{2m+n}$. Let $I_G$ be the augmentation ideal of $G$. By direct calculation, $\mathrm{cok} (N_{L/K}) = 0$ and
\begin{eqnarray*}
\ker(N_{L/K})= \langle \zeta^2 \rangle \oplus (2+\sqrt{3})^{\mathbb{Z}} \quad \& \quad
I_{G}O_L^{*}= \{{\sigma(u)}/{u} : u \in O_L^{*} \}=\langle \zeta^4 \rangle \oplus (2+\sqrt{3})^{\mathbb{Z}}.
\end{eqnarray*}
As a result, $H^0_T(G_{L/K},O_L^{*})={O_K^{*}}/{N_{L/K}O_L^{*}} = 0 $ and
\begin{eqnarray*}
H^{-1}_T(G,O_L^{*})= \frac{\ker(N_{L/K})}{I_{G}O_L^{*}} = 
\frac{\langle \zeta^2 \rangle\oplus (2+\sqrt{3})^{\mathbb{Z}}}
{ \langle \zeta^4 \rangle \oplus (2+\sqrt{3})^{\mathbb{Z}}} \simeq \mathbb{Z}/2\mathbb{Z}.
\end{eqnarray*}
Similarly, we can show that $H^n_T(G,\mu_L)\simeq \mathbb{Z}/2\mathbb{Z}$ for all $n$. 
\end{proof}
\begin{prop}\label{compute_L(0)_class_number}
Let $T=R_{L/K}^{(1)}(\mathbb{G}_m)$. Then
\begin{enumerate}
\item $\mathrm{ord}_{s=0}L(\hat{T},s)=1$ and $ L^{*}(\hat{T},0) ={\log(2+\sqrt{3})}/{3}$.
\item $\prod_{p}[H^0(\hat{\mathbb{Z}},H^1(I_p,\hat{T}))]=2$.
\item $h_T=1$ and $\mathbb{III}^1(T)=0$.
\end{enumerate} 
\end{prop}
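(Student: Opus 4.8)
The plan is to reduce everything to the arithmetic of the quadratic extension $L=\mathbb{Q}(\zeta)=\mathbb{Q}(i,\sqrt3)$ of $K=\mathbb{Q}(i)$ already recorded in Lemmas~\ref{compute_OK_OL} and~\ref{global_galois_coh}. Write $G=G_{L/K}\cong\mathbb{Z}/2\mathbb{Z}$ with nontrivial element $\sigma$. Since $T=R^{(1)}_{L/K}(\mathbb{G}_m)$, its character group fits in the exact sequence $0\to\mathbb{Z}\to\pi_{*}\mathbb{Z}\to\hat T\to0$, so as a $G_K$-module $\hat T$ is $\mathbb{Z}$ with $\sigma$ acting by $-1$; in particular $\hat T$ is a trivial $G_L$-module and $\hat T\otimes\mathbb{C}$ is the quadratic character $\epsilon$ of $G$. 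I will use this description throughout, together with the classical facts that for a totally imaginary field $F$ one has $\mathrm{ord}_{s=0}\zeta_F(s)=[F:\mathbb{Q}]/2-1$ and $\zeta_F^{*}(0)=-h_FR_F/w_F$.

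For part~1, applying $L(\pi_{*}\mathbb{Z},s)=\zeta_L(s)$ (Corollary~\ref{euler_piZ}) and $L(\mathbb{Z},s)=\zeta_K(s)$ to the exact sequence above gives the identity of Artin $L$-functions $L(\hat T,s)=\zeta_L(s)/\zeta_K(s)$. Hence $\mathrm{ord}_{s=0}L(\hat T,s)=\mathrm{ord}_{s=0}\zeta_L-\mathrm{ord}_{s=0}\zeta_K=1-0=1$, which also matches part~1 of Theorem~\ref{euler_value_L0} since $\mathrm{rank}\,\mathrm{Hom}_X(j_{*}\hat T,\mathbb{G}_m)=\mathrm{rank}\,U_T=1$ by Lemma~\ref{global_galois_coh}. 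Feeding $h_L=1$, $R_L=\log(2+\sqrt3)$, $w_L=12$ and $h_K=R_K=1$, $w_K=4$ from Lemma~\ref{compute_OK_OL} into the analytic class number formula at $s=0$ yields $\zeta_L^{*}(0)=-\log(2+\sqrt3)/12$ and $\zeta_K^{*}(0)=-1/4$, so $L^{*}(\hat T,0)=\zeta_L^{*}(0)/\zeta_K^{*}(0)=\log(2+\sqrt3)/3$.

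For part~2 the essential point is that $L/K$ has a single ramified prime. From $\Delta_L=12^2$ and $\Delta_K=-4$, the tower formula $\Delta_L=N_{K/\mathbb{Q}}(\mathfrak d_{L/K})\,\Delta_K^{[L:K]}$ gives $N_{K/\mathbb{Q}}(\mathfrak d_{L/K})=144/16=9$; since $9$ is odd and the prime $(3)$ of $O_K$ has norm $9$, necessarily $\mathfrak d_{L/K}=(3)$ (equivalently, $3$ is a unit of even valuation at $(1+i)$, so $L/K$ is unramified there, while $v_{(3)}(3)=1$ forces ramification at $(3)$). If $\mathfrak p$ is a finite prime of $K$ unramified in $L$ then $I_{\mathfrak p}$ acts trivially on $\hat T$ and $H^1(I_{\mathfrak p},\hat T)=H^1(I_{\mathfrak p},\mathbb{Z})=0$. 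For $\mathfrak p=(3)$ the local extension $L_w/K_{\mathfrak p}$ is totally (tamely) ramified of degree $2$, so $I_{\mathfrak p}$ acts on $\hat T$ through a surjection onto $\mathbb{Z}/2\mathbb{Z}$ acting by $-1$; inflation--restriction then gives $H^1(I_{\mathfrak p},\hat T)\cong H^1(\mathbb{Z}/2\mathbb{Z},\mathbb{Z}^{-})\cong\mathbb{Z}/2\mathbb{Z}$, on which Frobenius necessarily acts trivially, hence $[H^0(\hat{\mathbb{Z}},H^1(I_{(3)},\hat T))]=2$. Taking the product over all finite primes gives $\prod_p[H^0(\hat{\mathbb{Z}},H^1(I_p,\hat T))]=2$.

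For part~3, since $L/K$ is cyclic I will invoke Corollary~\ref{cyclic_rel_class_number}: $h_T=h_L[L:K][H^0_T(G,O_L^{*})]\big/\big(h_K\prod_v e_v(L/K)\big)$. With $h_L=h_K=1$, $[L:K]=2$, $H^0_T(G,O_L^{*})=0$ (Lemmas~\ref{compute_OK_OL} and~\ref{global_galois_coh}) and $\prod_v e_v(L/K)=2$ (again only $(3)$ ramifies), this gives $h_T=(1\cdot2\cdot1)/(1\cdot2)=1$. The vanishing $\mathbb{III}^1(T)=0$ is immediate from Hasse's norm theorem for the cyclic extension $L/K$, exactly as in the proof of Corollary~\ref{cyclic_rel_class_number}, which identifies $\mathbb{III}^1(T)$ with $\ker\big(H^0_T(G,L^{*})\to H^0_T(G,I_L)\big)=0$. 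As a consistency check one may substitute $h_T=1$, $\mathbb{III}^1(T)=0$, $[H^1(K,\hat T)]=|G^{\mathrm{ab}}|=2$, $w_T=[U_{T,\mathrm{tor}}]=6$, the factor $2$ from part~2 and $R_T=R(j_{*}\hat T)=2\log(2+\sqrt3)$ (the $2$ being the archimedean normalization at the unique complex place of $K$, with $U_T$ having fundamental unit $2+\sqrt3$ by Lemma~\ref{global_galois_coh}) into formula~(\ref{galois_form}) of Theorem~\ref{value_galois_fg}, recovering $L^{*}(\hat T,0)=\log(2+\sqrt3)/3$. The main difficulty here is purely computational: one must correctly determine that $L/K$ ramifies at exactly one prime, $(3)$ (and not at the primes above both $2$ and $3$, as one might naively guess), and, for the optional regulator cross-check, handle the archimedean normalization of $R(j_{*}\hat T)$ with care.
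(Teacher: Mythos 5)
Your proposal is correct and follows essentially the same route as the paper: part 1 via $L(\hat T,s)=\zeta_L(s)/\zeta_K(s)$ and the analytic class number formula with the data of Lemma \ref{compute_OK_OL}, part 2 by identifying $(3)$ as the unique ramified prime with $e_3(L/K)=2$, and part 3 from cyclicity (Hasse) together with Corollary \ref{cyclic_rel_class_number}. The only differences are cosmetic: you compute $H^0(\hat{\mathbb{Z}},H^1(I_{(3)},\hat T))\cong\mathbb{Z}/2\mathbb{Z}$ directly by inflation--restriction rather than citing the identity $[H^0(\hat{\mathbb{Z}},H^1(I_p,\hat T))]=e_p(L/K)$ already established for Corollary \ref{cyclic_rel_class_number}, and your parenthetical claim that $L/K$ is unramified at $(1+i)$ \emph{because} $3$ is a unit there is not a valid reason at residue characteristic $2$ (units can generate ramified quadratic extensions of $2$-adic fields), though your discriminant computation $N_{K/\mathbb{Q}}(\mathfrak d_{L/K})=9$ already settles this point.
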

\begin{proof}
\begin{enumerate}
\item As $T=R_{L/K}^{(1)}(\mathbb{G}_m)$, $L(\hat{T},s)=\zeta_L(s)/\zeta_{K}(s)$. Thus, the first part follows from  the class number formula and lemma $\ref{compute_OK_OL}$.
\item Since $O_L=\mathbb{Z}[\mu]=O_K[\mu]$, the relative discriminant $\Delta_{L/K}=3O_K$. Therefore, $3O_K$ is the only prime of $K$ ramified in $L$. Clearly, $e_3(L/K)=2$. Hence, $\prod_{p}[H^0(\hat{\mathbb{Z}},H^1(I_p,\hat{T}))]=\prod_{p}e_p(L/K)=2$.
\item As $L/K$ is a cyclic extension, $\mathbb{III}^1(T)=0$. Then using part 2), corollary $\ref{cyclic_rel_class_number}$ and lemma $\ref{global_galois_coh}$, we deduce $h_T=1$.
\end{enumerate}

 \end{proof}
\begin{prop}\label{compute_Ext_RT}
Let $T=R_{L/K}^{(1)}(\mathbb{G}_m)$. Then  $R_T=R(j_{*}\hat{T})=2\log(2+\sqrt{3})$ and
\begin{equation}
Ext^n_X(j_{*}\hat{T},\mathbb{G}_m) \simeq \left\{
\begin{array}{cl}
 \mathbb{Z}/6\mathbb{Z} \oplus \mathbb{Z} & \mbox{$n=0$} \\
 0 & \mbox{$n=1,2$.}
\end{array}
\right.
\end{equation}
\end{prop}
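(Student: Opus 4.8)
The plan is to read everything off the exact sequence of étale sheaves on $X=\mathrm{Spec}(O_K)$
\[ 0 \to \mathbb{Z} \to \pi'_{*}\mathbb{Z} \to j_{*}\hat{T} \to 0 \]
already used in the proof of Theorem \ref{thm_rel_class_number} (here $\pi':\mathrm{Spec}(O_L)\to X$ with $L=\mathbb{Q}(\zeta)$; the sequence exists because $R^{1}j_{*}\mathbb{Z}=0$ and $\hat{T}=\pi_{*}\mathbb{Z}/\mathbb{Z}$ for the norm torus). Applying $R\mathrm{Hom}_{X}(-,\mathbb{G}_m)$ produces a long exact sequence in which, by the norm isomorphism of Lemma \ref{pre_finite_inv}(1), $\mathrm{Ext}^{n}_{X}(\pi'_{*}\mathbb{Z},\mathbb{G}_m)\simeq H^{n}_{et}(\mathrm{Spec}(O_L),\mathbb{G}_m)$, while $\mathrm{Ext}^{n}_{X}(\mathbb{Z},\mathbb{G}_m)=H^{n}_{et}(X,\mathbb{G}_m)$ by Theorem \ref{et_ZGm}. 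Lemma \ref{compute_OK_OL} gives $\mathrm{Pic}(O_K)=\mathrm{Pic}(O_L)=0$ (since $h_K=h_L=1$) and Theorem \ref{et_ZGm} gives $H^{2}_{et}(\mathrm{Spec}(O_L),\mathbb{G}_m)=0$, so the long exact sequence degenerates to
\[ 0 \to \mathrm{Hom}_{X}(j_{*}\hat{T},\mathbb{G}_m) \to O_L^{*} \xrightarrow{N_{L/K}} O_K^{*} \to \mathrm{Ext}^{1}_{X}(j_{*}\hat{T},\mathbb{G}_m) \to 0 \]
together with $\mathrm{Ext}^{2}_{X}(j_{*}\hat{T},\mathbb{G}_m)=0$; the identification of the middle arrow with the norm $N_{L/K}$ is the one already implicit in the proof of Theorem \ref{thm_rel_class_number}.

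Next I feed in the Tate cohomology computed in Lemma \ref{global_galois_coh}. Since $\mathrm{cok}(N_{L/K})=O_K^{*}/N_{L/K}(O_L^{*})=H^{0}_{T}(G,O_L^{*})=0$, the norm is surjective, hence $\mathrm{Ext}^{1}_{X}(j_{*}\hat{T},\mathbb{G}_m)=0$; combined with $\mathrm{Ext}^{2}_{X}(j_{*}\hat{T},\mathbb{G}_m)=0$ this establishes the vanishing in degrees $1$ and $2$. For degree $0$, the kernel computation of that same lemma, $\ker(N_{L/K})=\langle\zeta^{2}\rangle\oplus(2+\sqrt{3})^{\mathbb{Z}}$, together with $\mathrm{ord}(\zeta^{2})=6$, yields $\mathrm{Ext}^{0}_{X}(j_{*}\hat{T},\mathbb{G}_m)=\mathrm{Hom}_{X}(j_{*}\hat{T},\mathbb{G}_m)\simeq\mathbb{Z}/6\mathbb{Z}\oplus\mathbb{Z}$. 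In particular $[\mathrm{Hom}_{X}(j_{*}\hat{T},\mathbb{G}_m)_{tor}]=6$.

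For the regulator I will use $R_T=R(j_{*}\hat{T})$ (Example \ref{ex_pairing}(3)) and Corollary \ref{value_etale_fg}, which with the orders just found reads
\[ L^{*}(\hat{T},0)=\pm\frac{[\mathrm{Ext}^{1}_{X}(j_{*}\hat{T},\mathbb{G}_m)]\,R(j_{*}\hat{T})}{[\mathrm{Ext}^{2}_{X}(j_{*}\hat{T},\mathbb{G}_m)]\,[\mathrm{Hom}_{X}(j_{*}\hat{T},\mathbb{G}_m)_{tor}]}=\pm\frac{R(j_{*}\hat{T})}{6}. \]
Since $L^{*}(\hat{T},0)=\log(2+\sqrt{3})/3$ by Proposition \ref{compute_L(0)_class_number}(1) and both sides are positive, $R(j_{*}\hat{T})=2\log(2+\sqrt{3})$. (Equivalently one can combine $\chi(j_{*}\hat{T})=\chi(\pi'_{*}\mathbb{Z})/\chi(\mathbb{Z})=(h_LR_L/w_L)/(h_KR_K/w_K)$, from Theorem \ref{mul}, Proposition \ref{euler_finite_inv} and Proposition \ref{Z_Euler}, with the formula of Proposition \ref{euler_comp1}, using $H^{0}_{et}(X,j_{*}\hat{T})=\hat{T}^{G_K}=0$ because $G_{L/K}$ acts on the rank-one module $\hat{T}$ by $-1$.)

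The work here is essentially bookkeeping; the points needing care are that the map $O_L^{*}\to O_K^{*}$ in the $\mathrm{Ext}$ sequence really is the norm $N_{L/K}$, and that $\hat{T}$ is not confused with the character group of the dual torus $T'$. If one prefers to compute $R(j_{*}\hat{T})$ directly from Definition \ref{defn_regulator} rather than via the L-value, the remaining subtlety is to identify the free generator of $\mathrm{Hom}_{X}(j_{*}\hat{T},\mathbb{G}_m)$ with the unit $2+\sqrt{3}$ and to evaluate $\Lambda_K$ on it; the relation $(2+\sqrt{3})\zeta=-(1-\zeta^{5})^{2}$ from Lemma \ref{compute_OK_OL}, which exhibits $2+\sqrt{3}$ as twice the generator of the free part of $O_L^{*}/\mu_L$, is precisely what accounts for the factor $2$ in $R_T=2\log(2+\sqrt{3})$ as opposed to $R_L=\log(2+\sqrt{3})$.
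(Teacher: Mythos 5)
Your computation of the $\mathrm{Ext}$-groups is exactly the paper's: both use the sheaf sequence $0\to\mathbb{Z}\to\pi'_{*}\mathbb{Z}\to j_{*}\hat{T}\to 0$, the norm isomorphism, $\mathrm{Pic}(O_K)=\mathrm{Pic}(O_L)=0$, and Lemma \ref{global_galois_coh} to get $\mathrm{Hom}_X(j_{*}\hat{T},\mathbb{G}_m)\simeq\ker N_{L/K}\simeq\mathbb{Z}/6\oplus\mathbb{Z}$ and the vanishing of $\mathrm{Ext}^1,\mathrm{Ext}^2$; that part is correct and needs no comment. Where you genuinely diverge is the regulator: the paper computes $R(j_{*}\hat{T})$ directly from Definition \ref{defn_regulator}, identifying the free generator of $\mathrm{Hom}_X(j_{*}\hat{T},\mathbb{G}_m)$ with the unit $2+\sqrt{3}$ and evaluating the normalized complex absolute value, so $R(j_{*}\hat{T})=\log|2+\sqrt{3}|_{\mathbb{C}}^2=2\log(2+\sqrt{3})$, whereas you back-solve for $R(j_{*}\hat{T})$ from Corollary \ref{value_etale_fg} together with the independently computed value $L^{*}(\hat{T},0)=\log(2+\sqrt{3})/3$ of Proposition \ref{compute_L(0)_class_number}. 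Logically this is sound (both ingredients are established earlier in the paper, and the sign ambiguity disappears because a regulator is a positive absolute value), but it buys brevity at the cost of the purpose of Section 5: the concluding corollary there is meant to \emph{verify} the formula of Corollary \ref{value_etale_fg} by computing both sides independently in this example, and if the regulator is obtained from that very formula the check becomes a tautology. Your closing remark sketching the direct computation (free generator $2+\sqrt{3}$, evaluation of $\Lambda_K$, and the relation $(2+\sqrt{3})\zeta=-(1-\zeta^{5})^{2}$ explaining why $R_T=2R_L$) is essentially the paper's argument and is the preferable way to phrase the proof; as written it is only an aside in your proposal, so I would promote it to the main argument and keep the L-value comparison as a consistency check.
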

\begin{proof}
Since $Pic(O_K)=Pic(O_L)=0$, the long exact sequence of Ext-groups of ($\ref{rel_class_number_eqn2}$) yields  
$Ext^2_{X}(j_{*}\hat{T},\mathbb{G}_m)=0$ and 
\[
0 \to Hom_{X}(j_{*}\hat{T},\mathbb{G}_m) \to O_L^{*} \xrightarrow{N_{L/K}} O_K^{*} \to 
Ext^1_{X}(j_{*}\hat{T},\mathbb{G}_m) \to 0.  \]
From lemma $\ref{global_galois_coh}$, $Ext^1_{X}(j_{*}\hat{T},\mathbb{G}_m)\simeq O_K^{*}/N_{L/K}O_L^{*}=0$ and $Hom_{X}(j_{*}\hat{T},\mathbb{G}_m) \simeq \ker N_{L/K} \simeq \mathbb{Z}/6\mathbb{Z} \oplus \mathbb{Z}$.
Finally, as the torsion free part of $Hom_{X}(j_{*}\hat{T},\mathbb{G}_m)$ is generated by $(2+\sqrt{3})$, 
\[ R(j_{*}\hat{T})= \log|(2+\sqrt{3})|_{\mathbb{C}}^2 = 2\log(2+\sqrt{3}).\]
\end{proof}
 
 \begin{cor}
Let $T=R_{L/K}^{(1)}(\mathbb{G}_m)$. Then $\mathrm{ord}_{s=0}L(\hat{T},s)=\mathrm{rank}_{\mathbb{Z}}Hom_X(j_{*}\hat{T},\mathbb{G}_m)$,
\[ L^{*}(\hat{T},0) = \frac{\pm [Ext^1_X(j_{*}\hat{T},\mathbb{G}_m)]R(j_{*}\hat{T})}
{[Hom_X(j_{*}\hat{T},\mathbb{G}_m)_{tor}][Ext^2_X(j_{*}\hat{T},\mathbb{G}_m)]} 
=\pm \frac{h_TR_T}{w_T}\frac{[\mathbb{III}^1(T)]}{[H^{1}(K,\hat{T})]} \prod_{p \notin S_{\infty}}
[H^0(\hat{\mathbb{Z}},H^1(I_p,\hat{T}))].\]
\end{cor}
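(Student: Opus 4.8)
The plan is to read the final corollary as the specialization of the two main application results of the paper to the norm torus $T=R_{L/K}^{(1)}(\mathbb{G}_m)$, so that the proof is a matter of invoking and aligning earlier statements rather than of fresh computation. First I would settle the order formula: applying part (1) of Theorem \ref{euler_value_L0} to the discrete $G_K$-module $M=\hat T$ yields $\mathrm{ord}_{s=0}L(\hat T,s)=\mathrm{rank}_{\mathbb{Z}}Hom_X(j_*\hat T,\mathbb{G}_m)$ with no extra argument.

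For the first (étale) equality I would invoke Corollary \ref{value_etale_fg}, which already writes $L^*(\hat T,0)$ as $\pm[Ext^1_X(j_*\hat T,\mathbb{G}_m)]R(j_*\hat T)$ divided by $[Ext^2_X(j_*\hat T,\mathbb{G}_m)][Hom_X(j_*\hat T,\mathbb{G}_m)_{tor}]$; the sole identification required is $R(j_*\hat T)=R_T$, which is Example \ref{ex_pairing}(3). This reproduces the middle term of the displayed chain verbatim.

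The substance of the corollary is the second equality, and the point I would stress is that this is \emph{not} a new calculation but exactly formula (\ref{galois_form}) of Theorem \ref{value_galois_fg}. The general identity relating the étale Ext-group expression to the arithmetic invariants of $T$ is established inside that proof: equation (\ref{value_galois_seq14}) gives $[Ext^1_X(j_*\hat T,\mathbb{G}_m)]/[Ext^2_X(j_*\hat T,\mathbb{G}_m)]$ as $h_T[\mathbb{III}^1(T)]\prod_p[H^0(\hat{\mathbb{Z}},H^1(I_p,\hat T))^D]/[H^1(K,\hat T)]$, and combining this with $[Hom_X(j_*\hat T,\mathbb{G}_m)_{tor}]=w_T$ (proved there via $Hom_X(j_*\hat T,\mathbb{G}_m)\simeq U_T$) and $R(j_*\hat T)=R_T$ converts the étale expression into the right-hand side of (\ref{galois_form}). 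Thus both displayed equalities follow by citing Corollary \ref{value_etale_fg} and Theorem \ref{value_galois_fg} directly. The main obstacle is therefore not conceptual but purely one of bookkeeping: one must keep straight the identifications $R(j_*\hat T)=R_T$, $[Hom_X(j_*\hat T,\mathbb{G}_m)_{tor}]=w_T$, and $[H^0(\hat{\mathbb{Z}},H^1(I_p,\hat T))^D]=[H^0(\hat{\mathbb{Z}},H^1(I_p,\hat T))]$ (equal orders of a finite group and its Pontryagin dual, finiteness being Lemma \ref{lemma_localext}(3)), and track that every equality holds only up to the displayed sign.

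Finally, as an internal consistency check I would substitute the explicit values computed in this subsection into both ends of the chain. Proposition \ref{compute_Ext_RT} gives $[Ext^1_X(j_*\hat T,\mathbb{G}_m)]=[Ext^2_X(j_*\hat T,\mathbb{G}_m)]=1$, $[Hom_X(j_*\hat T,\mathbb{G}_m)_{tor}]=w_T=6$ and $R_T=R(j_*\hat T)=2\log(2+\sqrt3)$, so the étale expression is $2\log(2+\sqrt3)/6=\log(2+\sqrt3)/3$. Proposition \ref{compute_L(0)_class_number} gives $h_T=1$, $[\mathbb{III}^1(T)]=1$ and $\prod_p[H^0(\hat{\mathbb{Z}},H^1(I_p,\hat T))]=2$, while $[H^1(K,\hat T)]=[L_0:K]=[L:K]=2$ since $L/K$ is abelian; hence the Galois expression is $(1\cdot 2\log(2+\sqrt3)/6)(1/2)(2)=\log(2+\sqrt3)/3$ as well. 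Both agree with $L^*(\hat T,0)=\log(2+\sqrt3)/3$ from Proposition \ref{compute_L(0)_class_number}(1), confirming the whole chain.
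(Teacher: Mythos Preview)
Your proposal is correct and matches the paper's intent: the corollary is stated without proof in the paper, so it is meant to be read exactly as you do---as the specialization of Theorem~\ref{euler_value_L0}(1), Corollary~\ref{value_etale_fg}, and Theorem~\ref{value_galois_fg} to this particular torus, with the preceding Propositions~\ref{compute_L(0)_class_number} and~\ref{compute_Ext_RT} supplying the explicit numerical values. Your added consistency check with the concrete numbers is a welcome supplement but not a departure from the paper's approach.
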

\section{Appendix: Determinants And Torsions}
We review some results about determinants of exact sequences and orders of torsion subgroups of finitely generated abelian groups.
\subsection{Determinants Of Exact Sequences}
For $n\geq 1$, consider the following exact sequence of vector spaces over $\mathbb{R}$
\begin{equation*}
0 \to V_0 \xrightarrow{T_0} V_1 \xrightarrow{T_1} ... \xrightarrow{T_{n-1}} V_n \to 0  \quad \quad  (\mathcal{E}).
\end{equation*}
Let $B_i$ be an ordered basis for $V_i$. We want to define the determinant $\nu(\mathcal{E})$ of $(\mathcal{E})$ with respect to the bases $\{B_i\}$. We shall do so inductively.
\begin{enumerate}
\item If $n=1$, then  $\nu(\mathcal{E}):=|\det(T_0)|$ with respect to the given bases.
\item If $n=2$, suppose $B_0=\{u_i\}_{i=1}^{r}$, $B_1=\{v_i\}_{i=1}^{r+s}$ and $B_2=\{w_i\}_{i=1}^{s}$.
For $i=1,...,s$, let ${T_1}^{-1}(w_i)$ be any preimage of $w_i$ under $T_2$. We can form the following elements 
$\wedge_{i=1}^{r+s} v_i$ and $(\wedge_{i=1}^{r} T_0(u_i)) \wedge (\wedge_{i=1}^{s} T_1^{-1}(w_i))$ of $\wedge_{i=1}^{r+s} V_1$. Since $\wedge_{i=1}^{r+s} V_1$ is a 1 dimensional vector space over $\mathbb{R}$, there exists a unique \textbf{positive} real number $\delta$ such that 
\[  (\wedge_{i=1}^{r} T_0(u_i)) \wedge (\wedge_{i=1}^{s} T_1^{-1}(w_i)) = \pm \delta (\wedge_{i=1}^{r+s} v_i). \]
Note that the choice of the preimages of $w_i$  under $T_1$ does not affect 
$(\wedge_{i=1}^{r} T_0(u_i)) \wedge (\wedge_{i=1}^{s} T_1^{-1}(w_i))$.
Therefore we can define $\nu(\mathcal{E}):=\delta$.
\item If $n\geq 3$, suppose we have defined $\nu(\mathcal{E})$ for $n=N$. We want to define $\nu(\mathcal{E})$ for $n=N+1$.
Let $I$ be the image of $T_{N-1}$ and choose any basis for $I$. We split $(\mathcal{E})$ into 
\[  0 \to V_0 \xrightarrow{T_0} V_1 \xrightarrow{T_1} ... \xrightarrow{T_{N-2}} V_{N-1} \xrightarrow{T_{N-1}} I \to 0  \quad \quad  (\mathcal{E}_1),
\]
\[ 0 \to I \to V_{N} \xrightarrow{T_N} V_{N+1} \to 0 \quad \quad  (\mathcal{E}_2). \]
The determinant of ($\mathcal{E}$) defined to be
$ \nu(\mathcal{E}):=\nu(\mathcal{E}_1)\nu(\mathcal{E}_2)^{(-1)^{N-1}} $. 
Note that $\nu(\mathcal{E})$ is independent of the choice of basis for $I$.
\end{enumerate}

\begin{rmk}\label{rmk_determinant}
\begin{enumerate}
\item Let ($\mathcal{E}$) be an exact sequence of $\mathbb{R}$-vector spaces
\begin{equation*}
0 \to V_0 \xrightarrow{T_0} V_1 \xrightarrow{T_1} ... \xrightarrow{T_{n-1}} V_n \to 0  \quad \quad  (\mathcal{E}).
\end{equation*}
We split ($\mathcal{E}$) into two exact sequences ($\mathcal{E}_1$) and ($\mathcal{E}_2$) such that $\beta\alpha=T_i$. 
\begin{equation*}
0 \to V_0 \xrightarrow{T_0} V_1 \xrightarrow{T_1} ... \xrightarrow{T_{i-1}} V_i \xrightarrow{\alpha} J \to 0  \quad \quad  (\mathcal{E}_1).
\end{equation*}
\begin{equation*}
0 \to J \xrightarrow{\beta} V_{i+1} \xrightarrow{T_{i+1}} ... \xrightarrow{T_{n-1}} V_n \to 0  \quad \quad  (\mathcal{E}_2).
\end{equation*}
Then by an induction argument, we can show that 
$\nu(\mathcal{E})=\nu(\mathcal{E}_1)\nu(\mathcal{E}_2)^{(-1)^{i}}$.
\item Let ($\mathcal{E}^{*}$) be the dual sequence of ($\mathcal{E}$) and let $B_i^{*}$ be the dual basis of $B_i$. Then with respect to $\{B_i^{*}\}$ and $\{B_i\}$, 
$ \nu(\mathcal{E}^{*})=\nu(\mathcal{E})^{-1}$.
\end{enumerate}
\end{rmk}

\begin{lemma}\label{det_3x3}
Consider the following commutative diagram 
\[ \xymatrixrowsep{0.25in}\xymatrix{
      &  0 \ar[d] & 0 \ar[d] & 0 \ar[d] \\
  0 \ar[r] & A_1 \ar[d]^{\theta_1} \ar[r]^{\phi_A}  & A_2  \ar[d]^{\theta_2} \ar[r]^{\psi_A}  &   A_3 \ar[d]^{\theta_3} \ar[r] & 0
 & (\mathcal{E}_A)\\
   0 \ar[r] & B_1 \ar[d]^{\tau_1} \ar[r]^{\phi_B} &   B_2  \ar[d]^{\tau_2} \ar[r]^{\psi_B}  & B_3  \ar[d]^{\tau_3} \ar[r] &  0
 & (\mathcal{E}_B)\\
    0 \ar[r] & C_1 \ar[d] \ar[r]^{\phi_C}  &   C_2 \ar[d]  \ar[r]^{\psi_C}  &   C_3 \ar[d] \ar[r] & 0
 & (\mathcal{E}_C)     \\
  & 0 & 0 & 0 \\
 & (\mathcal{E}_1) & (\mathcal{E}_2) & (\mathcal{E}_3)}
 \]
Let {$\{a_i,b_i,c_i\}_{i=1}^3$} be bases for $\{A_i,B_i,C_i\}_{i=1}^3$. 
Then with respect to these bases
\[ \frac{\nu(\mathcal{E}_2)}{\nu(\mathcal{E}_1)\nu(\mathcal{E}_3)}=\frac{\nu(\mathcal{E}_B)}{\nu(\mathcal{E}_A)\nu(\mathcal{E}_C)}. \]
\end{lemma}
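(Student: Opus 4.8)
The plan is to establish the identity for one particularly convenient choice of bases and then to argue that its truth does not depend on the bases, so that this special case suffices. This is the elementary analogue of the multiplicativity of the determinant of an acyclic complex under a short exact sequence of complexes.

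\emph{Reduction to a convenient basis.} From the inductive definition of $\nu$ one reads off how it scales under a change of basis in a three-term exact sequence $0 \to V_0 \xrightarrow{T_0} V_1 \xrightarrow{T_1} V_2 \to 0$: if a new basis of $V_0$, of $V_1$, or of $V_2$ differs from the old one by a matrix of determinant $d$, then $\nu$ is multiplied by $|d|$, by $|d|^{-1}$, or by $|d|$ respectively — immediate from the defining relation $\big(\wedge_i T_0(u_i)\big)\wedge\big(\wedge_k T_1^{-1}(w_k)\big) = \pm\,\nu\cdot\big(\wedge_j v_j\big)$ together with the $n=1$ case. In the $3\times 3$ diagram each of the nine spaces $A_i,B_i,C_i$ is a term of exactly one of the rows $(\mathcal{E}_A),(\mathcal{E}_B),(\mathcal{E}_C)$ and of exactly one of the columns $(\mathcal{E}_1),(\mathcal{E}_2),(\mathcal{E}_3)$. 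A short check of the nine cases — for instance $B_1$ is the \emph{middle} term of $(\mathcal{E}_1)$ and the \emph{left} term of $(\mathcal{E}_B)$, while $B_2$ is the middle term of both $(\mathcal{E}_2)$ and $(\mathcal{E}_B)$ — shows that changing the basis of any one of these spaces by a matrix of determinant $d$ multiplies $\nu(\mathcal{E}_2)/(\nu(\mathcal{E}_1)\nu(\mathcal{E}_3))$ and $\nu(\mathcal{E}_B)/(\nu(\mathcal{E}_A)\nu(\mathcal{E}_C))$ by the same power of $|d|$. Hence it suffices to verify the identity for one choice of bases.

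\emph{An adapted basis.} Choose arbitrary bases of the four corner spaces $A_1,A_3,C_1,C_3$. Lifting the $A_3$-basis through $(\mathcal{E}_A)$ and the $C_3$-basis through $(\mathcal{E}_C)$ produces bases of $A_2$ and $C_2$ each of the form (image of the $A_1$- resp.\ $C_1$-basis) $\cup$ (a chosen set of lifts); similarly lifting the $C_1$-basis through $(\mathcal{E}_1)$ and the $C_3$-basis through $(\mathcal{E}_3)$ gives bases of $B_1$ and $B_3$ of that shape; and finally lifting the resulting $B_3$-basis through $(\mathcal{E}_B)$ gives a basis of $B_2$. Commutativity of the squares lets one make the lift choices consistently so that this basis of $B_2$ is \emph{also} of the adapted shape for the column $(\mathcal{E}_2)$: $\theta_2$ carries a lift in $A_2$ of an $A_3$-basis vector to a legitimate lift in $B_2$ of the corresponding $B_3$-basis vector, $\tau_2$ carries a lift in $B_2$ of a $C_3$-basis vector to a legitimate lift of it in $C_2$, $\phi_B$ carries a lift in $B_1$ of a $C_1$-basis vector to a legitimate lift in $B_2$ of its image in $C_2$, and these choices fit together. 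With bases of this form all six determinants $\nu(\mathcal{E}_A),\nu(\mathcal{E}_B),\nu(\mathcal{E}_C),\nu(\mathcal{E}_1),\nu(\mathcal{E}_2),\nu(\mathcal{E}_3)$ equal $1$ straight from the definition of $\nu$, so both sides of the asserted identity equal $1$.

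The only subtle point is the construction of the mutually compatible adapted bases: one must fix the corner bases first, propagate lifts through the rows and columns in a consistent order, and use commutativity of each square to check that the two descriptions of the basis of $B_2$ (one via $(\mathcal{E}_B)$, one via $(\mathcal{E}_2)$) agree. Everything else is bookkeeping with the scaling rule for $\nu$ recorded above.
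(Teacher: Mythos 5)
Your proof is correct, but it takes a genuinely different route from the paper. The paper fixes one arbitrary system of bases and expands $\wedge_i b_2^i$ in two ways — once through $(\mathcal{E}_B),(\mathcal{E}_1),(\mathcal{E}_3)$ and once through $(\mathcal{E}_2),(\mathcal{E}_A),(\mathcal{E}_C)$ — and then proves directly that the two resulting top wedge products $M\wedge N\wedge P\wedge Q$ and $M'\wedge N'\wedge P'\wedge Q'$ coincide, by showing each difference $N-N'$, $P-P'$, $Q-Q'$ is a sum of terms with a factor in $\mathrm{im}(\theta_2)$, $\mathrm{im}(\phi_B)$ or $\psi_B^{-1}\mathrm{im}(\theta_3)$, which then die against the remaining factors of the wedge. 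You instead prove a change-of-basis covariance (your nine-case check is right: each of the nine spaces occurs in exactly one row and one column, in positions whose scaling exponents for the two sides of the identity agree) and then verify the identity for a single adapted system of bases built from the four corners, where all six determinants equal $1$ by construction; I checked that your compatibility step does work, the key choices being to take the $\psi_B$-lifts of $\theta_3(a_3^k)$ to be $\theta_2$ of the chosen $\psi_A$-lifts, and to define the $C_2$-lifts as $\tau_2$ of the chosen $\psi_B$-lifts (or, if the $C_2$-lifts were fixed beforehand, to invoke the lift-independence in the definition of $\nu$, since the discrepancy lies in $\mathrm{im}\,\phi_C$ and its lift in $\mathrm{im}\,\phi_B$ is killed in the wedge). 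What your approach buys is a cleaner separation of concerns: the multilinear algebra is isolated in a transparent scaling rule and the diagram chase is reduced to exhibiting a compatible splitting of the $3\times 3$ diagram, which makes the generalization to the $m\times n$ statement (Proposition \ref{det_nxn}) essentially formal; what the paper's argument buys is that it needs no base-change lemma at all, at the cost of the somewhat delicate cancellation bookkeeping in the exterior algebra.
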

\begin{proof}
By the definition of $\nu(\mathcal{E}_B)$, we have 
\begin{equation}\label{det_3x3_eq1}
 \wedge_{i}b_2^i = \pm \nu(\mathcal{E}_B)^{-1}   (\wedge_{i}\phi_B(b_1^i))\wedge (\wedge_{i} \psi_B^{-1}(b_3^i)).
\end{equation}
Let $M:=(\wedge_{i}\phi_B\theta_1(a_1^i))$ and $N:=(\wedge_{i} \phi_B\tau_1^{-1}(c_1^i))$. By the definition of $\nu(\mathcal{E}_1)$, 
\begin{equation}\label{det_3x3_eq2}
\wedge_{i}\phi_B(b_1^i) = \pm \nu(\mathcal{E}_1)^{-1} (\wedge_{i}\phi_B\theta_1(a_1^i))\wedge (\wedge_{i} \phi_B\tau_1^{-1}(c_1^i)) = \pm \nu(\mathcal{E}_1)^{-1} M \wedge N .
\end{equation}
Let $P:=(\wedge_{i}\psi_B^{-1}\theta_3(a_3^i))$ and $Q:=(\wedge_{i} \psi_B^{-1}\tau_3^{-1}(c_3^i))$. By the definition of
$\nu(\mathcal{E}_3)$, 
\begin{equation}\label{det_3x3_eq3}
\wedge_{i}\psi_B^{-1}(b_3^i) = \pm \nu(\mathcal{E}_3)^{-1} (\wedge_{i}\psi_B^{-1}\theta_3(a_3^i))\wedge (\wedge_{i} \psi_B^{-1}\tau_3^{-1}(c_3^i)) = \pm \nu(\mathcal{E}_3)^{-1} P \wedge Q.
\end{equation}
 Putting together ($\ref{det_3x3_eq1}$), ($\ref{det_3x3_eq2}$) and ($\ref{det_3x3_eq3}$), we deduce
\begin{equation}\label{det_3x3_eq4}
\wedge_{i}b_2^i = \pm \nu(\mathcal{E}_B)^{-1}\nu(\mathcal{E}_1)^{-1}\nu(\mathcal{E}_3)^{-1}
M\wedge N \wedge P \wedge Q.
\end{equation}
Let $M':=(\wedge_{i}\theta_2\phi_A(a_1^i))$, $N':=(\wedge_{i} \tau_2^{-1}\phi_C(c_1^i)) $, 
$P':=(\wedge_{i}\theta_2\psi_A^{-1}(a_3^i)) $,
 and $Q':=(\wedge_{i} \tau_2^{-1}\psi_C^{-1}(c_3^i))$.
By a similar argument, we have 
\begin{equation}\label{det_3x3_eq5}
\wedge_{i}b_2^i = \pm \nu(\mathcal{E}_2)^{-1}\nu(\mathcal{E}_A)^{-1}\nu(\mathcal{E}_C)^{-1}
M' \wedge N' \wedge P' \wedge Q'.
\end{equation}
From ($\ref{det_3x3_eq4}$) and ($\ref{det_3x3_eq5}$), it is enough to show
\[ M\wedge N \wedge P \wedge Q = M' \wedge N' \wedge P' \wedge Q'. \]
Indeed, we have $M=M'$ since $\phi_B\theta_1=\theta_2\phi_A$. Let $x=N-N'$. As $\phi_B\tau_1^{-1}(c_1^i)-\tau_2^{-1}\phi_C(c_1^i) \in \ker\tau_2=\mathrm{im}(\theta_2)$, we deduce $x$ is a finite sum of wedge products such that each product has a factor which is an element of $\mathrm{im}(\theta_2)$.

Similarly, let $y=P-P'$. As
$\psi_B^{-1}\theta_3(a_3^i)) - \theta_2\psi_A^{-1}(a_3^i) \in (\ker\psi_B) = (\mathrm{im}\phi_B)$
, $y$ is a finite sum of wedge products such that each product has a factor belonging to $\mathrm{im}(\phi_B)$.

Since $\tau_3\psi_B=\psi_C\tau_2$, $\psi_B^{-1}\tau_3^{-1}(c_3^i)-\tau_2^{-1}\psi_C^{-1}(c_3^i)$ is an element of $\ker(\tau_3\psi_B)$. As a vector space, $\ker \tau_3\psi_B$ is spanned by $\ker\psi_B=\mathrm{im}\phi_B$ and $\psi_B^{-1}\ker(\tau_3)=\psi_B^{-1}\mathrm{im}(\theta_3)$. 
Therefore 
\[ Q-Q'=(\wedge_{i} \psi_B^{-1}\tau_3^{-1}(c_3^i))- (\wedge_{i} \tau_2^{-1}\psi_C^{-1}(c_3^i)) =z+t
\]
where $z,t$ are finite sums such that each summand of $z$ (respectively $t$) has a factor belonging to $\mathrm{im}(\phi_B)$ (respectively $\psi_B^{-1}\mathrm{im}(\theta_3)$).

{Claim:}
$M' \wedge x \wedge P'=0$,  \quad $M \wedge N \wedge y =0$, \quad $M \wedge N \wedge z =0$, \quad $P \wedge t =0$.

{Proof of claim:}
Recall that $M'=(\wedge_{i}\theta_2\phi_A(a_1^i))$ and $P'=(\wedge_{i}\theta_2\psi_A^{-1}(a_3^i))$.
It is clear that $\{\theta_2\phi_A(a_1^i),\theta_2\psi_A^{-1}(a_3^i)\}$ span $\mathrm{im}(\theta_2)$. As
each summand of $x$ has a factor belonging to $\mathrm{im}(\theta_2)$, $M' \wedge x \wedge P'=0$.
The rest of the claim can be proved in a similar fashion. Finally,
\begin{eqnarray*}
M\wedge N \wedge P \wedge Q &=& M \wedge N \wedge P \wedge (Q'+z+t) =  M \wedge N \wedge P \wedge Q' \\
&=& M \wedge N \wedge (P'+y) \wedge Q' = M \wedge N \wedge P' \wedge Q'  \\
&=& M' \wedge (N'+x) \wedge P' \wedge Q' = M' \wedge N' \wedge P' \wedge Q'.
\end{eqnarray*}
\end{proof}
The following proposition can be deduced from lemma $\ref{det_3x3}$ by an induction argument (whose proof we omit).
\begin{prop}\label{det_nxn}
Consider the following commutative diagram of $\mathbb{R}$-vector spaces
\begin{equation}\label{det_nxn_sq1}
\xymatrixrowsep{0.19in}\xymatrix{
          &  0  \ar[d]  &   0 \ar[d]  &   0 \ar[d] &   &   0 \ar[d] &     \\
0  \ar[r] & V_{0,0}  \ar[r]^{T_{0,0}} \ar[d]^{T'_{0,0}}  &   V_{0,1}  \ar[r]^{T_{0,1}} \ar[d]^{T'_{0,1}} &  V_{0,2}  \ar[r] \ar[d]^{T'_{0,2}} & \cdots \ar[r]  &   V_{0,n}  \ar[r] \ar[d]^{T'_{0,n}}  & 0  & (\mathcal{R}_0)\\
0  \ar[r] & V_{1,0}  \ar[r]^{T_{1,0}} \ar[d]  &   V_{1,1}  \ar[r]^{T_{1,1}} \ar[d] &  V_{1,2}  \ar[r] \ar[d] & \cdots \ar[r]  &   V_{1,n}  \ar[r] \ar[d]  & 0   & (\mathcal{R}_1)\\
        &   \vdots \ar[d]  &  \vdots  \ar[d] & \vdots \ar[d] & \ddots   &   \vdots \ar[d]  &    \\
0  \ar[r] & V_{m,0}  \ar[r]^{T_{m,0}} \ar[d]  &   V_{m,1}  \ar[r]^{T_{m,1}} \ar[d] &  V_{m,2}  \ar[r] \ar[d] & \cdots \ar[r]  &   V_{m,n}  \ar[r] \ar[d]  & 0   & (\mathcal{R}_m) \\
 &  0   &   0   &   0  &   &   0  &     \\
  &  (\mathcal{C}_0)   &   (\mathcal{C}_1)   &   (\mathcal{C}_{2})  &   &   (\mathcal{C}_n)  &  
}
\end{equation}
Let $B_{i,j}$ be an ordered basis for $V_{i,j}$. Then with respect to the bases $B_{i,j}$,
\begin{equation}\label{det_nxn_eq1}
\prod_{i=0}^{n}\nu(\mathcal{C}_i)^{(-1)^{i}} =  \prod_{i=0}^{m}\nu(\mathcal{R}_i)^{(-1)^{i}}.
\end{equation}
\end{prop}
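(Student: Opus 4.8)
The plan is to prove the identity by a double induction that reduces every such grid to the $3\times 3$ case of Lemma~\ref{det_3x3}. I would induct on $n$, and, for each fixed $n$, on $m$; the base case $m=n=2$ is precisely Lemma~\ref{det_3x3}, while the cases in which $m\le 1$ or $n\le 1$ are degenerate and are checked directly from the definition of $\nu$ (or by padding with zero spaces). Throughout, one uses the observation, built into the definition of $\nu$ and into Remark~\ref{rmk_determinant}, that $\nu$ of an exact sequence of length $\ge 3$ is independent of the bases chosen for the intermediate terms, so the auxiliary bases introduced below do not affect any of the quantities involved.

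For the step in which $n\ge 3$: split every row $(\mathcal{R}_i)$ at its second object. Setting $W_i:=\mathrm{cok}(T_{i,0})=\mathrm{im}(T_{i,1})\subseteq V_{i,2}$, this produces for each $i$ a short exact sequence $(\mathcal{R}_i^{(1)}):\ 0\to V_{i,0}\to V_{i,1}\to W_i\to 0$ and a shorter exact sequence $(\mathcal{R}_i^{(2)}):\ 0\to W_i\to V_{i,2}\to\cdots\to V_{i,n}\to 0$. The vertical maps of the original diagram turn $0\to(\mathcal{C}_0)\to(\mathcal{C}_1)\to(\mathcal{C}_W)\to 0$ into a short exact sequence of complexes, where $(\mathcal{C}_W):\ 0\to W_0\to\cdots\to W_m\to 0$; since $(\mathcal{C}_0)$ and $(\mathcal{C}_1)$ are exact, the long exact homology sequence shows $(\mathcal{C}_W)$ is exact as well. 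Thus sub-grid~$1$, with rows $(\mathcal{R}_i^{(1)})$ and columns $(\mathcal{C}_0),(\mathcal{C}_1),(\mathcal{C}_W)$, and sub-grid~$2$, with rows $(\mathcal{R}_i^{(2)})$ and columns $(\mathcal{C}_W),(\mathcal{C}_2),\dots,(\mathcal{C}_n)$, are again commutative diagrams with exact rows and columns, of strictly smaller size, so the inductive hypothesis applies to each (after fixing once and for all a basis for every $W_i$ and using it in both sub-grids). By Remark~\ref{rmk_determinant}(1), with the split occurring at the map of index $1$, one has $\nu(\mathcal{R}_i)=\nu(\mathcal{R}_i^{(1)})\,\nu(\mathcal{R}_i^{(2)})^{-1}$ for every $i$. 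Substituting the two instances of the inductive hypothesis, the two occurrences of $\nu(\mathcal{C}_W)$ cancel and what remains is exactly $\prod_{j}\nu(\mathcal{C}_j)^{(-1)^{j}}=\prod_{i}\nu(\mathcal{R}_i)^{(-1)^{i}}$, completing the step.

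The step in which $n=2$ and $m\ge 3$ is entirely symmetric: split each column $(\mathcal{C}_j)$ at its second object, form the three-term row $(\mathcal{R}_U):\ 0\to U_0\to U_1\to U_2\to 0$ of cokernels $U_j=\mathrm{cok}(V_{0,j}\to V_{1,j})$, which is exact by the same homological argument, then recombine sub-grid~$1$ (a $3\times 3$ grid, governed by Lemma~\ref{det_3x3}) with sub-grid~$2$ (governed by the inductive hypothesis in $m$) using Remark~\ref{rmk_determinant}(1) once per column. I expect the main obstacle to be purely organizational: one must keep the alternating signs and the re-indexing of the truncated rows and columns straight through the recombination, and one must dispatch the low-dimensional base cases by hand; no linear algebra beyond Lemma~\ref{det_3x3} is needed.
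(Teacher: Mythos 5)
Your proof is correct: splitting each row (respectively column) at its second object, establishing exactness of the column of cokernels $(\mathcal{C}_W)$ (respectively the row $(\mathcal{R}_U)$) via the long exact homology sequence, fixing bases on the new spaces once and using them in both sub-grids, and recombining with Remark~\ref{rmk_determinant}(1) all check out, including the sign bookkeeping. This is precisely the induction from Lemma~\ref{det_3x3} that the paper asserts and omits, so you are supplying the intended argument's details rather than taking a different route.
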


\begin{cor}\label{det_2x3}
Consider the following commutative diagram with exact rows
\[\xymatrixrowsep{0.25in}\xymatrix{
  0 \ar[r] & A_1 \ar[d]^{\theta_1} \ar[r]^{\phi_A}  & A_2  \ar[d]^{\theta_2} \ar[r]^{\psi_A}  &   A_3 \ar[d]^{\theta_3} \ar[r] & 0 & (\mathcal{E}_A)\\
   0 \ar[r] & B_1  \ar[r]^{\phi_B} &   B_2  \ar[r]^{\psi_B}  & B_3   \ar[r] &  0 & (\mathcal{E}_B)}
 \]
Assume further that all the vertical maps are isomorphisms. Let $\{a_i,b_i\}_{i=1}^2$ be bases for $\{A_i,B_i\}_{i=1}^2$ respectively. Then with respect to these bases
\[ \frac{|\det\theta_1||\det\theta_3|}{|\det\theta_2|}=\frac{\nu(\mathcal{E}_A)}{\nu(\mathcal{E}_B)} .\]
\end{cor}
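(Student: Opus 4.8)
The plan is to obtain Corollary \ref{det_2x3} as an immediate specialization of Proposition \ref{det_nxn} (with Lemma \ref{det_3x3} as the underlying prototype). I would regard the given commutative square as the grid of Proposition \ref{det_nxn} with $m=1$ and $n=2$: put
\[ V_{0,0}=A_1,\ V_{0,1}=A_2,\ V_{0,2}=A_3,\qquad V_{1,0}=B_1,\ V_{1,1}=B_2,\ V_{1,2}=B_3, \]
with horizontal maps $\phi_A,\psi_A$ in the top row, $\phi_B,\psi_B$ in the bottom row, and vertical maps $\theta_1,\theta_2,\theta_3$. Then the two rows are exactly $(\mathcal{E}_A)$ and $(\mathcal{E}_B)$, and the three columns are the length-one sequences $(\mathcal{C}_j)\colon 0\to A_{j+1}\xrightarrow{\theta_{j+1}}B_{j+1}\to 0$ for $j=0,1,2$.

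First I would verify the hypotheses of Proposition \ref{det_nxn}: commutativity and exactness of the rows are assumed, and each column $(\mathcal{C}_j)$ is an exact sequence precisely because $\theta_{j+1}$ is an isomorphism. Next I would invoke the base case of the determinant construction, namely that for a two-term exact sequence $0\to V\xrightarrow{T}W\to 0$ one has $\nu=|\det T|$ with respect to chosen bases, so $\nu(\mathcal{C}_j)=|\det\theta_{j+1}|$ computed in the given bases $a_{j+1},b_{j+1}$. Proposition \ref{det_nxn} then yields, with respect to these bases,
\[ \nu(\mathcal{C}_0)\,\nu(\mathcal{C}_1)^{-1}\,\nu(\mathcal{C}_2)=\nu(\mathcal{E}_A)\,\nu(\mathcal{E}_B)^{-1}, \]
and substituting $\nu(\mathcal{C}_j)=|\det\theta_{j+1}|$ and $\nu(\mathcal{R}_0)=\nu(\mathcal{E}_A)$, $\nu(\mathcal{R}_1)=\nu(\mathcal{E}_B)$ gives exactly
\[ \frac{|\det\theta_1|\,|\det\theta_3|}{|\det\theta_2|}=\frac{\nu(\mathcal{E}_A)}{\nu(\mathcal{E}_B)}. \]

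Since this is a direct application of a result already established in the appendix, I expect no genuine obstacle; the only care needed is bookkeeping with the alternating exponents $(-1)^{j}$ along the columns and $(-1)^{i}$ along the rows, to confirm that $|\det\theta_1|$ and $|\det\theta_3|$ land in the numerator while $|\det\theta_2|$ lands in the denominator (which fixes the labelling of the columns chosen above), together with a reminder of the length-one convention $\nu=|\det T|$. If one wishes to avoid Proposition \ref{det_nxn} altogether, the same conclusion follows directly from Lemma \ref{det_3x3} after padding the square with a bottom row of zero spaces: then $\nu(\mathcal{E}_C)=1$, each column becomes $0\to A_i\xrightarrow{\theta_i}B_i\to 0\to 0$ with determinant $|\det\theta_i|$, and the formula of Lemma \ref{det_3x3} reduces to the assertion.
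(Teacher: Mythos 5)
Your proof is correct and is essentially the intended argument: the paper states this as a corollary of Proposition \ref{det_nxn} (equivalently Lemma \ref{det_3x3} with a zero row adjoined), and your specialization to $m=1$, $n=2$ with columns $0\to A_i\xrightarrow{\theta_i}B_i\to 0$ and $\nu(\mathcal{C}_{i-1})=|\det\theta_i|$ is exactly that deduction. The sign bookkeeping you carry out matches the stated formula, so nothing further is needed.
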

\subsection{Orders Of Torsion Subgroups}
For a finitely generated abelian group $M$, we write $M_f$ for $M/M_{tor}$ and by an integral basis for $M$, we mean a $\mathbb{Z}$-basis for $M_f$. Moreover, if $f:M\to N$ is a group homomorphism then $f_{tor}:M_{tor}\to N_{tor}$.
\begin{lemma}\label{torsion_group}
Consider the following exact sequence of finitely generated abelian groups
\begin{equation}\label{hom1_seq1}
0\to A \to B \xrightarrow{\phi} C \xrightarrow{\psi} D \to E \to 0.
\end{equation}
 Assume $A$ is finite. Then the orders of the torsion subgroups are related by
 \[ \frac{[A][C_{tor}]}{[B_{tor}][D_{tor}]}=\frac{1}{[\mathrm{cok}(\psi_{tor})]}. \]
\end{lemma}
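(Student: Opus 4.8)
The plan is to split the five‑term sequence $(\ref{hom1_seq1})$ at its interior terms and reduce the claimed identity of torsion orders to two elementary observations. Set $I := \mathrm{im}(\phi) = \ker(\psi) \subseteq C$. Then $(\ref{hom1_seq1})$ decomposes into the short exact sequence $0 \to A \to B \to I \to 0$, the inclusion $I \hookrightarrow C$, and the surjection $\psi : C \to D$ with $\ker\psi = I$. Note that all the torsion subgroups occurring are finite (a finitely generated torsion abelian group is finite), so $\mathrm{cok}(\psi_{tor})$ is finite and every quantity below makes sense.

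First I would show that, because $A$ is finite, $[B_{tor}] = [A]\,[I_{tor}]$. Indeed $A \subseteq B_{tor}$, and the composite $B_{tor} \to B \to I$ has kernel exactly $A$ and image contained in $I_{tor}$; the one point needing an argument is surjectivity onto $I_{tor}$: if $x \in I_{tor}$ with $nx = 0$, then any lift $b \in B$ satisfies $nb \in A$, so $b$ has finite order, hence $b \in B_{tor}$ and maps onto $x$. This gives $[B_{tor}] = [A]\,[I_{tor}]$.

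Next I would analyze the map $\psi_{tor} : C_{tor} \to D_{tor}$. Its kernel is $C_{tor} \cap \ker\psi = C_{tor} \cap I = I_{tor}$, and its image has order $[D_{tor}]/[\mathrm{cok}(\psi_{tor})]$. Therefore $[C_{tor}] = [I_{tor}]\,[D_{tor}]/[\mathrm{cok}(\psi_{tor})]$. Substituting both identities yields
\[
\frac{[A]\,[C_{tor}]}{[B_{tor}]\,[D_{tor}]}
= \frac{[A]\,[I_{tor}]\,[D_{tor}]}{[A]\,[I_{tor}]\,[D_{tor}]\,[\mathrm{cok}(\psi_{tor})]}
= \frac{1}{[\mathrm{cok}(\psi_{tor})]},
\]
as required.

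There is no real obstacle here; the computation is entirely elementary diagram‑chasing and counting in finite groups. The only place that genuinely uses the hypothesis that $A$ is finite is the surjectivity of $B_{tor} \to I_{tor}$ in the first step — without it, passing to the quotient $B/A = I$ could create new torsion coming from the torsion‑free part of $B$, and the clean relation $[B_{tor}] = [A]\,[I_{tor}]$ would fail.
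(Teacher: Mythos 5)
Your proof is correct and follows essentially the same route as the paper: the paper's one-line proof rests precisely on the fact you establish, that finiteness of $A$ makes $B_{tor}\to (B/A)_{tor}=I_{tor}$ surjective, after which the identity follows by the same elementary counting of torsion orders along $\ker(\psi_{tor})=I_{tor}$ and $\mathrm{cok}(\psi_{tor})$. You have simply written out the details the paper leaves implicit, including the correct observation that $E$ plays no role in the formula.
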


\begin{proof}
This is a consequence of the fact that 
if $A$ is finite then the map $B_{tor} \to (B/A)_{tor}$ is surjective.
\end{proof}

\begin{lemma}\label{det_tor_3term}
Let ($\mathcal{E}$) be an exact sequence of finitely generated abelian groups
\[ 0 \to A \xrightarrow{\phi} B \xrightarrow{\psi} C \to 0 \] 
and $(\mathcal{E})_{\mathbb{R}}$ be the sequence ($\mathcal{E}$) tensoring with $\mathbb{R}$. Then with respect to any integral bases,
\begin{equation}\label{det_tor_3term_eq1}
\nu(\mathcal{E})_{\mathbb{R}}= \frac{[A_{tor}][C_{tor}]}{[B_{tor}]} =  [\mathrm{cok}(\psi_{tor})].
\end{equation}
\end{lemma}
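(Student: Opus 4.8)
The second equality is routine: left-exactness of the functor $M \mapsto M_{tor}$ applied to $(\mathcal{E})$ yields an exact sequence $0 \to A_{tor} \to B_{tor} \to C_{tor} \to \mathrm{cok}(\psi_{tor}) \to 0$, and comparing orders gives $[\mathrm{cok}(\psi_{tor})] = [A_{tor}][C_{tor}]/[B_{tor}]$ (this is also Lemma~\ref{torsion_group} applied to $0 \to 0 \to A \xrightarrow{\phi} B \xrightarrow{\psi} C \to 0 \to 0$). So the content is the first equality. Write $\bar\phi \colon A_f \to B_f$ and $\bar\psi \colon B_f \to C_f$ for the maps induced by $\phi$ and $\psi$ on free quotients. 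Since $(\mathcal{E})_{\mathbb{R}} = (\mathcal{E}) \otimes_{\mathbb{Z}} \mathbb{R}$ and $- \otimes_{\mathbb{Z}} \mathbb{R}$ is exact, $(\mathcal{E})_{\mathbb{R}}$ is the exact sequence $0 \to A_f \otimes \mathbb{R} \xrightarrow{\bar\phi} B_f \otimes \mathbb{R} \xrightarrow{\bar\psi} C_f \otimes \mathbb{R} \to 0$; in particular $\bar\phi$ is injective, and $\bar\psi \colon B_f \to C_f$ is surjective because $\psi$ is. I will let $H$ denote the middle homology $\ker(\bar\psi)/\bar\phi(A_f)$ of the three-term integral complex $A_f \xrightarrow{\bar\phi} B_f \xrightarrow{\bar\psi} C_f$; a rank count shows $H$ is finite.

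The first step of the plan is to show $\nu(\mathcal{E})_{\mathbb{R}} = [H]$. Fix integral bases $\{u_i\}$, $\{v_j\}$, $\{w_k\}$ of $A_f$, $B_f$, $C_f$, and, using surjectivity of $\bar\psi$, lift each $w_k$ to some $x_k \in B_f$ with $\bar\psi(x_k) = w_k$. Unwinding the inductive definition of $\nu$ for a three-term exact sequence, $\nu(\mathcal{E})_{\mathbb{R}} = \lvert\det M\rvert$, where $M$ is the matrix expressing the family $\{\bar\phi(u_i)\} \cup \{x_k\}$ in the basis $\{v_j\}$; this family is an $\mathbb{R}$-basis of $B_f \otimes \mathbb{R}$ (the $x_k$ map to a basis of $C_f \otimes \mathbb{R}$ under $\bar\psi$, whose kernel is spanned by the $\bar\phi(u_i)$), and $M$ has integer entries, so $\nu(\mathcal{E})_{\mathbb{R}} = [B_f : L]$ with $L = \langle \bar\phi(u_i), x_k\rangle$ a full-rank (hence finite-index) subgroup; independence of the chosen lifts is the remark following the definition of $\nu$. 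Since $\bar\phi(A_f) \subseteq L$, we may pass to $D := B_f/\bar\phi(A_f)$ and get $\nu(\mathcal{E})_{\mathbb{R}} = [D : \langle\bar{x}_k\rangle]$. The map $\bar\psi$ descends to a surjection $D \twoheadrightarrow C_f$ with kernel $H$, and the images $\bar{x}_k$ map onto the basis $\{w_k\}$ of $C_f$; hence $\langle\bar{x}_k\rangle$ is free of full rank in $D$, meets $H$ trivially, and surjects onto $D/H$, so $D = H \oplus \langle\bar{x}_k\rangle$ and $\nu(\mathcal{E})_{\mathbb{R}} = [H]$.

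The second step is to identify $H$ with $\mathrm{cok}(\psi_{tor})$. Here I would view the three rows $(A_{tor} \xrightarrow{\phi_{tor}} B_{tor} \xrightarrow{\psi_{tor}} C_{tor})$, $(A \xrightarrow{\phi} B \xrightarrow{\psi} C)$, $(A_f \xrightarrow{\bar\phi} B_f \xrightarrow{\bar\psi} C_f)$ as three-term complexes; the torsion sequences $0 \to M_{tor} \to M \to M_f \to 0$ assemble them into a short exact sequence of complexes. The middle complex is exact, being $(\mathcal{E})$; the bottom complex has homology $0$ at the ends and $H$ in the middle; the top complex has homology $\mathrm{cok}(\psi_{tor})$ at $C_{tor}$. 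The long exact homology sequence of this short exact sequence of complexes then forces $H \cong \mathrm{cok}(\psi_{tor})$ (and, incidentally, $\ker(\psi_{tor}) = \mathrm{im}(\phi_{tor})$). Combining the two steps gives $\nu(\mathcal{E})_{\mathbb{R}} = [H] = [\mathrm{cok}(\psi_{tor})] = [A_{tor}][C_{tor}]/[B_{tor}]$.

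The main obstacle is the first step: carefully converting the wedge-product definition of $\nu(\mathcal{E})_{\mathbb{R}}$ into the lattice index $[B_f : L]$ and then into the group order $[H]$ — in particular verifying that the chosen lifts $x_k$ really give an $\mathbb{R}$-basis (so $L$ is full rank and the index is finite), that the resulting value does not depend on the lifts, and that $\langle\bar{x}_k\rangle$ is an honest complement to the finite group $H$ inside $D$. Once $\nu(\mathcal{E})_{\mathbb{R}} = [H]$ is in hand, the homological identification $H \cong \mathrm{cok}(\psi_{tor})$ is short.
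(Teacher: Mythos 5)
Your proof is correct and follows essentially the same route as the paper's: both arguments reduce $\nu(\mathcal{E})_{\mathbb{R}}$ to the index of $\bar\phi(A_f)$ in $\ker\bar\psi$ (your $[H]$, the paper's $[\ker\psi_f/\mathrm{im}(\phi)]$) and then identify this with $[\mathrm{cok}(\psi_{tor})]$ using the torsion/free decomposition $0 \to M_{tor} \to M \to M_f \to 0$. The only cosmetic differences are that you evaluate the determinant as a lattice index with arbitrary integral lifts where the paper picks Smith-normal-form-adapted bases and a section of $\psi_f$, and you invoke the long exact homology sequence of the short exact sequence of complexes where the paper applies the snake lemma to the corresponding diagram.
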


\begin{proof}
From remark $\ref{rmk_determinant}$, for any section $\gamma$ of $\psi_{\mathbb{R}}$,
$ \nu(\mathcal{E})_{\mathbb{R}}=|\det\theta_{\gamma}| $ , with respect to integral bases.
As $|\det\theta_{\gamma}|$ is independent of the choice of integral bases, we only need to show that there exist a section $\gamma$ of $\psi_{\mathbb{R}}$ and integral bases of $A$, $B$ and $C$ such that ($\ref{det_tor_3term_eq1}$) holds.

Consider the following commutative diagram
\[ \xymatrixrowsep{0.2in}\xymatrix{ 0 \ar[r] & B_{tor} \ar[r] \ar[d]^{\psi_{tor}} & B \ar[d]^{\psi} \ar[r] &  B_f \ar[d]^{\psi_{f}} \ar[r] & 0 \\
                    0 \ar[r] & C_{tor} \ar[r]          & C         \ar[r] &  C_f         \ar[r] & 0 \\}
\]
The Snake lemma yields $\mathrm{cok}(\psi_f)=0$ and 
\[ 0 \to \ker\psi_{tor} \to \ker\psi=\mathrm{im}(\phi) \to \ker\psi_{f} \to 
\mathrm{cok}\psi_{tor} \to  0 .\]
Therefore, $[\mathrm{cok}\psi_{tor}]=[\ker\psi_{f}/\mathrm{im}(\phi)]$. Since $\psi_f : B_f \to C_f$ is surjective and $C_f$ is a free abelian group, there exists a section $\gamma : C_f \to B_f$ of $\psi_f$ and we have 
$ B_f = \ker(\psi_f) \oplus \gamma(C_f)$. Take any integral basis $ \{ w_i\}_{i=1}^s$ for $C_f$. 
By the Smith Normal form, there are $\mathbb{Z}$-bases $\{u_i\}_{i=1}^r$ for $A_f$ and $\{v_i\}_{i=1}^r$ for $\ker\psi_{f}$ such that $\phi_f(u_i)=m_iv_i$ where $m_i$ is a positive integer for $i=1,..,r$. Then $\{u_1,...,u_r,w_1,...,w_s\}$ 
and $\{v_1,...,v_r,\gamma(w_1),...,\gamma(w_s)\}$ form integral bases for $A_{\mathbb{R}}\oplus C_{\mathbb{R}}$ and  $B_{\mathbb{R}}$. Moreover, $[\ker\psi_{f}/\mathrm{im}(\phi)] = \prod_{i=1}^r|m_i|$.

Let $\theta_{\gamma} : A_{\mathbb{R}}\oplus C_{\mathbb{R}} \to B_{\mathbb{R}}$ be given by
$\theta(a,c)=\psi(a)+\gamma(c)$. Then with respect to the above integral bases, $\det(\theta_{\gamma})=\prod_{i=1}^r|m_i|$.
 As a result, 
\[ \nu(\mathcal{E})_{\mathbb{R}}=|\det\theta_{\gamma}| = \prod_{i=1}^r|m_i|
= {\left[\frac{\ker\psi_{f}}{\mathrm{im}(\phi)}\right]} = {[\mathrm{cok}\psi_{tor}]}= \frac{[A_{tor}][C_{tor}]}{[B_{tor}]} .\]
\end{proof}

\begin{prop}\label{det_tor}
Let $(\mathcal{E})$ be an exact sequence of finitely generated abelian groups
\[ 0 \to A_0 \xrightarrow{} A_1 \to ... \to A_n \to 0. \]
Let $(\mathcal{E})_{\mathbb{R}}$ be the sequence $(\mathcal{E})$ tensoring with $\mathbb{R}$. Let $B_i$ be an ordered integral basis for $A_i$. Then with respect to $B_i$,
\begin{equation*}\label{det_tor_eq1}
\nu(\mathcal{E})_{\mathbb{R}} = \prod_{i=0}^{n}[(A_i)_{tor}]^{(-1)^{i}} .
\end{equation*}
\end{prop}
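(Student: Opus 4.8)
The plan is to establish the formula by induction on $n$, using the three-term case (Lemma~\ref{det_tor_3term}) as the base of the induction and the splicing identity of Remark~\ref{rmk_determinant}(1) to carry out the inductive step. Two preliminary observations make this go through cleanly. First, $\mathbb{R}$ is a torsion-free, hence flat, $\mathbb{Z}$-module, so $-\otimes_{\mathbb{Z}}\mathbb{R}$ is exact; consequently $(\mathcal{E})_{\mathbb{R}}$ is exact and tensoring with $\mathbb{R}$ commutes with forming kernels, images and cokernels of the $A_i$, so no $\mathrm{Tor}$-terms intervene when we cut $(\mathcal{E})$ into shorter exact sequences. Second, $\nu(\mathcal{E})_{\mathbb{R}}$ does not depend on the chosen integral bases, since a change of $\mathbb{Z}$-basis of a torsion-free quotient is given by a matrix in $GL_r(\mathbb{Z})$, hence of determinant $\pm 1$, while $\nu$ is by construction a positive real number; so the statement is well-posed and we may pick convenient bases along the way.

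For $n=1$ the sequence is an isomorphism $A_0\xrightarrow{\sim}A_1$, so $(A_0)_{tor}\simeq (A_1)_{tor}$, the induced map on torsion-free quotients lies in $GL_r(\mathbb{Z})$, and $\nu(\mathcal{E})_{\mathbb{R}}=1=[(A_0)_{tor}]^{1}[(A_1)_{tor}]^{-1}$. For $n=2$ the assertion is precisely Lemma~\ref{det_tor_3term}.

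Now let $n\ge 3$ and assume the result for all shorter exact sequences. Put $C:=\mathrm{cok}(A_0\to A_1)$, a finitely generated abelian group which, by exactness of $(\mathcal{E})$, is also the image of the map $A_1\to A_2$; fix an integral basis for it. Splice $(\mathcal{E})$ into
\[ (\mathcal{E}_1):\quad 0\to A_0\to A_1\to C\to 0,\qquad (\mathcal{E}_2):\quad 0\to C\to A_2\to\cdots\to A_n\to 0, \]
so that the composite $A_1\to C\to A_2$ is the original map $A_1\to A_2$; applying Remark~\ref{rmk_determinant}(1) to $(\mathcal{E})_{\mathbb{R}}$ with this splicing (which occurs at the index $i=1$ spot, giving the exponent $(-1)^1=-1$) yields $\nu(\mathcal{E})_{\mathbb{R}}=\nu(\mathcal{E}_1)_{\mathbb{R}}\,\nu(\mathcal{E}_2)_{\mathbb{R}}^{-1}$. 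By Lemma~\ref{det_tor_3term}, $\nu(\mathcal{E}_1)_{\mathbb{R}}=[(A_0)_{tor}][C_{tor}]/[(A_1)_{tor}]$, and by the inductive hypothesis applied to the $n$-term exact sequence $(\mathcal{E}_2)$ (in which $C$ sits in position $0$ and $A_j$ in position $j-1$), $\nu(\mathcal{E}_2)_{\mathbb{R}}=[C_{tor}]\prod_{j=2}^{n}[(A_j)_{tor}]^{(-1)^{j-1}}$. Dividing, the factor $[C_{tor}]$ cancels, and using $-(-1)^{j-1}=(-1)^{j}$ together with the exponent $(-1)^1$ on $[(A_1)_{tor}]$ we get $\nu(\mathcal{E})_{\mathbb{R}}=\prod_{i=0}^{n}[(A_i)_{tor}]^{(-1)^{i}}$, which closes the induction.

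The only point requiring care is the sign bookkeeping: one must ensure that the torsion of the spliced-in group $C$ enters $\nu(\mathcal{E}_1)$ and $\nu(\mathcal{E}_2)$ with opposite exponents so that it drops out of the product, and this is exactly what the factor $(-1)^{i}$ in the splicing formula of Remark~\ref{rmk_determinant} guarantees at the odd index $i=1$. Everything else is formal, and the same computation works verbatim if one prefers to splice off the last short exact sequence instead of the first.
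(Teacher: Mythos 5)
Your proof is correct and follows exactly the route the paper sketches: induction on $n$ with Lemma~\ref{det_tor_3term} as the base case, splicing off the first short exact sequence via Remark~\ref{rmk_determinant}(1) and letting the torsion of the cokernel $C$ cancel between the two factors. You simply supply the details (flatness of $\mathbb{R}$, choice of an integral basis for $C$, and the sign bookkeeping) that the paper leaves implicit.
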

\begin{proof}
The proof uses induction on $n$. The base case when $n=2$ is lemma $\ref{det_tor_3term}$.
\end{proof}
\begin{cor}\label{det_tor_5term}
Suppose we have an exact sequence of finitely generated abelian groups
\[ 0 \to A \to B \xrightarrow{\phi} C \xrightarrow{\psi} D \to E \to 0 \] 
where $A$ and $E$ are finite groups. Then with respect to integral bases,
\[ \nu([ 0 \to B_{\mathbb{R}} \xrightarrow{\phi} C_{\mathbb{R}} \xrightarrow{\psi} D_{\mathbb{R}} \to 0])
= \frac{[B_{tor}][D_{tor}]}{[A][C_{tor}][E]} = \frac{[\mathrm{cok}(\psi_{tor})]}{[\mathrm{cok}\psi]}. \]
\end{cor}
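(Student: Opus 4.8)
The plan is to reduce to the already-proved short-exact-sequence case. First I would tensor the five-term sequence with $\mathbb{R}$: since $A$ and $E$ are finite, $A_{\mathbb{R}}=E_{\mathbb{R}}=0$, and as tensoring with $\mathbb{R}$ is exact the sequence collapses to a genuine short exact sequence of $\mathbb{R}$-vector spaces $0\to B_{\mathbb{R}}\xrightarrow{\phi}C_{\mathbb{R}}\xrightarrow{\psi}D_{\mathbb{R}}\to 0$ — here $\phi_{\mathbb{R}}$ is injective because its kernel is $A_{\mathbb{R}}=0$, $\psi_{\mathbb{R}}$ is surjective because its cokernel is $E_{\mathbb{R}}=0$, and exactness in the middle is preserved. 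So the task is to compute $\nu$ of this sequence with respect to the integral bases of $B$, $C$, $D$.

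The main step would be to apply Proposition \ref{det_tor} to the original five-term sequence of finitely generated abelian groups, which reads off $\nu$ of its scalar extension as $[A_{tor}]\,[B_{tor}]^{-1}[C_{tor}]\,[D_{tor}]^{-1}[E_{tor}]=[A][C_{tor}][E]\big/[B_{tor}][D_{tor}]$, using that $A$ and $E$ are finite. The point that needs care is that the scalar extension occurring in Proposition \ref{det_tor} carries the zero space at each of its two ends, whereas the corollary asks for $\nu$ of the three-term sequence obtained by discarding those zero terms. Unwinding the inductive definition of $\nu$ (equivalently, splitting off the end maps via Remark \ref{rmk_determinant}(1)), discarding a zero term at the \emph{right} end of an exact sequence leaves $\nu$ unchanged, while discarding a zero term at the \emph{left} end replaces $\nu$ by $\nu^{-1}$; this asymmetry is exactly the difference between the two boundary cases ``$V_0=0$'' and ``$V_n=0$'' in the definition, and is checked first for $n=2$ and then pushed through the recursion. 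Consequently $\nu$ of the three-term sequence is the reciprocal of the quantity given by Proposition \ref{det_tor}, namely $[B_{tor}][D_{tor}]\big/[A][C_{tor}][E]$, which is the first claimed equality. (Alternatively, one can factor the original sequence through $I:=\mathrm{im}\,\phi=\ker\,\psi$ and $J:=\mathrm{im}\,\psi=\ker(D\to E)$ into three short exact sequences $0\to A\to B\to I\to 0$, $0\to I\to C\to J\to 0$, $0\to J\to D\to E\to 0$, evaluate each $\nu$ after tensoring with $\mathbb{R}$ via Lemma \ref{det_tor_3term}, and feed the commutative ladder relating the corollary's sequence to $0\to I_{\mathbb{R}}\to C_{\mathbb{R}}\to J_{\mathbb{R}}\to 0$ into Corollary \ref{det_2x3}; the same left/right boundary behaviour is then needed to pin down $|\det(B_{\mathbb{R}}\to I_{\mathbb{R}})|$ and $|\det(J_{\mathbb{R}}\to D_{\mathbb{R}})|$.)

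For the second equality I would invoke Lemma \ref{torsion_group} for the five-term sequence, which is legitimate since $A$ is finite; it gives $[A][C_{tor}]\big/[B_{tor}][D_{tor}]=1/[\mathrm{cok}(\psi_{tor})]$, i.e. $[B_{tor}][D_{tor}]\big/[A][C_{tor}]=[\mathrm{cok}(\psi_{tor})]$. Since the last map $D\to E$ is surjective with kernel $\mathrm{im}\,\psi$, we have $E\cong\mathrm{cok}\,\psi$, hence $[\mathrm{cok}\,\psi]=[E]$; dividing by $[E]$ then yields $[B_{tor}][D_{tor}]\big/[A][C_{tor}][E]=[\mathrm{cok}(\psi_{tor})]\big/[\mathrm{cok}\,\psi]$, as required.

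I expect the main obstacle to be precisely the sign/parity bookkeeping flagged above: making sure that when the finite end terms $A$ and $E$ disappear over $\mathbb{R}$ they contribute $[A]^{-1}$ and $[E]^{-1}$ (not $[A]$ and $[E]$) to $\nu$ of the three-term sequence, and keeping the integral bases of $B$, $C$, $D$ compatible with the arbitrarily chosen bases of the intermediate images that enter the recursive definition of $\nu$. Once that is settled, the statement follows by direct citation of Proposition \ref{det_tor} (or Lemma \ref{det_tor_3term} together with Corollary \ref{det_2x3}) and Lemma \ref{torsion_group}.
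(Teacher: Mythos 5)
Your proposal is correct, and since the paper states this corollary without giving a proof, your argument (Proposition \ref{det_tor} applied to the five-term sequence, adjusted for the vanishing end terms, together with Lemma \ref{torsion_group} and $[\mathrm{cok}\,\psi]=[E]$) is exactly the intended bookkeeping; the alternative you sketch via Lemma \ref{det_tor_3term} and Corollary \ref{det_2x3} works equally well. The boundary behaviour you flag as the main obstacle is indeed correct and easy to nail down: the $n=2$ case of the definition gives $\nu(0\to 0\to V\xrightarrow{T}W\to 0)=|\det T|^{-1}$ but $\nu(0\to V\xrightarrow{T}W\to 0\to 0)=|\det T|$, and splitting via Remark \ref{rmk_determinant}(1) at the first (resp.\ last) nonzero map propagates this to arbitrary length, so dropping $A_{\mathbb{R}}$ on the left inverts $\nu$ while dropping $E_{\mathbb{R}}$ on the right leaves it unchanged, yielding the reciprocal $[B_{tor}][D_{tor}]/([A][C_{tor}][E])$ of the value Proposition \ref{det_tor} assigns to the realified five-term sequence, as claimed.
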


\bigskip
\begin{align*}
& \large \mbox{Department of Mathematics, University of Regensburg, 93053 Regensburg, Germany.} \\
& \large \mbox{Email : minh-hoang.tran@mathematik.uni-regensburg.de}
\end{align*}


\begin{thebibliography}{99}
\bibitem[BL]{BL} M. Bienenfeld and S. Lichtenbaum, \emph{"Values of Zeta and L-functions at zero: a cohomological
characterization"}, (unpublished).
\bibitem[Kat91]{Kat91} S. Katayama, \emph{"Isogenous tori and the class number formulae"}, Journal of Mathematics of Kyoto University Vol. 31, Number 3, 1991.
\bibitem[Lic75]{Lic75} S. Lichtenbaum, \emph{"Values of Zeta and L-functions at zero"}, Asterisque 24-25, 1975.
\bibitem[Lic09a]{Lic09a} S. Lichtenbaum,  \emph{"The Weil-\'etale Topology for Number Rings"}, Annals of Mathematics 170, 2009.
\bibitem[Lic09b]{Lic09b} S. Lichtenbaum, \emph{"Euler Characteristics and Special Values of Zeta-Functions"}, Motives and algebraic cycles, Fields Institute Communications 56, 2009.
\bibitem[Lic14]{Lic14} S. Lichtenbaum, \emph{"Special values of Zeta-functions of schemes"}, manuscripts.
\bibitem[Mil80]{Mil80} J. Milne, \emph{"\'Etale Cohomology"}, Princeton Mathematical Series, Princeton University Press, 1980.
\bibitem[Mil06]{Mil06} J. Milne,  \emph{"Arithmetic Duality Theorems"}, 2nd edition, BookSurge LLC, 2006.
\bibitem[Ono61]{Ono61} T. Ono,  \emph{"Arithmetic of Algebraic Tori"}, Annals of Mathematics 74.1, 1961.
\bibitem[Ono63]{Ono63} T. Ono, \emph{"On the Tamagawa Number of Algebraic Tori"}, Annals of Mathematics 78.1, 1963.
\bibitem[Ono87]{Ono87} T. Ono, \emph{"On some class number relations for Galois extensions"}, Nagoya Mathematics Journal, Vol. 107, 1987.
\bibitem[PR93]{PR93}  V. Platonov \& A. Rapinchuk, \emph{"Algebraic Groups and Number Theory"}, 1st edition, Pure and Applied Mathematics (Book 139),  Academic Press, 1993.
\bibitem[Ser95]{Ser95} J-P. Serre, \emph{"Local Fields"}, Graduate Texts in Mathematics, Springer, 1995.
\bibitem[Ser97]{Ser97} J-P. Serre, \emph{"Galois Cohomology"}, Springer Monographs in Mathematics, Springer, 1997.
\bibitem[Tat84]{Tat84} J. Tate, \emph{"Les Conjectures de Stark sur les Fonctions L d'Artin en s=0"}, Progress in Mathematics, Birkhauser, 1984.
\bibitem[Was97]{Was97} L. Washington, \emph{"Introduction to Cyclotomic Fields"}, Graduate Texts in Mathematics, Springer, 1997.
\bibitem[Wei94]{Weibel94} C. Weibel, \emph{"Introduction to Homological Algebra"}, Cambridge Studies in Advanced Mathematics, Cambridge University Press, 1995.
\end{thebibliography}
	\end{document}